\documentclass[a4paper,DIV12,12pt]{article}
\usepackage[a4paper, top=20mm, bottom =20mm, textwidth=165mm]{geometry}
\usepackage{amsthm,amsmath,amssymb,amsfonts,tikz, amscd, physics}
\usepackage[matrix,arrow,curve]{xy}
\usepackage{enumitem}
\usepackage{color}
\usepackage[pdftex, bookmarksnumbered=true]{hyperref}
\usepackage{mathrsfs}
\usepackage[utf8]{inputenc}
\usepackage[normalem]{ulem}
\usepackage{tikz}
\usetikzlibrary{positioning, arrows.meta}
\usetikzlibrary{shapes.geometric,decorations.pathmorphing}

\usepackage{ytableau}

\graphicspath{{images/}}

\theoremstyle{plain}
\newtheorem{define}{Definition}[section]

\newtheorem{lemma}[define]{Lemma}%[section]

\newtheorem{proposition}[define]{Proposition}
\newtheorem*{proposition*}{Proposition}
\newtheorem{corollary}[define]{Corollary}
\newtheorem{theorem}[define]{Theorem}%[section]
\newtheorem{remark}[define]{Remark}%[section]

\newtheorem{claim}[define]{Claim}

\newcommand{\zh}{\textcolor{red}}

\numberwithin{equation}{section}

\newcommand{\re}{{\mathrm{e}}}

\title{Athinization of irreducible \(\widehat{\mathfrak{gl}}_n\)-modules with dominant highest weights
}

\usepackage[ backend=biber, maxnames=10, maxalphanames=5, minalphanames=5, style=alphabetic]{biblatex}
\DeclareFieldFormat[article]{title}{\mkbibemph{#1\isdot}}
\DeclareFieldFormat{journaltitle}{#1\isdot}
\renewbibmacro{in:}{%
  \ifentrytype{article}
    {}
    {\printtext{\bibstring{in}\intitlepunct}}}

\author{Mikhail Bershtein, Evgeny Mukhin, Leonid Rybnikov, and Aleksandr Trufanov}

\begin{document}

\maketitle

\begin{abstract}
We study the Gelfand--Tsetlin realization of generic Verma modules for the affine Lie algebra $\widehat{\mathfrak{gl}}_n$ by viewing them as {\it thin} modules over the affine Yangian $Y(\widehat{\mathfrak{sl}}_n)$. By results of \cite{feigin2011yangians}, these modules admit a basis indexed by periodic Gelfand--Tsetlin patterns with explicit formulas for the Yangian action, and we identify them with the evaluation modules introduced by Kodera~\cite{kodera2021guay}.

Our main result describes the specialization from generic highest weights to dominant highest weights (not necessarily integral). We call the resulting construction {\it athinization}: an irreducible \(\widehat{\mathfrak{gl}}_n\)-module, which is not thin as a module over the affine Kac--Moody algebra, is realized as a thin module over the larger (and ``more affine'') algebra \(Y(\widehat{\mathfrak{sl}}_n)\). Combinatorially, this realization is obtained by restricting the generic periodic Gelfand--Tsetlin basis to a distinguished subset of {\it permitted patterns}. We prove that the span of these patterns carries a well-defined affine Yangian action.

In particular, this construction yields explicit Gelfand--Tsetlin-type bases for admissible representations of $\widehat{\mathfrak{gl}}_n$ in the sense of Kac--Wakimoto, providing a new combinatorial realization of these modules. We compare the formulas for characters coming from this combinatorics with those for minimal models of  $W$-algebras of the type $A_n$ via the {\it principal specialization}.

Further, we obtain analogous results for representations of $U_q\widehat{\mathfrak{gl}}_n$ via their realization as thin modules over the quantum toroidal algebra of $\mathfrak{gl}_n$. 
\end{abstract}

\tableofcontents

\section{Introduction}

%{\lcom Preliminary version of the introduction, working on it. Feel free to add what you find necessary.}

\subsection{Affine Yangians and Gelfand--Tsetlin realizations of $\widehat{\mathfrak{gl}}_n$-modules}

The affine Yangian $Y(\widehat{\mathfrak{sl}}_n)$ and its trigonometric analog, the quantum toroidal algebra $ U_q(\ddot{\mathfrak{gl}}_n)$ of $\mathfrak{gl}_n$, %\zh{toroidal algebra should be gl not sl} 
play an important role in representation theory, algebraic geometry, and mathematical physics. One important class of representations of $Y(\widehat{\mathfrak{sl}}_n)$ consists of {\it thin} (or {\it tame}) modules, in which the Cartan subalgebra acts diagonally with simple joint spectrum. Such modules admit distinguished bases with explicit formulas for matrix elements of the generators.

In~\cite{feigin2011yangians}, Feigin, Finkelberg, Negut, and the third author showed that the generic Verma module over the affine Lie algebra $\widehat{\mathfrak{gl}}_n$ can be realized as a thin module over the affine Yangian $Y(\widehat{\mathfrak{sl}}_n)$. 
In this realization the module admits a natural basis indexed by periodic (or affine) Gelfand--Tsetlin patterns, and the action of the Yangian generators is given by explicit formulas for the matrix elements in this basis. 
These formulas are closely related to classical constructions of Gelfand--Tsetlin bases for representations of the Lie algebra $\mathfrak{gl}_n$. 
From the geometric perspective developed in~\cite{feigin2011yangians}, the generic Verma module arises from the equivariant cohomology of affine Laumon quasiflag spaces. 
In this picture the Gelfand--Tsetlin basis corresponds to the fixed points of a torus action, via Atiyah--Bott localization.
%, and the Shapovalov form is interpreted as the Poincar\'e pairing in equivariant cohomology.

The affine Yangian itself admits several equivalent presentations and is closely related to the quantum toroidal algebra. These structures were extensively studied in works of Tsymbaliuk~\cite{tsymbaliuk2010quantum}, Kodera~\cite{kodera2021guay}, and others. In particular, Kodera constructed evaluation homomorphisms from the affine Yangian $Y(\widehat{\mathfrak{sl}}_n)$ to completed enveloping algebras of $\widehat{\mathfrak{gl}}_n$, allowing one to regard affine Lie algebra modules as Yangian modules. The thin modules described in~\cite{feigin2011yangians} are naturally identified with pullbacks of highest weight modules under this evaluation map.

\subsection{Classical Gelfand--Tsetlin patterns.}
A (classical) Gelfand--Tsetlin pattern for an irreducible finite-dimensional $\mathfrak{gl}_n$-module with the highest weight $\mu$ is a triangular array
$\{\mu^{(i)}_j\}_{1\le j\le i\le n}$ with top row
$\mu=\mu^{(n)}=(\mu^{(n)}_1,\dots,\mu^{(n)}_n)$ and the interlacing inequalities
% \s{notation looks similar to notation \(n\)-tuples of Young diagrams in Section \(4.3\), but have different meaning. Do we care?} \zh{Maybe we can write $\lambda_{ij}$ instead of $\lambda_j^{(i)}$? This notation is often used in this context and it is similar to $d_{ij}$.} \s{It is possible. Also Lenya suggested to use \(\nu\) in introduction. I like both options.} 
% \s{Maybe \(\mu_{ij}\)? we always used \(\mu\) for \(\mathfrak{gl}(n)\) weights, aso it is good. } \zh{Yes, $\mu_{ij}$ looks like a best option.} 
% \s{I decided to preserve higher bracket because otherwise equality \(\mu=\mu_n\)  looks weird to me.} 
\begin{equation}
  \mu^{(i)}_j \;\ge\; \mu^{(i-1)}_j \;\ge\; \mu^{(i)}_{j+1}\qquad
(2\le i\le n,\; 1\le j\le i-1).
\end{equation}
It is convenient to display the pattern as
\[
\begin{tikzpicture}[baseline=(current bounding box.center),scale=1,
  every node/.style={inner sep=1.2pt}]
  % top row i=n
  \node (n1) at (0,0) {$\mu^{(n)}_1$};
  \node (n2) at (1.6,0) {$\mu^{(n)}_2$};
  \node (n3) at (3.2,0) {$\cdots$};
  \node (n4) at (4.8,0) {$\mu^{(n)}_{n-1}$};
  \node (n5) at (6.4,0) {$\mu^{(n)}_n$};

  % row n-1
  \node (m1) at (0.8,-1.0) {$\mu^{(n-1)}_1$};
  \node (m2) at (2.4,-1.0) {$\mu^{(n-1)}_2$};
  \node (m3) at (4.0,-1.0) {$\cdots$};
  \node (m4) at (5.6,-1.0) {$\mu^{(n-1)}_{n-1}$};

  % dots
  \node (d) at (3.2,-2.0) {$\vdots$};

  % row i=2 (new)
  \node (c1) at (2.4,-2.7) {$\mu^{(2)}_1$};
  \node (c2) at (4.0,-2.7) {$\mu^{(2)}_2$};

  % bottom row i=1
  \node (b1) at (3.2,-3.6) {$\mu^{(1)}_1$};

  % interlacing "arrows"
  \draw[->,thin] (n1) -- (m1);
  \draw[->,thin] (m1) -- (n2);
  \draw[->,thin] (n2) -- (m2);
  \draw[->,thin] (m2) -- (n3);
  \draw[->,thin] (m3) -- (n4);
  \draw[->,thin] (n4) -- (m4);
  \draw[->,thin] (m4) -- (n5);

  % arrows for the last rows
  \draw[->,thin] (c1) -- (b1);
  \draw[->,thin] (b1) -- (c2);
\end{tikzpicture}
\]
where each entry $\mu^{(i-1)}_j$ is squeezed between the two entries above it. Such arrays naturally index the {\it Gelfand-Tsetlin basis} in the irreducible $\mathfrak{gl}_n$-module with the highest weight $\mu=\mu^{(n)}$ with the action of Chevalley generators in this basis given by explicit formulas.

Following \cite{FFFR,feigin2011yangians}, we will 
%it is often more convenient to 
encode a pattern
$\{\mu^{(i)}_j\}$ by a triangular pattern of integers $\{d_{ij}\}_{1\le j\le i\le n}$
defined by
\begin{equation}
  \mu^{(i)}_j \;=\; \mu^{(n)}_j \;-\; d_{ij}, \qquad 1\le j\le i\le n,
\end{equation}
in particular $d_{nj}=0$ for all $j$.
(In other words, $d_{ij}$ measures how much the $j$-th coordinate drops when passing
from the top row to the $i$-th row.)
The interlacing inequalities translate into inequalities for the integers $d_{ij}$.
For example, $\mu^{(i)}_j\ge \mu^{(i-1)}_j$ becomes
\begin{equation}\label{eq:d-vertical}
  d_{ij}\le d_{i-1,j},
\end{equation}
while $\mu^{(i-1)}_j\ge \mu^{(i)}_{j+1}$ becomes
\begin{equation}\label{eq:d-diagonal}
  \mu^{(n)}_j-d_{i-1,j}\;\ge\;\mu^{(n)}_{j+1}-d_{i,j+1}
  \quad\Longleftrightarrow\quad
  d_{i-1,j}\;\le\; d_{i,j+1} + \bigl(\mu^{(n)}_j-\mu^{(n)}_{j+1}\bigr).
\end{equation}
Thus the classical interlacing conditions can be viewed as a system of linear inequalities
for the integers $d_{ij}$ with constants determined by the dominant top row
$\mu^{(n)}_1\ge\cdots\ge\mu^{(n)}_n$. 
We can picture these relations as follows, with the black arrows representing relations~\eqref{eq:d-vertical} and the blue arrows representing  relations~\eqref{eq:d-diagonal}:

\[
\begin{tikzpicture}[every node/.style={font=\small}]
    % left dots

    % row i=1 (keep only j>=1)
    \node (a11) at (0,0) {\(d_{1,1}\)};

    % row i=2
    \node (a21) at (0,-1) {\(d_{2,1}\)};
    \node (a22) at (1,-1) {\(d_{2,2}\)};

    % row i=3
    
    \node (a31) at (0,-2) {\(d_{3,1}\)};
    \node (a32) at (1,-2) {\(d_{3,2}\)};
    \node (a33) at (2,-2) {\(d_{3,3}\)};

    % bottom dots
    \node (a41) at (0,-3) {\(\dots\)};
    \node (a42) at (1,-3) {\(\dots\)};
    \node (a43) at (2,-3) {\(\dots\)};
    \node (a44) at (3,-3) {\(\dots\)};

    % black arrows (standard GT inequalities d_{i,j} >= d_{i+1,j})

    \draw[->] (a11) -- (a21);

    \draw[->] (a21) -- (a31);
    \draw[->] (a22) -- (a32);

    \draw[->] (a31) -- (a41);
    \draw[->] (a32) -- (a42);
    \draw[->] (a33) -- (a43);

    % blue diagonal arrows (permitted-pattern inequalities depending on lambda)
    % Example corresponding to the pair (Lambda_dom, alpha_0+alpha_1):
    \draw[blue,->] (a32) -- (a21);
      %node[midway, right, xshift=2pt, yshift=2pt]
      %{\scriptsize\(\scriptstyle \mathtt{y}_0-\mathtt{y}_2-1\)};
    \draw[blue,->] (a22) -- (a11);
      %node[midway, right, xshift=2pt, yshift=2pt]
      %{\scriptsize\(\scriptstyle \mathtt{y}_0-\mathtt{y}_2-1\)};
      \draw[blue,->] (a33) -- (a22);
\end{tikzpicture}
\]

For what follows, it is convenient to replace  inequalities~\eqref{eq:d-diagonal} with 
the strict 
% inequalities $d_{i-1,j}\;<\; d_{i,j+1} + \bigl(\mu^{(n)}_j-\mu^{(n)}_{j+1}+1\bigr)$ that are equivalent to the following 
(redundant, in general) system of inequalities for all positive coroots $\alpha^\vee_{jk}$ for $1\le j<k\le n$:
\begin{equation}\label{eq:d-diagonal-strict}
d_{i-1,j}\;<\; d_{i,k} + \bigl(\mu^{(n)}_j-\mu^{(n)}_{k}-j+k\bigr)=d_{i,k} + \langle\mu^{(n)}+\rho,\alpha_{jk}^\vee\rangle.
\end{equation}

% \subsection{Relaxing inequalities for nonintegral dominant highest weights.}

\bigskip

Several generalizations of this construction exist for infinite-dimensional representations. 
Namely, when $\mu^{(n)}$ is dominant but not necessarily integral, the constants
$\mu^{(n)}_j-\mu^{(n)}_{k}$, \(j<k\) 
% (and more generally differences between entries that are forced to be compared by interlacing) 
need not be integers (but have to be nonnegative if integers).
In the $d$-description this leads to a natural ``relaxation'':
one keeps exactly those inequalities~\eqref{eq:d-diagonal-strict} whose right-hand side shift is an integer, and drops
the inequalities involving nonintegral shifts.
% Equivalently, inequalities comparing two entries are imposed only when these entries are {\it comparable}, i.e.\ when their difference is an integer, and the inequalities between incomparable entries (those whose difference is not an integer) are removed. 
This produces a family of integer patterns $d_{ij}$ associated
with a dominant (possibly nonintegral) $\mu^{(n)}$. 
In \cite{futorny2019combinatorial} (also stated in \cite{Popov}), it is shown that, for {\it any} dominant highest weight $\mu^{(n)}$, such patterns index a basis in the corresponding simple module.
% , (this was also stated in the master's thesis of Pavel Popov \cite{Popov}). 
% \zh{Sometimes authors of the references are listed without given names, this one has both given and last name, in other cases the authors not listed.}
%In the present paper, we get this as a special case of a similar statement for the affine Lie algebra $\widehat{\mathfrak{gl}}_n$ where the patterns $\{d_{ij}\}$ generalize to {\it periodic} patterns. 

%For $\mu^{(n)}$ being the highest weight of a parabolically induced module (i.e. such $\mu^{(n)}$ that for any $j$, all the $k$ such that $\mu_k^{(n)}-\mu_j^{(n)}$ is an integer, form an interval in $\{1,2,\ldots,n\}$). Such relaxed Gelfand--Tsetlin patterns index a basis in the simple $\mathfrak{gl}_n$-module with the highest weight $\mu^{(n)}$ (that is, the parabolically induced from a simple finite-dimensional module), and the formulas for the action of the generators stay the same as in the classical case. This relaxed affineGelfand--Tsetlin combinatorics admits a direct generalization to the affine/periodic setting in the sense of \cite{FFNR}.

\subsection{Periodic (affine) generalization}

%{\lcom continue by summarizing the combinatorics of FFNR11}

A similar combinatorial picture arises in the affine setting (see  \cite{feigin2011yangians} for the details). We assume \(n>2\) unless stated otherwise.  Here one considers representations of the affine Lie algebra $\widehat{\mathfrak{gl}}_n$ and of the affine Yangian $Y(\widehat{\mathfrak{sl}}_n)$. The appropriate combinatorial objects are {\it periodic} Gelfand--Tsetlin patterns, that is, infinite arrays of nonnegative integers $d_{ij}$ for all $i\ge j$ satisfying periodicity conditions $d_{i+n,j+n}=d_{ij}$ as well as the finitistic condition $d_{ij}=0$ for large enough values of $i-j$. Such patterns may be viewed as affine analogues of the ``most relaxed'' Gelfand--Tsetlin patterns describing bases of Verma modules: 
\[
\begin{tikzpicture}!%[scale=0.8]
            \node (a00) at (-1,1) {\(\dots\)};
            
            \node (a10) at (-1,0) {\(d_{1,0}\)};        \node (a11) at (0,0) {\(d_{1,1}\)};
    
            \node (a20) at (-1,-1) {\(d_{2,0}\)};
            \node (a21) at (0,-1) {\(d_{2,1}\)};
            \node (a22) at (1,-1) {\(d_{2,2}\)};
    
            \node (a30) at (-1,-2) {\(\dots\)};
            \node (a31) at (0,-2) {\(d_{3,1}\)};
            \node (a32) at (1,-2) {\(d_{3,2}\)};
            \node (a33) at (2,-2) {\(d_{3,3}\)};
    
            \node (a41) at (0,-3) {\(\dots\)};
            \node (a42) at (1,-3) {\(\dots\)};
            \node (a43) at (2,-3) {\(\dots\)};
            \node (a44) at (3,-3) {\(\dots\)};
    
            \draw[->] (a00) -- (a10);
    
            \draw[->] (a10) -- (a20);
            \draw[->] (a11) -- (a21);
            
            \draw[->] (a20) -- (a30);
            \draw[->] (a21) -- (a31);
            \draw[->] (a22) -- (a32);
    
            \draw[->] (a31) -- (a41);
            \draw[->] (a32) -- (a42);
            \draw[->] (a33) -- (a43);
        \end{tikzpicture}
\]       

In \cite{feigin2011yangians} it was shown that these patterns index a basis in a thin representation of the affine Yangian $Y(\widehat{\mathfrak{sl}}_n)$ whose character coincides with that of the universal Verma module over $\widehat{\mathfrak{gl}}_n$. In this basis the generators of the Yangian act by explicit formulas for matrix elements which generalize the classical Gelfand--Tsetlin formulas.

Just as in the finite-dimensional case, we are interested in extending this description to smaller irreducible modules. The main problem is that in this situation the formulas for matrix elements of generators may develop poles, and the generic Gelfand--Tsetlin realization no longer directly produces an irreducible module. Our main observation is that, for dominant highest weights, the specialization can nevertheless be described combinatorially by restricting to a distinguished subset of Gelfand--Tsetlin patterns, which we call {\it permitted patterns}. We prove that the linear span of basis vectors corresponding to such patterns carries a well-defined action of the affine Yangian and is naturally identified with the irreducible highest weight $\widehat{\mathfrak{gl}}_n$-module (the case of {\it integer} dominant highest weights was discussed in \cite{feigin2011yangians}). In the present paper, we prove the following

\bigskip

\noindent{\bf Main Theorem.} (Theorem~\ref{th:Main} in the text) {\it The vector space spanned by permitted Gelfand--Tsetlin patterns carries a well-defined structure of a thin $Y(\widehat{\mathfrak{sl}}_n)$-module. This module is irreducible and naturally identified with the pullback of the irreducible highest weight $\widehat{\mathfrak{gl}}_n$-module under the evaluation homomorphism of \cite{kodera2021guay}
\(
Y(\widehat{\mathfrak{sl}}_n) \to U\widehat{\mathfrak{gl}}_n.
\)
}

\bigskip

This is the sense in which our construction gives an {\it athinization} of irreducible \(\widehat{\mathfrak{gl}}_n\)-modules with dominant highest weights. As modules over the affine Kac--Moody algebra, these representations need not be thin: their weight spaces may have nontrivial multiplicities. The larger algebra \(Y(\widehat{\mathfrak{sl}}_n)\), however, has a larger commutative Cartan-type subalgebra, and the additional commuting operators separate these multiplicities. The irreducible affine module is therefore realized as a thin Yangian module, with one-dimensional joint eigenspaces indexed by permitted affine Gelfand--Tsetlin patterns.

This construction leads to explicit combinatorial bases and character formulas for a large class of $\widehat{\mathfrak{gl}}_n$-modules with dominant highest weights. 
In particular, it provides a new realization of the {\it admissible representations} introduced by Kac and Wakimoto~\cite{kac1988modular}. 
These representations form a distinguished class of highest weight modules whose characters enjoy modular properties. 
Despite their importance, explicit combinatorial bases for admissible modules have been largely unknown. 
Our results provide such bases in the form of affine Gelfand--Tsetlin patterns, see Proposition~\ref{pr:admissible}.

For the case of integral dominant highest weight representations, the conditions that specify permitted Gelfand--Tsetlin patterns are equivalent to cylindric plane partitions introduced in \cite{gessel1997cylindric}. 
In the general case, these combinatorial conditions have appeared before, restated in terms of partitions. For the case \(n=2\), they were introduced in \cite{burge1993restricted} for combinatorial reasons. 
For general \(n\), these conditions appear in the representation theory of quantum deformed \(W_n\) algebras in \cite{feigin2013representations}.

\subsection{$q$-deformation}  
% We work in the setting of representations of the affine Lie algebra $\widehat{\mathfrak{gl}}_n$ regarded as representations of the affine Yangian $Y(\widehat{\mathfrak{sl}}_n)$, but 
All the above results can be easily repeated for \(q\)-deformed algebras as well. 
We do this in Section~5 of the present paper. 
In this setting the affine Yangian is replaced by the quantum toroidal algebra of \(\mathfrak{gl}_n\), while the affine Lie algebra \(\widehat{\mathfrak{gl}}_n\) is replaced by its quantum affine counterpart \(U_q\widehat{\mathfrak{gl}}_n\).  
Using the explicit Gelfand--Tsetlin formulas for the action of the quantum toroidal algebra, due to Tsymbaliuk \cite{tsymbaliuk2010quantum}, one obtains a \(q\)-deformed version of the thin modules considered above. 
For generic highest weights these modules are again indexed by periodic Gelfand--Tsetlin patterns, with matrix coefficients given by explicit rational \(q\)-difference analogues of the Yangian formulas.

For dominant highest weights, the same specialization procedure remains valid. Namely, the span of permitted periodic Gelfand--Tsetlin patterns is stable under the quantum toroidal action, and the resulting module is naturally identified, via the quantum toroidal evaluation homomorphism, with the pullback of the irreducible highest weight \(U_q\widehat{\mathfrak{gl}}_n\)-module. Thus the main results of the paper extend from the affine Lie algebra/affine Yangian setting to the quantum affine/toroidal algebra setting. In particular, the same combinatorics of permitted patterns gives explicit Gelfand--Tsetlin-type bases and character formulas for the corresponding \(q\)-deformed dominant modules, including the quantum analogues of the admissible representations considered earlier. 

\subsection{Geometric perspective}

The results of this paper admit a natural geometric interpretation within the framework developed in~\cite{feigin2011yangians}. In that work the generic Verma module is realized as the equivariant cohomology of affine Laumon quasiflag spaces. The Gelfand--Tsetlin basis arises from Atiyah--Bott localization to the discrete fixed-point set of the maximal torus action.

The specialization of the highest weight corresponds geometrically to reducing the equivariance, that is, passing from the full torus to an appropriate subtorus. In this situation the fixed-point set is no longer discrete but still splits into connected components.

From the geometric interpretation of the Shapovalov form as the Poincar\'e pairing, one expects that the irreducible quotient of the Verma module is spanned by the cohomology classes of compact components of the fixed-point set. The existence of a Gelfand--Tsetlin basis corresponds to the situation in which all such compact components are isolated points. The results of the present paper suggest that this phenomenon occurs precisely when the highest weight is dominant. We outline this argument in Section~\ref{sec:geometry} and plan to present more details elsewhere.  

\subsection{Possible crystal structure}

%The combinatorics of permitted Gelfand--Tsetlin patterns also suggests a natural crystal-theoretic structure. For each simple root direction one considers the set of permitted patterns obtained by varying only the corresponding row of the pattern. The defining inequalities for permitted patterns then identify this set with a tensor product of elementary \(\mathfrak{sl}_2\)-crystals, possibly infinite in one direction. Applying this construction for all simple roots one obtains a natural \(\widehat{\mathfrak{sl}}_n\)-crystal structure on the set of permitted periodic Gelfand--Tsetlin patterns.

%In the case of an integral dominant highest weight, this crystal is normal and can be identified with the product of the crystal of the corresponding integrable highest weight \(\widehat{\mathfrak{sl}}_n\)-module and the set of integer partitions. For generic highest weights, the resulting crystal degenerates to the product of the crystal \(B(\infty)\) with the same partition factor. This provides further evidence that the permitted Gelfand--Tsetlin patterns should be viewed not only as a basis of the corresponding thin module, but also as a natural nonintegral extension of the usual crystal combinatorics for highest weight representations. We discuss it in Section~\ref{ssec:crystal}.

The combinatorics of permitted Gelfand--Tsetlin patterns naturally suggests a crystal-theoretic question. In the finite-dimensional case, Gelfand--Tsetlin patterns carry the crystal structure studied by Littelmann \cite{Littelmann1998}: for each simple root one fixes all rows except the corresponding one, and the remaining degrees of freedom form a tensor product of elementary \(\mathfrak{sl}_2\)-crystals. The inequalities defining permitted periodic patterns admit a formally analogous row-wise construction in the affine setting. Thus, one obtains natural candidates for Kashiwara operators on the set of permitted patterns, and in the integrable dominant case, this construction recovers the crystals already considered by Tingley \cite{Tingley2008}.

However, for general dominant, nonintegral highest weights the situation is more subtle. The straightforward extension of the above row-wise construction to permitted patterns does not, in general, satisfy Stembridge's local axioms for simply-laced crystals. Thus, the resulting object should not be regarded as an \(\widehat{\mathfrak{sl}}_n\)-crystal in the usual sense without further modification. We therefore do not pursue the crystal-theoretic interpretation in the present paper and postpone the analysis of possible crystal structures on permitted patterns to future work.

\subsection{Organization of the paper}

In Section~\ref{sec:basic} we recall the necessary background on affine Lie algebras and affine Yangians and review the evaluation homomorphisms. 
In Section~\ref{sec:thin} we study the specialization of the Gelfand--Tsetlin formulas to dominant highest weights and introduce the notion of permitted patterns. 
We prove that the span of these patterns carries a Yangian module structure and identify it with the irreducible highest weight module. 
In Section~\ref{sec:admissible} we apply these results to admissible representations of $\widehat{\mathfrak{sl}}_n$ and derive explicit formulas for their characters.
In Section~\ref{sec:qcase}, we establish similar results in the setting of affine quantum and toroidal algebras. 
Finally, in Section~\ref{sec:geometry}, we discuss the geometric approach via affine Laumon spaces.

\iffalse 
\zh{While I would say many things differently, I like the introduction. There is one part which is not discussed  - the Verma module, integrable modules can be lifted to toroidal (or affine Yangian) case for generic parameters $q_1,q_2$. For evaluation module we will a relation between $q_1,q_2$. But for admissible modules this is not the case. For generic $q_1,q_2$ the module is larger and it has to be reduced in the the resonance. This is quiet similar to what happens when you go to roots of unity. Well, maybe this is important only to me and we can forget it.}
\fi

\subsection*{Acknowledgements}

We are grateful to Boris Feigin for useful discussions and suggestions. The work of L.R. was supported by the Fondation Courtois. The work of E.M. is partially supported by Simons Foundation grant $\sharp$ 709444. A.T. was partially funded by a excellence fellowship (“365478”) from the Fonds de recherche du Québec. M.B. is grateful to the Perimeter Institute and the Universit{\'e} de Montr{\'e}al, where part of this work was done, for their hospitality.

%write intro

\section{Preliminaries}\label{sec:basic}

  \subsection{Affine algebra \(\widehat{\mathfrak{gl}}_n\)}
    Let \( \widehat{\mathfrak{gl}}_n = \mathfrak{gl}_n \otimes \mathbb{C}[t,t^{-1}] \oplus \mathbb{C}K \) be the central extension of the loop algebra \(\mathfrak{gl}_n \otimes \mathbb{C}[t,t^{-1}]\), with commutation relations
    \begin{equation}
        [X \otimes t^r, Y \otimes t^s] = [X, Y] \otimes t^{r+s} + \delta_{r+s,0} r \, \operatorname{Tr}(XY) K,
    \end{equation}   
    where \(X,Y\in\mathfrak{gl}_n\) and \(r,s\in\mathbb{Z}\). Let \( \widetilde{\mathfrak{gl}}_n = \widehat{\mathfrak{gl}}_n \oplus \mathbb{C}D\), where
    \begin{equation}
        [D, X \otimes t^r] = r X \otimes t^r, \quad [D, K] = 0.
    \end{equation}    

    Similarly,
    \begin{equation}
        \widehat{\mathfrak{sl}}_n = \mathfrak{sl}_n \otimes \mathbb{C}[t,t^{-1}] \oplus \mathbb{C}K \subset \widehat{\mathfrak{gl}}_n,
    \end{equation}
    Let \(\mathtt{Heis}\) be the Heisenberg algebra generated by \(K', \mathtt{a}_r\) for \(r \in \mathbb{Z}\), with central \(K'\) and relations:
    \begin{equation}
        [\mathtt{a}_r, \mathtt{a}_s] = r \delta_{r+s,0} K', 
    \end{equation}
    and \(\widetilde{\mathtt{Heis}} = \mathtt{Heis}\oplus \mathbb{C}D'\) with relations
    \begin{equation}
        [D', \mathtt{a}_r] = r \mathtt{a}_r, \quad  [D', K'] = 0. 
    \end{equation}    
    \begin{equation}
        \widetilde{\mathfrak{sl}}_n = \widehat{\mathfrak{sl}}_n \oplus \mathbb{C}D \subset \widetilde{\mathfrak{gl}}_n.
    \end{equation}    
    %\zh{Is $d$ same is $D$?}
    We fix  an isomorphism
    \begin{equation}\label{eq:glnthroughsln}
        \widetilde{\mathfrak{gl}}_n \cong (\widetilde{\mathtt{Heis}}\oplus\widetilde{\mathfrak{sl}}_n)/(K' - n K, D'-D).
    \end{equation}
    We use the triangular decompositions \(\mathfrak{gl}_n = \mathfrak{n}_{-} \oplus 
    \mathfrak{h}_{\mathfrak{gl}}\oplus \mathfrak{n}_{+}\) and \(\mathfrak{sl}_n = \mathfrak{n}_{-} \oplus \mathfrak{h} \oplus \mathfrak{n}_{+}\), where \(\mathfrak{h}\) is the space of traceless diagonal matrices.
    Let \( \widehat{\mathfrak{h}} = \mathfrak{h} \oplus \mathbb{C}K \), \( \widetilde{\mathfrak{h}} = \widehat{\mathfrak{h}} \oplus \mathbb{C}D \), and decompose \(        \widetilde{\mathfrak{sl}}_n = \widehat{\mathfrak{n}}_{-} \oplus \widetilde{\mathfrak{h}} \oplus \widehat{\mathfrak{n}}_{+}\),
    where \(\widehat{\mathfrak{n}}_{\pm} = \mathfrak{n}_{\pm} \oplus \mathfrak{sl}_n \otimes t^{\pm1}\mathbb{C}[t^{\pm 1}]\), and \(\widetilde{\mathfrak{b}} = \widehat{\mathfrak{n}}_+ \oplus \widetilde{\mathfrak{h}}\subset \widetilde{\mathfrak{sl}}_n\). 
    
    The \textit{Cartan matrix} \(C=(C_{ij})_{i,j\in\mathbb{Z}/n\mathbb{Z}} \) has entries
    \begin{equation}
        C_{ij}  = 2\delta_{ij}^n - \delta_{i,j+1}^n - \delta_{i,j-1}^n,
    \end{equation}
    where
    \begin{equation}
        \delta_{i,j}^n = \begin{cases}1, & i \equiv j \mod n \\ 0, & \text{otherwise} \end{cases}.
    \end{equation}
        \begin{theorem}[\cite{kac:1990}] The Lie algebra
    \(\widehat{\mathfrak{sl}}_n\) is isomorphic to the Lie algebra generated by \(e_i, h_i, f_i\), \(i \in \mathbb{Z}/n\mathbb{Z}\), with relations
    \begin{equation}
        [h_i,h_j] = 0, \quad [h_i,e_j] = C_{ij}e_j, \quad [h_i,f_j] = -C_{ij}f_j, \quad [e_i,f_j] = \delta_{ij}h_i,
    \end{equation}
    \begin{equation}
        \mathrm{ad}_{e_i}^{1 - C_{ij}}(e_j) = \mathrm{ad}_{f_i}^{1 - C_{ij}}(f_j) = 0 \quad \text{for } i \not= j.\quad \qed      
    \end{equation}
    \end{theorem}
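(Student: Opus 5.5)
The statement is the classical Chevalley--Serre presentation of \(\widehat{\mathfrak{sl}}_n\), so the natural route is through the Gabber--Kac theorem for symmetrizable Kac--Moody algebras. Let \(\mathfrak{g}(C)\) be the Lie algebra generated by \(e_i,h_i,f_i\), \(i\in\mathbb{Z}/n\mathbb{Z}\), subject to the listed relations. The Cartan matrix \(C\) is symmetric with corank one (its kernel is spanned by \((1,\dots,1)\)), so \(C\) is of affine type \(A^{(1)}_{n-1}\).

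First I would write down an explicit homomorphism \(\varphi\colon\mathfrak{g}(C)\to\widehat{\mathfrak{sl}}_n\). For \(1\le i\le n-1\) set \(e_i\mapsto E_{i,i+1}\), \(f_i\mapsto E_{i+1,i}\), \(h_i\mapsto E_{ii}-E_{i+1,i+1}\). For the affine node set \(e_0\mapsto E_{n1}\otimes t\), \(f_0\mapsto E_{1n}\otimes t^{-1}\), \(h_0\mapsto E_{nn}-E_{11}+K\). The cocycle \(r\operatorname{Tr}(XY)K\) is exactly what is needed for \([e_0,f_0]=h_0\). The relations \([h_i,e_j]=C_{ij}e_j\) and the Serre relations are then finite checks on matrix units; the Serre relations hold because \(\mathrm{ad}_{e_i}^{2}e_j\) lies in a root space \(\alpha_j+2\alpha_i\) that does not exist in the loop algebra, and \(\mathrm{ad}_{e_i}e_j=0\) when \(i,j\) are not adjacent. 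The case \(n=2\) needs exponent \(3\) and is checked the same way.

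Next I would show \(\varphi\) is surjective. The elements \(E_{i,i+1}\) and \(E_{i+1,i}\) generate \(\mathfrak{sl}_n\). Bracketing \(E_{n1}\otimes t\) with \(\mathfrak{sl}_n\) gives all of \(\mathfrak{sl}_n\otimes t\), since \(\mathfrak{sl}_n\) is a simple module over itself. Iterating yields \(\mathfrak{sl}_n\otimes t^{k}\) for all \(k\neq0\), and \(K\) lies in the image of \(h_0\) together with the finite Cartan.

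For injectivity I would use the structure theory of Kac, Theorem 9.11 in \cite{kac:1990}. For a symmetrizable \(C\), the algebra defined by the Chevalley and Serre relations equals the derived algebra \(\mathfrak{g}'(C)\), because the maximal ideal meeting \(\mathfrak{h}\) trivially is generated by the Serre elements. Then \(\varphi\) is a surjection from \(\mathfrak{g}'(C)\) onto \(\widehat{\mathfrak{sl}}_n\) that respects the \(Q\)-grading, with \(\deg(X\otimes t^k)=\deg X+k\delta\). Its kernel is a graded ideal. Since \(\varphi\) is injective on the span of the \(h_i\) (the images are linearly independent), the kernel meets the Cartan trivially. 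Any graded ideal of \(\mathfrak{g}'(C)\) meeting \(\mathfrak{h}'\) trivially is central, because the algebra has no nonzero ideals intersecting the Cartan trivially other than those in its center. The center of \(\mathfrak{g}'(C)\) is spanned by \(\sum h_i\), which maps to \(K\neq0\), so the kernel is zero.

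The main obstacle is this injectivity step, that is, showing the Serre relations generate all relations. I would cite Gabber--Kac via \cite{kac:1990} rather than reprove it. A self-contained alternative is to compare dimensions of root spaces, using that both algebras have \(\dim\mathfrak{g}_{\alpha+k\delta}=1\) and \(\dim\mathfrak{g}_{k\delta}=n-1\); this still needs the Weyl--Kac denominator identity or an equivalent, so citing the theorem is the efficient choice.
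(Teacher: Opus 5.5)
Your proposal is correct and agrees with the paper, which gives no argument of its own and simply cites \cite{kac:1990}; the explicit homomorphism, the surjectivity check and the injectivity step via the Gabber--Kac theorem are the standard argument behind that citation. One point should be stated more carefully. The grading must be by the affine root lattice $\widehat{Q}$, including $\delta$, not by the finite $Q$. Then a graded ideal of $\mathfrak{g}'(C)$ is stable under the derivation $D$, hence is an ideal of $\mathfrak{g}(C)$, and $\mathfrak{g}(C)$ has no nonzero ideals meeting its Cartan subalgebra trivially by construction; this is the actual reason, and your "because" clause does not give it.
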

    \subsection{Root systems}
    Define a  scalar product $(\ ,\ )$ on \(\mathfrak{h}_{\mathfrak{gl}}\) by \ \((X,Y) = \operatorname{Tr}(XY)\). Extend it to \(\widetilde{\mathfrak{h}}\) by
    \begin{equation}
        (K, \mathfrak{h}) = 0, \quad (D, \mathfrak{h}) = 0, \quad (K, D) = 1, \quad (K,K)=(D,D)=0.
    \end{equation}    
         Let \(\{\epsilon_1, \dots, \epsilon_n\}\) be the  basis in \(\mathfrak{h}_{\mathfrak{gl}}^*\) dual to the basis of the matrix units \(E_{ii}\) and let \(\delta = (K,\cdot)\), \(\omega_0 = (D,\cdot) \) be elements of \(\widetilde{\mathfrak{h}}^*\). Clearly \(\delta(D) = \omega_0(K) = 1\) and \(\delta(K) = \omega_0(D) = 0\). 

    %\zh{Maybe first extend the scalar product, and then describe a dual basis and then define the scalar product on the dual space?}
    
    Let \(\alpha_i = \epsilon_i - \epsilon_{i+1}\) for \(i \in \{1,\dots,n-1\}\), \(\alpha_0 = \delta +\epsilon_n -\epsilon_{1} \) . Extend \(\epsilon_i\) and \(\alpha_i\) to all \(i \in \mathbb{Z}\) by periodicity: \(\epsilon_i =\epsilon_{i+n}\), \(\alpha_i = \alpha_{i+n}\). %Note that 
    %\begin{equation}\label{eq:DefDelta}
    %    \delta = \sum_{i=0}^{n-1} \alpha_i.
    %\end{equation}
    Let  \( \Delta = \{\alpha_i\}_{i=1}^{n-1}\)
        and  \(\widehat{\Delta} = \{\alpha_0\}\cup\Delta\) be the \textit{simple roots} of types  \(A_{n-1}\) and \(\widehat{A}_{n-1}\) correspondingly, forming bases in \(\mathfrak{h}^*\) and \(\widehat{\mathfrak{h}}^*\). Let
    \begin{equation}
        \Phi = \{\epsilon_i - \epsilon_j \mid 1\le i \neq j\le n\} \subset \mathfrak{h}^*, \quad
        \Phi_+ = \{\epsilon_i - \epsilon_j  \mid 1\le i<j\le n\}.       
    \end{equation}
    The \textit{affine root system} \(\widehat{\Phi}\) of type \(\widehat{A}_{n-1}\) is
    \begin{equation}
        \{\alpha + m\delta \mid \alpha \in \Phi,\; m \in \mathbb{Z}\} \cup \{m\delta \mid m \neq 0\},        
    \end{equation}
    with \textit{positive roots}
    \begin{equation}
        \widehat{\Phi}_+ = \{\alpha + m\delta \mid \alpha \in \Phi,\; m > 0\} \cup \Phi^+ \cup \{m\delta \mid m > 0\}
    \end{equation}
    and \textit{real roots}
    \begin{equation}
        \widehat{\Phi}^{\text{re}} = \{\alpha + m\delta \mid \alpha \in \Phi,\; m \in\mathbb{Z}\}.
    \end{equation}    

    \begin{proposition}\label{th:RepOfRoot}
     For any element \(\alpha\in\widehat{\Phi}_+\) there exists a pair of integers \(l<r\) such that
     \begin{equation}
         \alpha = \alpha_{l,r},
     \end{equation}
     where \(\alpha_{l,r}=\sum_{s=l}^{r-1}\alpha_s\).
     For \(\alpha\in\widehat{\Phi}_+^{\text{re}}\)  the pair \((l,r)\) is unique up to the shift \((l+ t n, r+ tn)\) for \(t\in\mathbb{Z}\).
    \end{proposition}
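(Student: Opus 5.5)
The plan is to compute \(\alpha_{l,r}\) explicitly and match it against the list of positive roots. Since \(\alpha_s=\epsilon_s-\epsilon_{s+1}\) for \(s\not\equiv 0 \pmod n\) and \(\alpha_s=\delta+\epsilon_s-\epsilon_{s+1}\) for \(s\equiv 0\pmod n\) (with \(\epsilon\) indices read modulo \(n\)), the sum telescopes:
\begin{equation*}
\alpha_{l,r}=\sum_{s=l}^{r-1}\alpha_s=\epsilon_l-\epsilon_r+m\,\delta,
\end{equation*}
where \(m=\#\{s\in[l,r-1] : s\equiv 0 \pmod n\}\). This count equals \(\lfloor (r-1)/n\rfloor-\lfloor (l-1)/n\rfloor\). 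So the first step is to record this closed formula, which holds for all integers \(l<r\).

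For existence, take \(\alpha\in\widehat\Phi_+\) and split into cases. If \(\alpha=\epsilon_i-\epsilon_j\in\Phi_+\) with \(1\le i<j\le n\), take \((l,r)=(i,j)\); then \(m=0\). If \(\alpha=\epsilon_i-\epsilon_j+m\delta\) with \(i\ne j\) in \(\{1,\dots,n\}\) and \(m>0\), take \(l=i\) and \(r=j+mn\) when \(i<j\), or \(r=j+(m-1)n+n\) shifted appropriately when \(i>j\). In each case \(r\) is the unique integer with \(r\equiv j\pmod n\), \(r>l\), and the correct number \(m\) of multiples of \(n\) in \([l,r-1]\). A direct count with the floor formula confirms the choice. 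If \(\alpha=m\delta\), take \(l=1\) and \(r=1+mn\): then \(\epsilon_l=\epsilon_r\) and the interval \([1,mn]\) contains exactly \(m\) multiples of \(n\).

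For uniqueness in the real case, suppose \(\alpha_{l,r}=\alpha_{l',r'}=\epsilon_i-\epsilon_j+m\delta\) with \(i\not\equiv j\). Comparing the finite parts gives \(l\equiv l'\) and \(r\equiv r'\pmod n\). This uses the linear independence of \(\epsilon_a-\epsilon_b\) modulo \(\delta\) and the fact that \(\epsilon_l-\epsilon_r\) determines the residues \(l\bmod n\) and \(r\bmod n\) when they differ. Shifting by a multiple of \(n\), we may assume \(l=l'\). Then \(r-r'\in n\mathbb Z\). Since \(m\) is strictly increasing in \(r\) along each residue class (each step of \(n\) adds exactly one multiple of \(n\)), equality of \(m\) forces \(r=r'\). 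The shift invariance itself is immediate from \(\alpha_{s+n}=\alpha_s\).

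The main care point is the bookkeeping of \(\delta\) coefficients via the floor formula, especially the case \(i>j\). Applying the formula directly avoids off-by-one errors. No deeper obstacle is expected. Note also that for \(m\delta\) uniqueness fails (any \(l\) works), which is why the statement restricts uniqueness to real roots.
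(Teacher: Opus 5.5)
Your proof is correct and follows the paper's route. The paper also writes \(\alpha=\epsilon_i-\epsilon_j+m\delta\) and takes \(l=i\), \(r=mn+j\) (with \(j=i\) covering \(m\delta\)), which is your choice in every case. Your ``\(r=j+(m-1)n+n\) shifted appropriately'' for \(i>j\) is just \(r=j+mn\) again, and your count \(\lfloor (r-1)/n\rfloor-\lfloor (l-1)/n\rfloor\) agrees with the paper's \(\lceil r/n\rceil-\lceil l/n\rceil\). Your uniqueness argument for real roots is also correct: you read off the residues of \(l,r\) from the finite part, then use strict monotonicity of the \(\delta\)-coefficient in \(r\) along a residue class. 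This supplies a step the paper's one-line proof leaves unstated.
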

    %Extend \(\epsilon_i\) for all \(i\in \mathbb{Z}\) by peroidicity \(\epsilon_{i+n}=\epsilon_i\). \zh{Can we move it to the place where $\alpha_i$ were extended? Why do we have spelling issues? Need to run a spelling check!} 
    Note that 
    \begin{equation}\label{eq:gammag-delta}
        \alpha_{l,r} = \epsilon_{l}-\epsilon_{r}+\left(\Big\lceil \frac{r}{n} \Big\rceil -\Big\lceil \frac{l}{n} \Big\rceil\right)\delta, \quad \text{for}\quad l<r.
    \end{equation} 
    \begin{proof}
        Any \(\alpha\in\widehat{\Phi}_+\) is represented as \(\alpha = \epsilon_i - \epsilon_j + m\delta\) where either \(1\le i<j\le n\) and \(m\ge 0\)  or  \(1\le j\le i\le n\) and \(m \ge 1\). In both cases \(i< mn+j\) and we have \( \alpha = \alpha_i + \alpha_{i+1} + \dots \alpha_{mn+j-1}\).
    \end{proof}    
    Let 
    \begin{equation}
        \widehat{\Phi}_+^{\text{re}} =  \widehat{\Phi}^{\text{re}}\cap \widehat{\Phi}_{+}.
    \end{equation}
    Note that
        \begin{equation}
            \widehat{\Phi}_+^{\text{re}}  = \{\alpha_{l,r}|\ l<r\in\mathbb{Z},\quad l-r\not\equiv 0 \quad \mod n\}.
        \end{equation}

    Let \(\{\omega_0,\omega_1,\dots,\omega_{n-1},\delta\} \subset \widetilde{\mathfrak{h}}^*\) be the basis dual to \(\{\alpha_0,\alpha_1,\dots,\alpha_{n-1},\omega_0\}\) with respect to the scalar product \((\;,\;)\). The \textit{fundamental weights} \(\omega_i\) form a basis of \(\omega_0^{\perp} \subset \widetilde{\mathfrak{h}}^*\). %\zh{\sout{Extend \(\alpha_i, \omega_i\) to all \(i \in \mathbb{Z}\) by periodicity \( \omega_i = \omega_{i+n}\). }}
    
    Let  %\zh{Why not to call them the root and weight lattices?}
    \begin{align}
        Q &= \mathrm{Span}_\mathbb{Z}\{\alpha_1,\dots,\alpha_{n-1}\}, \\
        P &= \mathrm{Span}_\mathbb{Z}\{\omega_1 - \omega_0,\dots,\omega_{n-1} - \omega_0\}
    \end{align}
     be the \textit{root} and \textit{weight lattices} correspondingly. 
    %Similarly \zh{Why similarly? Why it is in this place?}
    % \begin{equation}
    %     \widehat{Q} = \mathrm{Span}_{\mathbb{Z}}\{\alpha_0,\alpha_1,\dots,\alpha_{n-1}\}
    % \end{equation}
    % is the \textit{affine root lattice}.
    We have \(Q \subset P\). The lattice \(P\) acts on \(\widetilde{\mathfrak{h}}^*\) by the following operators
    \begin{equation}\label{eq:Paction}    
        \begin{aligned}
            t_\Lambda(v) &= v - (v,\Lambda)\,\delta, ~~\text{for}~ v\in\widehat{\mathfrak{h}}^*,\\
            t_\Lambda(\omega_0) &= \omega_0 + \Lambda - \tfrac{(\Lambda,\Lambda)}{2}\,\delta,
        \end{aligned}
    \end{equation}
    for all \(\Lambda\in P\). %\zh{Can we either use commas (prefered) or semicolumns but not both?}

    For $\alpha\in\widehat{\Delta}$, let \(s_\alpha(v) = v - 2\tfrac{(v,\alpha)}{(\alpha,\alpha)}\alpha\) be the corresponding reflection in \(\widetilde{\mathfrak{h}}^*\). Let \(W\) be the \textit{finite Weyl group} generated by \(s_\alpha\), \(\alpha \in \Delta\). Remark that  \(W\simeq \mathfrak{S}_n\) is a group of permutations on \(n\) elements. Similarly, let \(\widehat{W}\) be the \textit{affine Weyl group} generated by \(s_\alpha\), \(\alpha \in \widehat{\Delta}\). The affine Weyl group is known to be the semidirect product \(\widehat{W} = W \ltimes Q\) where \(Q\) acts on $W$ by formulas \eqref{eq:Paction}. Extending $Q$ to \(P\), we get the \textit{extended affine Weyl group} \(\widehat{W}^e = W \ltimes P\).   
    \subsection{Highest weight representations}
%\zh{I would write simply $i\neq j$. Otherwise you have to write $\delta^n_{ij}$}
    
    Let \(\Lambda \in \omega_0^{\perp}\). The \textit{Verma module} \(\mathcal{M}_\Lambda\) \textit{over} \(\widetilde{\mathfrak{sl}}_n\) is
    \begin{equation}
        \mathcal{M}_\Lambda = \mathrm{Ind}_{\widetilde{\mathfrak{b}}}^{\widetilde{\mathfrak{sl}}_n} \mathbb{C}\zeta_\Lambda,
    \end{equation}
    where \(\mathbb{C}\zeta_\Lambda\) is a 1-dimensional module with
    \begin{equation}
        \widehat{\mathfrak{n}}_+ \zeta_\Lambda = 0, \quad h\,\zeta_\Lambda = \Lambda(h)\,\zeta_\Lambda, \quad \forall h \in \widetilde{\mathfrak{h}}.
    \end{equation}
    
    \begin{theorem}[\cite{kac:1990}]
    There exists a unique symmetric bilinear form (Shapovalov form) on \(\mathcal{M}_\Lambda\) such that
    \begin{equation}
        (\zeta_\Lambda, \zeta_\Lambda) = 1, \quad (e_i \zeta, \xi) = (\zeta, f_i \xi), \quad (h_i \zeta, \xi) = (\zeta, h_i \xi),\quad \text{for all} \quad \zeta, \xi\in \mathcal{M}_\Lambda.        
    \end{equation}
        \qed
    \end{theorem}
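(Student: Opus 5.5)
The Shapovalov form is obtained from the anti-involution \(\sigma\) of \(U(\widetilde{\mathfrak{sl}}_n)\) defined on generators by \(\sigma(e_i)=f_i\), \(\sigma(f_i)=e_i\), \(\sigma(h)=h\) for \(h\in\widetilde{\mathfrak{h}}\). This map preserves the defining relations of the preceding theorem, since they are invariant under swapping \(e\leftrightarrow f\) and reversing the order of products. It therefore extends to an anti-automorphism of \(U(\widetilde{\mathfrak{sl}}_n)\) of order two.

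\textbf{Step 1 (the contravariant pairing).} By the PBW theorem, \(U(\widetilde{\mathfrak{sl}}_n)=U(\widetilde{\mathfrak{h}})\oplus(\widehat{\mathfrak{n}}_-U+U\widehat{\mathfrak{n}}_+)\). Let \(\pi:U(\widetilde{\mathfrak{sl}}_n)\to U(\widetilde{\mathfrak{h}})=S(\widetilde{\mathfrak{h}})\) be the Harish-Chandra projection. Since \(\mathcal{M}_\Lambda=U(\widehat{\mathfrak{n}}_-)\zeta_\Lambda\), I define
\[
(x\zeta_\Lambda,y\zeta_\Lambda)=\Lambda\bigl(\pi(\sigma(x)y)\bigr),\qquad x,y\in U(\widehat{\mathfrak{n}}_-).
\]
This is well defined because the map \(U(\widehat{\mathfrak{n}}_-)\to\mathcal{M}_\Lambda\) is an isomorphism. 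An equivalent formulation: write \(\sigma(x)y\zeta_\Lambda=c\,\zeta_\Lambda+(\text{lower weight terms})\) and set the pairing equal to \(c\).

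\textbf{Step 2 (properties).} The identity \((\zeta_\Lambda,\zeta_\Lambda)=1\) is immediate. For contravariance I use the second formulation, which gives \((u\zeta_\Lambda,v\zeta_\Lambda)=\) the coefficient of \(\zeta_\Lambda\) in \(\sigma(u)v\zeta_\Lambda\) for arbitrary \(u,v\in U\). The key point is that \(\sigma(u)\zeta_\Lambda\) projects onto \(\mathbb{C}\zeta_\Lambda\) in the same way as \(\sigma(u)\), and \(\zeta_\Lambda\) generates. With this, \((e_i\zeta,\xi)\) equals the \(\zeta_\Lambda\)-coefficient of \(\sigma(u)f_iv\zeta_\Lambda\), which is \((\zeta,f_i\xi)\). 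The identity for \(h_i\) follows in the same way.

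For symmetry, note that \(\pi\circ\sigma=\sigma\circ\pi=\pi\) on \(U(\widetilde{\mathfrak{h}})\), because \(\sigma\) fixes \(\widetilde{\mathfrak{h}}\) and swaps \(\widehat{\mathfrak{n}}_\pm\), hence preserves the decomposition. Therefore \(\pi(\sigma(y)x)=\pi(\sigma(\sigma(x)y))=\pi(\sigma(x)y)\).

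\textbf{Step 3 (uniqueness).} Suppose \((\cdot,\cdot)'\) also satisfies the stated conditions. Contravariance lets me move every \(f_i\) from the left argument to the right as an \(e_i\). This gives \((x\zeta_\Lambda,\xi)'=(\zeta_\Lambda,\sigma(x)\xi)'\).

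Weight spaces pair to zero unless the weights are equal. To see this, use \(h_i\)-invariance: \((h\zeta,\xi)=(\zeta,h\xi)\) forces orthogonality of distinct \(\widetilde{\mathfrak{h}}\)-weights, and the \(D\)-weight follows from the \(h_0\) and \(K\) weights, or I use the \(h\in\widetilde{\mathfrak{h}}\) version. The \(\Lambda\)-weight space is \(\mathbb{C}\zeta_\Lambda\). Hence \((\zeta_\Lambda,\eta)'\) equals the \(\zeta_\Lambda\)-coefficient of \(\eta\), and the form is determined.

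I expect the main obstacle to be the verification that \(\sigma\) is well defined via the Serre-type presentation, including its extension to \(D\) and \(\widetilde{\mathfrak{h}}\). The remaining steps are standard PBW bookkeeping.
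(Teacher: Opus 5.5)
The paper gives no proof here; it only cites Kac. Your route (the Chevalley anti-involution plus the Harish-Chandra projection) is the standard one, and the existence part is essentially sound. The point you flag as the main obstacle is actually immediate: $\sigma=-F_{\widetilde{\mathfrak{gl}}_n}$ restricted to $\widetilde{\mathfrak{sl}}_n$, i.e.\ $X\otimes t^r\mapsto X^t\otimes t^{-r}$, $K\mapsto K$, $D\mapsto D$, is visibly a Lie algebra anti-automorphism with $e_i\leftrightarrow f_i$ and $h_i\mapsto h_i$.

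\textbf{The gap is in Step 3.} The hypotheses give invariance only under $h_0,\dots,h_{n-1}$. These span $\widehat{\mathfrak h}=\mathfrak h\oplus\mathbb{C}K$, and $\delta$ vanishes on all of them, so they cannot separate $\Lambda$ from $\Lambda-m\delta$. Your claim that the $D$-weight follows from the $h_0$- and $K$-weights is false. For example, $f_0f_1\cdots f_{n-1}\zeta_\Lambda\neq 0$ has weight $\Lambda-\delta$ but the same $h_i$-eigenvalues as $\zeta_\Lambda$. So the joint $h_i$-eigenspace containing $\zeta_\Lambda$ is infinite-dimensional, and $h_i$-invariance does not make $\zeta_\Lambda$ orthogonal to the rest of it. Falling back on ``the $h\in\widetilde{\mathfrak h}$ version'' means assuming $D$-invariance, which is not among the stated conditions.

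The fix avoids weights altogether. Since $\widehat{\mathfrak n}_-$ is generated by the $f_i$, we have $\mathcal M_\Lambda=\mathbb{C}\zeta_\Lambda\oplus\sum_i f_i\mathcal M_\Lambda$. The stated contravariance gives $(\zeta_\Lambda,f_iw)'=(e_i\zeta_\Lambda,w)'=0$, so $(\zeta_\Lambda,\eta)'$ is exactly the $\zeta_\Lambda$-coefficient of $\eta$. Combined with $(x\zeta_\Lambda,\xi)'=(\zeta_\Lambda,\sigma(x)\xi)'$, this determines the form. (Moving $f_i$ out of the left slot uses symmetry.) $D$-invariance then comes for free from your construction.

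\textbf{A smaller point in Step 2.} Your sentence justifying $(u\zeta_\Lambda,v\zeta_\Lambda)=\Lambda(\pi(\sigma(u)v))$ for arbitrary $u,v\in U$ does not establish it. What you need is independence of the representative in the \emph{first} slot, because you apply the formula to $e_iu\notin U(\widehat{\mathfrak n}_-)$. Independence in the second slot is clear: $v\zeta_\Lambda=0$ implies $\sigma(u)v\zeta_\Lambda=0$. For the first slot, prove symmetry before contravariance. Your observation that $\sigma$ preserves $U(\widetilde{\mathfrak h})\oplus(\widehat{\mathfrak n}_-U+U\widehat{\mathfrak n}_+)$ gives $\pi\circ\sigma=\pi$ on all of $U$. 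Hence $\Lambda(\pi(\sigma(u)v))=\Lambda(\pi(\sigma(v)u))$ for all $u,v\in U$, and second-slot independence transfers to the first slot. Contravariance for $e_i$ and $h_i$ then follows exactly as you wrote.
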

    
    \begin{theorem}[\cite{kac:1990}]
    The Verma module $\mathcal{M}_\Lambda$ has a unique irreducible quotient $\mathcal{L}_\Lambda$. The module $\mathcal{L}_\Lambda$ is the quotient of $\mathcal{M}_\Lambda$ by the kernel of the Shapovalov form. \qed
    \end{theorem}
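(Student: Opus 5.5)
The plan is to deduce this from the existence of the Shapovalov form, using the standard argument for highest weight modules. Throughout, $\mathcal{M}_\Lambda$ is graded by weights of $\widetilde{\mathfrak{h}}$ as $\mathcal{M}_\Lambda=\bigoplus_\mu (\mathcal{M}_\Lambda)_\mu$. By PBW, $\mathcal{M}_\Lambda\cong U(\widehat{\mathfrak{n}}_-)\zeta_\Lambda$, so every weight has the form $\Lambda-\beta$ with $\beta$ a nonnegative combination of simple roots. The top weight space is $(\mathcal{M}_\Lambda)_\Lambda=\mathbb{C}\zeta_\Lambda$. The weight spaces are finite-dimensional, which follows once we use $D$ in $\widetilde{\mathfrak{h}}$ to separate the $\delta$-grading.

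\emph{Step 1: proper submodules avoid the top weight space.} Every submodule $N\subset\mathcal{M}_\Lambda$ is $\widetilde{\mathfrak{h}}$-stable, so it is a sum of weight spaces, $N=\bigoplus_\mu N_\mu$. This uses the standard Vandermonde argument for the diagonalizable action, applied inside the finite sum of weight spaces containing a given vector. If $N_\Lambda\neq0$, then $\zeta_\Lambda\in N$ and hence $N=\mathcal{M}_\Lambda$. So every proper submodule lies in $\bigoplus_{\mu\neq\Lambda}(\mathcal{M}_\Lambda)_\mu$. The sum of all proper submodules is therefore still proper, and it is the unique maximal submodule $J$. Consequently $\mathcal{L}_\Lambda=\mathcal{M}_\Lambda/J$ is the unique irreducible quotient: any irreducible quotient is $\mathcal{M}_\Lambda/N$ with $N$ maximal, which forces $N=J$.

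\emph{Step 2: $J$ equals the kernel of the Shapovalov form.} Let $R=\{\zeta:(\zeta,\xi)=0\ \forall\xi\}$ be the radical of the form.

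First we show $R$ is a submodule. The contravariance relations for $e_i,f_i,h_i$ give, for $\zeta\in R$ and any $\xi$,
\[
(f_i\zeta,\xi)=(\zeta,e_i\xi)=0,
\]
using the symmetric version of the relation. Similarly $e_i\zeta\in R$ and $h_i\zeta\in R$. These generators, together with $D$ and $K$, generate $\widetilde{\mathfrak{sl}}_n$, so $R$ is a submodule. For $D$ and $K$ we use that distinct weight spaces are orthogonal, which follows from $h$-contravariance and, for $D$, from uniqueness of the form.

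Next, $R$ is proper because $(\zeta_\Lambda,\zeta_\Lambda)=1$. Hence $R\subset J$.

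For the reverse inclusion, take $\zeta\in J_\mu$ and $\xi\in(\mathcal{M}_\Lambda)_\mu$, and write $\xi=u\zeta_\Lambda$ with $u\in U(\widehat{\mathfrak{n}}_-)$. Contravariance gives $(\zeta,\xi)=(\zeta_\Lambda,u^\tau\zeta)$, where $\tau$ is the anti-involution exchanging $e_i$ and $f_i$. Now $u^\tau\zeta$ lies in $J_\Lambda=0$. Orthogonality of distinct weight spaces then gives $J\subset R$.

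The main obstacle is bookkeeping rather than substance. One must make sure the form is contravariant for all of $\widetilde{\mathfrak{sl}}_n$ and not only for the Chevalley generators, and that distinct weight spaces are orthogonal. Both follow from extending the relations through the anti-involution $\tau$ on $U(\widetilde{\mathfrak{sl}}_n)$ fixing $\widetilde{\mathfrak{h}}$, together with the Kac--Moody presentation recalled above. I would handle these points first, and then Steps 1 and 2 are formal.
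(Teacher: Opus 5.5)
Your proof is correct and is the standard argument from Kac's book, which the paper cites without proof. Step~1 is the same argument the paper itself gives for the Yangian analogue in Proposition~\ref{th:IrrModYangian}, except that you use the $\widetilde{\mathfrak{h}}$-weight grading instead of the $D$-grading. That substitution is needed here, since $f_1,\dots,f_{n-1}$ have $D$-degree $0$. Step~2 is Kac's identification of the maximal submodule with the radical of the contravariant form.

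One point should be tightened: the orthogonality of weight spaces that differ by a multiple of $\delta$, which you attribute only vaguely to uniqueness of the form. It follows directly from $(f_{i_1}\cdots f_{i_k}\zeta_\Lambda,\xi)=(\zeta_\Lambda,e_{i_k}\cdots e_{i_1}\xi)$ together with the fact that $\zeta_\Lambda$ is orthogonal to every weight vector of weight $\neq\Lambda$. Such a vector lies in $\sum_j f_j\mathcal{M}_\Lambda$, and $(\zeta_\Lambda,f_jw)=(e_j\zeta_\Lambda,w)=0$.
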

%\zh{I think the wording is ugly. I would say: The Verma module $\mathcal{M}_\Lambda$ has a unique irreducible quotient $\mathcal{L}_\Lambda$. The module $\mathcal{L}_\Lambda$ is the quotient of $\mathcal{M}_\Lambda$ by the kernel of the Shapovalov form. }

    \begin{theorem}[\cite{kac:1990}]
    The Verma module \(\mathcal{M}_\Lambda\) is irreducible if and only if
    \begin{align}
        (\Lambda + \rho,\alpha) &\notin \mathbb{Z}_{>0}, \quad \text{for all } \alpha \in \widehat{\Phi}^{\mathrm{re}}_+;\\       
        (\Lambda + \rho, \delta) &\neq 0;
    \end{align}
    where \(\rho = \sum_{i=0}^{n-1} \omega_i\).
    \qed
    \end{theorem}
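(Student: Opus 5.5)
This is the Kac–Kazhdan criterion specialized to real roots and to the imaginary direction. I would deduce it from the Kac–Kazhdan determinant formula for the Shapovalov form. Irreducibility of $\mathcal{M}_\Lambda$ is equivalent, by the preceding theorem, to nondegeneracy of the Shapovalov form. The form is block-diagonal with respect to the weight decomposition $\mathcal{M}_\Lambda=\bigoplus_{\eta}(\mathcal{M}_\Lambda)_{\Lambda-\eta}$, with $\eta$ ranging over $\widehat{Q}_+=\sum_i\mathbb{Z}_{\ge0}\alpha_i$. So the plan is to compute $\det_\eta$ of the form on each weight space and show that all these determinants are nonzero exactly under the stated conditions.

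\textbf{Step 1.} Apply the Kac--Kazhdan formula: up to a nonzero constant,
\[
\det\nolimits_\eta(\Lambda)=\prod_{\beta\in\widehat{\Phi}_+}\prod_{m\ge1}\Bigl(2(\Lambda+\rho,\beta)-m(\beta,\beta)\Bigr)^{\operatorname{mult}(\beta)\,P(\eta-m\beta)},
\]
where $P$ is the Kostant partition function with multiplicities. For $\widehat{A}_{n-1}$ the real roots have multiplicity one and $(\beta,\beta)=2$. The imaginary roots $k\delta$ have multiplicity $n-1$ (computed inside $\widetilde{\mathfrak{sl}}_n$) and $(k\delta,k\delta)=0$. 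This formula itself is proved in the standard way:
\begin{itemize}
\item the leading term in $\Lambda$ is computed via the PBW basis;
\item each linear factor must vanish whenever a singular vector of weight $\Lambda-m\beta$ exists;
\item such singular vectors are produced by Jantzen-type or Casimir arguments, using the generalized Casimir operator of \cite{kac:1990}.
\end{itemize}

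\textbf{Step 2.} Read off the vanishing conditions. For a real positive root $\beta$ the factors are $2\bigl((\Lambda+\rho,\beta)-m\bigr)$. Such a factor vanishes for some $m\ge1$ with $P(\eta-m\beta)\neq0$ for some $\eta$ (take $\eta=m\beta$) if and only if $(\Lambda+\rho,\beta)\in\mathbb{Z}_{>0}$. For $\beta=k\delta$ the factor is $2k(\Lambda+\rho,\delta)$. It vanishes if and only if $(\Lambda+\rho,\delta)=0$, that is, $\Lambda(K)=-n$, which is the critical level. Hence all determinants are nonzero if and only if both conditions of the theorem hold, and this gives irreducibility.

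\textbf{Main obstacle.} The hard part is Step 1, specifically showing that every factor in the product really occurs with the claimed exponent rather than just bounding the determinant's degree. I would handle it as in Kac--Kazhdan. First compare degrees: the total degree of $\det_\eta$ in $\Lambda$ equals $\sum_\beta\sum_m \operatorname{mult}(\beta)P(\eta-m\beta)$, and this is checked by the leading-term computation. Then, for generic $\Lambda$ on each hyperplane $2(\Lambda+\rho,\beta)=m(\beta,\beta)$, exhibit a submodule with highest weight $\Lambda-m\beta$. This shows that the hyperplane divides $\det_\eta$ with multiplicity at least $\operatorname{mult}(\beta)P(\eta-m\beta)$. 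The degree count then forces equality.

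Since the paper cites the theorem from \cite{kac:1990}, in the text it suffices to invoke that reference for the determinant formula and carry out only the specialization in Step 2 explicitly.
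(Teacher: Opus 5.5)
Your proposal is correct and follows the route behind the citation: the paper gives no argument of its own for this statement and simply quotes \cite{kac:1990}, where the criterion is obtained by specializing the Kac--Kazhdan determinant formula exactly as in your Step~2 (norm-$2$ real roots give $(\Lambda+\rho,\beta)\notin\mathbb{Z}_{>0}$, and every imaginary factor is a multiple of $(\Lambda+\rho,\delta)$). One caveat concerns your optional sketch of the determinant formula itself: for imaginary $\beta$, all pairs $(\beta,m)$ give the same hyperplane $(\Lambda+\rho,\delta)=0$, and a single singular vector of weight $\Lambda-m\beta$ accounts only for $P(\eta-m\beta)$, not $(n-1)P(\eta-m\beta)$, so the per-hyperplane lower bounds do not add up there; the exponent of $(\Lambda+\rho,\delta)$ must instead be read off from the leading term, since the Casimir argument (together with the leading-term computation) shows it is the only possible irreducible factor of $\det_\eta$ whose linear part is proportional to $(\Lambda,\delta)$.
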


    We say that  \(\Lambda \in \omega_0^{\perp}\) is of \textit{level} \(k = (\delta, \Lambda)\), where \((\ ,\ )\) is the scalar product in \(\widetilde{\mathfrak{h}}^*\). Set \(\kappa = k + n\). A weight \(\Lambda\in \omega_0^\perp\) is called \textit{generic} if \( (\Lambda + \rho,\alpha)\notin \mathbb{Z}\) for all \(\alpha\in \widehat{\Phi}_+%^{\text{re}}
    \).
    In particular, \( \kappa= (\Lambda + \rho,\delta)\not\in \mathbb{Q}\). 

    Next, we consider \(\widetilde{\mathfrak{gl}}_n\)-modules. Let \(\Lambda\) be a weight for \(\widetilde{\mathfrak{sl}}_n\)  of level \(k\). Let \(\mathcal{F}^k_{u}\) be a Fock module of \(\widetilde{\mathtt{Heis}}\) freely generated by \(\mathtt{a}_r\), \(r<0\), acting on a highest weight vector \(\zeta_u^k\) satisfying
    \begin{equation}
        K'\zeta_u^k = n k \zeta_u^k, \quad D'\zeta_u^k = 0,\quad \mathtt{a}_0 \zeta_u^k = u \zeta_u^k, \quad \mathtt{a}_r \zeta_u^k = 0,~ \text{for}~ r>0.
    \end{equation}
    Define \(\mathcal{M}_{\Lambda,u}=\mathcal{M}_{\Lambda}\otimes\mathcal{F}_u^k\) and \(\mathcal{L}_{\Lambda,u}=\mathcal{L}_{\Lambda}\otimes\mathcal{F}_u^k\) as \(\widetilde{\mathfrak{gl}}_n\)-modules with  the action induced from \eqref{eq:glnthroughsln}.
    \begin{remark}
            There is an automorphism \(F_{\widetilde{\mathfrak{gl}}_n}\) of \(\widetilde{\mathfrak{gl}}_n\) defined by  
            \begin{equation}\label{eq:Fgl}
                F_{\widetilde{\mathfrak{gl}}_n}(X\otimes t^r)= -X^t\otimes t^{-r},\quad\quad F_{\widetilde{\mathfrak{gl}}_n}(K)=-K,\quad\quad F_{\widetilde{\mathfrak{gl}}_n}(D)= - D .
            \end{equation}          
            Restricting \(F_{\widetilde{\mathfrak{gl}}_n}\) to \(\widetilde{\mathfrak{sl}}_n\) we obtain an automorphism 
            \(F_{\widetilde{\mathfrak{sl}}_n}\) of $\widetilde{\mathfrak{sl}}_n$ such that  
            \begin{equation}\label{eq:Fsl}
                F_{\widetilde{\mathfrak{sl}}_n}(f_i)= -e_{i},\quad\quad F_{\widetilde{\mathfrak{sl}}_n}(e_i)=-f_{i}, \quad\quad F_{\widetilde{\mathfrak{sl}}_n}(h_i )= - h_{i},\quad\quad F_{\widetilde{\mathfrak{sl}}_n}(K)=-K.
            \end{equation}
            Since these automorphisms interchange \(\widehat{\mathfrak{n}}_{\pm}\) they allow us to go from lowest weight modules to highest weight modules and back.
    \end{remark}

    \subsection{Affine Yangian}

   We follow the convenitions for the affine Yangian in \cite{feigin2011yangians}.
        \begin{define}
        For \(n>2\) and \(\hbar,\hbar'\in\mathbb{C}\) we define \(\widehat{Y}_\circ (\hbar,\hbar')\)  as an associative algebra with unit with generators \(\mathbf{h}_{i,r},\mathbf{x}_{i,r}^{\pm}\) for \(i\in\{1,\dots,n\}\) and \(r\in\mathbb{Z}_{\ge 0}\) and relations
        \begin{align}
               [\mathbf{h}_{i,0},\mathbf{x}^{\pm}_{j,s}] &= \pm C_{ij}\mathbf{x}^{\pm}_{j,s}
               ,  \\
                [\mathbf{x}^{+}_{i,r},\mathbf{x}^{-}_{j,s}] &=\delta_{ij}^n \mathbf{h}_{i,r+s},\\\label{eq:Y0-quadratic}
                [\mathbf{h}_{i,r}, \mathbf{h}_{j,s}] &=0,
            \end{align}
        for \( i,j\in\{1,\dots,n\}, ~ r,s \in\mathbb{Z}_{\ge0}\),
        \begin{align}
                2[\mathbf{h}_{i,r+1},\mathbf{x}^{\pm}_{j,s}]-2[\mathbf{h}_{i,r},\mathbf{x}^{\pm}_{j,s+1}]&=\pm\hbar C_{ij}(\mathbf{h}_{i,r}\mathbf{x}^{\pm}_{j,s}+\mathbf{x}^{\pm}_{j,s}\mathbf{h}_{i,r}),\\ \label{eq:Ypm-quadratic}
                2[\mathbf{x}^{\pm}_{i,r+1},\mathbf{x}^{\pm}_{j,s}]-2[\mathbf{x}^{\pm}_{i,r},\mathbf{x}^{\pm}_{j,s+1}]&=\pm\hbar C_{ij}(\mathbf{x}^{\pm}_{i,r}\mathbf{x}^{\pm}_{j,s}+\mathbf{x}^{\pm}_{j,s}\mathbf{x}^{\pm}_{i,r}),
        \end{align}
        for \((i,j)\in\{1,\dots,n\}^2\setminus\{(1,n),(n,1)\}\),
        and
        \begin{align}
            2['\mathbf{h}_{n,r+1},\mathbf{x}^{\pm}_{1,s}]-2['\mathbf{h}_{n,r},\mathbf{x}^{\pm}_{1,s+1}] &= \mp\hbar C_{ij}('\mathbf{h}_{n,r}\mathbf{x}^{\pm}_{1,s}+\mathbf{x}^{\pm}_{1,s} ~'\mathbf{h}_{n,r}),\\
                2[\mathbf{h}_{1,r+1},~'\mathbf{x}^{\pm}_{n,s}]-2[\mathbf{h}_{1,r},~'\mathbf{x}^{\pm}_{n,s+1}]&=\mp\hbar C_{ij}(\mathbf{h}_{1,r}~'\mathbf{x}^{\pm}_{n,s}+~'\mathbf{x}^{\pm}_{n,s}\mathbf{h}_{1,r}),\\
                2['\mathbf{x}^{\pm}_{n,r+1},\mathbf{x}^{\pm}_{1,s}]-2['\mathbf{x}^{\pm}_{n,r},\mathbf{x}^{\pm}_{1,s+1}]&=\mp\hbar C_{ij}('\mathbf{x}^{\pm}_{n,r}\mathbf{x}^{\pm}_{1,s}+\mathbf{x}^{\pm}_{1,s}~'\mathbf{x}^{\pm}_{n,r}),
        \end{align}
        where to define \('\mathbf{h}_{n,s}\) and \('\mathbf{x}^{\pm}_{n,s}\) we use generating series
        \begin{equation}
            \begin{aligned}
                            \mathbf{h}_n(v) &= 1+\sum_{s=0}^\infty \mathbf{h}_{n,s}\hbar^{-s}v^{-s-1},\\
                            '\mathbf{h}_n(v) &= 1+\sum_{s=0}^\infty\!~'\mathbf{h}_{n,s}\hbar^{-s}v^{-s-1}:=  \mathbf{h}_n\left(v- \frac{\hbar'}{\hbar}-\frac{n}{2}\right),  
            \end{aligned}
        \end{equation}
        and
        \begin{equation}
            \begin{aligned}
                            \mathbf{x}^\pm_n(v) &= \sum_{s=0}^\infty \mathbf{x}^\pm_{n,s}\hbar^{-s}v^{-s-1},\\
                            '\mathbf{x}^\pm_n(v)  &= \sum_{s=0}^\infty\!~'\mathbf{x}^\pm_{n,s}\hbar^{-s}v^{-s-1}:=  \mathbf{x}^\pm_n\left(v- \frac{\hbar'}{\hbar}-\frac{n}{2}\right).  
            \end{aligned}
        \end{equation}
        \end{define}
        \begin{remark}
            We extend \(\mathbf{x}^\pm_i(v),\mathbf{h}_i(v)\) to all \(i\in\mathbb{Z}\) by %formulas
            \begin{equation}
                \mathbf{x}^\pm_{i}(v) =\mathbf{x}^\pm_{i-n}(v+\frac{\hbar'}{\hbar}+\frac{n}{2}), \quad \mathbf{h}_{i}(v) =\mathbf{h}_{i-n}(v+\frac{\hbar'}{\hbar}+\frac{n}{2}).
            \end{equation}
            Then 
            \begin{equation}
                '\mathbf{x}^\pm_n(v) = \mathbf{x}^\pm_0(v), \quad '\mathbf{h}_n(v) = \mathbf{h}_0(v).
            \end{equation}
        \end{remark}

        \begin{define}
            Affine Yangian is an algebra \(\widehat{Y}(\hbar,\hbar')= \widehat{Y}_\circ(\hbar,\hbar')/I_{\text{Serre}}\) where \(I_{\text{Serre}}\) is a two-sided ideal in \(\widehat{Y}_\circ(\hbar,\hbar')\) generated by relations
        \begin{align}\label{eq:Serrepm} 
            [[\mathbf{x}^{\pm}_{i,r},[\mathbf{x}^{\pm}_{i,p},\mathbf{x}^{\pm}_{j,s}]] + [[\mathbf{x}^{\pm}_{i,p},\mathbf{x}^{\pm}_{i,r}],\mathbf{x}^{\pm}_{j,s}] &= 0
        \end{align}
        for \( i,j\in\{1,\dots,n\}, i=j\pm1 \operatorname{mod} n, ~ r,s,p \in\mathbb{Z}_{\ge0}\).
        \end{define} 

        We define 
        \(\widetilde{Y}(\hbar,\hbar') =\widehat{Y}(\hbar,\hbar')\oplus \mathbb{C}D, ~~ \widetilde{Y}_\circ(\hbar,\hbar')=\widehat{Y}_\circ(\hbar,\hbar')\oplus \mathbb{C}D\) with relations
        \begin{equation}
            [D, \mathbf{x}^{\pm}_{i,r}] =  \pm \mathbf{x}^{\pm}_{i,r}, \quad [D, \mathbf{h}_{i,r}] = 0.
        \end{equation}

    Note that for \(\hbar\neq 0\), algebras \(\widehat{Y}(\hbar,\hbar'), \widetilde{Y}(\hbar,\hbar'), \widehat{Y}_\circ(\hbar,\hbar'), \widetilde{Y}_\circ(\hbar,\hbar'), \) depend only on \(\frac{\hbar'}{\hbar}\) (up to isomorphism).
    
    \begin{remark}
        We will use results of \cite{kodera2021guay}, we compair %\zh{spelling is still off} 
        notations. There is an isomorphism
        \begin{equation}
            \phi:\widehat{Y}(\hbar,\hbar')\rightarrow Y_{\epsilon_1,\epsilon_2}(\widehat{\mathfrak{sl}}_n)
        \end{equation}
        where \(Y_{\epsilon_1,\epsilon_2}(\widehat{\mathfrak{sl}}_n)\) is from  \cite{kodera2021guay}  with
        \begin{equation}\label{eq:e&h}
            \epsilon_1+\epsilon_2 = \hbar , \quad  \quad \quad \frac{\epsilon_1 - \epsilon_2}{2} =-\frac{\hbar'}{n} - \frac{\hbar}{2},
        \end{equation}
        defined by %formulas 
        \begin{equation}
            H_{j}(v ) = \phi\left(\mathbf{h}_{j}\left(v +j\frac{\epsilon_1 - \epsilon_2}{2}
            \right)
            \right), \quad X_{j}^{\pm}(v ) = \phi\left(\mathbf{x}_{j}^{\pm}\left(v +j\frac{\epsilon_1 - \epsilon_2}{2}\right)\right), \quad j\in\{1,\dots,n\}.
        \end{equation}
    \end{remark}

    Denote by \(\widehat{Y}^{\pm}(\hbar,\hbar')\) and \(\widetilde{Y}^{0}(\hbar,\hbar')\) the algebras generated by \(\mathbf{x}_{j,r}^{\pm}\) and \(\mathbf{h}_{j,r},D\)  with relations \eqref{eq:Ypm-quadratic}, \eqref{eq:Serrepm} and \eqref{eq:Y0-quadratic} correspondingly. 
    \begin{theorem}[{\cite[Thm. 4.1]{yang2020pbw}}]
    Algebras \(\widehat{Y}^{\pm}(\hbar,\hbar'),
            \widetilde{Y}^{0}(\hbar,\hbar')\) are embedded into \(\widetilde{Y}(\hbar,\hbar')\) and the multiplication map induces vector space isomorphism
        \begin{equation}
            \widetilde{Y}(\hbar,\hbar')
            \simeq
            \widehat{Y}^{-}(\hbar,\hbar')
            \otimes
            \widetilde{Y}^{0}(\hbar,\hbar')
            \otimes
            \widehat{Y}^{+}(\hbar,\hbar').          
        \end{equation}
        \qed
    \end{theorem}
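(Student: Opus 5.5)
The plan is the standard argument for triangular decompositions of Kac--Moody-type algebras given by generators and relations: first for the algebra $\widetilde{Y}_\circ(\hbar,\hbar')$ without Serre relations, then passing to the quotient by $I_{\text{Serre}}$. Let $\widehat{Y}^{\pm}_\circ$ be the algebras generated by $\mathbf{x}^{\pm}_{j,r}$ with only the quadratic relations \eqref{eq:Ypm-quadratic}.

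\textbf{Step 1 (surjectivity).} The multiplication map $\widehat{Y}^{-}_\circ\otimes\widetilde{Y}^{0}\otimes\widehat{Y}^{+}_\circ\to\widetilde{Y}_\circ$ should be surjective by a straightening argument. Using $[\mathbf{x}^+_{i,r},\mathbf{x}^-_{j,s}]=\delta^n_{ij}\mathbf{h}_{i,r+s}$, every $\mathbf{x}^+$ can be moved to the right of every $\mathbf{x}^-$ at the cost of Cartan terms. To move Cartan generators, the relations $2[\mathbf{h}_{i,r+1},\mathbf{x}^\pm_{j,s}]-2[\mathbf{h}_{i,r},\mathbf{x}^\pm_{j,s+1}]=\dots$ together with $[\mathbf{h}_{i,0},\mathbf{x}^\pm_{j,s}]=\pm C_{ij}\mathbf{x}^\pm_{j,s}$ are used. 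By induction on $r$, they express $\mathbf{h}_{i,r}\mathbf{x}^\pm_{j,s}$ as $\mathbf{x}^\pm\cdot(\text{polynomial in }\mathbf{h})$ plus lower terms. The cases $(1,n),(n,1)$ are handled by the same relations for the shifted series $'\mathbf{h}_n$ and $'\mathbf{x}^\pm_n$. The element $D$ just acts by degree. An induction on the length of words, with a secondary induction on the total index $\sum r$, closes this step.

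\textbf{Step 2 (injectivity for $\widetilde{Y}_\circ$).} Following Jantzen's argument for $U_q(\mathfrak g)$, I will define an action of $\widetilde{Y}_\circ$ on the vector space $V=\widehat{Y}^{-}_\circ\otimes\widetilde{Y}^{0}\otimes\widehat{Y}^{+}_\circ$. In this action, $\mathbf{x}^-$ acts by left multiplication on the first factor. The Cartan generators are moved past $\widehat{Y}^{-}_\circ$ by the commutation formulas of Step~1, read as a definition on the free algebra. The generator $\mathbf{x}^+_{i,r}$ acts by the derivation-type rule that produces $\mathbf{h}_{i,r+s}$ from each $\mathbf{x}^-_{i,s}$. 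Applying the resulting operators to $1\otimes1\otimes1$ gives a left inverse of the multiplication map, which yields injectivity and the embeddings of $\widehat{Y}^\pm_\circ$ and $\widetilde{Y}^0$.

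The main obstacle is checking that these operators satisfy all defining relations. The hardest cases are the quadratic $\mathbf{h}$--$\mathbf{x}$ relations at all $r,s$ and the relations of $\mathbf{x}^+$ with $\mathbf{x}^-$. It is cleanest to work with the generating series $\mathbf{h}_i(v)$. The $\mathbf{h}$--$\mathbf{x}$ relations are then equivalent to $\mathbf{h}_i(u)\mathbf{x}^\pm_j(v)\mathbf{h}_i(u)^{-1}$ being multiplication by the rational factor $\frac{u-v\pm\hbar C_{ij}/2}{u-v\mp\hbar C_{ij}/2}$, suitably shifted for the $(1,n)$ pair. In this form the consistency checks reduce to identities of rational functions. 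As a fallback, I would compare with the degeneration $\hbar\to0$, where the algebra becomes an enveloping algebra of a central extension of $\mathfrak{sl}_n[u,v^{\pm1}]$. There PBW holds, and the graded dimensions force injectivity.

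\textbf{Step 3 (Serre relations).} I will show that $I_{\text{Serre}}$ equals $I^-\cdot\widetilde{Y}_\circ+\widetilde{Y}_\circ\cdot I^+$, where $I^\pm\subset\widehat{Y}^\pm_\circ$ are the ideals generated by the elements \eqref{eq:Serrepm}. The key point is that each $\pm$-Serre element $S^+$ satisfies $[\mathbf{x}^-_{k,t},S^+]\in\widetilde{Y}^0\cdot I^+ + I^+\cdot\widetilde{Y}^0$; in fact this bracket is $0$ after the $\mathbf{h}$-terms are moved aside, by a direct computation in the rank-two subalgebra for indices $i,j=i\pm1$. Together with $\widetilde{Y}^0$-stability, this shows that the two-sided ideal generated by $I^+$ is $\widehat{Y}^-_\circ\otimes\widetilde{Y}^0\otimes I^+$, and symmetrically for $I^-$. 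Quotienting the decomposition of Step~2 then gives the claimed isomorphism, and shows that $\widehat{Y}^\pm=\widehat{Y}^\pm_\circ/I^\pm$ embed.
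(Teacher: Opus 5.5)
The paper gives no proof of this statement; it is quoted from Yang--Zhao. Your architecture is the standard one (Jantzen for $U_q(\mathfrak g)$, Hernandez for quantum affinizations): surjectivity by straightening, injectivity for the Serre-free algebra via a module on $\widehat{Y}^-_\circ\otimes\widetilde{Y}^0\otimes\widehat{Y}^+_\circ$, then triangularity of the Serre ideal from $[\mathbf{x}^-_{k,t},S^+]=0$. Steps 1 and 3 are fine in outline. The gap is Step 2. It carries all the content (injectivity, hence the embeddings), and both mechanisms you propose for it fail as stated.

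\textbf{The generating-series reformulation is false.} Take $\hbar=1$, $\mathbf{h}_i(u)=1+\sum_r\mathbf{h}_{i,r}u^{-r-1}$, $\mathbf{x}^\pm_j(v)=\sum_s\mathbf{x}^\pm_{j,s}v^{-s-1}$ and $a=\pm C_{ij}/2$. Summing the defining relations, including $[\mathbf{h}_{i,0},\mathbf{x}^\pm_{j,s}]=\pm C_{ij}\mathbf{x}^\pm_{j,s}$, gives
\[
(u-v-a)\,\mathbf{h}_i(u)\mathbf{x}^\pm_j(v)-(u-v+a)\,\mathbf{x}^\pm_j(v)\mathbf{h}_i(u)=-[\mathbf{h}_i(u),\mathbf{x}^\pm_{j,0}],
\]
that is, $\mathbf{h}_i(u)\mathbf{x}^\pm_j(v)\mathbf{h}_i(u)^{-1}=\frac{u-v+a}{u-v-a}\mathbf{x}^\pm_j(v)-\frac{2a}{u-v-a}\mathbf{x}^\pm_j(u-a)$. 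The boundary term cannot be dropped. Pure multiplication by the rational factor would force $[\mathbf{h}_i(u),\mathbf{x}^\pm_{j,0}]=0$, contradicting $[\mathbf{h}_{i,0},\mathbf{x}^\pm_{j,0}]=\pm C_{ij}\mathbf{x}^\pm_{j,0}$ for $C_{ij}\neq0$. Operators defined that way do not satisfy the relations of $\widetilde{Y}_\circ$.

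With the correct rule, you still have to verify three things about the induced twisting maps $\varphi_{i,u}$ of the first tensor factor:
\begin{itemize}
\item they commute for different $(i,u)$, so that the commutative algebra $\widetilde{Y}^0$ acts;
\item they are compatible with the derivations $\mathbf{x}^-_{i,s}\mapsto\mathbf{h}_{i,r+s}$ that define $\mathbf{x}^+_{i,r}$;
\item they are compatible with the shifted rules for the pairs $(1,n)$ and $(n,1)$.
\end{itemize}
These are genuine rational identities, and none of them is checked.

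Relatedly, you put $\widehat{Y}^-_\circ$, which carries the quadratic relations, in the first factor, but define the Cartan and $\mathbf{x}^+$ actions on the free algebra without checking that they descend. It is cleaner to use free algebras in Step 2 and move the quadratic elements $Q^\pm$ into Step 3 alongside the Serre elements. This works because $[\mathbf{x}^\mp_{k,t},Q^\pm]=0$ follows from the $\mathbf{h}$--$\mathbf{x}$ and $\mathbf{x}^+$--$\mathbf{x}^-$ relations alone, using $C_{ij}=C_{ji}$.

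\textbf{The fallback cannot substitute for this.} Specializing a parameter in a family of algebras presented by generators and relations can only make the algebra larger, since relations can only become dependent. So the $\hbar\to0$ limit gives only upper bounds on the (filtered) pieces of $\widetilde{Y}_\circ$, and likewise on $\widehat{Y}^\pm_\circ$. Injectivity of the surjective multiplication map needs the opposite bound: $\widetilde{Y}_\circ$ must be at least as large as $\widehat{Y}^-_\circ\otimes\widetilde{Y}^0\otimes\widehat{Y}^+_\circ$. Flatness of the family is essentially the PBW-type statement you are trying to prove. Moreover, the Serre-free algebra does not degenerate to the enveloping algebra of the central extension of $\mathfrak{sl}_n[u,v^{\pm1}]$. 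The lower bound therefore has to come from an honestly verified representation, that is, from completing Step 2 with the corrected commutation rule. Once that is done, your three steps do yield the theorem.
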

    We denote by \(\widetilde{Y}^{0,+}(\hbar,\hbar')\) and \( \widetilde{Y}^{0,+}_\circ(\hbar,\hbar')\) the subalgebras generated by \(\mathbf{x}_{j,r}^{+},\mathbf{h}_{j,r},D\) of \(\widetilde{Y}(\hbar,\hbar')\)   and \(\widetilde{Y}_\circ(\hbar,\hbar')\) correspondingly.
    %Similarly, let \(\widehat{Y}^{0,+}(\hbar,\hbar')\) subalgebra of \(\widehat{Y}(\hbar,\hbar')\) generated by \(\mathbf{x}_{j,r}^{+},\mathbf{h}_{j,r}\). 
    \begin{corollary}
         The multiplication map induces a vector space isomorphism
        \begin{equation}
            \widetilde{Y}^{0,+}(\hbar,\hbar')
            \simeq
            \widetilde{Y}^{0}(\hbar,\hbar')
            \otimes
            \widehat{Y}^{+}(\hbar,\hbar').          
        \end{equation}
    \end{corollary}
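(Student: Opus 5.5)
The idea is to deduce the corollary from the triangular decomposition of \cite[Thm.~4.1]{yang2020pbw}, by restricting the multiplication isomorphism
\[
m:\widehat{Y}^{-}\otimes\widetilde{Y}^{0}\otimes\widehat{Y}^{+}\to\widetilde{Y}(\hbar,\hbar')
\]
to the subspace \(1\otimes\widetilde{Y}^{0}\otimes\widehat{Y}^{+}\).

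\textbf{Injectivity.} By the theorem, \(\widetilde{Y}^{0}\) and \(\widehat{Y}^{+}\) embed into \(\widetilde{Y}(\hbar,\hbar')\). The restricted map
\[
\mu:\widetilde{Y}^{0}\otimes\widehat{Y}^{+}\to\widetilde{Y}(\hbar,\hbar'),\qquad a\otimes b\mapsto ab,
\]
agrees with \(m\) on \(1\otimes\widetilde{Y}^{0}\otimes\widehat{Y}^{+}\). Since \(m\) is an isomorphism, \(\mu\) is injective. Its image lies in \(\widetilde{Y}^{0,+}\), because products of the generators \(\mathbf{h}_{j,r},D\) with products of the \(\mathbf{x}^{+}_{j,r}\) belong to the subalgebra these elements generate.

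\textbf{Surjectivity.} It remains to show that the image \(\widetilde{Y}^{0}\cdot\widehat{Y}^{+}\) is all of \(\widetilde{Y}^{0,+}\). This image contains \(1\) and all the generators \(\mathbf{h}_{j,r}\), \(D\), \(\mathbf{x}^+_{j,r}\). So it suffices to show it is closed under multiplication, and for that it is enough to check the straightening property
\[
\widehat{Y}^{+}\,\widetilde{Y}^{0}\subset \widetilde{Y}^{0}\,\widehat{Y}^{+}.
\]
By induction on word length, this reduces to moving a single \(\mathbf{x}^+_{j,s}\) past a single \(\mathbf{h}_{i,r}\) or \(D\).

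For \(D\), the relation \(\mathbf{x}^+_{j,s}D=D\mathbf{x}^+_{j,s}-\mathbf{x}^+_{j,s}\) already has the required form.

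For \(\mathbf{h}_{i,r}\), I would use the defining relations of \(\widehat{Y}_\circ\):
\[
[\mathbf{h}_{i,0},\mathbf{x}^+_{j,s}]=C_{ij}\mathbf{x}^+_{j,s},
\]
together with the quadratic relation
\[
2[\mathbf{h}_{i,r+1},\mathbf{x}^{+}_{j,s}]=2[\mathbf{h}_{i,r},\mathbf{x}^{+}_{j,s+1}]+\hbar C_{ij}\bigl(\mathbf{h}_{i,r}\mathbf{x}^{+}_{j,s}+\mathbf{x}^{+}_{j,s}\mathbf{h}_{i,r}\bigr).
\]
These show by induction on \(r\) that \(\mathbf{x}^+_{j,s}\mathbf{h}_{i,r}\) is a linear combination of terms \(\mathbf{h}_{i,r'}\mathbf{x}^+_{j,s'}\) with \(r'\le r\). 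The primed relations for the pairs \((i,j)=(1,n),(n,1)\) are handled the same way, since \('\mathbf{h}_n(v)\) is just a shift of the spectral parameter and its coefficients are linear combinations of the \(\mathbf{h}_{n,r}\).

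\textbf{Main obstacle.} The only real point is the commutation bookkeeping in the \(\mathbf{h}_{i,r}\) case. The most transparent route is via generating series: the standard identity
\[
\mathbf{h}_i(u)\,\mathbf{x}^+_j(v)\,\mathbf{h}_i(u)^{-1}=\frac{u-v+\hbar C_{ij}/2}{u-v-\hbar C_{ij}/2}\,\mathbf{x}^+_j(v)+(\text{terms in }\mathbf{x}^+_j(u\pm\cdot))
\]
makes the straightening explicit, and it follows from the quadratic relations. Combining surjectivity with the injectivity above, \(\mu\) is a vector space isomorphism \(\widetilde{Y}^{0,+}(\hbar,\hbar')\simeq\widetilde{Y}^{0}(\hbar,\hbar')\otimes\widehat{Y}^{+}(\hbar,\hbar')\).
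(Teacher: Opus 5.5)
Your proposal is correct and follows the route the paper intends. The paper states the corollary as an immediate consequence of the triangular decomposition \cite[Thm.~4.1]{yang2020pbw} and gives no proof; you supply the missing details, namely injectivity by restricting the PBW isomorphism to $1\otimes\widetilde{Y}^{0}\otimes\widehat{Y}^{+}$, and surjectivity via the straightening $\widehat{Y}^{+}\widetilde{Y}^{0}\subset\widetilde{Y}^{0}\widehat{Y}^{+}$, obtained by induction on $r$ from the defining relations. Two small remarks: for the pair $(1,n)$ it is the shifted series of $\mathbf{x}^{+}_{n}$, not of $\mathbf{h}_{n}$, that enters the relation, but the same triangular change of basis handles it; and the closing generating-series identity is not needed, since your elementwise induction already completes the argument.
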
    
    For a tuple \(\vec{\mathbf{Q}}(v) =\{\mathbf{Q}_i(v)\}_{i=1}^n\subset\mathbb{C}[[v]]^{\times n}\) there is a well-defined \(1\)-dimensional module \(\mathbb{C}\zeta_{\vec{\mathbf{Q}}}\) over \(\widetilde{Y}^{0,+}(\hbar,\hbar')\)  such that
    \begin{equation}
        \mathbf{x}_i^{+}(v)\zeta_{\vec{\mathbf{Q}}} = 0,\quad \mathbf{h}_i(v)\zeta_{\vec{\mathbf{Q}}} = \mathbf{Q}_i(v)\zeta_{\vec{\mathbf{Q}}}, \quad D \zeta_{\vec{\mathbf{Q}}} = 0,\quad \text{for} \quad i\in\{1,\dots,n\}.
    \end{equation}
    We have a surjective map 
    \begin{equation}
        \widetilde{Y}^{0,+}_\circ(\hbar,\hbar')\rightarrow \widetilde{Y}^{0,+}(\hbar,\hbar'),
    \end{equation} 
    so \(\mathbb{C}\zeta_{\vec{\mathbf{Q}}}\)  acquires a \(\widetilde{Y}^{0,+}_\circ(\hbar,\hbar')\)-module structure. A Verma module over \(\widetilde{Y}_\circ(\hbar,\hbar')\) is defined by formula 
    \begin{equation}
        \mathtt{M}_{\vec{\mathbf{Q}}}^\circ = \operatorname{Ind}_{\widetilde{Y}^{0,+}_\circ(\hbar,\hbar')}^{\widetilde{Y}_\circ(\hbar,\hbar')} \mathbb{C}\zeta_{\vec{\mathbf{Q}}}.
    \end{equation} 
    The Verma module over \(\widetilde{Y}(\hbar,\hbar')\) is defined by a similar formula
    \begin{equation}
        \mathtt{M}_{\vec{\mathbf{Q}}} = \operatorname{Ind}_{\widetilde{Y}^{0,+}(\hbar,\hbar')}^{\widetilde{Y}(\hbar,\hbar')} \mathbb{C}\zeta_{\vec{\mathbf{Q}}}.
    \end{equation}
    \begin{proposition}\label{th:IrrModYangian}
        There exists a unique irreducible quotient \(\mathtt{L}_{\vec{\mathbf{Q}}}\) of \(\widetilde{Y}(\hbar,\hbar')\)-module \(\mathtt{M}_{\vec{\mathbf{Q}}}\) and a unique irreducible quotient of \(\mathtt{L}^\circ_{\vec{\mathbf{Q}}}\) of \(\widetilde{Y}(\hbar,\hbar')\)-module \(\mathtt{M}^\circ_{\vec{\mathbf{Q}}}\).    
    \end{proposition}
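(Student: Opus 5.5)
The plan is the standard highest-weight argument, applied to the grading by $D$ and the weights of $\mathbf{h}_{i,0}$. First I will make the Verma module explicit. By the PBW-type theorem of \cite{yang2020pbw} and its corollary, $\widetilde{Y}(\hbar,\hbar')$ is free as a right $\widetilde{Y}^{0,+}(\hbar,\hbar')$-module with basis coming from $\widehat{Y}^{-}(\hbar,\hbar')$. Hence $\mathtt{M}_{\vec{\mathbf{Q}}}\simeq \widehat{Y}^{-}(\hbar,\hbar')\otimes\mathbb{C}\zeta_{\vec{\mathbf{Q}}}$ as a vector space.

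Next I will put a grading on the module. Each $\mathbf{x}^-_{i,r}$ lowers the $D$-eigenvalue by $1$ and shifts the $\mathbf{h}_{j,0}$-eigenvalues by $-C_{ji}$. So $\mathtt{M}_{\vec{\mathbf{Q}}}$ decomposes into eigenspaces for $D$ and the $\mathbf{h}_{j,0}$ (equivalently, a grading by the affine root lattice $\widehat{Q}_+$). The top component is $\mathbb{C}\zeta_{\vec{\mathbf{Q}}}$: its $D$-eigenvalue is $0$, and every other vector has strictly negative $D$-eigenvalue. In particular the top component is one-dimensional.

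The key step is that every proper submodule $N$ avoids the top component. The operators $D$ and $\mathbf{h}_{j,0}$ lie in the algebra, so $N$ is spanned by its intersections with these eigenspaces. If the $D=0$ part of $N$ were nonzero, $N$ would contain $\zeta_{\vec{\mathbf{Q}}}$. Since $\zeta_{\vec{\mathbf{Q}}}$ generates $\mathtt{M}_{\vec{\mathbf{Q}}}$, this would force $N=\mathtt{M}_{\vec{\mathbf{Q}}}$. So every proper submodule lies in $\bigoplus_{m<0}\mathtt{M}_{\vec{\mathbf{Q}}}[m]$, where $[m]$ denotes the $D$-eigenspace with eigenvalue $m$.

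Now let $J$ be the sum of all proper submodules. It is again graded with zero top component, hence proper, and it is the unique maximal submodule. Therefore $\mathtt{L}_{\vec{\mathbf{Q}}}=\mathtt{M}_{\vec{\mathbf{Q}}}/J$ is irreducible. For uniqueness, any irreducible quotient $\mathtt{M}_{\vec{\mathbf{Q}}}/N$ has $N$ maximal, so $N=J$.

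The same argument works verbatim for $\mathtt{M}^\circ_{\vec{\mathbf{Q}}}$ over $\widetilde{Y}_\circ(\hbar,\hbar')$, and does not need PBW there. Here $\mathtt{M}^\circ_{\vec{\mathbf{Q}}}$ is spanned by monomials in the $\mathbf{x}^-_{i,r}$ applied to $\zeta$. This follows by repeatedly applying the defining relations to move $\mathbf{x}^+$ and $\mathbf{h}$ to the right, which only produces terms of the same or higher $D$-degree. So the $D=0$ part is still spanned by $\zeta$. It is nonzero because $\mathbb{C}\zeta_{\vec{\mathbf{Q}}}$ is a $\widetilde{Y}^{0,+}_\circ(\hbar,\hbar')$-module and the induced module maps onto it after projecting to the top degree. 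The cleanest way to see this is that the $D=0$ component of the induced module is $\widetilde{Y}_\circ[0]\otimes_{\widetilde{Y}^{0,+}_\circ[0]}\mathbb{C}\zeta$, which contains $\zeta\neq0$. Note that the statement presumably means the quotient of $\mathtt{M}^\circ_{\vec{\mathbf{Q}}}$ as a $\widetilde{Y}_\circ$-module.

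The main point needing care is exactly this claim, that the top graded piece is one-dimensional and nonzero, i.e. that $\zeta\neq 0$ in the Verma module. For $\widetilde{Y}$ it comes from the PBW theorem of \cite{yang2020pbw}. For $\widetilde{Y}_\circ$ I would first try the triangular spanning argument, using the $D$-grading to control degrees.
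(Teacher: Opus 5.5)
Your main argument is the same as the paper's: grade by $D$, note that the degree-$0$ piece is $\mathbb{C}\zeta_{\vec{\mathbf{Q}}}$ and generates the module, conclude that every proper submodule misses it, so the sum of all proper submodules is the unique maximal submodule.

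The paper simply asserts that the $D=0$ eigenspace is $\mathbb{C}\zeta_{\vec{\mathbf{Q}}}$. Your extra justification is fine for $\mathtt{M}_{\vec{\mathbf{Q}}}$ (via PBW) and for the upper bound in $\mathtt{M}^\circ_{\vec{\mathbf{Q}}}$ (spanning by $\mathbf{x}^-$-monomials). However, your reason for $\zeta_{\vec{\mathbf{Q}}}\neq 0$ in $\mathtt{M}^\circ_{\vec{\mathbf{Q}}}$ does not work as written.

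The degree-$0$ part of the induced module is not $\widetilde{Y}_\circ[0]\otimes_{\widetilde{Y}^{0,+}_\circ[0]}\mathbb{C}\zeta$. You must also divide $\widetilde{Y}_\circ[0]$ by $\sum_{j>0}\widetilde{Y}_\circ[-j]\,\widetilde{Y}^{0,+}_\circ[j]$, which contains terms like $\mathbf{x}^-_{i,s}\mathbf{x}^+_{i,r}$, because $\mathbf{x}^+$ kills $\zeta$. Whether $\zeta$ survives that quotient is exactly the question. Likewise, ``the induced module maps onto it after projecting to the top degree'' presupposes the conclusion.

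The clean fix is the universal property of induction. The surjection $\widetilde{Y}_\circ(\hbar,\hbar')\to\widetilde{Y}(\hbar,\hbar')$ maps $\widetilde{Y}^{0,+}_\circ(\hbar,\hbar')$ onto $\widetilde{Y}^{0,+}(\hbar,\hbar')$, compatibly with the action on $\mathbb{C}\zeta_{\vec{\mathbf{Q}}}$, since that action was defined by pullback. Hence there is a $\widetilde{Y}_\circ$-module map $\mathtt{M}^\circ_{\vec{\mathbf{Q}}}\to\mathtt{M}_{\vec{\mathbf{Q}}}$ with $\zeta\mapsto\zeta$. The image of $\zeta$ is nonzero by the PBW theorem, so $\mathtt{M}^\circ_{\vec{\mathbf{Q}}}\neq 0$ and its top component is exactly $\mathbb{C}\zeta_{\vec{\mathbf{Q}}}$.
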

    \begin{proof}
        Denote \( \mathcal{N} = \sum N\) where the sum is over all proper submodules of  \(\mathtt{M}^\circ_{\vec{\mathbf{Q}}}\). Note that \(\mathtt{M}^\circ_{\vec{\mathbf{Q}}}\) is graded by \(D\). So, any submodule \(N\) in \(\mathtt{M}^\circ_{\vec{\mathbf{Q}}}\) is also graded. Note that  \(\mathtt{M}^\circ_{\vec{\mathbf{Q}}}\) is generated by \(\zeta_{\vec{\mathbf{Q}}}\). Moreover, the  eigenspace of \(D\) with eigenvalue \(0\) is \(\mathbb{C}\zeta_{\vec{\mathbf{Q}}}\).  Thus \(\mathcal{N}\) does not contain \(\mathbb{C}\zeta_{\vec{\mathbf{Q}}}\). So, \(\mathtt{M}^\circ_{\vec{\mathbf{Q}}}/\mathcal{N}\) is a unique irreducible quotient of \(\mathtt{M}^\circ_{\vec{\mathbf{Q}}}\). The same argument works for \(\mathtt{M}_{\vec{\mathbf{Q}}}\).
    \end{proof}
   % \zh{I would make it a lemma or proposition at most. It is well-known for sure. The proof is very standard too.}
    \begin{corollary}\label{th:irrquad=irrwithserre}
        We have \(\mathtt{L}^\circ_{\vec{\mathbf{Q}}} \simeq \mathtt{L}_{\vec{\mathbf{Q}}}\), in particular,  relations \eqref{eq:Serrepm} are satisfied in $\mathtt{L}^\circ_{\vec{\mathbf{Q}}}$.
    \end{corollary}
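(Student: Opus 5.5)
The plan is to show that the Serre elements act by zero on \(\mathtt{L}^\circ_{\vec{\mathbf{Q}}}\). Then \(\mathtt{L}^\circ_{\vec{\mathbf{Q}}}\) is naturally a \(\widetilde{Y}(\hbar,\hbar')\)-module, and we identify it with \(\mathtt{L}_{\vec{\mathbf{Q}}}\) by checking that it is an irreducible quotient of \(\mathtt{M}_{\vec{\mathbf{Q}}}\) and applying the uniqueness in Proposition~\ref{th:IrrModYangian}.

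\textbf{Step 1: the Serre-minus elements.} Let \(S^{-}\) be any left-hand side of \eqref{eq:Serrepm} with the sign \(-\), and let \(J^-\) be the two-sided ideal of \(\widetilde{Y}_\circ(\hbar,\hbar')\) generated by all such \(S^-\). Consider \(N = J^-\zeta_{\vec{\mathbf{Q}}}\subset \mathtt{M}^\circ_{\vec{\mathbf{Q}}}\). It is a submodule, since \(J^-\) is a left ideal. Each \(S^-\) has \(D\)-degree \(-3\), so elements of the form \(aS^-b\) applied to \(\zeta_{\vec{\mathbf{Q}}}\) cannot reach the \(D\)-degree \(0\) part.

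To make this precise we need that \(N\) is spanned by vectors \(a S^- b\,\zeta_{\vec{\mathbf{Q}}}\), and that these lie in negative \(D\)-degree. Write \(b\zeta_{\vec{\mathbf{Q}}}\) as a combination of homogeneous vectors; each has degree \(\le 0\). Applying \(S^-\) lowers the degree by \(3\), and \(a\) raises it by \(\deg a\). It is therefore not immediate that degree \(0\) is avoided.

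The better route is the standard one: show that \(S^-\zeta\) is a singular vector for every \(\zeta\) in the relevant sense. Concretely, one checks that \([\mathbf{x}^+_{k,t}, S^-]\) lies in the left ideal generated by the \(\mathbf{x}^-\)-Serre-type elements of lower length together with \(\mathbf{x}^+\)'s. This uses the relations \([\mathbf{x}^+_{k,t},\mathbf{x}^-_{j,s}]=\delta^n_{kj}\mathbf{h}_{k,t+s}\) and the \(\mathbf{h}\)–\(\mathbf{x}^-\) relations, exactly as in the proof that Serre relations hold in irreducible quotients for Kac–Moody algebras. Hence \(\mathbf{x}^+_{k,t}S^-\zeta_{\vec{\mathbf{Q}}}=0\). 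Then the submodule generated by \(S^-\zeta_{\vec{\mathbf{Q}}}\) equals \(\widetilde{Y}^-_\circ\widetilde{Y}^0 S^-\zeta_{\vec{\mathbf{Q}}}\), which lies in degrees \(\le -3\) and is therefore proper. The same holds with \(\zeta_{\vec{\mathbf{Q}}}\) replaced by any vector of the form \(y\zeta_{\vec{\mathbf{Q}}}\) with \(y\in \widehat{Y}^-_\circ\): an induction on the degree of \(y\) shows that \(J^-\zeta_{\vec{\mathbf{Q}}}\) is contained in the sum of proper submodules \(\mathcal N\). Hence \(J^-\) acts by zero on \(\mathtt{L}^\circ_{\vec{\mathbf{Q}}}\).

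\textbf{Step 2: the Serre-plus elements.} This is the main obstacle: we must show that the \(\mathbf{x}^+\)-Serre elements \(S^+\) act by zero as well. For \(v\in\mathtt{L}^\circ_{\vec{\mathbf{Q}}}\), the vector \(S^+v\) has degree \(3\) higher than \(v\). Suppose \(S^+v\neq0\) for some \(v\), and choose \(v\) of maximal degree. By irreducibility, some product of \(\mathbf{x}^+\)'s sends \(S^+v\) to a nonzero multiple of \(\zeta_{\vec{\mathbf{Q}}}\). Alternatively, and more cleanly, use the automorphism of \(\widetilde{Y}_\circ\) swapping \(\mathbf{x}^\pm\), the analogue of \(F_{\widetilde{\mathfrak{sl}}_n}\). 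Together with the invariant (Shapovalov-type) pairing between \(\mathtt{L}^\circ_{\vec{\mathbf{Q}}}\) and the corresponding lowest weight module, this shows that the annihilator of the module is stable under the swap. By Step 1, the twisted module kills the \(S^-\); hence the original module kills the \(S^+\).

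If the Shapovalov route is too heavy, I would instead verify directly that the vectors \(S^+ y\zeta_{\vec{\mathbf{Q}}}\) generate a proper submodule, by induction on the \(\widehat{Y}^-_\circ\)-length of \(y\). Commuting \(S^+\) past the \(\mathbf{x}^-\)'s produces terms in the ideal generated by \(\mathbf{x}^+\) acting on \(\zeta_{\vec{\mathbf{Q}}}\), which vanish.

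\textbf{Step 3: conclusion.} Once the whole ideal \(I_{\mathrm{Serre}}\) acts by zero, \(\mathtt{L}^\circ_{\vec{\mathbf{Q}}}\) is a \(\widetilde{Y}(\hbar,\hbar')\)-module. It is generated by a highest weight vector with eigenvalues \(\vec{\mathbf{Q}}\), so it is a quotient of \(\mathtt{M}_{\vec{\mathbf{Q}}}\). It is irreducible, because submodules over \(\widetilde{Y}\) are the same as submodules over \(\widetilde{Y}_\circ\). By the uniqueness in Proposition~\ref{th:IrrModYangian}, \(\mathtt{L}^\circ_{\vec{\mathbf{Q}}}\simeq\mathtt{L}_{\vec{\mathbf{Q}}}\).
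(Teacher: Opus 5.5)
There is a genuine gap, and it sits in Step~2, the step you yourself flag as the main obstacle. Twisting $\mathtt{L}^\circ_{\vec{\mathbf{Q}}}$ by an automorphism that swaps $\mathbf{x}^\pm$ produces a \emph{lowest} weight module, so Step~1, which is about highest weight modules, says nothing about it. To transport the vanishing of $S^-$ to $S^+$ you would need two things:
\begin{enumerate}
\item an anti-involution $\sigma$ of $\widetilde{Y}_\circ(\hbar,\hbar')$ with $\sigma(\mathbf{x}^\pm_{i,r})=\mathbf{x}^\mp_{i,r}$ and $\sigma(\mathbf{h}_{i,r})=\mathbf{h}_{i,r}$;
\item a nondegenerate contravariant form on $\mathtt{L}^\circ_{\vec{\mathbf{Q}}}$, so that $(S^+v,w)=(v,\sigma(S^+)w)=(v,S^-w)=0$.
\end{enumerate}
Building such a form on $\mathtt{M}^\circ_{\vec{\mathbf{Q}}}$, with radical equal to the maximal proper submodule, needs a Harish-Chandra projection, i.e.\ a triangular decomposition of $\widetilde{Y}_\circ$. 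You never establish this, and the PBW theorem quoted in the paper concerns $\widetilde{Y}$, not $\widetilde{Y}_\circ$.

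Your fallback is wrong as stated. Commuting $S^+$ past the $\mathbf{x}^-$'s produces $\mathbf{h}_{k,m}$'s, and these act on $\zeta_{\vec{\mathbf{Q}}}$ by the generally nonzero coefficients of $\mathbf{Q}_k(v)$, not by zero. For instance, for $y$ of $D$-degree $-3$, $S^+y\zeta_{\vec{\mathbf{Q}}}$ is a scalar multiple of $\zeta_{\vec{\mathbf{Q}}}$, and its vanishing is a genuine cancellation, not a termwise one.

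Step~1 is also only asserted. In the Yangian, $[\mathbf{x}^+_{j,t},S^-]$ reduces to symmetrized terms $[\mathbf{x}^-_{i,r},[\mathbf{x}^-_{i,p},\mathbf{h}_{j,t+s}]]$. Since $[\mathbf{h}_{j,m},\mathbf{x}^-_{i,p}]$ is not linear in $\mathbf{x}^-$ for $m\ge 1$, the Kac--Moody computation does not carry over verbatim. Stability of $J^-\zeta_{\vec{\mathbf{Q}}}$ under the $\mathbf{h}_{k,m}$ is not addressed either.

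More to the point, this direct verification is unnecessary. The paper obtains the corollary straight from Proposition~\ref{th:IrrModYangian}, essentially by running your Step~3 in the opposite direction:
\begin{enumerate}
\item Let $\pi\colon\widetilde{Y}_\circ(\hbar,\hbar')\to\widetilde{Y}(\hbar,\hbar')$ be the quotient map; it sends $\widetilde{Y}^{0,+}_\circ$ onto $\widetilde{Y}^{0,+}$. Hence $a\otimes\zeta_{\vec{\mathbf{Q}}}\mapsto\pi(a)\otimes\zeta_{\vec{\mathbf{Q}}}$ is a well-defined surjection $\mathtt{M}^\circ_{\vec{\mathbf{Q}}}\to\mathtt{M}_{\vec{\mathbf{Q}}}$ of $\widetilde{Y}_\circ$-modules.
\item Compose it with $\mathtt{M}_{\vec{\mathbf{Q}}}\to\mathtt{L}_{\vec{\mathbf{Q}}}$. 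The target is nonzero, because $\zeta_{\vec{\mathbf{Q}}}\neq0$ in $\mathtt{M}_{\vec{\mathbf{Q}}}$ by the PBW theorem. It stays irreducible over $\widetilde{Y}_\circ$, since the submodules are the same, as you note.
\item So $\mathtt{L}_{\vec{\mathbf{Q}}}$ is an irreducible quotient of $\mathtt{M}^\circ_{\vec{\mathbf{Q}}}$, and uniqueness gives $\mathtt{L}^\circ_{\vec{\mathbf{Q}}}\simeq\mathtt{L}_{\vec{\mathbf{Q}}}$.
\end{enumerate}
The Serre relations \eqref{eq:Serrepm} then hold in $\mathtt{L}^\circ_{\vec{\mathbf{Q}}}$ simply because they hold in $\mathtt{L}_{\vec{\mathbf{Q}}}$. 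The compatibility you try to check by hand is already contained in the triangular decomposition of $\widetilde{Y}$, which is what guarantees $\mathtt{M}_{\vec{\mathbf{Q}}}\neq 0$.
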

    
\subsection{Evaluation map}
    \begin{theorem}[\cite{kodera2021guay}]
    For \(\kappa \notin \{0,n\}\), there is a surjective evaluation homomorphism
    \begin{equation}
        \operatorname{ev}_{-}:\widetilde{Y}(1,-\kappa)\rightarrow (U\widetilde{\mathfrak{gl}}_n/(K-\kappa+n))_{\text{comp}}.
    \end{equation}
    % and
    % \begin{equation}
    %     \operatorname{ev}_{+}:\widetilde{Y}(1,k)\rightarrow \mathcal{U}(\widetilde{\mathfrak{gl}}_n/(K-k))_{\text{comp}},
    % \end{equation}
    Here, \((U\widetilde{\mathfrak{gl}}_n)_{\text{comp}}\) is a suitable completion of \(U\widetilde{\mathfrak{gl}}_n\) whose action is well-defined in all highest weight modules over \(\widetilde{\mathfrak{gl}}_n\).
    \qed
    \end{theorem}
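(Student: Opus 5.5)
We will follow the strategy of Guay and of Kodera~\cite{kodera2021guay}. The map is defined on a small generating set of $\widetilde{Y}(1,-\kappa)$, and the relations are checked on that set only. We first pass to a \emph{minimalistic presentation} of the affine Yangian (Guay--Nakajima--Wendlandt type). For $n>2$, $\widehat{Y}(\hbar,\hbar')$ is generated by $\mathbf{x}^\pm_{i,0}$, $\mathbf{h}_{i,0}$ and $\mathbf{h}_{i,1}$, $i\in\{1,\dots,n\}$. The relations needed are:
\begin{itemize}
\item the affine Kac--Moody relations among the degree $0$ generators;
\item $[\mathbf{h}_{i,1},\mathbf{h}_{j,1}]=0$ and $[\mathbf{h}_{i,1},\mathbf{h}_{j,0}]=0$;
\item the relation $[\mathbf{h}_{i,1},\mathbf{x}^\pm_{j,0}]=\pm C_{ij}\bigl(\mathbf{x}^\pm_{j,1}+\dots\bigr)$, which we use to \emph{define} $\mathbf{x}^\pm_{j,1}$;
\item one cubic relation such as $[\mathbf{x}^\pm_{i,1},\mathbf{x}^\pm_{j,0}]-[\mathbf{x}^\pm_{i,0},\mathbf{x}^\pm_{j,1}]=\pm\tfrac{\hbar}{2}C_{ij}\{\mathbf{x}^\pm_{i,0},\mathbf{x}^\pm_{j,0}\}$;
\item the modified version of these relations for the pair $(n,1)$, involving the shift by $\hbar'/\hbar+n/2$.
\end{itemize}
With this presentation we only need to define $\operatorname{ev}_-$ on these generators and verify finitely many identities in the completion.

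\textbf{Definition of $\operatorname{ev}_-$.} On degree $0$ we set $\operatorname{ev}_-(\mathbf{x}^+_{i,0})=e_i$, $\operatorname{ev}_-(\mathbf{x}^-_{i,0})=f_i$, $\operatorname{ev}_-(\mathbf{h}_{i,0})=h_i$ and $D\mapsto D$. For $\mathbf{h}_{i,1}$ we use a Guay-type ansatz:
\[
\operatorname{ev}_-(\mathbf{h}_{i,1})=c_i\,h_i+\tfrac{1}{2}h_i^2+\sum_{s\in\mathbb{Z}}\sum_{k}\epsilon_{i,k,s}\,{:}E_{ik}t^{-s}E_{ki}t^{s}{:}-(\text{same for } i+1).
\]
Here ${:}\ {:}$ is normal ordering, which puts positive modes to the right. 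This makes every term act by finite sums on highest weight modules, so the expression lies in $(U\widetilde{\mathfrak{gl}}_n)_{\text{comp}}$. The signs $\epsilon_{i,k,s}$ are as in Guay's formula, and the constants $c_i$ are still to be determined.

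We then check the relations in increasing order of difficulty.
\begin{enumerate}
\item The degree $0$ relations hold by the Kac presentation theorem quoted above.
\item Computing $[\operatorname{ev}_-(\mathbf{h}_{i,1}),e_j]$ gives a candidate $\operatorname{ev}_-(\mathbf{x}^+_{j,1})$. Requiring that this candidate be independent of $i$ (up to the prescribed $C_{ij}$-multiple) fixes the $c_i$ in terms of $\hbar$, $\hbar'$ and the central charge.
\item The pair $(n,1)$ is where the level enters. Commuting the normally ordered quadratic terms through $e_0=E_{n1}t$ produces a central term proportional to $K$. This term must reproduce the shift $\hbar'/\hbar+n/2$, which forces $K=\kappa-n$ and $\hbar'=-\kappa$ at $\hbar=1$.
\item The cubic $\mathbf{x}\mathbf{x}$ relation follows, after applying $\mathrm{ad}\,e$ and $\mathrm{ad}\,f$, from the $\mathbf{h}\mathbf{x}$ relations together with the Serre relations already holding in $U\widehat{\mathfrak{gl}}_n$.
\end{enumerate}

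\textbf{Main obstacle.} We expect the hardest step to be $[\operatorname{ev}_-(\mathbf{h}_{i,1}),\operatorname{ev}_-(\mathbf{h}_{j,1})]=0$. It is an identity between quartic normally ordered infinite sums. Reordering terms produces anomalous central and quadratic contributions, and these must cancel exactly. First we would reduce to $|i-j|\le1$, since distant indices involve commuting blocks. Then we would organize the commutator through the Sugawara-type identity $[\,{:}E E{:}\,,X t^r]$ and show that the resulting terms cancel pairwise. This cancellation should depend on $\kappa\neq0$ (the Sugawara normalization $1/\kappa$ appears when rewriting the quadratic part), and the pair $(n,1)$ will need separate bookkeeping.

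\textbf{Surjectivity.} The image contains $e_i,f_i,h_i$, hence all of $\widetilde{\mathfrak{sl}}_n$. It remains to obtain the Heisenberg generators $\mathtt{a}_r$. We would extract them from $[\operatorname{ev}_-(\mathbf{x}^+_{i,1}),\operatorname{ev}_-(\mathbf{x}^-_{i,1})]$, or from the $\mathfrak{gl}$-trace part of $\operatorname{ev}_-(\mathbf{h}_{i,1})$ conjugated by suitable elements. Modulo $\widetilde{\mathfrak{sl}}_n$, these produce $\mathtt{a}_r$ multiplied by a coefficient proportional to $\kappa-n$, i.e.\ to $K$. The hypothesis $\kappa\neq n$ makes this coefficient nonzero. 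Finally, the image is closed in the completion, so surjectivity onto $(U\widetilde{\mathfrak{gl}}_n/(K-\kappa+n))_{\text{comp}}$ follows.
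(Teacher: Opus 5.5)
The paper does not prove this statement; it quotes it from Kodera. Your framework (a minimalistic presentation in degrees $0,1$, Guay's normally ordered formula for $\operatorname{ev}_-(\mathbf{h}_{i,1})$, and a relation-by-relation check) is the standard one. As written, however, both the surjectivity part and the existence part have concrete gaps.

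\textbf{Surjectivity.} The claim that the image is closed is false, and literal surjectivity cannot hold. $\widetilde{Y}(1,-\kappa)$ is countably generated, so its image has countable dimension. Any standard completion in which the sums $\sum_{s\ge0}E_{ik}t^{-s}E_{ki}t^{s}$ converge (e.g.\ $\varprojlim_N U/U\cdot(\mathfrak{gl}_n\otimes t^N\mathbb{C}[t])$) also contains $\sum_{s>0}c_sE_{ik}t^{-s}E_{ki}t^{s}$ for arbitrary $(c_s)$, so it has uncountable dimension. What can be proved, and all the paper ever uses, is that the image contains $U\widetilde{\mathfrak{gl}}_n/(K-\kappa+n)$, i.e.\ that it is dense. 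This is the separate result quoted as Theorem~\ref{th:heisenberg}, under $\kappa\notin\{0,n\}$.

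Your route to $\mathtt{a}_r$ does not give even this. For instance, $[\operatorname{ev}_-(\mathbf{x}^+_{i,1}),\operatorname{ev}_-(\mathbf{x}^-_{i,1})]=\operatorname{ev}_-(\mathbf{h}_{i,2})$ is an infinite sum of cubic and quadratic terms. There is no operation ``modulo $\widetilde{\mathfrak{sl}}_n$'' that discards these tails and leaves $c\,\mathtt{a}_r$. You would have to exhibit Yangian elements whose images are linear in $\mathtt{a}_r$ up to $U\widetilde{\mathfrak{sl}}_n$, and that is a genuinely separate step.

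\textbf{Existence.} There are three problems here.
First, your list of relations omits $[\mathbf{x}^+_{i,1},\mathbf{x}^-_{j,0}]=\delta_{ij}\mathbf{h}_{i,1}$. For $i\neq j$ this follows formally from the $\mathbf{h}\mathbf{x}$ relations. For $i=j$ it is a real constraint, namely $\tfrac12[[\operatorname{ev}_-(\tilde{\mathbf{h}}_{i,1}),e_i],f_i]=\operatorname{ev}_-(\mathbf{h}_{i,1})$ with $\tilde{\mathbf{h}}_{i,1}=\mathbf{h}_{i,1}-\tfrac{\hbar}{2}\mathbf{h}_{i,0}^2$. It fixes the normalization of the quadratic part, and without it your list does not present the Yangian.

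Second, Step~4 is asserted, not argued. The Serre relations of $U\widehat{\mathfrak{gl}}_n$ are degree-zero identities and say nothing about the quadratic tails of $\operatorname{ev}_-(\mathbf{x}^\pm_{i,1})$. Identities such as $[\operatorname{ev}_-(\mathbf{x}^+_{i,1}),\operatorname{ev}_-(\mathbf{x}^+_{i,0})]=\operatorname{ev}_-(\mathbf{x}^+_{i,0})^2$ (here $\hbar=1$), and their adjacent and $(n,1)$ analogues, have to be computed.

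Third, your reduction of $[\operatorname{ev}_-(\mathbf{h}_{i,1}),\operatorname{ev}_-(\mathbf{h}_{j,1})]=0$ to $|i-j|\le1$ is wrong. $\operatorname{ev}_-(\mathbf{h}_{i,1})$ contains $E_{ik}t^{-s}E_{ki}t^{s}$ for every $k$, including $k=j,j+1$, so there are no commuting blocks. Already for the finite evaluation map $Y(\mathfrak{sl}_n)\to U\mathfrak{gl}_n$, commutativity at distant indices is the commutativity of the Gelfand--Tsetlin subalgebra, not a block decomposition. Note also that Guay's sums run only over $s\ge0$, i.e.\ over positive affine root vectors, not over all $s\in\mathbb{Z}$.

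Since this commutation is the core of the proof you are reconstructing, your proposal locates where the work lies but does not carry it out.
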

    \begin{remark}\label{th:FY}
        In \cite{kodera2021guay} \(\operatorname{ev}_{-}\) defined in the completion compatible with  lowest weight modules. We  modify the evaluation map of  \cite{kodera2021guay} by formula \(F_{\widetilde{\mathfrak{gl}}_n}\circ \operatorname{ev}_{-}\circ F_{Y}\) where \(F_{\widetilde{\mathfrak{gl}}_n}\) is defined by formula \eqref{eq:Fgl} and \(F_Y\) is an automorphism  of \(\widetilde{Y}(\hbar,\hbar')\) switching \(\widehat{Y}^{\pm}(\hbar,\hbar')\)
                \begin{equation}
                    F_Y(\mathbf{h}_{i}(v))=\mathbf{h}_{i}(-v),\quad \quad  F_Y(\mathbf{x}_{i}^{\pm}(v))=\mathbf{x}_{i}^{\mp}(-v ),  \quad F_Y(D) = -D.
                \end{equation}
%                In terms of 
%                \(Y_{\epsilon_1,\epsilon_2}(\widehat{\mathfrak{sl}}_n)\) we have 
%                \begin{equation}
%                    F_Y(H_{i}(v)) = H_{i}(-v - i(\epsilon_1 - \epsilon_2)),\quad\quad F_Y(X_{i}^{\pm}(v)) = X_{i}^{\mp}(-v - i(\epsilon_1 - \epsilon_2)).
%                \end{equation}
%                In particular
%                \begin{equation}
%                    F_Y(H_{i,0}) = - H_{i,0},\quad\quad F_Y(H_{i,1}) = H_{i,1}+ i(\epsilon_1 - \epsilon_2) H_{i,0}.
%                \end{equation}
%                \begin{equation}
%                    F_Y(X^{\pm}_{i,0}) = - X^{\mp}_{i,0},\quad\quad F_Y(X^{\pm}_{i,1}) = X^{\mp}_{i,1}+ i(\epsilon_1 - \epsilon_2) X^{\mp}_{i,0}.
%                \end{equation}
        \end{remark}
    \begin{theorem}[{\cite[Thm. 4.18]{kodera2019braid}}]\label{th:heisenberg}
     %\zh{ Is $k=\kappa$?}s{no, \(\kappa = k + n\)} \zh{Why Theorem 2.15 has $\kappa\neq 0$ and this one $k\neq 0$? I would suggest to write for $\kappa\neq n$}
    For \(\kappa \notin \{0,n\}\), 
    the image of \(\operatorname{ev}_{-}\widehat{Y}(1,-\kappa)\) contains \(U\widehat{\mathfrak{gl}}_n\).
            \qed
    \end{theorem}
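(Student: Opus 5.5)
Since the image of $\operatorname{ev}_{-}$ is a subalgebra, it is enough to show that it contains a set of generators of $U\widehat{\mathfrak{gl}}_n$. Under the identification \eqref{eq:glnthroughsln}, and working modulo $K-\kappa+n$, such a set consists of the Chevalley generators $e_i,f_i,h_i$ ($i\in\mathbb{Z}/n\mathbb{Z}$) of $\widehat{\mathfrak{sl}}_n$ together with the Heisenberg modes $\mathtt{a}_r$, $r\in\mathbb{Z}$. The centre $K$ acts as the scalar $\kappa-n$, so it needs no separate treatment.

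\emph{Step 1: the degree-zero generators.} I would use Kodera's explicit formulas, transported through $F_{\widetilde{\mathfrak{gl}}_n}$ and $F_Y$ as in Remark~\ref{th:FY}. They give that $\operatorname{ev}_{-}(\mathbf{x}^\pm_{i,0})$ and $\operatorname{ev}_{-}(\mathbf{h}_{i,0})$ are, up to nonzero normalisation, exactly the Chevalley generators of $\widehat{\mathfrak{sl}}_n$, for $i=1,\dots,n$ (index $n$ playing the role of $0$). Hence $U\widehat{\mathfrak{sl}}_n$ lies in the image. It remains to produce the Heisenberg modes $\mathtt{a}_r=\sum_j E_{jj}\otimes t^r$.

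\emph{Step 2: first-order generators.} Next I would write $\operatorname{ev}_{-}(\mathbf{h}_{i,1})$, or better $\operatorname{ev}_{-}(\mathbf{x}^\pm_{i,1})$. Kodera's formula has the shape
\begin{equation*}
\operatorname{ev}_{-}(\mathbf{x}^+_{i,1}) = c_i\, e_i + \hbar\sum_{s\ge 0}\sum_{k=1}^n \bigl(E_{ik}t^{-s}\bigr)\bigl(E_{k,i+1}t^{s}\bigr) + \hbar\sum_{s\ge 1}\sum_{k=1}^n \bigl(E_{k,i+1}t^{-s}\bigr)\bigl(E_{ik}t^{s}\bigr),
\end{equation*}
up to normal ordering and constant shifts, where $c_i$ is linear in $\kappa$. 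Both sides lie in the image, and by Step 1 so does everything in $U\widehat{\mathfrak{sl}}_n$.

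Commuting this element with $f_{i+1}\otimes t^{r}$-type elements, i.e.\ with $E_{i+1,i}t^r$ obtained from iterated brackets of Chevalley generators, gives quadratic expressions plus linear ones. I would then subtract the quadratic parts. For this I would decompose each $E_{kk}t^m$ appearing in them as an $\widehat{\mathfrak{sl}}_n$ part plus $\tfrac1n\mathtt{a}_m$. Using the relation $[\mathtt{a}_r,\mathtt{a}_s]=r\delta_{r+s,0}K'$, the sum $\sum_s \mathtt{a}_{-s}\mathtt{a}_{s+r}$ collapses under commutators with $\widehat{\mathfrak{sl}}_n$ elements. What remains is $\bigl(\alpha\kappa+\beta\bigr)\,\mathtt{a}_r$ modulo the image, for explicit constants $\alpha,\beta$ depending on $n$ and $\hbar$.

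\emph{Step 3: nonvanishing of the coefficient.} The key computation, and the main obstacle, is to verify that this scalar coefficient of $\mathtt{a}_r$ is nonzero. I expect it to be proportional to $\kappa(\kappa-n)$ or a similar product, which matches the hypothesis $\kappa\notin\{0,n\}$. I would first check this for $n=3$ and $r=\pm1$, keeping careful track of the normal-ordering constants. I would then treat $\mathtt{a}_0$ separately, since the $r=0$ bracket degenerates. For $\mathtt{a}_0$ I would use $\operatorname{ev}_{-}(\mathbf{h}_{i,1})$ directly: its degree-zero linear term contains $\mathtt{a}_0$ with a coefficient of the same type. The higher modes $\mathtt{a}_r$, $|r|\ge 2$, can also be obtained by iterating commutators with $\widehat{\mathfrak{sl}}_n$ elements of degree $\pm1$. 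With all $\mathtt{a}_r$ in the image, the image contains $U\widehat{\mathfrak{gl}}_n$.
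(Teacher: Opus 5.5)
\textbf{Assessment: there is a genuine gap, in Step 2.}

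Your Step 1 is fine. The degree-zero generators map to the Chevalley generators, so $U\widehat{\mathfrak{sl}}_n$ (with $K=\kappa-n$) lies in the image, and everything reduces to producing the modes $\mathtt a_r$.

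The gap is in Step 2. The quadratic parts of $\operatorname{ev}_{-}(\mathbf{x}^+_{i,1})$, and of its commutators with $E_{i+1,i}t^r$, are half-infinite sums such as $\sum_{s\ge 0}\sum_k (E_{ik}t^{-s})(E_{ki}t^{s+r})$. These live only in the completion. Step 1 puts only the \emph{finite} algebra $U\widehat{\mathfrak{sl}}_n$ into the image, so nothing allows you to ``subtract the quadratic parts''.

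Your proposed mechanism does not remove them either. Write $E_{kk}t^m$ as its traceless part plus $\frac1n\mathtt a_m$, and then commute with $\widehat{\mathfrak{sl}}_n$:
\begin{itemize}
\item the pure Heisenberg sum $\sum_s\mathtt a_{-s}\mathtt a_{s+r}$ is simply annihilated, and contributes no linear $\mathtt a_r$;
\item the $\widehat{\mathfrak{sl}}_n\cdot\widehat{\mathfrak{sl}}_n$ tails and the mixed tails $\sum_s (Xt^{-s})\,\mathtt a_{s+r}$ are sent to infinite tails of the same kind, plus finitely many linear terms coming from the central extension.
\end{itemize}
So the claim that ``what remains is $(\alpha\kappa+\beta)\mathtt a_r$ modulo the image'' is circular. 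It presupposes that an infinite remainder, which itself involves the $\mathtt a_m$, already lies in the image.

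\textbf{The missing idea.} You need two elements of the image with literally the same infinite tail, so that their difference is a genuinely finite element of $U\widehat{\mathfrak{gl}}_n$ from which $\mathtt a_r$ can be extracted. This could come, for instance, from two different level-one lifts of the same loop element, or from images under automorphisms that shift the cut-off $s\ge 0$ in the sums. The paper does not reprove the statement; it cites Kodera, whose argument obtains such elements from the braid group action on the affine Yangian, which is compatible with $\operatorname{ev}_{-}$.

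\textbf{Further problems with Step 3.}
\begin{itemize}
\item The decisive nonvanishing of the coefficient, which is exactly where $\kappa\notin\{0,n\}$ must enter, is only guessed, not computed.
\item $\mathtt a_r$ commutes with $\widehat{\mathfrak{sl}}_n$. Iterated commutators with degree $\pm1$ elements of $\widehat{\mathfrak{sl}}_n$ therefore cannot turn $\mathtt a_{\pm1}$ into $\mathtt a_{\pm 2}$; each mode has to be produced separately.
\item $\operatorname{ev}_{-}(\mathbf h_{i,1})$ carries the same infinite tails, and $\mathtt a_0$ is central in $\widehat{\mathfrak{gl}}_n$. So $\mathtt a_0$ cannot be read off the linear part of $\operatorname{ev}_{-}(\mathbf h_{i,1})$ as you propose.
\end{itemize}
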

    \begin{proposition}
        We have \(\operatorname{ev}_{-}^{*}\mathcal{L}_{\Lambda,u}\simeq \mathtt{L}_{\vec{\mathbf{Q}}}\) for 
        \begin{equation}\label{eq:highestweight}
            \mathbf{Q}_i(v) = \frac{- v + \mu_{i+1}-\frac{i}{2}-1}{- v + \mu_{i}-\frac{i}{2}-1}, \quad \text{for} \quad i\in\{1,\dots, n\},
        \end{equation}
        where \(\mu_{i}\) are determined by 
        \begin{equation}\label{eq:lambdai}
        \begin{cases}
            \mu_{i}- \mu_{i+1} = (\Lambda,\alpha_i), \quad  \text{for} \quad i\in\{1,\dots, n-1\},\\
            \sum_{i=1}^n \mu_i = u.
        \end{cases}           
        \end{equation}     
    \end{proposition}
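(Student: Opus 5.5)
We need to show that the pullback $\operatorname{ev}_{-}^{*}\mathcal{L}_{\Lambda,u}$ is an irreducible highest weight $\widetilde{Y}(1,-\kappa)$-module whose highest weight series are given by \eqref{eq:highestweight}. The plan has three steps.

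\emph{Step 1: irreducibility.} Since $\mathcal{L}_\Lambda$ is an irreducible $\widetilde{\mathfrak{sl}}_n$-module and $\mathcal{F}^k_u$ is an irreducible $\widetilde{\mathtt{Heis}}$-module (for $k\neq0$), the tensor product $\mathcal{L}_{\Lambda,u}$ is irreducible over $\widetilde{\mathfrak{gl}}_n$ via \eqref{eq:glnthroughsln}. It is therefore irreducible over $U\widehat{\mathfrak{gl}}_n$ together with $D$. By Theorem~\ref{th:heisenberg}, the image of $\operatorname{ev}_-$ contains $U\widehat{\mathfrak{gl}}_n$; moreover $D$ is mapped to $D$, up to a shift that is irrelevant for the argument. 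Hence every $\widetilde{Y}$-submodule is a $\widetilde{\mathfrak{gl}}_n$-submodule, and $\operatorname{ev}_-^*\mathcal{L}_{\Lambda,u}$ is irreducible.

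\emph{Step 2: highest weight vector.} We take $\zeta=\zeta_\Lambda\otimes\zeta^k_u$. After the twist in Remark~\ref{th:FY}, $\operatorname{ev}_-$ is compatible with highest weight modules. In particular $\operatorname{ev}_-(\mathbf{x}^+_{i,r})$ is a (completed) combination of elements of $\widehat{\mathfrak{n}}_+$ and the positive Heisenberg modes, times other elements, all of $D$-degree $+1$. Indeed, $[D,\mathbf{x}^+_{i,r}]=\mathbf{x}^+_{i,r}$ forces this, together with the $\mathfrak{h}$-weight. Since the $D$-eigenvalue on $\mathcal{L}_{\Lambda,u}$ is bounded above and is attained only on $\mathbb{C}\zeta$, we get $\mathbf{x}^+_i(v)\zeta=0$. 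Likewise $\mathbf{h}_i(v)$ has degree zero and weight zero, so it acts on $\zeta$ by a scalar series $\mathbf{Q}_i(v)$, and $D\zeta=0$.

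By the universal property of $\mathtt{M}_{\vec{\mathbf{Q}}}$ we then obtain a nonzero map $\mathtt{M}_{\vec{\mathbf{Q}}}\to\operatorname{ev}_-^*\mathcal{L}_{\Lambda,u}$. It is surjective by Step 1, and its kernel is a proper submodule. By Proposition~\ref{th:IrrModYangian}, irreducibility of the target gives $\operatorname{ev}_-^*\mathcal{L}_{\Lambda,u}\simeq\mathtt{L}_{\vec{\mathbf{Q}}}$.

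\emph{Step 3: computing $\mathbf{Q}_i(v)$.} This is the main computation and the main obstacle. We need Kodera's explicit formulas for $\operatorname{ev}_-(H_{i,0})$ and $\operatorname{ev}_-(H_{i,1})$, transported through the isomorphism $\phi$ with the parameters in \eqref{eq:e&h} and through $F_Y$, $F_{\widetilde{\mathfrak{gl}}_n}$.

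The degree-zero term is easy: $\mathbf{h}_{i,0}$ acts by $(\Lambda,\alpha_i)=\mu_i-\mu_{i+1}$. The degree-one term is a quadratic normally ordered sum over the loop modes. On $\zeta$, all terms with a positive mode on the right vanish. What remains is a diagonal Cartan contribution of the form $\tfrac{1}{2}(\mu_i^2-\mu_{i+1}^2)$ plus linear shifts in $\mu_i,\mu_{i+1}$; these shifts come from the $\tfrac{i}{2}$ in the $\phi$-identification and from the normal ordering constants. This fixes the first two coefficients of $\mathbf{Q}_i(v)$.

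To get the full series we use the following argument. In $\mathtt{L}_{\vec{\mathbf{Q}}}$, the vector $\mathbf{x}^-_{i,0}\zeta$ spans, together with its $\mathbf{h}_i$-images, the space $\mathbf{x}^-_i(v)\zeta$. Applying $\operatorname{ev}_-$, this space lies in the one-dimensional weight space $\mathbb{C}f_i\zeta$ (or vanishes). Standard rank-one Yangian theory then shows that $\mathbf{Q}_i(v)$ is rational of degree $\le1$, so it is determined by $\mathbf{h}_{i,0}$ and $\mathbf{h}_{i,1}$. Writing it as $\frac{v-a}{v-b}$ and matching the first two coefficients yields \eqref{eq:highestweight}.

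The case $i=n$ uses $\mathbf{h}_0=\,'\mathbf{h}_n$ and the level shift; this produces $\mu_{n+1}$, interpreted via periodicity with the $\kappa$ shift.
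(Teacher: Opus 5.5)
Your Steps 1--2 follow the paper's argument. The paper also uses Theorem~\ref{th:heisenberg} to see that $\operatorname{ev}_-^*\mathcal{L}_{\Lambda,u}$ is generated by $\zeta_\Lambda\otimes\zeta^k_u$ and is irreducible. It then concludes via the surjection $\mathtt{M}_{\vec{\mathbf{Q}}}\to\operatorname{ev}_-^*\mathcal{L}_{\Lambda,u}$ and Proposition~\ref{th:IrrModYangian}. The one input it does not prove, the highest weight itself, it takes from Kodera's Theorem~4.1.

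One justification in your Step 2 needs repair. For the homogeneous grading, the top $D$-eigenspace of $\mathcal{L}_{\Lambda,u}$ is not $\mathbb{C}\zeta$: it is the whole irreducible $\mathfrak{gl}_n$-module of highest weight $\mu$. So a degree count alone does not kill $\mathbf{x}^+_{i}(v)\zeta$. The right reason is that $\operatorname{ev}_-(\mathbf{x}^+_{i,r})$ has $\widetilde{\mathfrak h}$-weight $\alpha_i$, and $\Lambda+\alpha_i$ is not a weight of $\mathcal{L}_{\Lambda,u}$.

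You depart from the paper in Step 3, and there the sketch does not work as written. The reduction is sound. Since $\mathbf{x}^+_{i,r}\mathbf{x}^-_{i,s}\zeta=\mathbf{h}_{i,r+s}\zeta$ and the weight space of weight $\Lambda-\alpha_i$ is at most one-dimensional (also for $i=n$), the Hankel matrix of eigenvalues of $\mathbf{h}_{i,r+s}$ has rank at most one. Hence $\mathbf{Q}_i(v)=1+c/(v-a)$ is fixed by the eigenvalues of $\mathbf{h}_{i,0}$ and $\mathbf{h}_{i,1}$. But the formula you want forces
\[
\mathbf{h}_{i,1}\zeta=(\mu_i-\mu_{i+1})\Bigl(\mu_i-\frac{i}{2}-1\Bigr)\zeta,
\]
whose quadratic part is $\mu_i^2-\mu_i\mu_{i+1}$. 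Your predicted ``$\tfrac12(\mu_i^2-\mu_{i+1}^2)$ plus linear shifts'' differs from this by $\tfrac12(\mu_i-\mu_{i+1})^2$, which no linear shift can absorb. With your form, the pole of $\mathbf{Q}_i$ would sit at $\tfrac12(\mu_i+\mu_{i+1})+\mathrm{const}$ rather than at $\mu_i-\tfrac i2-1$. So matching the first two coefficients would not give \eqref{eq:highestweight}.

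The asymmetry between $\mu_i$ and $\mu_{i+1}$ has to come out of the precise, non-symmetric normal ordering in Kodera's formula for the image of $H_{i,1}$, transported through $\phi$, $F_Y$ and $F_{\widetilde{\mathfrak{gl}}_n}$. The node $i=n$ needs its own formula, in which the level enters. So Step 3 is not yet a derivation. Either carry out that computation with the actual formula, or, as the paper does, import the highest weight from Kodera's Theorem~4.1.
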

    Note that \(\mu_i\) are eigenvalues of \(E_{ii}\otimes t^0\in \widehat{\mathfrak{gl}}_n\)  on the highest weight vector in \(\mathcal{L}_{\Lambda,u}\).
    \begin{proof}
    By analogy with \cite[Thm.4.1]{kodera2021guay} for \(\operatorname{ev}_{-}\) we have
    \begin{equation}
        \mathbf{x}_i^{+}(v)\zeta_{\Lambda}\otimes \zeta^{k}_u = 0,\quad \mathbf{h}_i(v)\zeta_{\Lambda}\otimes \zeta^{k}_u = \mathbf{Q}_i(v)\zeta_{\Lambda}\otimes \zeta^{k}_u, \quad \text{for} \quad i\in\{1,\dots,n\},
    \end{equation}
    where \(\mathbf{Q}_i(v)\) are determined by formula \eqref{eq:highestweight}. By Theorem \ref{th:heisenberg}  \(\operatorname{ev}_{-}^{*}\mathcal{L}_{\Lambda,u}\) is generated by \(\zeta_{\Lambda}\otimes \zeta^{k}_u\). 
    So, there is a surjective homomorphism \(j:\mathtt{M}_{\vec{\mathbf{Q}}}\rightarrow\operatorname{ev}_{-}^{*}\mathcal{L}_{\Lambda,u}\). The module \(\operatorname{ev}_{-}^{*}\mathcal{L}_{\Lambda,u}\) is irreducible. So, by Proposition \ref{th:IrrModYangian}  we have  \(
        \mathtt{L}_{\vec{\mathbf{Q}}}\simeq \operatorname{ev}_{-}^{*}\mathcal{L}_{\Lambda,u}\).
    \end{proof}

\section{Yangian action on thin modules}\label{sec:thin}
    \subsection{Yangian action on Gelfand-Tsetlin patterns}

    Let \(\underline{d}\) be pattern  of integer nonnegative numbers \(\{d_{i,j}\}_{i
    \ge j\in\mathbb{Z}}\). We call \(\underline{d}\) an \textit{affine Gelfand-Tsetlin pattern}  if
    \begin{enumerate}
        \item \label{def:dperidicity} \(d_{i,j}\ge d_{i+1,j}\) for all \(i\ge j\), %\zh{I would write: for all $i\geq j$}
        \item \(d_{i+n,j+n} = d_{i,j}\) for all \(i\ge j\), %\zh{I would write: for all $i\geq j$}
        \item \label{def:finitness} 
        for any $i\in\mathbb{Z}$ only finitely many numbers $d_{ij}$ are non-zero. %\zh{I would write: for any $i\in\mathbb{Z}$ only finitely many numbers $d_{ij}$ are non-zero}
    \end{enumerate}
    We denote the set of affine Gelfand-Tsetlin patterns by \(\widehat{\mathtt{GT}}\).
%    \begin{align}
%        d_i(\underline{d}) &= \sum_{j\le i} d_{ij},\\ \label{eq:lengthd}
%        L(\underline{d}) &= \max(i - j| d_{i,j}>0).
%    \end{align}
 We denote the affine Gelfand-Tsetlin pattern with all \(d_{ij} = 0\)  by \(\underline{0}\).
    
    In figures, we denote the inequality \(a\le b\) by \(a \leftarrow b\). % We use black arrows to denote inequalities that are satisfied by any pattern from \(\widehat{\mathtt{GT}}\). 
     See the conditions for affine Gelfand-Tsetlin patterns on Figure \ref{pic:gt-pattern}.
    \begin{figure}[h]
        \centering
        \begin{tikzpicture}!%[scale=0.8]
            \node (a00) at (-1,1) {\(\dots\)};
            
            \node (a10) at (-1,0) {\(d_{1,0}\)};        \node (a11) at (0,0) {\(d_{1,1}\)};
    
            \node (a20) at (-1,-1) {\(d_{2,0}\)};
            \node (a21) at (0,-1) {\(d_{2,1}\)};
            \node (a22) at (1,-1) {\(d_{2,2}\)};
    
            \node (a30) at (-1,-2) {\(\dots\)};
            \node (a31) at (0,-2) {\(d_{3,1}\)};
            \node (a32) at (1,-2) {\(d_{3,2}\)};
            \node (a33) at (2,-2) {\(d_{3,3}\)};
    
            \node (a41) at (0,-3) {\(\dots\)};
            \node (a42) at (1,-3) {\(\dots\)};
            \node (a43) at (2,-3) {\(\dots\)};
            \node (a44) at (3,-3) {\(\dots\)};
    
            \draw[->] (a00) -- (a10);
    
            \draw[->] (a10) -- (a20);
            \draw[->] (a11) -- (a21);
            
            \draw[->] (a20) -- (a30);
            \draw[->] (a21) -- (a31);
            \draw[->] (a22) -- (a32);
    
            \draw[->] (a31) -- (a41);
            \draw[->] (a32) -- (a42);
            \draw[->] (a33) -- (a43);
        \end{tikzpicture}
        \caption{Inequalities for \(\underline{d}\in\widehat{\mathtt{GT}}\)}
        \label{pic:gt-pattern}
    \end{figure}

    \begin{remark}
        Every pattern \(\underline d\in\widehat{\mathtt{GT}}\) corresponds to an \(n\)-tuple of Young diagrams \(\{\lambda^{(a)}\}_{a=1}^n\) via
        \begin{equation}\label{eq:nYoungDiagrams}
            \lambda^{(a)}_j=d_{a+j-1,a}.
        \end{equation}
    \end{remark}
     For a given  \((\Lambda,u)\in\omega_0^\perp\oplus\mathbb{C}\),  define a sequence \(\{\mathtt{y}_i(\Lambda,u)\}_{i\in\mathbb{Z}}\)  by the system of equations 
    \begin{align}\label{eq:yi}
        \begin{cases}
            \sum_{s = 1}^n \mathtt{y}_{s}(\Lambda,u)=  u -\frac{n(n+2)}{2},\\
            \mathtt{y}_{l}(\Lambda,u) - \mathtt{y}_{r}(\Lambda,u) = \big(\sum_{s=l}^{r - 1}\alpha_s, \Lambda+\rho\big) \quad \text{for any } l<r\in\mathbb{Z} .               
        \end{cases}
    \end{align}
    Recall that \(\mu_i\) are defined by formulas \eqref{eq:lambdai}. Then,
        \begin{equation}
            \mathtt{y}_{i} = \mu_i - i - \frac{1}{2}, \quad \text{for} \quad i\in\{1,\dots, n\}.
        \end{equation}
    Note, that
    \begin{equation}
           \mathtt{y}_{i+n}(\Lambda,u) = \mathtt{y}_{i}(\Lambda,u)-\kappa, ~ \kappa = (\delta,\Lambda+\rho).
    \end{equation}
    Let
    \begin{equation}
        p_{ij}(\Lambda,u,\underline{d}) = -\mathtt{y}_j(\Lambda,u)+d_{ij}.
    \end{equation}
     Denote \(d_i(\underline{d}) = \sum_{j\le i} d_{ij}\). To simplify notations we do not write the arguments of \(\mathtt{y}_i, p_{ij}, d_i\) further.

For a pattern $\underline d$ denote \(\underline{d}^{\pm}_{ij}\) the patterns obtained from $\underline d$  by adding $\pm 1$ to element $d_{ij}$ . The same change is applied simultaneously to all of its periodic copies.
    Let \begin{equation}\label{eq:GLu}
        \mathbb{M}_{\Lambda,u}= \bigoplus_{\underline{d}\in\widehat{\mathtt{GT}}}\mathbb{C}\xi_{\underline{d}}.
    \end{equation}
    \begin{theorem}[{\cite{feigin2011yangians}[Thm.3.11]}]\label{th:slactsonGTVerma}
    For a generic weight \(\Lambda\)  there is an action of \(\widehat{\mathfrak{sl}}_n\) on  \(\mathbb{M}_{\Lambda,u}\) given by %formulas
        \begin{subequations}\label{eq:ActionInGTBasis}
            \begin{align}
                h_{i}^{\Lambda,u}\xi_{\underline{d}} &= \xi_{\underline{d}}((\mathtt{y}_i - \mathtt{y}_{i+1}-1) + d_{i-1} - 2d_i + d_{i+1}),\\
                e_{i}^{\Lambda,u} \xi_{\underline{d}} &=-\sum_{j\le i}\xi_{\underline{d}^{-}_{ij}}(p_{i+1,j}-p_{ij})(p_{i+1,i+1}-p_{ij})\prod_{s\leq i, s\ne j}\frac{p_{i+1,s}-p_{ij}}{p_{is}-p_{ij}},                                 
               \\            f_{i}^{\Lambda,u}\xi_{\underline{d}}&= 
               \xi_{\underline{d}^{+}_{ii}}\prod_{s\leq i-1} \frac{p_{i-1,s}-p_{ii}}{p_{is}-p_{ii}} + \sum_{j< i}\xi_{\underline{d}^{+}_{ij}}\frac{p_{i-1,j}-p_{ij}}{p_{ii}-p_{ij}}\prod_{s\leq i-1, s\neq j} \frac{p_{i-1,s}-p_{ij}}{p_{is}-p_{ij}}.
            \end{align}
        \end{subequations}
%       \zh{\sout{where \(\underline{d}^{\pm}_{ij}\) is a pattern with entries the same as \(\underline{d}\) except that \(d_{ij}\) is changed to \(d_{ij}\pm 1\) simultaneously with  all \(d_{i+mn,j+mn}\) for \(m\in\mathbb{Z}\).}}
        \qed
    \end{theorem}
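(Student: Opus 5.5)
Theorem~\ref{th:slactsonGTVerma} is cited from \cite{feigin2011yangians}, so the most economical proof imports it. Still, here is how I would verify it directly.

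The plan is to check the Chevalley--Serre relations of the theorem of \cite{kac:1990} (the presentation of $\widehat{\mathfrak{sl}}_n$ by $e_i,h_i,f_i$) on each basis vector $\xi_{\underline d}$. The formulas are periodic in $i$ (indices mod $n$, with $\mathtt{y}_{i+n}=\mathtt{y}_i-\kappa$). So it suffices to treat relations among generators with indices in a window of consecutive $i$'s.

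First I would check well-definedness. For generic $\Lambda$, the differences $p_{is}-p_{ij}=\mathtt{y}_j-\mathtt{y}_s+d_{is}-d_{ij}$ never vanish for $s\ne j$, since $\mathtt{y}_j-\mathtt{y}_s=(\alpha_{j,s},\Lambda+\rho)$ (or its negative) is non-integral. Hence every denominator is nonzero. Next I would check that the sums are finite and land in $\widehat{\mathtt{GT}}$. In $e_i$, the factor $p_{i+1,j}-p_{ij}=d_{i+1,j}-d_{ij}$ kills terms where lowering $d_{ij}$ would break $d_{ij}\ge d_{i+1,j}$. In $f_i$, the factor $p_{i-1,j}-p_{ij}$ kills terms where raising $d_{ij}$ would break $d_{i-1,j}\ge d_{ij}$; for the $j=i$ term the constraint comes from the $d_{i-1,\cdot}$ side, by periodicity and finiteness. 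Only the finitely many $j$ with $d_{ij}\neq d_{i\pm1,j}$ contribute.

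Second, the relations $[h_i,e_j]=C_{ij}e_j$ and $[h_i,f_j]=-C_{ij}f_j$ are immediate. The operator $h_i$ is diagonal, and $d_{i-1}-2d_i+d_{i+1}$ shifts by $\mp C_{ij}$ when a single $d_{jk}$ changes by $\mp1$.

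Third, and this is the main computation, I would prove $[e_i,f_j]=\delta_{ij}h_i$. For $i\ne j$, the operators act on different rows, and the coefficients of $e_i$ and $f_j$ depend on each other's rows only through factors such as $p_{i+1,s}$ or $p_{i-1,s}$. I expect the two orders of application to produce identical products, so the commutator vanishes; this can be checked term by term. For $i=j$, the off-diagonal terms $\underline d\mapsto\underline d^{+}_{ik}{}^{-}_{il}$ with $k\ne l$ must cancel, and the diagonal coefficient must equal $\mathtt{y}_i-\mathtt{y}_{i+1}-1+d_{i-1}-2d_i+d_{i+1}$. The standard tool is partial fractions: view the coefficient as a sum of residues of the rational function
\[
R(z)=\frac{\prod_{s\le i+1}(p_{i+1,s}-z)\prod_{s\le i-1}(p_{i-1,s}-z)}{\prod_{s\le i}(p_{is}-z)(p_{is}-z-1)}
\]
over its poles $z=p_{ij}$. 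The vanishing of the sum of all residues, including the one at infinity, then gives both the cancellation and the diagonal value. The residue at infinity should produce the $\mathtt{y}_i-\mathtt{y}_{i+1}$ term.

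Fourth, I would check the Serre relations. For $|i-j|>1$ (mod $n$) they are commutation and follow as in the third step. For adjacent $i,j$, I would either verify the cubic relations by a similar residue identity, or argue that $\mathbb{M}_{\Lambda,u}$ is a highest-weight module whose weight multiplicities match the Verma character. The second route makes the Serre relations follow from the Gabber--Kac theorem applied to the resulting module over the algebra without Serre relations.

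The main obstacle is the infinite-rank nature of the products. The products range over all $s\le i$, and only finitely many $d_{is}$ are nonzero. So one must show that the infinite products converge, or telescope, using periodicity, and that the residue bookkeeping survives this. I would handle it by truncating at $s\ge i-N$ with $N$ large: for $s<i-N$ all relevant $d$'s vanish and the factors collapse to ratios that cancel exactly between numerator and denominator. This reduces every identity to the finite $\mathfrak{gl}_N$ Gelfand--Tsetlin identities.
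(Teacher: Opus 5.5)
Importing the result is exactly what the paper does: it gives no argument beyond the citation to \cite{feigin2011yangians}. Your direct verification is therefore an independent route. Its overall shape is viable, but two steps fail as written.

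\textbf{The residue function for $[e_i,f_i]$.} Your $R(z)$ is the wrong function. For $s\ll i$ its factors are $(-\mathtt{y}_s-z)/(-\mathtt{y}_s-z-1)\neq 1$, so the product over $s$ does not terminate and $R$ is not rational. Moreover, its residue at $z=p_{ij}$ contains $\prod_s(p_{i+1,s}-p_{ij})\prod_s(p_{i-1,s}-p_{ij})$, which is neither of the two products occurring in $[e_i,f_i]$. Write $e_i\xi_{\underline d}=\sum_j E_j(\underline d)\,\xi_{\underline d^{-}_{ij}}$ and $f_i\xi_{\underline d}=\sum_j F_j(\underline d)\,\xi_{\underline d^{+}_{ij}}$. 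The formulas of the theorem then read
\[
E_j(\underline d)=-\frac{\prod_{s\le i+1}(p_{i+1,s}-p_{ij})}{\prod_{s\le i,\,s\ne j}(p_{is}-p_{ij})},\qquad
F_j(\underline d)=\frac{\prod_{s\le i-1}(p_{i-1,s}-p_{ij})}{\prod_{s\le i,\,s\ne j}(p_{is}-p_{ij})}.
\]
One of the numerator rows must be shifted by $1$:
\[
\phi(z)=\frac{\prod_{s\le i-1}(p_{i-1,s}-z)\,\prod_{s\le i+1}(p_{i+1,s}-z-1)}{\prod_{s\le i}(p_{is}-z)(p_{is}-z-1)} .
\]
This function has the properties you need:
\begin{enumerate}
\item Its factors cancel for $s\ll i$, so it is rational.
\item Its residues at $z=p_{ij}$ and $z=p_{ij}-1$ are $-F_j(\underline d)E_j(\underline d^{+}_{ij})$ and $E_j(\underline d)F_j(\underline d^{-}_{ij})$.
\item It expands as $\phi(z)=1-h/z+O(z^{-2})$ with $h=\mathtt{y}_i-\mathtt{y}_{i+1}-1+d_{i-1}-2d_i+d_{i+1}$.
\end{enumerate}
Summing the residues gives exactly the diagonal part of $[e_i,f_i]=h_i$. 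There are also two minor slips. The $j=i$ term of $f_i$ needs no constraint, since $d_{ii}$ is bounded only from below. And lowering some $d_{jk}$ shifts the $h_i$-eigenvalue by $+C_{ij}$, not $-C_{ij}$.

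\textbf{The Gabber--Kac route to the Serre relations.} This route rests on a false premise. As a module over $\widehat{\mathfrak{sl}}_n$ (or over the algebra without Serre relations), $\mathbb{M}_{\Lambda,u}$ is not a highest weight module with Verma multiplicities. By Corollary~\ref{th:GLambdauHeisenbergAction} it is $\mathcal{M}_\Lambda\otimes\mathcal{F}^k_u$, with the Fock space as a multiplicity space. Already at weight $\Lambda-\delta$ there are $2^n-1$ patterns, against the Verma multiplicity $2^n-2$. So the module is not generated by $\xi_{\underline 0}$, and it has $\widehat{\mathfrak{sl}}_n$-singular vectors of weights $\Lambda-m\delta$. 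The argument ``a Serre element applied to the highest weight vector gives a singular vector, hence zero'' therefore does not apply. Your residue alternative for the cubic relations is only announced, not carried out.

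What actually proves the Serre relations, and in fact every relation, is your final truncation step, and you should make it the main argument:
\begin{enumerate}
\item On columns $s\ge i-N$, the operators $e_i,f_i,e_{i+1},f_{i+1}$ act on rows $i,i+1$ by the classical Gelfand--Tsetlin formulas, after the change of variables $l_{ks}=-p_{ks}$.
\item Rows $i-1$ and $i+2$ are read but never modified. For $n=3$ they are periodic copies of each other, which is harmless for that reason.
\item Since $\mathtt{y}_j-\mathtt{y}_s\notin\mathbb{Z}$ for $j\neq s$, no denominator vanishes, even at intermediate non-GT patterns.
\end{enumerate}
Hence the needed identities are specializations of the rational identities underlying generic Gelfand--Tsetlin $\mathfrak{gl}_N$-modules.
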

    \begin{remark}
        This action does not make  \(\mathbb{M}_{\Lambda,u}\) an irreducible \(\widehat{\mathfrak{sl}}_n\) module. In fact, there is an action of the Heisenberg algebra on \(\mathbb{M}_{\Lambda,u}\) commuting with \(\widehat{\mathfrak{sl}}_n\), we will see it in the Corollary \ref{th:GLambdauHeisenbergAction}.
    \end{remark}
        
    \begin{theorem}[{\cite{feigin2011yangians}[Thm.3.20]}]\label{th:YactsonGTVerma}
        If \(\Lambda\) is generic then there is an  action of Yangian \(\widetilde{Y}(1,-\kappa)\) on \(\mathbb{M}_{\Lambda,u}\) given by %formulas 
        \begin{subequations}\label{eq:ActionInGTBasisYangian}
            \begin{align}
                \mathbf{x}^{+, \Lambda,u}_{i}(v) \xi_{\underline{d}} &= \sum_j \left(-v -(p_{ij}-\frac{1+i}{2})\right)^{-1} (-e_{i,\underline{d},\underline{d}^{-}_{ij}})\xi_{\underline{d}^{-}_{ij}},
                \\   
                \mathbf{x}^{-, \Lambda,u}_{i}(v) \xi_{\underline{d}} &= \sum_j\left(-v - (p_{ij}+\frac{1-i}{2})\right)^{-1}(-f_{i,\underline{d},\underline{d}^{+}_{ij}})\xi_{\underline{d}^{+}_{ij}},\\ \label{eq:ActionInGTBasisYangianH} 
                \mathbf{h}_{i}^{\Lambda,u}(v)\xi_{\underline{d}} &= \frac{(-v+\frac{i+1}{2}-p_{i+1,i+1})}{(-v+\frac{i-1}{2}-p_{ii})}\prod_{j\le i} \frac{(-v+\frac{i+1}{2}-p_{i+1,j})(-v+\frac{i-1}{2}-p_{i-1,j-1})}{(-v+\frac{i+1}{2}-p_{ij})(-v+\frac{i-1}{2}-p_{i,j-1})}\xi_{\underline{d}},\\
                D \xi_{\underline{d}} &= - d_n \xi_{\underline{d}}.
            \end{align} 
        \end{subequations}
        \qed
    \end{theorem}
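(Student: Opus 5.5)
The plan is to verify the defining relations of $\widetilde{Y}_\circ(1,-\kappa)$ directly on the basis $\xi_{\underline d}$, and then obtain the Serre relations \eqref{eq:Serrepm} for free from Corollary~\ref{th:irrquad=irrwithserre}.

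Genericity of $\Lambda$ ensures that no numbers $p_{ij}-p_{ik}$ or $p_{ij}-p_{i\pm1,k}$ with $j\not\equiv k$ are integers. Hence all denominators in \eqref{eq:ActionInGTBasisYangian} are nonzero, and each series $\mathbf{x}^\pm_i(v)\xi_{\underline d}$ is a finite sum of simple poles in $v$. I would use the convention $\mathtt{y}_{i+n}=\mathtt{y}_i-\kappa$ and the periodicity of $\underline d$. These turn the twisted relations involving ${}'\mathbf{h}_n$, ${}'\mathbf{x}^\pm_n$ into the same formulas with index $0$. So it suffices to check the relations uniformly for all $i,j\in\mathbb{Z}$ with $|i-j|\le1$ or $|i-j|\ge2$. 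The relations then split into the following checks.

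\textbf{(a) Diagonal relations.} $\mathbf{h}_i(v)$ acts diagonally, so $[\mathbf{h}_{i,r},\mathbf{h}_{j,s}]=0$ is automatic. The relation $[\mathbf{h}_{i,0},\mathbf{x}^\pm_{j,s}]$ follows from the coefficient of $v^{-1}$ in \eqref{eq:ActionInGTBasisYangianH}, which recovers $h_i^{\Lambda,u}$ of Theorem~\ref{th:slactsonGTVerma}. The $D$-relations are immediate because $d_n$ changes by $\pm1$ exactly when the move is in row $n$; here I would recheck the grading convention against $\mathbf{x}^\pm$.

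\textbf{(b) $\mathbf{h}$--$\mathbf{x}$ relations.} I would rewrite the quadratic $\mathbf{h}$--$\mathbf{x}$ relations in generating-function form,
\[
\mathbf{h}_i(u)\,\mathbf{x}^\pm_j(v)\,\mathbf{h}_i(u)^{-1}=\frac{u-v\pm \tfrac{C_{ij}}{2}}{u-v\mp\tfrac{C_{ij}}{2}}\,\mathbf{x}^\pm_j(v)\;+\;(\text{terms independent of } v),
\]
so that it suffices to check a statement about eigenvalues. Namely, the ratio of eigenvalues of $\mathbf{h}_i(u)$ on $\xi_{\underline d^{\pm}_{jk}}$ and on $\xi_{\underline d}$ should equal this factor evaluated at the pole $v$ of the $(j,k)$ term. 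That is a direct inspection of which factors in \eqref{eq:ActionInGTBasisYangianH} contain $p_{jk}$. For $j=i$ the pair of factors $(\cdot-p_{ik})$ shifts by one. For $j=i\pm1$ only one factor shifts, and the half-integer shifts $\frac{i\pm1}{2}$ are exactly what make this work. The $\mathbf{x}$--$\mathbf{x}$ quadratic relations are handled the same way. Two successive moves $\underline d\to\underline d^{\pm}_{ik}\to(\underline d^{\pm}_{ik})^{\pm}_{jl}$ commute as patterns, so one compares coefficients of $\xi$ with both moves applied; after clearing the simple poles in $v,w$, one gets a scalar identity between products of the factorized coefficients $e_{i,\underline d,\cdot}$, $f_{i,\underline d,\cdot}$.

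\textbf{(c) The relation $[\mathbf{x}^+_i(u),\mathbf{x}^-_j(v)]=\delta_{ij}\,\frac{\mathbf{h}_i(u)-\mathbf{h}_i(v)}{v-u}$.} This is the step I expect to be the main obstacle. For $i\neq j$, and for $i=j$ with moves at different positions $k\neq l$, the two orders give the same target pattern. Cancellation then reduces to a product identity for the explicit rational coefficients, a finite check. For the diagonal terms with $i=j$ and $k=l$, one must show that $\mathbf{h}_i(v)-1$ has simple poles exactly at $v=\frac{i-1}{2}-p_{ik}$ and $v=\frac{i+1}{2}-p_{ik}$. One must also show that its residues equal $f_{i,\underline d,\underline d^+_{ik}}\,e_{i,\underline d^+_{ik},\underline d}$ and $e_{i,\underline d,\underline d^-_{ik}}\,f_{i,\underline d^-_{ik},\underline d}$ respectively. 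I would prove this by partial fractions (Lagrange interpolation) of the explicit product \eqref{eq:ActionInGTBasisYangianH}, mirroring the classical Gelfand--Tsetlin computation. The point is that at $v\to\infty$ the identity reduces to $[e_i,f_i]=h_i$ from Theorem~\ref{th:slactsonGTVerma}, and the residue matching is local in each pole.

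\textbf{(d) Serre relations.} Finally, for the Serre relations, note that $\xi_{\underline 0}$ is annihilated by all $\mathbf{x}^+_i(v)$ and is an $\mathbf{h}$-eigenvector, so it is a highest weight vector of the $\widetilde Y_\circ$-module $\mathbb{M}_{\Lambda,u}$. For generic $\Lambda$ the joint $\mathbf{h}$-spectrum is simple: the pole positions of $\mathbf{h}_i(v)$ determine the multiset $\{p_{ik}\}$, hence $\underline d$. All matrix coefficients $e$, $f$ are nonzero, so every nonzero submodule contains some $\xi_{\underline d}$. From there it reaches $\xi_{\underline 0}$ by raising operators, and it generates everything by lowering ones. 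Hence $\mathbb{M}_{\Lambda,u}\simeq \mathtt L^\circ_{\vec{\mathbf Q}}$, and by Corollary~\ref{th:irrquad=irrwithserre} the Serre relations \eqref{eq:Serrepm} hold. This gives an action of $\widetilde Y(1,-\kappa)$.
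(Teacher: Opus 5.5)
The paper gives no argument for this statement. It is quoted from \cite{feigin2011yangians}, where the operators come from correspondences on affine Laumon spaces (cf.\ Section~\ref{sec:geometry}) and the displayed formulas are their matrix coefficients in the fixed-point basis; the paper only transports them to highest-weight conventions by $F_Y$.

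Your proposal is therefore a genuinely different, self-contained algebraic route, and it is sound in outline. The quadratic relations reduce to local rational identities, and these do hold:
\begin{itemize}
\item for the $\mathbf h$--$\mathbf x$ relations, eigenvalue ratios at the simple poles;
\item for the $\mathbf x$--$\mathbf x$ relations and the off-diagonal part of $[\mathbf x^+,\mathbf x^-]$, equality of coefficient products along the two orders of two distinct moves;
\item for the diagonal part of $[\mathbf x^+_i,\mathbf x^-_i]$, partial fractions of $\mathbf h_i(v)$.
\end{itemize}
For instance, on $\xi_{\underline d}$ the eigenvalue of $\mathbf h_i(v)$ has residue $f_{i,\underline d,\underline d^+_{ik}}\,e_{i,\underline d^+_{ik},\underline d}$ at $v=\frac{i-1}{2}-p_{ik}$ and residue $-e_{i,\underline d,\underline d^-_{ik}}\,f_{i,\underline d^-_{ik},\underline d}$ at $v=\frac{i+1}{2}-p_{ik}$. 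For a repeated move in the $\mathbf x$--$\mathbf x$ relations, the double poles cancel because the two successive poles differ by exactly $1$.

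Getting the Serre relations from irreducibility and Corollary~\ref{th:irrquad=irrwithserre} is exactly the device the paper uses later for Theorem~\ref{th:Main}. It is not circular, since that corollary rests only on the PBW theorem. Your route buys independence from the geometry and avoids any cubic Serre computation. The cited route gives all relations at once, together with the geometric framework that Section~\ref{sec:geometry} relies on.

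Two steps need repair.

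\textbf{Simple spectrum.} Your justification is incorrect as stated. The poles of $\mathbf h_i(v)$ at $\frac{i+1}{2}-p_{ik}$ and $\frac{i-1}{2}-p_{ik}$ cancel against numerator factors when $d_{i+1,k}=d_{ik}$, resp.\ $d_{i-1,k}=d_{ik}$. On $\xi_{\underline 0}$, for example, the eigenvalue has a single pole. So one $\mathbf h_i(v)$ does not recover row $i$. The conclusion is still true, by either of two arguments:
\begin{itemize}
\item invoke Lemma~\ref{th:thin}: its proof uses only the eigenvalue formula and $\kappa\neq n$, and $\mathbb L_{\Lambda,u}=\mathbb M_{\Lambda,u}$ for generic $\Lambda$;
\item split the zeros and poles of $\mathbf h_i(v)$ by the cosets $\frac{i+1}{2}+\mathtt y_k+\mathbb Z$, which are pairwise distinct by genericity, and recover column $k$ row by row, starting from the factor $-v+\frac k2-p_{kk}$ of $\mathbf h_{k-1}(v)$.
\end{itemize}

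\textbf{Twisted relations.} Your claim that periodicity turns the twisted relations into the index-$0$ ones must actually be checked, just as for $D$. There, as you suspected, $D=-d_n$ forces $[D,\mathbf x^\pm_{i,r}]=\pm\delta_{i,n}\mathbf x^\pm_{i,r}$.
\begin{itemize}
\item Since $p_{i+n,j+n}=p_{ij}+\kappa$, the displayed formulas satisfy $\mathbf h_{i+n}(v)=\mathbf h_i(v+\kappa-\frac n2)$, and likewise for $\mathbf x^\pm$.
\item The extension rule with $(\hbar,\hbar')=(1,-\kappa)$ instead prescribes $\mathbf h_{i+n}(v)=\mathbf h_i(v-\kappa+\frac n2)$.
\item These agree only for $\hbar'/\hbar=\kappa-n$. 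That is what $-\kappa$ becomes under the substitution $v\mapsto -v$ built into $F_Y$, so the slip plausibly comes from treating $F_Y$ as an automorphism of $\widetilde Y(\hbar,\hbar')$.
\end{itemize}
With the normalization as printed, your uniform check would fail on the pairs involving ${}'\mathbf h_n,{}'\mathbf x^\pm_n$ and $\mathbf x^\pm_1,\mathbf h_1$. This is a convention problem in the statement rather than in your method, and once the twist is normalized consistently your argument goes through.
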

     Note that in formulas \eqref{eq:ActionInGTBasis} and \eqref{eq:ActionInGTBasisYangianH} for any \(\underline{d}\in\widehat{\mathtt{GT}}\) only finite number of multipliers in product that are not equal to \(1\).
     
     We use notations \(e_{i,\underline{d},\underline{d}'}^{\Lambda,u}, f_{i,\underline{d},\underline{d}'}^{\Lambda,u},h_{i,\underline{d},\underline{d}'}^{\Lambda,u}\) and \(\mathbf{x}_{i,r,\underline{d},\underline{d}'}^{\pm,\Lambda,u},\mathbf{h}_{i,r,\underline{d},\underline{d}'}^{\Lambda,u}\) for the matrix elements between \(\xi_{\underline{d}}\) and \(\xi_{\underline{d}'}\).
     \begin{remark}
         These matrix elements are rational functions in \(\Lambda\). For generic \(\Lambda\) they do not vanish and are well-defined. However, for some weights $\Lambda$ the formulas are not well-defined.
     \end{remark}
    \begin{remark}
         The action of \(\widehat{\mathfrak{sl}}_n\) and \(\widehat{Y}(1,-\kappa)\) on \(\bigoplus_{\underline{d}\in\widehat{\mathtt{GT}}}\mathbb{C}\xi_{\underline{d}}\) given in \cite{feigin2011yangians} defines the structure of a lowest weight \(\widehat{\mathfrak{sl}}_n\)-module and a lowest weight \(\widehat{Y}(1,-\kappa)\)-module, respectively. We use \(F_{\widehat{\mathfrak{sl}}_n}\) and \(F_{Y}\) to make it a highest weight module.

        Note that in \cite{feigin2011yangians} there is a  misprint in the Yangian action, more precisely \(\mathbf{x}^{+}_{i}(v)\) are mixed with  \(\mathbf{x}^{-}_{i}(v)\).
    \end{remark}
    \begin{remark}
        The eigenvalues of \(\mathbf{h}_{i}(v)\) on the highest weight vector  \(\xi_{\underline{0}}\in \mathbb{M}_{\Lambda,u}\) are given by \eqref{eq:highestweight}.
        
        %\zh{I would remove the word "formulas" from the text everywhere. Then at least I do not need to think if it is plural or single.} \s{Here I agree: I will try to get rid of it where is possible. But not everywhere, e.g. in Remark \(3.5\) it seems impossible for me now} 
        %\zh{You can leave it in  Remark 3.5 but why not to say: The action given in [DDNR11] on $\xi_d$ defines a structure.... ?} 
%        \begin{equation}
%            \mathbf{Q}_i(v) = \frac{- v + \lambda_{i+1}-\frac{i}{2}-1}{- v + \lambda_{i}-\frac{i}{2}-1}, \quad \text{for} \quad i\in\{1,\dots, n\}.
%        \end{equation}
    \end{remark}
    \begin{theorem}
        For a generic \(\Lambda\) we have an isomorphism of \(\widehat{Y}(1,-\kappa)\)-modules
        \begin{equation}
             \mathbb{M}_{\Lambda,u}\simeq \operatorname{ev}_{-}^{*}\mathcal{L}_{\Lambda,u}\simeq \mathtt{L}_{\vec{\mathbf{Q}}},
        \end{equation}
        for \(\mathbf{Q}_i(v)\) determined by formulas \eqref{eq:highestweight}.    
    \end{theorem}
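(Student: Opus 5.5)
The plan is to show that $\mathbb{M}_{\Lambda,u}$ is an irreducible highest weight $\widetilde{Y}(1,-\kappa)$-module whose highest weight is $\vec{\mathbf{Q}}$. The second isomorphism $\operatorname{ev}_{-}^{*}\mathcal{L}_{\Lambda,u}\simeq \mathtt{L}_{\vec{\mathbf{Q}}}$ is already the Proposition of the previous subsection, and irreducibility together with the uniqueness in Proposition~\ref{th:IrrModYangian} then gives the first one.

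\textbf{Step 1: highest weight vector and generation.} From \eqref{eq:ActionInGTBasisYangian}, $\mathbf{x}^{+}_i(v)\xi_{\underline 0}=0$, because $\underline 0^{-}_{ij}$ has a negative entry. By the Remark preceding the statement, $\mathbf{h}_i(v)\xi_{\underline 0}=\mathbf{Q}_i(v)\xi_{\underline 0}$ with $\mathbf{Q}_i$ as in \eqref{eq:highestweight}, and $D\xi_{\underline 0}=0$. It remains to show that $\xi_{\underline 0}$ generates $\mathbb{M}_{\Lambda,u}$. For generic $\Lambda$ the module is thin: the eigenvalues of $\mathbf{h}_i(v)$ determine the multiset $\{p_{ij}\}_j$ modulo the row data. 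Since the $\mathtt{y}_j$ are pairwise non-congruent modulo $\mathbb{Z}$ for generic $\Lambda$, these multisets recover every $d_{ij}$. Hence distinct patterns have distinct joint eigenvalues, and every submodule is spanned by the basis vectors it contains. Now take any $\underline d\neq\underline 0$. Choose a removable entry, for example $d_{ij}>0$ with $d_{i+1,j}<d_{ij}$ and $(i,j)$ extremal, so that $\underline d^{-}_{ij}\in\widehat{\mathtt{GT}}$. The matrix element of $\mathbf{x}^{-}_i(v)$ from $\underline d^{-}_{ij}$ to $\underline d$ is a nonzero rational function for generic $\Lambda$. Applying $\mathbf{x}^-_i(v)$ and then projecting with Cartan elements shows by induction on $d_n$ that $\xi_{\underline d}$ lies in the submodule generated by $\xi_{\underline 0}$.

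\textbf{Step 2: irreducibility.} Let $N$ be a nonzero submodule. By thinness $N$ contains some $\xi_{\underline d}$. Decrease entries one at a time, using $\mathbf{x}^{+}_i(v)$ with nonvanishing matrix elements $e_{i,\underline d,\underline d^-_{ij}}$. For generic $\Lambda$ the factors $p_{i+1,j}-p_{ij}$ and $p_{i+1,i+1}-p_{ij}$ vanish only at integer coincidences, which occur only when $\underline d^-_{ij}$ leaves $\widehat{\mathtt{GT}}$. Repeating this, we reach $\xi_{\underline 0}\in N$, so $N=\mathbb{M}_{\Lambda,u}$ by Step 1. Then $\mathbb{M}_{\Lambda,u}$ is an irreducible quotient of $\mathtt{M}_{\vec{\mathbf Q}}$ (the Serre relations hold by Theorem~\ref{th:YactsonGTVerma}, or by Corollary~\ref{th:irrquad=irrwithserre}), and Proposition~\ref{th:IrrModYangian} gives $\mathbb{M}_{\Lambda,u}\simeq\mathtt{L}_{\vec{\mathbf Q}}$.

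\textbf{Main obstacle.} The hardest step is the careful nonvanishing and separation check: that the joint $\mathbf{h}$-spectrum separates all patterns (thinness), and that the relevant raising and lowering coefficients are nonzero exactly on the edges of $\widehat{\mathtt{GT}}$. I would first write $\mathbf{h}_i(v)$ on $\xi_{\underline d}$ as a ratio of products over rows $i-1,i,i+1$. I would then show that this ratio determines $d_{i,\cdot}$ given the other rows, and conclude by descending induction along periodicity using the finiteness condition.

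As an alternative cross-check, Kac's criterion shows that $\mathcal{M}_\Lambda$ is irreducible for generic $\Lambda$, so $\mathcal{L}_{\Lambda,u}=\mathcal{M}_{\Lambda}\otimes\mathcal F^k_u$. Its $D$-graded character is $\prod_{m\ge1}(1-q^m)^{-n}\prod_{\alpha}(1-e^{-\alpha})^{-1}$, which matches the generating function of $\widehat{\mathtt{GT}}$ counted via $n$-tuples of partitions \eqref{eq:nYoungDiagrams}. This confirms that the surjection from $\mathtt{M}_{\vec{\mathbf Q}}$ lands in a module of the correct size.
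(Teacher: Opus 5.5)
Your proof is correct and is essentially the paper's argument. The paper obtains this statement as the special case of Theorem~\ref{th:Main} (a generic weight is dominant and \(\mathbb{L}_{\Lambda,u}=\mathbb{M}_{\Lambda,u}\)), which is proved by thinness, connectivity of patterns through nonvanishing \(\mathbf{x}^{\pm}_{i,0}\) matrix elements, and uniqueness of the irreducible quotient. Your shortcuts are legitimate generic-case specializations of Lemmas~\ref{th:thin} and~\ref{th:connectedgraph}: you separate columns using \(\mathtt{y}_j-\mathtt{y}_{j'}\notin\mathbb{Z}\) for \(j\neq j'\), and you route every path through \(\underline 0\).

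Two small repairs are needed.

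In Step~1, induct on the total size \(\sum_{i=1}^n d_i(\underline d)\) rather than on \(d_n\). The quantity \(d_n\) is unchanged by \(\mathbf{x}^{\pm}_i\) for \(i\not\equiv 0 \bmod n\), so it does not decrease along your steps.

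In the thinness check, recovering row \(i\) from \(\mathbf{h}_i(v)\) ``given the other rows'' does not by itself give a well-founded induction, because periodicity couples the rows cyclically. Moreover, the \(j\)-factor of \(\mathbf{h}_i(v)\) is identically \(1\) when \(d_{i-1,j}=d_{ij}=d_{i+1,j}\). Instead, use the \(j\)-factor of \(\mathbf{h}_{i+1}(v)\) for \(j\le i\), namely \(\frac{(t+1-d_{i+2,j})(t-d_{ij})}{(t+1-d_{i+1,j})(t-d_{i+1,j})}\) with \(t=-v+\frac{i}{2}+\mathtt{y}_j\). This determines \(d_{ij}\) from \(d_{i+1,j}\) and \(d_{i+2,j}\), so you can induct downward on \(i-j\), starting from the deep diagonals, which vanish by finiteness and periodicity.
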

    \begin{proof}          
        %The action of \(\mathbf{h}_{i}(v)\) on highest weight vector in \(\mathbb{M}_{\Lambda,u}\) and  \(\operatorname{ev}_{-}^{*}\mathcal{M}_{\Lambda,u}\) is the same. It is clear that \(\operatorname{ev}_{-}^{*}\mathcal{M}_{\Lambda,u}\) is irreducible  module over \(\widehat{Y}(1,-\kappa)\). It remains to show that \(\mathbb{M}_{\Lambda,u}\) is also irreducible \(\widehat{Y}(1,-\kappa)\)-module. 
        We will prove this statement for a wider class of weights in  Theorem \ref{th:Main}.        
        %we have an epimorphism \(\mathtt{f}:\mathbb{M}_{\Lambda,u}\rightarrow \operatorname{ev}_{-}^{*}\mathcal{M}_{\Lambda,u}\). It is clear that \(\mathbb{M}_{\Lambda,u}\) and  \(\operatorname{ev}_{-}^{*}\mathcal{M}_{\Lambda,u}\) have the same character, so \(\mathtt{f}\) is forced to bijection.
    \end{proof}
    \begin{corollary}\label{th:GLambdauHeisenbergAction}
        There is an action of \(\mathtt{Heis}\) on \(\mathbb{M}_{\Lambda,u}\)commuting with \(\widehat{\mathfrak{sl}}_n\). Together, these actions endow  \(\mathbb{M}_{\Lambda,u}\) with a structure of \(\widehat{\mathfrak{gl}}_n\)-module isomorphic to \(\mathcal{M}_{\Lambda,u}\).    \qed
    \end{corollary}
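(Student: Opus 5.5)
The plan is to deduce the statement from the preceding theorem, which gives $\mathbb{M}_{\Lambda,u}\simeq \operatorname{ev}_{-}^{*}\mathcal{L}_{\Lambda,u}$ for generic $\Lambda$. For generic $\Lambda$ the Verma module $\mathcal{M}_\Lambda$ is irreducible (Kac's criterion), so $\mathcal{L}_{\Lambda,u}=\mathcal{M}_\Lambda\otimes\mathcal{F}^k_u=\mathcal{M}_{\Lambda,u}$. Hence $\mathbb{M}_{\Lambda,u}$ carries an action of the completed algebra $(U\widetilde{\mathfrak{gl}}_n/(K-\kappa+n))_{\text{comp}}$, transported through the isomorphism.

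By Theorem~\ref{th:heisenberg}, the image of $\operatorname{ev}_{-}$ contains $U\widehat{\mathfrak{gl}}_n$, and in particular contains the Heisenberg generators $\mathtt{a}_r$ and $K'$. I would define the $\mathtt{Heis}$-action on $\mathbb{M}_{\Lambda,u}$ as follows. Choose preimages $Y_r\in\widetilde{Y}(1,-\kappa)$ with $\operatorname{ev}_{-}(Y_r)=\mathtt{a}_r$, and let $\mathtt{a}_r$ act on $\mathbb{M}_{\Lambda,u}$ by $Y_r$ via Theorem~\ref{th:YactsonGTVerma}. This action does not depend on the choice of preimage. Indeed, the Yangian action on $\mathbb{M}_{\Lambda,u}$ is the pullback of the $U\widetilde{\mathfrak{gl}}_n$-action along $\operatorname{ev}_{-}$, so it factors through the image of $\operatorname{ev}_{-}$.

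Commutation with $\widehat{\mathfrak{sl}}_n$ then holds because, under the identification $\widetilde{\mathfrak{gl}}_n\cong(\widetilde{\mathtt{Heis}}\oplus\widetilde{\mathfrak{sl}}_n)/(K'-nK,D'-D)$, the elements $\mathtt{a}_r$ commute with $\widehat{\mathfrak{sl}}_n$ in $\widehat{\mathfrak{gl}}_n$.

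The key step I expect to be the main obstacle is checking that the $\widehat{\mathfrak{sl}}_n$-action of Theorem~\ref{th:slactsonGTVerma} coincides with the $\widehat{\mathfrak{sl}}_n\subset \widehat{\mathfrak{gl}}_n$ action transported via $\operatorname{ev}_{-}$. I would do this on generators. It suffices to verify that $\operatorname{ev}_{-}$, after the twist $F_{\widetilde{\mathfrak{gl}}_n}\circ\operatorname{ev}_{-}\circ F_Y$, sends $\mathbf{x}^\pm_{i,0}$ and $\mathbf{h}_{i,0}$ to the Chevalley generators $e_i,f_i,h_i$ (as in \cite{kodera2021guay}). Given this, the matrix elements of $\mathbf{x}^{\pm}_{i,0}$ read off from \eqref{eq:ActionInGTBasisYangian} (the leading coefficient in $v^{-1}$) reproduce \eqref{eq:ActionInGTBasis}. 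Here $\mathbf{x}^+_{i,0}$ has matrix element $e_{i,\underline d,\underline d^-_{ij}}$ up to the sign conventions, and similarly for $\mathbf{x}^-_{i,0}$ and $f_i$. The Cartan check compares the $v^{-1}$ coefficient of \eqref{eq:ActionInGTBasisYangianH} with $h_i^{\Lambda,u}$. The sign and index conventions ($i=0$ versus $i=n$, the shift by $\frac{\hbar'}{\hbar}+\frac n2$) need care, but this is bookkeeping.

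Finally, the combined $\widehat{\mathfrak{gl}}_n$-module is $\operatorname{ev}_{-}^*\mathcal{L}_{\Lambda,u}$ restricted to $U\widehat{\mathfrak{gl}}_n$, which is $\mathcal{L}_{\Lambda,u}=\mathcal{M}_{\Lambda,u}$. Thus the structure obtained is isomorphic to $\mathcal{M}_{\Lambda,u}$, and the central element $K'$ acts by $nk$ as required.
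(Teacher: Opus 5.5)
Your proposal is correct and takes essentially the paper's route. The corollary is stated with \qed right after the isomorphism \(\mathbb{M}_{\Lambda,u}\simeq\operatorname{ev}_{-}^{*}\mathcal{L}_{\Lambda,u}\), where \(\mathcal{L}_{\Lambda,u}=\mathcal{M}_{\Lambda,u}\) for generic \(\Lambda\), and the same argument is written out in the proof of Theorem~\ref{th:PglhatIrr}: transport the \(\widehat{\mathfrak{gl}}_n\)-action, and realize the Heisenberg generators as \(\operatorname{ev}_{-}\)-images of Yangian elements via \cite[Thm.~4.18]{kodera2019braid}. Your explicit check that the \(v^{-1}\)-coefficients of \eqref{eq:ActionInGTBasisYangian} reproduce \eqref{eq:ActionInGTBasis} is left implicit in the paper, and it does go through; for instance, the \(v^{-1}\)-coefficient of \eqref{eq:ActionInGTBasisYangianH} is exactly \(\mathtt{y}_i-\mathtt{y}_{i+1}-1+d_{i-1}-2d_i+d_{i+1}\).
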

%\zh{Any infinite-dimensional space can be made to a $\widehat{\mathfrak{gl}}_n$-module. What do you want to say by this corollary? }

    \subsection{Dominant weights and permitted Gelfand-Tsetlin patterns}\label{sec:permitted}
        
    We say \(\Lambda_{\text{dom}} \in \omega_0^{\bot} \) is \textit{dominant} if
    \begin{equation}
        \bigl(\Lambda_{\text{dom}} + \rho,\;\alpha\bigr) \notin \mathbb{Z}_{\le 0}
        \qquad
        \text{for every } \alpha\in\widehat{\Phi}_{+}.%^{\mathrm{re}}.
    \end{equation}
    In particular, \( \kappa = (\Lambda_{\text{dom}} + \rho,\delta)\not\in \mathbb{Q}_{\le 0}\). Denote 
    \begin{equation}
        \widehat{\Phi}_{\Lambda_{\text{dom}}} =\{\beta\mid \beta\in \widehat{\Phi}, ~  (\beta,\Lambda_{\text{dom}}+\rho)\in\mathbb{Z} \}
    \end{equation}
    and let \(\widehat{W}_{\Lambda_{\text{dom}}}\) be the group generated by \(s_\alpha\) for \(\alpha \in \widehat{\Phi}_{+,\Lambda_{\text{dom}}}^{\mathrm{re}} = \widehat{\Phi}_{\Lambda_{\text{dom}}}\cap \widehat{\Phi}^{\mathrm{re}}_+\).   
    \begin{remark}
        A  dominant weight is not necessarily integral. For example, a generic weight \(\Lambda\) is dominant. Further, we will consider admissible weights. The admissible weights are dominant.
    \end{remark}
    Consider a root \(\alpha_{l,r}\in\widehat{\Phi}^{\text{re}}_{+,\Lambda_{\text{dom}}}\) for \(l<r\in\mathbb{Z}\).
    Then
    \begin{equation}
        (\alpha_{l,r},\Lambda_{\text{dom}} + \rho) = \mathtt{y}_l -\mathtt{y}_r\in\mathbb{Z}_{> 0}. 
    \end{equation}    
    We say that \(\underline{d}\in\widehat{\mathtt{GT}}\) is \textit{permitted with respect to a pair} \(\Lambda_{\text{dom}},\alpha_{l,r}\) if for any \(i\ge r-1\) we have
    \begin{equation}\label{eq:DashedIneq}
        d_{i,l}\le d_{i+1,r} + \mathtt{y}_l -\mathtt{y}_r-1. 
    \end{equation}
    \begin{remark}
        Let \(m\delta\in \widehat{\Phi}_{+,\Lambda_{\mathrm{dom}}}\). Then, for any pair
        \((l,r)\) satisfying \(r-l=mn\), we have
        \(m\delta=\alpha_{l,r}\).
        Moreover, condition~\eqref{eq:DashedIneq} is automatically satisfied for every
        \(\underline d\in\widehat{\mathtt{GT}}\).
        Thus, every
        \(\underline d\in\widehat{\mathtt{GT}}\)
        is said to be permitted with respect to pair
        \(\Lambda_{\mathrm{dom}}, m\delta\).
    \end{remark}
    
    For convenience on figures we denote this inequality by blue arrow with index as follows
    \(
        d_{i,l}{\color{blue}\xleftarrow{\mathtt{y}_l -\mathtt{y}_r-1}} d_{i+1,r}
    \).  
    See Figure \ref{pic:gt-pattern-perm1}, which illustrates a pattern \(\underline{d}\)  permitted with respect to the pair \(\Lambda_{\text{dom}},\alpha_0 +\alpha_1\). 
    \begin{figure}
        \centering
        \begin{tikzpicture}[every node/.style={font=\small}]
    
        \node (a00) at (-1,1) {\(\dots\)};
        
        \node (a10) at (-1,0) {\(d_{1,0}\)};        \node (a11) at (0,0) {\(d_{1,1}\)};
    
        \node (a20) at (-1,-1) {\(d_{2,0}\)};
        \node (a21) at (0,-1) {\(d_{2,1}\)};
        \node (a22) at (1,-1) {\(d_{2,2}\)};
    
        \node (a30) at (-1,-2) {\(\dots\)};
        \node (a31) at (0,-2) {\(d_{3,1}\)};
        \node (a32) at (1,-2) {\(d_{3,2}\)};
        \node (a33) at (2,-2) {\(d_{3,3}\)};
    
        \node (a41) at (0,-3) {\(\dots\)};
        \node (a42) at (1,-3) {\(\dots\)};
        \node (a43) at (2,-3) {\(\dots\)};
        \node (a44) at (3,-3) {\(\dots\)};
    
        \draw[->] (a00) -- (a10);
    
        \draw[->] (a10) -- (a20);
        \draw[->] (a11) -- (a21);
        
        \draw[->] (a20) -- (a30);
        \draw[->] (a21) -- (a31);
        \draw[->] (a22) -- (a32);
    
        \draw[->] (a31) -- (a41);
        \draw[->] (a32) -- (a42);
        \draw[->] (a33) -- (a43);
    
        % Пунктирные красные стрелки с метками
        \draw[blue,   ->] (a32) -- (a20) node[midway, right, xshift=2pt, yshift=2pt] {\scriptsize\(\scriptstyle \mathtt{y}_0-\mathtt{y}_2-1\)};
        \draw[blue,   ->] (a22) -- (a10) node[midway, right, xshift=2pt, yshift=2pt] {\scriptsize\(\scriptstyle \mathtt{y}_0-\mathtt{y}_2-1\)};
        \end{tikzpicture}
        \caption{Inequalities for \(\underline{d}\in\widehat{\mathtt{GT}}\) permitted with respect to pair \(\Lambda_{\text{dom}},\alpha_0 +\alpha_1\).}
        \label{pic:gt-pattern-perm1}
    \end{figure}
    The inequalities \eqref{eq:DashedIneq} imply
    \begin{equation}\label{eq:IneqOnPij}
        p_{i,l}<p_{i+1,r}, \quad\quad p_{i,l}<p_{i,r}.
    \end{equation}
    We say that \(\underline{d}\in\widehat{\mathtt{GT}}\) is \textit{permitted with respect to} \(\Lambda_{\text{dom}}\) if \(\underline{d}\) is permitted with respect to the pair \(\Lambda_{\text{dom}},\beta\) for any \(\beta\in\widehat{\Phi}_{+,\Lambda_{\text{dom}}}\). Denote the set of all permitted patterns by \(\widehat{\mathtt{GT}}_{\text{perm}}(\Lambda_{\text{dom}})\).  Consider a vector space
     \begin{equation}\label{eq:PLu}
            \mathbb{L}_{\Lambda_{\text{dom}},u} = \bigoplus_{\underline{d}\in\widehat{\mathtt{GT}}_{\text{perm}}(\Lambda_{\text{dom}})}\mathbb{C}\xi_{\underline{d}}.
    \end{equation}  
    
    %The goal of the remaining part of the subsection is to show that a structure of \(\widehat{Y}_\circ(1,-\kappa)\)-module is correctly defined  on \(\mathbb{L}_{\Lambda_{\text{dom}},u}\) and prove the main Theorem \ref{th:Main} of the paper.
%\zh{Can we write instead: 
In the remaining part of this section we show that \(\mathbb{L}_{\Lambda_{\text{dom}},u}\) inherits a structure of  \(\widehat{Y}_\circ(1,-\kappa)\)-module, see Theorem \ref{th:Main} below.%}

    \begin{remark}
        A generic weight \(\Lambda\in \omega_0^\perp\) is dominant, then the conditions \eqref{eq:DashedIneq} are void and we have 
        \begin{equation}
            \mathbb{L}_{\Lambda,u} = \mathbb{M}_{\Lambda,u}.
        \end{equation}
    \end{remark}
     %that  \(\mathbb{L}_{\Lambda_{\text{dom}},u}=\operatorname{ev}_{-}^{*}\mathcal{L}_{\Lambda_{\text{dom}},u}\). 
    \begin{lemma}\label{th:pijnotequal}
        For a dominant weight  \(\Lambda_{\text{dom}}\) and \(\underline{d}\in \widehat{\mathtt{GT}}_{\text{perm}}(\Lambda_{\text{dom}})\) we have
        \begin{enumerate}
            \item If \(p_{i,l} = p_{i+1,r}\) for \(l\le i\) and \(r\le i+1\),  then \(l = r\) and \(d_{i,l}=d_{i+1,l}\),
            \item 
             If \(p_{i,l} = p_{i,r}\) for \(l,r\le i\), then \(l=r\).
        \end{enumerate}
    \end{lemma}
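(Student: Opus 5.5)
The plan is to turn an equality of $p$'s into an integrality statement for $\mathtt{y}_l-\mathtt{y}_r$. Then dominance makes the relevant positive root have a positive integer pairing with $\Lambda_{\text{dom}}+\rho$, and the permittedness inequality \eqref{eq:DashedIneq}, in the form \eqref{eq:IneqOnPij}, gives a strict inequality contradicting the assumed equality. Since $p_{ij}=-\mathtt{y}_j+d_{ij}$ with $d_{ij}\in\mathbb{Z}$, an equality $p_{i,l}=p_{i',r}$ forces $\mathtt{y}_l-\mathtt{y}_r=d_{i,l}-d_{i',r}\in\mathbb{Z}$.

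\emph{Part 1.} Assume $p_{i,l}=p_{i+1,r}$ with $l\le i$, $r\le i+1$, and split into three cases.

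If $l=r$, the equality reads $d_{i,l}=d_{i+1,l}$, which is the claim.

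If $l<r$, then $\alpha_{l,r}\in\widehat{\Phi}_+$ and $(\alpha_{l,r},\Lambda_{\text{dom}}+\rho)=\mathtt{y}_l-\mathtt{y}_r\in\mathbb{Z}$. Dominance therefore gives $\mathtt{y}_l-\mathtt{y}_r\in\mathbb{Z}_{>0}$.
\begin{itemize}
\item If $\alpha_{l,r}$ is real, it lies in $\widehat{\Phi}^{\mathrm{re}}_{+,\Lambda_{\text{dom}}}$. Since $i\ge r-1$, condition \eqref{eq:DashedIneq} applies and yields $p_{i,l}<p_{i+1,r}$, which is a contradiction.
\item If $r-l=mn$, so that $\alpha_{l,r}=m\delta$, I argue directly. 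Periodicity and the column monotonicity give
\[
d_{i+1,r}=d_{i+1-mn,l}\ge d_{i,l}.
\]
Hence $p_{i+1,r}-p_{i,l}=\mathtt{y}_l-\mathtt{y}_r+d_{i+1,r}-d_{i,l}\ge mk+n m>0$ is not quite the right bookkeeping; more precisely, $p_{i+1,r}-p_{i,l}\ge\mathtt{y}_l-\mathtt{y}_r=m\kappa\ge1$. This is again a contradiction.
\end{itemize}

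If $l>r$, then $\alpha_{r,l}\in\widehat{\Phi}_+$ and $\mathtt{y}_r-\mathtt{y}_l\in\mathbb{Z}_{>0}$.
\begin{itemize}
\item In the real case, apply \eqref{eq:DashedIneq} to the pair $(r,l)$ in row $i$; this is allowed because $i\ge l-1$. Combining with the column monotonicity $d_{i+1,r}\le d_{i,r}$ and $d_{i+1,l}\le d_{i,l}$ gives
\[
d_{i+1,r}\le d_{i,r}\le d_{i+1,l}+\mathtt{y}_r-\mathtt{y}_l-1\le d_{i,l}+\mathtt{y}_r-\mathtt{y}_l-1 ,
\]
that is, $p_{i+1,r}<p_{i,l}$, a contradiction.
\item The imaginary case is handled as before, using $d_{i,l}=d_{i-mn,r}\ge d_{i+1,r}$.
\end{itemize}

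\emph{Part 2.} Assume $p_{i,l}=p_{i,r}$ with $l\neq r$; by symmetry take $l<r\le i$. As before, $\mathtt{y}_l-\mathtt{y}_r\in\mathbb{Z}_{>0}$.
\begin{itemize}
\item In the real case, \eqref{eq:DashedIneq} holds in row $i\ge r-1$. Together with $d_{i+1,r}\le d_{i,r}$ it gives $d_{i,l}\le d_{i,r}+\mathtt{y}_l-\mathtt{y}_r-1$, which is the second inequality of \eqref{eq:IneqOnPij}: $p_{i,l}<p_{i,r}$. This is a contradiction.
\item In the imaginary case, $d_{i,r}=d_{i-mn,l}\ge d_{i,l}$ gives $p_{i,r}-p_{i,l}\ge m\kappa>0$, again a contradiction.
\end{itemize}

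The only delicate point is the imaginary roots $m\delta$, where \eqref{eq:DashedIneq} is not listed as an explicit condition. There I replace it by periodicity plus monotonicity, together with the observation that dominance forces $m\kappa\in\mathbb{Z}_{>0}$ whenever it is an integer. The rest is bookkeeping of which row index each inequality is applied in, checking $i\ge r-1$ (respectively $i\ge l-1$) in each case.
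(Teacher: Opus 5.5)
Your proof is correct and follows the paper's argument. Equality of the $p$'s forces $\mathtt{y}_l-\mathtt{y}_r\in\mathbb{Z}$, dominance then puts the relevant positive root in $\widehat{\Phi}_{+,\Lambda_{\text{dom}}}$, and the permitted inequality \eqref{eq:DashedIneq}, combined with column monotonicity in the case $l>r$ and in part 2, gives a strict inequality that contradicts the assumed equality.

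Your separate treatment of the imaginary roots $m\delta$ just re-derives the paper's remark that \eqref{eq:DashedIneq} holds automatically for them, which the paper uses implicitly to handle all roots uniformly. Your mid-proof self-correction was also unnecessary: $mk+nm=m\kappa$, so the first bookkeeping was already right.
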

    \begin{proof}
        \begin{enumerate}
            \item  Let \(p_{i,l}=p_{i+1,r}\). Assume that \(l<r\). Then \(\alpha_{l,r} \in \widehat{\Phi}_{+,\Lambda}\). So  for \(\underline{d}\in \widehat{\mathtt{GT}}_{\text{perm}}(\Lambda_{\text{dom}})\)  we have \(p_{i,l}<p_{i+1,r}\) and which gives a contradiction.  If \(l>r\), then \(\alpha_{r,l} \in \widehat{\Phi}_{+,\Lambda}\) and we have 
            \begin{equation}
                p_{i,r}\ge p_{i+1,r}=p_{i,l}\ge p_{i+1,l}.
            \end{equation} 
            This contradicts the inequality for permitted patterns \(p_{i,r}<p_{i+1,l}\). So,  \(l=r\) and we have \(d_{i,l}=d_{i+1,l}\).
            \item Let \(p_{i,l} = p_{i,r}\). Assume that \(l\neq r\) and without loss of generality \(l<r\). Then \(\alpha_{l,r} \in \widehat{\Phi}_{+,\Lambda}\) and
            \begin{equation}
                p_{i,l}< p_{i+1,r}\le p_{i,r}.
            \end{equation}
            So, we have a contradiction.
        \end{enumerate}
    \end{proof}

     \begin{corollary}\label{th:WellDefMatEl}
        We have the following properties of matrix elements as functions of \(\Lambda\in\omega_0^{\perp}\)
        \begin{enumerate}
            \item \(\mathbf{x}_{i,r,\underline{d},\underline{d}'}^{\pm,\Lambda,u},\mathbf{h}_{i,r,\underline{d},\underline{d}}^{\Lambda,u}\) do not have poles at \(\Lambda =\Lambda_{\text{dom}}\) for \(\underline{d}\in \widehat{\mathtt{GT}}_{\text{perm}}(\Lambda_{\text{dom}})\)
            ,            
            \item  \(\mathbf{x}_{i,r,\underline{d},\underline{d}'}^{\pm,\Lambda,u}\) have zeroes and do not have poles at \(\Lambda=\Lambda_{\text{dom}}\)  for \(\underline{d}\in \widehat{\mathtt{GT}}\setminus\widehat{\mathtt{GT}}_{\text{perm}}(\Lambda_{\text{dom}})\) and \(\underline{d}'\in \widehat{\mathtt{GT}}_{\text{perm}}(\Lambda_{\text{dom}})\);  
            \item \(\mathbf{x}_{i,0,\underline{d},\underline{d}^{\mp}_{ij}}^{\pm,\Lambda,u}\)  do not have zeros in \(\Lambda =\Lambda_{\text{dom}}\)  for \(\underline{d},\underline{d}^{\mp}_{ij}\in \widehat{\mathtt{GT}}_{\text{perm}}(\Lambda_{\text{dom}})\).            
        \end{enumerate}           
     \end{corollary}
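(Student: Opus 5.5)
Each of the three items is a statement about the explicit rational functions in \eqref{eq:ActionInGTBasis} and \eqref{eq:ActionInGTBasisYangian}. We treat $\Lambda$ as varying along a generic line through $\Lambda_{\text{dom}}$ with $u$ fixed. Then every $p_{ij}=-\mathtt{y}_j+d_{ij}$ is an affine function of $\Lambda$. A difference $p_{ab}-p_{cd}$ vanishes identically at $\Lambda_{\text{dom}}$ only if $\mathtt{y}_d-\mathtt{y}_b\in\mathbb{Z}$, i.e.\ only if $b=d$ or $\alpha_{\min(b,d),\max(b,d)}\in\widehat{\Phi}_{\Lambda_{\text{dom}}}$. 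The plan is to list every denominator and every numerator factor of the matrix elements and decide, using Lemma~\ref{th:pijnotequal} and the permitted inequalities \eqref{eq:DashedIneq}, \eqref{eq:IneqOnPij}, when each can vanish. The coefficient of $v^{-r-1}$ in $\mathbf{x}^{\pm}_i(v)$ equals the $r=0$ matrix element times a power of $p_{ij}-\tfrac{1\pm i}{2}$, which is polynomial in $\Lambda$. Hence poles and zeros of $\mathbf{x}^{\pm}_{i,r}$ come only from the $e$- and $f$-matrix elements. Similarly, $\mathbf{h}_{i,r}$ is a polynomial in the shifted $p$'s, since expanding the rational function in \eqref{eq:ActionInGTBasisYangianH} in $v^{-1}$ introduces no denominators in $\Lambda$. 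So $\mathbf{h}$ never has poles, which settles the $\mathbf{h}$ part of (1).

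For (1), the denominators of $e_i$ and $f_i$ acting on $\xi_{\underline d}$ are $p_{is}-p_{ij}$ with $s\ne j$, $s,j\le i$, and $p_{ii}-p_{ij}$. By Lemma~\ref{th:pijnotequal}(2), applied to the permitted $\underline d$, none of these vanishes at $\Lambda_{\text{dom}}$. This works both when $\mathtt{y}_s-\mathtt{y}_j$ is nonintegral and when it is an integer.

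For (2), let $\underline d$ be non-permitted and $\underline d'=\underline d^{\mp}_{ij}$ permitted, so $\underline d$ differs from a permitted pattern by $\pm1$ in one periodic family of entries. The key step is to show that $\underline d$ violates exactly one family of inequalities \eqref{eq:DashedIneq}, and that it violates it with equality off by one. For the $e$-case ($\underline d=(\underline d')^{+}_{ij}$), one would get $d_{i,l}=d_{i+1,r}+\mathtt{y}_l-\mathtt{y}_r$ with $(i,l)$ the raised entry, or the analogous equality with $(i+1,r)$ lowered. This is equivalent to $p_{i,l}=p_{i+1,r}$ or $p_{i-1,l}=p_{i,r}$ at $\Lambda_{\text{dom}}$. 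In the first situation, the numerator factor $p_{i+1,s}-p_{ij}$ with $s=r$ (or $p_{i+1,i+1}-p_{ij}$ when $r=i+1$) vanishes. In the second, the factor $p_{i-1,s}-p_{ij}$ in the $f$-formula vanishes. For the remaining case one should first check which direction of step can leave the permitted set. Only two steps do so: raising $d_{i,l}$, which can break $d_{i,l}\le d_{i+1,r}+\dots$, and lowering $d_{i+1,r}$. These correspond exactly to the $e$-matrix element from $\underline d$ and the $f$-matrix element from $\underline d$, respectively. Absence of poles then needs care, because $\underline d$ is not permitted and Lemma~\ref{th:pijnotequal} does not apply directly. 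The plan is to show that the denominators $p_{is}-p_{ij}$ for $\underline d$ differ from those of $\underline d'$ only in entries of row $i$ that are shifted by one. A vanishing denominator would then force $p_{i,s}=p_{i,j}\pm1$ for the permitted $\underline d'$. Combining the permitted inequalities for $\alpha_{s,j}$ with the vertical inequalities, as in the proof of Lemma~\ref{th:pijnotequal}, should show that this happens only in the degenerate configuration that is already excluded, or that it is simultaneously cancelled by a numerator zero.

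For (3), with both $\underline d$ and $\underline d^{\mp}_{ij}$ permitted, the numerator factors are $p_{i\pm1,s}-p_{ij}$ and $p_{i+1,i+1}-p_{ij}$, evaluated at both patterns. Lemma~\ref{th:pijnotequal}(1) applied to $\underline d$ says that $p_{i,j}=p_{i+1,s}$ forces $s=j$ and $d_{ij}=d_{i+1,j}$. In that case lowering $d_{ij}$ would violate the vertical inequality, so $\underline d^{-}_{ij}\notin\widehat{\mathtt{GT}}$, a contradiction. The $f$-case is symmetric: apply the lemma to $\underline d^{+}_{ij}$ with rows $i-1,i$.

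The main obstacle is the no-pole half of (2). There one must control coincidences of $p$'s for a pattern that is only ``one step'' from permitted. I would first try to prove a small lemma: if $\underline d^{\pm}_{ij}$ is permitted, then $\underline d$ satisfies every strict inequality $p_{i,a}<p_{i,b}$ for $a<b$ with $\alpha_{a,b}\in\widehat{\Phi}_{\Lambda_{\text{dom}}}$, except possibly at the modified entry, where it holds non-strictly. I would then do a direct case check of the resulting boundary equalities against the factors in the formulas.
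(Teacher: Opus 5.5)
Your treatment of (1) and (3) is what the paper's one-line proof intends. Lemma~\ref{th:pijnotequal}(2) rules out vanishing denominators $p_{is}-p_{ij}$ for a permitted source pattern. Lemma~\ref{th:pijnotequal}(1) rules out vanishing numerator factors when both ends are permitted.

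There is one minor slip. In the $f$-case of (3), the factors $p_{i-1,s}-p_{ij}$ are those of $\underline d$, so apply part (1) of the lemma to $\underline d$, not to $\underline d^{+}_{ij}$. Equality then forces $s=j$ and $d_{i-1,j}=d_{ij}$, hence $\underline d^{+}_{ij}\notin\widehat{\mathtt{GT}}$.

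The genuine gap is the no-pole half of (2). You leave it as a plan, and the plan as stated does not close. Your proposed auxiliary lemma gives only a non-strict inequality at the modified entry, but that is exactly where the denominators $p_{is}-p_{ij}$ live.

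Your fallback ``or it is simultaneously cancelled by a numerator zero'' cannot be accepted either. For example, in the $e$-case, $p_{is}(\underline d)=p_{ij}(\underline d)$ with $s>j$ would force $d_{is}=d_{i+1,s}$. Then $\frac{p_{i+1,s}-p_{ij}}{p_{is}-p_{ij}}\equiv 1$, which cancels precisely the numerator zero coming from the violated inequality with $r=s$. The ``has zeroes'' part of (2) would then fail, so this scenario must be ruled out, not tolerated.

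The missing idea is that the non-permitted $\underline d$ still lies in $\widehat{\mathtt{GT}}$. Its vertical inequality at the modified entry, paired with a permitted inequality for $\underline d'$ on the other neighbouring row, supplies the strictness. Only $s$ with $\mathtt{y}_s-\mathtt{y}_j\in\mathbb{Z}$ matter, and $p_{ab}(\underline d)=p_{ab}(\underline d')$ for $(a,b)\neq(i,j)$. For $\underline d'=\underline d^{-}_{ij}$ (the $e$-case):
\[
\begin{aligned}
s<j&:\quad p_{is}(\underline d)<p_{i+1,j}(\underline d')\le p_{ij}(\underline d')<p_{ij}(\underline d),\\
s>j&:\quad p_{ij}(\underline d)\le p_{i-1,j}(\underline d)=p_{i-1,j}(\underline d')<p_{is}(\underline d).
\end{aligned}
\]
The second line uses $d_{ij}\le d_{i-1,j}$ and \eqref{eq:IneqOnPij} for $\alpha_{j,s}$ on rows $i-1,i$. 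For $\underline d'=\underline d^{+}_{ij}$ (the $f$-case):
\[
\begin{aligned}
s>j&:\quad p_{ij}(\underline d)<p_{ij}(\underline d')<p_{i+1,s}(\underline d')\le p_{is}(\underline d),\\
s<j&:\quad p_{is}(\underline d)<p_{i+1,j}(\underline d')=p_{i+1,j}(\underline d)\le p_{ij}(\underline d).
\end{aligned}
\]
The second line uses $d_{i+1,j}\le d_{ij}$.

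Hence no denominator vanishes at $\Lambda_{\text{dom}}$. The numerator zero you correctly located therefore survives; it is $p_{ij}=p_{i+1,r}$ in the $e$-case and $p_{i-1,l}=p_{ij}$ in the $f$-case, coming from the violated inequality. You also do not need that exactly one family of inequalities is violated; any one suffices once the denominators are known to be nonzero.
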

     \begin{proof}
         It follows from \eqref{eq:ActionInGTBasisYangian} and Lemma \ref{th:pijnotequal}.
     \end{proof}
    \begin{corollary}
        Operators \(\tilde{\mathbf{x}}_{i,r}^{\pm},\tilde{\mathbf{h}}_{i,r}:\mathbb{L}_{\Lambda_{\text{dom}},u}\rightarrow \mathbb{L}_{\Lambda_{\text{dom}},u}\) given in the basis \(\{\xi_{\underline{d}}\}_{\underline{d}\in\widehat{\mathtt{GT}}_{\text{perm}}(\Lambda_{\text{dom}}) }\)  by %formulas
        \begin{equation}\label{eq:restrictedaction}
        \begin{aligned}
            \tilde{\mathbf{x}}_{i,r}^{\pm} \xi_{\underline{d}} &= \sum_{\underline{d}'\in\widehat{\mathtt{GT}}_{\text{perm}}(\Lambda_{\text{dom}}) }\mathbf{x}_{i,\underline{d},\underline{d}'}^{\pm,\Lambda_{\text{dom}},u}[r]\xi_{\underline{d}'},\\
            \tilde{\mathbf{h}}_{i}(v) \xi_{\underline{d}} &= \mathbf{h}_{i}^{\Lambda_{\text{dom}},u}(v) \xi_{\underline{d}} .         
        \end{aligned}
        \end{equation}
        are well-defined.
    \end{corollary}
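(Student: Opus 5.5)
The claim is that the operators \(\tilde{\mathbf{x}}^{\pm}_{i,r}\) and \(\tilde{\mathbf{h}}_i(v)\) are well defined on \(\mathbb{L}_{\Lambda_{\text{dom}},u}\). So the task is to check that every matrix element entering \eqref{eq:restrictedaction} is a finite complex number, and that each basis vector is sent to a finite linear combination. Both facts will be read off from the explicit formulas \eqref{eq:ActionInGTBasisYangian} combined with Corollary~\ref{th:WellDefMatEl}.

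\emph{Finiteness.} Fix \(\underline{d}\in\widehat{\mathtt{GT}}_{\text{perm}}(\Lambda_{\text{dom}})\). By \eqref{eq:ActionInGTBasisYangian}, \(\mathbf{x}^{\pm}_i(v)\xi_{\underline{d}}\) involves only the patterns \(\underline{d}^{\mp}_{ij}\) with \(j\le i\), taken modulo the periodic shift \(j\mapsto j+n\). If the coefficient is nonzero, then \(\underline{d}^{\mp}_{ij}\) is an affine Gelfand--Tsetlin pattern. By the finiteness condition in the definition of \(\widehat{\mathtt{GT}}\), only finitely many \(d_{ij}\) in row \(i\) are nonzero, and the vertical inequalities \(d_{i-1,j}\ge d_{ij}\ge d_{i+1,j}\) restrict \(j\) further. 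Hence only finitely many \(j\) (up to periodicity) contribute. Consequently each \(\tilde{\mathbf{x}}^{\pm}_{i,r}\xi_{\underline{d}}\) is a finite sum, and restricting the sum to permitted \(\underline{d}'\) keeps it finite. Likewise the product in \eqref{eq:ActionInGTBasisYangianH} has only finitely many factors different from \(1\), as noted after Theorem~\ref{th:YactsonGTVerma}.

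\emph{No poles.} Regard each matrix element as a rational function of \(\Lambda\in\omega_0^\perp\), or equivalently of the \(\mathtt{y}_j\), and evaluate at \(\Lambda=\Lambda_{\text{dom}}\). For \(\mathbf{h}_i(v)\), the eigenvalue is a rational function in \(v\) whose coefficients are polynomial in the \(p_{ij}\). Expanding in \(v^{-1}\), each coefficient \(\mathbf{h}_{i,r,\underline{d},\underline{d}}\) is a polynomial in the \(p_{ij}\), so it has no pole in \(\Lambda\).

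For \(\mathbf{x}^{\pm}_{i,r}\), the \(v^{-r-1}\) coefficient equals \(e_{i,\underline{d},\underline{d}'}\) or \(f_{i,\underline{d},\underline{d}'}\) times a polynomial in \(p_{ij}\). The only possible denominators are therefore \(p_{is}-p_{ij}\) for \(s\ne j\), and, in the \(f\)-formula, \(p_{ii}-p_{ij}\) for \(j<i\). By Lemma~\ref{th:pijnotequal}(2), applied to the permitted source pattern \(\underline{d}\), all of these are nonzero at \(\Lambda_{\text{dom}}\). This is precisely Corollary~\ref{th:WellDefMatEl}(1), which I would invoke directly. Thus every matrix element \(\mathbf{x}^{\pm,\Lambda_{\text{dom}},u}_{i,\underline{d},\underline{d}'}[r]\) in \eqref{eq:restrictedaction} is a finite number.

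I do not see a serious obstacle. The one subtle point is that poles must be excluded \emph{before} projecting to permitted targets. Corollary~\ref{th:WellDefMatEl}(1) does this for all targets \(\underline{d}'\), because the denominators depend only on the source \(\underline{d}\). The projection onto \(\mathbb{L}_{\Lambda_{\text{dom}},u}\) in \eqref{eq:restrictedaction} is then a harmless truncation, and the operators \(\tilde{\mathbf{x}}^{\pm}_{i,r}\) and \(\tilde{\mathbf{h}}_i(v)\) are well defined. Whether the truncation respects the Yangian relations is a separate question, addressed afterwards using Corollary~\ref{th:WellDefMatEl}(2).
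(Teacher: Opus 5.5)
Your proposal is correct and follows the paper's argument. The paper's proof is the single observation that every matrix element entering the formulas for \(\tilde{\mathbf{x}}^{\pm}_{i,r}\) and \(\tilde{\mathbf{h}}_{i,r}\) has no pole at \(\Lambda=\Lambda_{\text{dom}}\) when the source pattern is permitted, i.e.\ Corollary~\ref{th:WellDefMatEl}(1). You invoke that result and unpack it via Lemma~\ref{th:pijnotequal}(2); your finiteness check is a harmless extra.
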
 
    \begin{proof}
        All corresponding matrix elements do not have poles for \(\Lambda = \Lambda_{\text{dom}}\).
    \end{proof}

    \begin{proposition}
        Operators \(\tilde{\mathbf{x}}_{i,r}^{\pm},\tilde{\mathbf{h}}_{i,r}:\mathbb{L}_{\Lambda_{\text{dom}},u}\rightarrow \mathbb{L}_{\Lambda_{\text{dom}},u}\) 
        define a structure of \(\widehat{Y}_\circ(1,-\kappa)\)-module on \(\mathbb{L}_{\Lambda_{\text{dom}},u}\) .
    \end{proposition}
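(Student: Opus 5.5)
The plan is a specialization argument. The relations of \(\widehat{Y}_\circ(1,-\kappa)\) hold on \(\mathbb{M}_{\Lambda,u}\) for generic \(\Lambda\) by Theorem~\ref{th:YactsonGTVerma}. We then let \(\Lambda\to\Lambda_{\text{dom}}\) along a suitable line and use Corollary~\ref{th:WellDefMatEl} to see that the span of the permitted patterns is a quotient in the limit.

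\emph{Step 1: setting up the family.} Fix \(\Lambda_{\text{dom}}\) and take a one-parameter family \(\Lambda(t)=\Lambda_{\text{dom}}+t\nu\), with \(\nu\) chosen so that \(\Lambda(t)\) is generic for small \(t\ne0\). We also keep \((\delta,\nu)=0\) if convenient, so that \(\kappa\) stays fixed. Each relation of \(\widehat{Y}_\circ\) is evaluated on a basis vector \(\xi_{\underline d}\) and paired with a dual basis vector \(\xi_{\underline d''}\). The result is a finite sum, over intermediate patterns \(\underline d'\), of products of matrix elements; finiteness follows from degree and weight considerations, since each generator changes a single entry by \(\pm1\) (up to periodicity). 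These sums are rational functions of \(t\) and vanish identically for \(t\ne0\).

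\emph{Step 2: taking the limit.} Take \(\underline d,\underline d''\in\widehat{\mathtt{GT}}_{\text{perm}}(\Lambda_{\text{dom}})\) and split the intermediate patterns into permitted and non-permitted ones.
\begin{itemize}
\item If \(\underline d'\) is permitted, then by Corollary~\ref{th:WellDefMatEl}(1) both factors are regular at \(t=0\), and their product at \(t=0\) is exactly the corresponding matrix-element product for the operators \(\tilde{\mathbf{x}}^\pm_{i,r},\tilde{\mathbf{h}}_{i,r}\) of \eqref{eq:restrictedaction}.
\item If \(\underline d'\) is not permitted, the first factor (from \(\underline d\) to \(\underline d'\)) is regular by Corollary~\ref{th:WellDefMatEl}(1). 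The second factor (from \(\underline d'\) to \(\underline d''\)) vanishes at \(t=0\) and has no pole, by Corollary~\ref{th:WellDefMatEl}(2).
\end{itemize}
So every non-permitted term tends to \(0\), and the \(t\to0\) limit of the vanishing identity is precisely the relation for the operators \(\tilde{\mathbf{x}}^\pm_{i,r},\tilde{\mathbf{h}}_{i,r}\) on \(\mathbb{L}_{\Lambda_{\text{dom}},u}\). Conceptually, the span of the non-permitted \(\xi_{\underline d}\) becomes a subspace whose image, projected back onto the permitted part, drops out. This is the same as saying that \(\mathbb{L}\) is a subquotient of the lattice-limit module. The relations to check are the ones with \(\mathbf{h}\mathbf{h}\), \(\mathbf{x}^+\mathbf{x}^-\), the quadratic \(\mathbf{h}\mathbf{x}\) and \(\mathbf{x}\mathbf{x}\) relations, and their primed versions for the pair \((n,1)\); all of them are at most quadratic in the generators, so the two-factor argument above suffices. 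Relations involving \(\mathbf{h}_{i,0}\), and the \(D\)-grading, are immediate because \(\mathbf{h}\) acts diagonally with regular eigenvalues.

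\emph{Main obstacle.} The delicate point is Step 2 for the second factor. Corollary~\ref{th:WellDefMatEl}(2) has to be applied in exactly the direction needed: from a non-permitted \(\underline d'\) to a permitted \(\underline d''\). In addition, the product must not hide a pole in the first factor when \(\underline d\) is permitted. For \(\underline d\) permitted the first factor is fine by part (1). I would nevertheless double-check, directly from \eqref{eq:ActionInGTBasisYangian} and Lemma~\ref{th:pijnotequal}, that the zero of the second factor is a genuine zero in \(t\) and is not cancelled. This could fail if the path \(\Lambda(t)\) hits \(p_{ij}=p_{is}\) coincidences in the non-permitted \(\underline d'\) that produce poles in the second factor's denominators. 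If that happens, I would choose \(\nu\) generically, and if necessary argue with the order of vanishing, comparing the orders of zeros and poles of the two factors for the specific one-step moves \(\underline d'=\underline d^{\pm}_{ij}\). One should also check that the generating-series relations reduce to relations among finitely many coefficients \([r]\). This holds because each relation in a fixed mode involves only polynomially many terms and the rational expressions in \(v\) expand coefficientwise.
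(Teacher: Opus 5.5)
This is essentially the paper's own proof: the paper also projects \(A^\Lambda B^\Lambda\xi_{\underline d}\) onto the permitted patterns and discards the non-permitted intermediate patterns \(\underline d'\), using Corollary~\ref{th:WellDefMatEl}(1) for the first factor and Corollary~\ref{th:WellDefMatEl}(2) for the second. Corollary~(2) is stated in exactly the non-permitted-to-permitted direction you need, so your ``main obstacle'' is already covered there; the paper simply evaluates the rational functions directly at \(\Lambda=\Lambda_{\text{dom}}\) instead of along a line. The only point to tidy is Step~1, because for \(\kappa\in\mathbb{Q}\) (e.g.\ admissible weights) no line makes \(\Lambda(t)\) generic for all small \(t\neq0\): the bad values of \(t\) accumulate at \(0\) if \((\delta,\nu)\neq0\), and every \(t\) is bad if \((\delta,\nu)=0\). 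So argue instead, as the paper implicitly does, that each relation entry is a rational function of \(\Lambda\) vanishing on the Zariski-dense generic set, hence identically, and then evaluate at \(\Lambda_{\text{dom}}\).
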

    \begin{proof}
        It suffices to check that \(\tilde{\mathbf{x}}_{i,r}^{\pm},\tilde{\mathbf{h}}_{i,r}\) satisfy commutation relations of \(\widehat{Y}_\circ(1,-\kappa)\). Consider a projection \(\mathtt{P}^{\Lambda}_{\Lambda_{\text{dom}}}:\mathbb{M}_{\Lambda,u}\rightarrow \mathbb{L}_{\Lambda_{\text{dom}},u}\) such that
        \begin{equation}
            \mathtt{P}^{\Lambda}_{\Lambda_{\text{dom}}}(\xi_{\underline{d}}) = \begin{cases}
                \xi_{\underline{d}}, \quad \text{for} \quad \underline{d}\in \widehat{\mathtt{GT}}_{\text{perm}}(\Lambda_{\text{dom}}),\\
                0, \quad \text{otherwise.}
            \end{cases}
        \end{equation}
        
        Consider any quadratic expression \(A^\Lambda B^\Lambda\) where \(A^\Lambda,B^\Lambda\in\{\mathbf{x}_{i,r}^{\pm,\Lambda,u}, \mathbf{h}_{i,r}^{\Lambda,u}\}\) with matrix elements \(A^\Lambda_{\underline{d},\underline{d}'}, B^\Lambda_{\underline{d},\underline{d}'}\) for \(\underline{d},\underline{d}'\in\widehat{\mathtt{GT}}\). Fix \(\underline{d}\in \widehat{\mathtt{GT}}_{\text{perm}}(\Lambda_{\text{dom}})\). Denote by \(\tilde A,\tilde B\)  corresponding element in \(\{\tilde{\mathbf{x}}_{i,r}^{\pm}, \tilde{\mathbf{h}}_{i,r}\}\).  We have
        \begin{equation}
            \mathtt{P}^{\Lambda}_{\Lambda_{\text{dom}}}A^\Lambda B^\Lambda\xi_{\underline{d}} = \sum_{\underline{d}''\in\widehat{\mathtt{GT}}_{\text{perm}}(\Lambda_{\text{dom}})}\sum_{\underline{d}'\in\widehat{\mathtt{GT}}}A^\Lambda_{\underline{d}',\underline{d}''}B^\Lambda_{\underline{d},\underline{d}'}\xi_{\underline{d}''}.
        \end{equation}
        Then by Corollary \ref{th:WellDefMatEl}
        \begin{equation}
            (\mathtt{P}^{\Lambda}_{\Lambda_{\text{dom}}}A^\Lambda B^\Lambda\xi_{\underline{d}})|_{\Lambda=\Lambda_{\text{dom}} }= \sum_{\underline{d}''\in\widehat{\mathtt{GT}}_{\text{perm}}(\Lambda_{\text{dom}})}\sum_{\underline{d}'\in\widehat{\mathtt{GT}}_{\text{perm}}(\Lambda_{\text{dom}})}A^{\Lambda_{\text{dom}}}_{\underline{d}',\underline{d}''}B^{\Lambda_{\text{dom}}}_{\underline{d},\underline{d}'}\xi_{\underline{d}''} = \tilde A \tilde B \xi_{\underline{d}}.
        \end{equation}
        So, if a quadratic expression of \(\mathbf{x}_{i,r}^{\pm,\Lambda,u}, \mathbf{h}_{i,r}^{\Lambda,u}\) is equal to zero, then the same expression in operators  \(\tilde{\mathbf{x}}_{i,r}^{\pm}, \tilde{\mathbf{h}}_{i,r}\) also vanishes.
    \end{proof}

    %We call  \(\widehat{Y}_\circ(1,-\kappa)\)-module \textit{thin} %(or  \textit{tame}) 
    %if \(\widehat{Y}^0(1,-\kappa)\) acts semi simply and for any two eigenvectors \(\xi_1, \xi_2\) there exists  \(h\in\widehat{Y}^0(1,-\kappa)\) such that its eigenvalues on \(\xi_1, \xi_2\) are different.    
    We call a $\widehat{Y}_\circ(1,-\kappa)$-module \textit{thin} if the action of $\widehat{Y}^0(1,-\kappa)$ is semisimple and all eigenspaces are one-dimensional.
    \begin{lemma}\label{th:thin}
        For  \(\kappa \neq n\)  the module \(\mathbb{L}_{\Lambda_{\text{dom}},u}\) is a thin \(\widehat{Y}_\circ(1,-\kappa)\)-module.        
    \end{lemma}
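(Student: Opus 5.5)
By construction, every basis vector $\xi_{\underline d}$ with $\underline d\in\widehat{\mathtt{GT}}_{\text{perm}}(\Lambda_{\text{dom}})$ is a joint eigenvector of all $\tilde{\mathbf h}_i(v)$. Its eigenvalue is the rational function in \eqref{eq:ActionInGTBasisYangianH} evaluated at $\Lambda=\Lambda_{\text{dom}}$, and by Corollary \ref{th:WellDefMatEl} this evaluation has no poles. So $\widehat{Y}^0(1,-\kappa)$ acts semisimply, and thinness reduces to one claim: two distinct permitted patterns $\underline d\neq\underline d'$ have distinct tuples of eigenvalues $\{\mathbf h_i(v)\}_{i=1}^n$. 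I would recover $\underline d$ from its eigenvalues by induction on the row index $i$.

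Write the eigenvalue of $\mathbf h_i(v)$ as a ratio of polynomials in $v$. For row $i$ put
$$P_i(v)=\prod_{j\le i}\Bigl(-v+\tfrac{i}{2}-p_{ij}\Bigr),$$
a product with only finitely many factors $\neq$ the ``vacuum'' factor; more precisely, normalise against the pattern $\underline 0$ so that only finitely many factors differ. Formula \eqref{eq:ActionInGTBasisYangianH} then gives
$$\mathbf h_i(v)=\frac{P_{i+1}(v-\tfrac12)\,P_{i-1}(v+\tfrac12)}{P_i(v-\tfrac12)\,P_i(v+\tfrac12)}.$$
From this relation one reconstructs the multisets $\{p_{ij}\}_{j\le i}$ row by row, for instance by working in the regime $i\gg0$ and using periodicity, or by solving the recursion in the ring of rational functions. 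The Cartan-part eigenvalues together with $D$ fix $d_i$. Since $\mathtt y_j$ are fixed, the multiset $\{p_{ij}\}_j$ determines the multiset $\{d_{ij}\}_j$ only up to permutations $j\leftrightarrow j'$ with $\mathtt y_j-\mathtt y_{j'}\in\mathbb Z$. Lemma \ref{th:pijnotequal}(2) shows that the $p_{ij}$ within a row are pairwise distinct for permitted patterns, so the multisets are sets. It does not by itself rule out a relabeling $p_{ij}(\underline d)=p_{ij'}(\underline d')$.

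To exclude relabeling I would use the ordering built into permitted patterns. If $\mathtt y_l-\mathtt y_r\in\mathbb Z_{>0}$ with $l<r\le i$, then \eqref{eq:IneqOnPij} gives $p_{i,l}<p_{i,r}$. Hence within each integrality class $\{j:\mathtt y_j\equiv c\bmod\mathbb Z\}$ the values $p_{ij}$ are strictly increasing in $j$, and the label of each value in the set is determined. The hypothesis $\kappa\neq n$ (together with dominance, $\kappa\notin\mathbb Q_{\le0}$) should be what guarantees that $\mathtt y_{j+n}=\mathtt y_j-\kappa$ keeps the shifted copies positioned so that this ordering along the infinite row is compatible with the finiteness condition. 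It also ensures the evaluation formulas and \eqref{eq:highestweight} make sense.

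The main obstacle is the reconstruction step. The numerator and denominator of $\mathbf h_i(v)$ can cancel, because of coincidences of the form $p_{i+1,j}\pm\tfrac12$ versus $p_{i,j'}\pm\tfrac12$, and these coincidences are exactly what occurs for non-generic $\Lambda$. I would first try to show, using Lemma \ref{th:pijnotequal}(1), that every such cancellation is of the harmless type $p_{i,l}=p_{i+1,l}$ with $d_{i,l}=d_{i+1,l}$, so that it cancels identically in every pattern. I would then argue that the reduced rational functions still determine the sets $\{p_{ij}\}$, proceeding by a degree/induction argument over rows, starting from the rows $i$ whose entries are all $0$ apart from finitely many.
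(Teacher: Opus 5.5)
Your treatment of semisimplicity is correct. So is your final relabeling idea: inside a class $\mathtt y_j\equiv c \bmod \mathbb Z$ the values $p_{ij}$ increase strictly in $j$ by \eqref{eq:IneqOnPij}. This is essentially how the paper concludes. The gap is the step you flag yourself, extracting row data from the eigenvalues of the $\mathbf h_i(v)$, and neither mechanism you suggest works.

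First, a row-by-row induction has no base case. Every row has only finitely many nonzero entries, and row $i+n$ is row $i$ shifted by $\kappa$, so nothing simplifies for $i\gg0$.

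Second, Lemma~\ref{th:pijnotequal}(1) does not make the cancellations in $\mathbf h_i(v)$ harmless. The numerator factor $-v+\frac{i+1}{2}-p_{i+1,j}$ cancels the denominator factor $-v+\frac{i-1}{2}-p_{i,j'}$ whenever $p_{i,j'}=p_{i+1,j}-1$. For $j'<j$ this is exactly equality in \eqref{eq:DashedIneq}, which permitted patterns do attain. For example, take $\underline d=\underline 0$, $j'=i$, $j=i+1$ with $(\Lambda_{\text{dom}},\alpha_i)=0$; this is why $\mathbf Q_i(v)=1$ there. (With your $P_i$ the identity actually reads $\mathbf h_i(v)=P_{i+1}(v)P_{i-1}(v)/\bigl(P_i(v-\tfrac12)P_i(v+\tfrac12)\bigr)$.)

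Third, your account of the hypothesis $\kappa\neq n$ is not right. It has nothing to do with positioning shifted copies or with the evaluation formulas.

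The missing idea is to invert this twisted Cartan relation globally, and that is exactly where $\kappa\neq n$ enters.

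The paper defines $\mathbf a_i(v)=v^{-1}+\sum_{r\ge1}\mathbf a_{i,r}v^{-r-1}\in\widehat{Y}^0(1,-\kappa)[[v^{-1}]]$ by \eqref{eq:FunEqOnAi}. In the expansion of $\mathbf a_i(v)/\mathbf a_i(v+n-\kappa)$ each new unknown $\mathbf a_{i,r}$ enters with coefficient a nonzero multiple of $n-\kappa$, so the coefficients are uniquely determined iff $\kappa\neq n$. For $\kappa=n$ that product of $\mathbf h$'s over a period acts by $1$ on every $\xi_{\underline d}$ and carries no information.

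The eigenvalue of $\mathbf a_i(v)$ is $a_{i,\underline d}(v)=\frac{1}{v-p_{i+1,i+1}}\prod_{j\le i}\frac{v-p_{ij}}{v-p_{i+1,j}}$. The eigenvalue of $\mathbf h_i(-v)$ equals $a_{i-1,\underline d}(v+\frac{i-1}{2})/a_{i,\underline d}(v+\frac{i+1}{2})$, so your relation has been inverted. Now the half-integer shifts are gone, and the only possible cancellations are $p_{ij}=p_{i+1,j'}$ between adjacent rows. Lemma~\ref{th:pijnotequal}(1) forces these to have $j=j'$ and $d_{ij}=d_{i+1,j}$.

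Iterating this, the zero sets of the telescoped products $\prod_{m=0}^{l}a_{i+m,\underline d}(v)$, $l\ge0$, exhaust exactly $Z^{(i)}_{\underline d}=\{p_{ij}: d_{ij}>0\}$. An ordering argument like yours then finishes: compare two patterns at the largest $j$ where row $i$ differs.

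You could instead stay in your own framework. On divisors of the vacuum-normalized $P_i$, the relation is a linear recurrence over the domain $\mathbb Z[\mathbb C]$, with twist $D_{i+n}=t^{n/2-\kappa}D_i$. Its $n$-step transfer matrix has eigenvalues $t^{\pm n/2}$, so the homogeneous problem has only the zero solution iff $\kappa\notin\{0,n\}$. This recovers the full set $\{p_{ij}\}_{j\le i}$. Either way, this inversion is the actual content of the lemma and has to be supplied.
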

    \begin{proof}
        One has to check that for any \(\underline{d},\underline{d}'\in\widehat{\mathtt{GT}}_{\text{perm}}(\Lambda_{\text{dom}})\) there exists an element \(h\in\widehat{Y}^0(1,-\kappa)\) such that eigenvalues of \(h\) on \(\underline{d},\underline{d}'\) are different. We divide the proof into \(3\) steps.
        
        \textbf{Step \(1\).} 
        % Consider series \( \mathbf{a}_{i}(v) = v^{-1}+\sum_{r=1}^{\infty}\mathbf{a}_{i,r} v^{-r-1}\in \widehat{Y}^0(1,-\kappa)[[v^{-1}]]\) for \(i\in\mathbb{Z}\) given by formulas
        % \begin{subequations}
        %     \begin{align}\label{eq:a&h}
        %     \mathbf{h}_i(-v) &= \frac{\mathbf{a}_{i-1}\left(v+\frac{i-1}{2}\right)}{\mathbf{a}_i\left(v+\frac{i+1}{2}\right)}, \\ \label{eq:a_period}  
        %     \mathbf{a}_{i+n}(v) &= \mathbf{a}_{i}(v - \kappa).
        %     \end{align}
        % \end{subequations}
        % Elements \(\mathbf{a}_{i}(v)\) are well-defined. Indeed, equation \eqref{eq:a&h} implies
        % \begin{equation}
        %     \frac{\mathbf{a}_{i}(v)}{\mathbf{a}_{i+n}(v+n)}\! =  \mathbf{h}_{i+1}\!\!\left(-v+\frac{i}{2}\right) \mathbf{h}_{i+2}\!\!\left(-v+\frac{i-1}{2}\right) \dots \mathbf{h}_{i+n}\!\!\left(-v-n+1+\frac{i-n+1}{2}\right)\!,\!
        % \end{equation}  
        % The right handside is in \(\widehat{Y}^0(1,-\kappa)\), denote it by \(\mathbf{H}_i(v)\). By formula \eqref{eq:a_period} we have
        % \begin{equation}\label{eq:FunEqOnAi}
        %     \frac{\mathbf{a}_{i}(v)}{\mathbf{a}_{i}(v+n-\kappa)} = \mathbf{H}_i(v).
        % \end{equation}
        % Expanding the series, we obtain
        % \begin{equation}
        %     \frac{\mathbf{a}_{i}(v)}{\mathbf{a}_{i}(v+n-\kappa)} = 1 + (n-\kappa)v^{-1} + \sum_{r=2}^{\infty} \Big(r(n-\kappa) \mathbf{a}_{i,r-1}+P_r(\{\mathbf{a}_{i,s}\}_{s=0}^{r-2})\Big)v^{-r-1},
        % \end{equation}
        % where \(P_r\) are polynomials. Hence, the elements \(\mathbf{a}_{i,r}\) are determined recursively by formula \eqref{eq:FunEqOnAi}.  
        Consider series \( \mathbf{a}_{i}(v) = v^{-1}+\sum_{r=1}^{\infty}\mathbf{a}_{i,r} v^{-r-1}\in \widehat{Y}^0(1,-\kappa)[[v^{-1}]]\) for \(i\in\mathbb{Z}\) given by %formulas
        \begin{equation}\label{eq:FunEqOnAi}
        \frac{\mathbf{a}_{i}(v)}{\mathbf{a}_{i}(v+n-\kappa)} = 
        \mathbf{h}_{i+1}\left(-v+\frac{i}{2}\right) \mathbf{h}_{i+2}\left(-v+\frac{i-1}{2}\right) \dots \mathbf{h}_{i+n}\left(-v+\frac{i-n+1}{2}\right).
        \end{equation} 
        Expanding the left-hand side, we obtain
        \begin{equation}
            \frac{\mathbf{a}_{i}(v)}{\mathbf{a}_{i}(v+n-\kappa)} = 1 + (n-\kappa)v^{-1} + \sum_{r=2}^{\infty} \Big(r(n-\kappa) \mathbf{a}_{i,r-1}+P_r(\{\mathbf{a}_{i,s}\}_{s=0}^{r-2})\Big)v^{-r-1},
        \end{equation}
        where \(P_r\) are polynomials. Hence, for \(k\neq 0\) the elements \(\mathbf{a}_{i,r}\) are determined recursively by formula \eqref{eq:FunEqOnAi}.
        
        Let us show that the eigenvalue \( a_{i,\underline{d}}(v)\) of \(\mathbf{a}_{i}(v) \) on \(\xi_{\underline{d}}\in\mathbb{L}_{\Lambda_{\text{dom}},u}\) is given by
        \begin{equation}\label{eq:EigenvaluesAi}
           a_{i,\underline{d}}(v) =\frac{1}{(v-p_{i+1,i+1})}\prod_{j\le i}\frac{v-p_{i,j}}{v-p_{i+1,j}}.
        \end{equation}
        Indeed, applying  \eqref{eq:FunEqOnAi} to  \(\xi_{\underline{d}}\) and using \eqref{eq:ActionInGTBasisYangian} we get 
        \begin{multline}\label{eq:AiEigenFunctEq}
            \frac{a_{i,\underline{d}}(v)}{a_{i,\underline{d}}(v+n-\kappa)}=\frac{(v+n-p_{i+n+1,i+n+1})}{(v-p_{i+1,i+1})}\prod_{j\le i}\frac{v-p_{i,j}}{v-p_{i+1,j}}\frac{v+n-p_{i+n+1,j+n}}{v+n-p_{i+n,j+n}}   \\
            =\!\left(\!\frac{1}{(v-p_{i+1,i+1})}\prod_{j\le i}\frac{v-p_{i,j}}{v-p_{i+1,j}} \!\right)  \! \left(\! \frac{1}{(v+n-\kappa-p_{i+1,i+1})}\prod_{j\le i}\frac{v+n-\kappa-p_{i,j}}{v+n-\kappa-p_{i+1,j}} \! \right)^{-1}.  
        \end{multline} 
        Functional equation \eqref{eq:AiEigenFunctEq} on \(a_{i,\underline{d}}(v)\) has a unique solution by the argument as above. Moreover, \eqref{eq:EigenvaluesAi} satisfies the \eqref{eq:AiEigenFunctEq}.
        
        \textbf{Step \(2\).}      
        Consider a set 
        \begin{equation}
            Z^{(i)}_{\underline{d}} = \{ p_{i,j}(\underline{d})| j\le i,  d_{i,j}> 0\}.
        \end{equation}
        By the second part of Lemma \ref{th:pijnotequal} it has no multiplicities. 
        %Note that   by the first part of Lemma \ref{th:pijnotequal} a rational function \(a_{i,\underline{d}}(v)\) has poles and zeroes in \(p_{i+1,j}\) and \(p_{i,j}\) correspondingly for all \(j\le i\) such that \(p_{i,j}\neq p_{i+1,j}\).
        % Note that \(p_{i,j}(\underline{d})\not\in Z^{(i)}_{\underline{d}}\) if and only if 
        % \begin{equation}
        %     p_{i,j}(\underline{d}) = p_{i+1,j}(\underline{d}) =\dots = p_{i+L,j}(\underline{d}).
        % \end{equation}
        Let us show that 
        \begin{equation}
            Z^{(i)}_{\underline{d}}  =\bigcup_{l=0}^{\infty} V_{i,\underline{d}}^{(l)},
        \end{equation}
        where \(V_{i,\underline{d}}^{(l)}\) is a set of zeros of \begin{equation}
            A_{i,\underline{d}}^{(l)}(v) = \prod_{m=0}^{l} a_{i+m,\underline{d}}(v) =\prod_{s=0}^l \frac{1}{(v-p_{i+l+1,i+s+1})}\prod_{j\le i}\frac{v-p_{i,j}}{v-p_{i+l+1,j}}.
        \end{equation}
        If \(d_{i,j}=0\) then \(d_{i+s,j}=0\) for any \(s\ge 0\) hence \(p_{i+l+1,j} = p_{ij}\). Then multiplier \((v - p_{i,j})\)  contracts. So, \(Z^{(i)}_{\underline{d}}  \supset\bigcup_{l=0}^{\infty} V_{i,\underline{d}}^{(l)}\).
        
        In the other direction, assume there exists \(p_{i,j}\in Z^{(i)}_{\underline{d}}\) such that \(p_{i,j}\not\in V_{i,\underline{d}}^{(l)}\) for all  \(l\ge 0\). Then for each \(l\) there exists \(j_l\le i+l+1\) such that \(p_{i+l+1,j_l} = p_{i,j}\). But by the first part of Lemma \ref{th:pijnotequal} it means that \(j_l=j\)  and \(d_{i,j}  =d_{i+l,j}\) for all \(l\ge 0\). Thus, \(d_{i,j} = 0\) and \(p_{i,j}\not\in Z^{(i)}_{\underline{d}}\).

        \textbf{Step \(3\).} Assume that \(\underline{d}\neq \underline{d}'\in\widehat{\mathtt{GT}}_{\text{perm}}(\Lambda_{\text{dom}})\) such that
        \begin{equation}
             a_{i,\underline{d}}(v)= a_{i,\underline{d}'}(v), \quad \quad \text{for any} \quad i\in\mathbb{Z}.
        \end{equation}
        Then \(Z_{\underline{d}}^{(i)} = Z_{\underline{d}'}^{(i)}\) for \(i\in\mathbb{Z}\). For fixed \(i\) show that equality \( Z^{(i)}_{\underline{d}} = Z^{(i)}_{\underline{d}'} \) implies \(p_{ij}(\underline{d}) = p_{ij}(\underline{d}')\) for all \(j\le i\). Suppose that it is not true, then without loss of generality there exists \(j\le i\) such that \(p_{ij}(\underline{d}) > p_{ij}(\underline{d}')\) and for any \(r\) such that \(j<r\le i\) we have \(p_{ir}(\underline{d}) = p_{ir}(\underline{d}')\). Since \(p_{ij}(\underline{d}) > p_{ij}(\underline{d}')\ge -\mathtt{y}_j\) we have \(d_{ij}(\underline{d})>0\) and \(p_{ij}(\underline{d})\in Z^{(i)}_{\underline{d}}\). So, since \( Z^{(i)}_{\underline{d}} = Z^{(i)}_{\underline{d}'} \)  there is \(l<j\) such that \(p_{il}(\underline{d}')=p_{ij}(\underline{d})\). Then \(\alpha_{l,j}\in\widehat{\Phi}_{+,\Lambda}\) and since \(\underline{d}'\in\widehat{\mathtt{GT}}_{\text{perm}}(\Lambda_{\text{dom}})\) by \eqref{eq:IneqOnPij} we get
        \begin{equation}
            p_{il}(\underline{d}')<p_{ij}(\underline{d}')<p_{ij}(\underline{d}).
        \end{equation}
        So, we have a contradiction.
    \end{proof}

    \begin{theorem}\label{th:PIrr}
        For  \(\kappa \notin \{0,n\}\)  the module \(\mathbb{L}_{\Lambda_{\text{dom}},u}\) is an irreducible \(\widehat{Y}_\circ(1,-\kappa)\)-module.    
    \end{theorem}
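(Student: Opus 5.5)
The plan is to combine thinness (Lemma~\ref{th:thin}) with connectivity of the permitted patterns under the generators $\tilde{\mathbf{x}}^{\pm}_{i,0}$, whose matrix elements between permitted patterns are nonzero.

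\textbf{Step 1: submodules are spanned by basis vectors.} Let $N\subset\mathbb{L}_{\Lambda_{\text{dom}},u}$ be a nonzero submodule. By Lemma~\ref{th:thin}, $\widehat{Y}^0(1,-\kappa)$ acts semisimply with one-dimensional joint eigenspaces $\mathbb{C}\xi_{\underline{d}}$. A vector $v=\sum c_{\underline{d}}\xi_{\underline{d}}\in N$ has finitely many nonzero terms. For each pair of distinct patterns in its support, pick an element of $\widehat{Y}^0$ (for instance a coefficient of some $\mathbf{a}_i(v)$) separating their eigenvalues. A Lagrange-interpolation polynomial in these finitely many elements then projects $v$ onto each $\xi_{\underline{d}}$. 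Hence $N$ is spanned by the $\xi_{\underline{d}}$ it contains, and it contains at least one of them.

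\textbf{Step 2: reaching the highest weight vector.} From any $\xi_{\underline{d}}\in N$ with $\underline{d}\neq\underline{0}$, I want a move $\underline{d}\mapsto\underline{d}^-_{ij}$ that stays permitted. Then $\tilde{\mathbf{x}}^+_{i,0}\xi_{\underline{d}}$ has a nonzero coefficient at $\xi_{\underline{d}^-_{ij}}$ by Corollary~\ref{th:WellDefMatEl}(3), and Step 1 gives $\xi_{\underline{d}^-_{ij}}\in N$. Since $D$ strictly increases (from $-d_n$ toward $0$), iterating this reaches $\xi_{\underline{0}}$.

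To choose the move, take a maximal entry in the order generated by the arrows: $d_{ij}>0$ with $d_{i+1,j}<d_{ij}$ (i.e.\ $d_{i+1,j}\le d_{ij}-1$). Decreasing $d_{ij}$ must not break any inequality. The GT inequality $d_{i-1,j}\ge d_{ij}$ only gets easier, so the real check is the blue inequalities~\eqref{eq:DashedIneq} in which $d_{ij}$ sits on the smaller side, namely $d_{i-1,l}\le d_{i,j}+\mathtt{y}_l-\mathtt{y}_j-1$ for $l<j$.

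\textbf{Main obstacle.} The main obstacle is showing that some positive entry can always be decreased. If $d_{ij}$ is blocked by such an inequality with equality, I would move to $d_{i-1,l}$. I would then argue that one of these two things happens:
\begin{itemize}
\item $d_{i-1,l}$ is itself decreasable;
\item it is blocked in turn, producing a chain of blocked entries.
\end{itemize}
To rule out an infinite chain, I would use finiteness of the pattern and periodicity, with a potential such as $p_{ij}$, which strictly changes along the chain by~\eqref{eq:IneqOnPij}. The danger is cycles, which I would exclude using dominance of $\Lambda_{\text{dom}}$: the constants $\mathtt{y}_l-\mathtt{y}_j-1\ge0$ on integral positive roots. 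I would try this first as an explicit choice: take the positive entry with maximal $i$ and, among those, the largest $p_{ij}$.

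\textbf{Step 3: conclusion.} Once $\xi_{\underline{0}}\in N$, reverse the argument. For any permitted $\underline{d}$, find a decreasing chain of permitted patterns from $\underline{d}$ to $\underline{0}$ by Step 2 applied to $\underline{d}$ itself. The reverse steps use $\tilde{\mathbf{x}}^-_{i,0}$, whose matrix elements are again nonzero by Corollary~\ref{th:WellDefMatEl}(3), and Step 1 isolates each component. Hence every $\xi_{\underline{d}}\in N$, so $N=\mathbb{L}_{\Lambda_{\text{dom}},u}$. The assumption $\kappa\ne0$ ensures the $\mathbf{a}_i$ are defined as used in Lemma~\ref{th:thin}.
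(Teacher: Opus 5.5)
You have the paper's overall architecture. Thinness (Lemma~\ref{th:thin}) forces a nonzero submodule to contain some $\xi_{\underline d}$. Corollary~\ref{th:WellDefMatEl}(3) makes $\tilde{\mathbf x}^{\pm}_{i,0}$ nonzero along single-box moves between permitted patterns. So everything reduces to joining each permitted pattern to $\underline 0$ by permitted moves.

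That last step is the paper's Lemma~\ref{th:connectedgraph}: every nonzero permitted $\underline d$ has some $\underline d^-_{ij}$ that is still permitted. It is the real content of the theorem, and you leave it as a plan whose ingredients fail as stated.
\begin{enumerate}
\item \emph{``The positive entry with maximal $i$'' does not exist.} By periodicity $d_{i+n,j+n}=d_{ij}$, so positive entries occur with arbitrarily large $i$. Likewise ``largest $p_{ij}$'' is meaningless, since $p_{i+n,j+n}=p_{ij}+\kappa$ and the $p_{ij}$ are complex.
\item \emph{Your chain can leave the set of candidates.} The blocking entry $d_{i-1,l}$ is positive, but it need not satisfy $d_{i,l}<d_{i-1,l}$. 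The chain must then also move down columns, and $p_{ij}$ no longer controls it.
\item \emph{Dominance cannot exclude cycles.} It only gives $\mathtt y_l-\mathtt y_r-1\ge 0$, and zero constants occur. Take $\Lambda_{\text{dom}}=0$, which is dominant with $\kappa=n$. The pattern with $d_{jj}=c>0$ for all $j$ and all other entries $0$ is permitted. Yet nothing can be decreased: lowering $d_{jj}$ violates \eqref{eq:DashedIneq} for $\alpha_{j-1,j}$, i.e.\ $d_{j-1,j-1}\le d_{jj}$.
\end{enumerate}
So any correct argument must use $\kappa\neq n$ exactly here. In your write-up the hypothesis only serves to define the $\mathbf a_i$. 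Even there, it is $\kappa\neq n$, not $\kappa\neq 0$, that is needed: the recursion from \eqref{eq:FunEqOnAi} has leading coefficient $r(n-\kappa)$.

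The missing idea is to work on the last nonzero diagonal $L=\max\{i-j : d_{ij}>0\}$. Take $j$ with $d_{L+j,j}>0$.
\begin{itemize}
\item Since $d_{L+j+1,j}=0$, the Gelfand--Tsetlin inequality survives the decrease.
\item Every blue constraint $d_{L+j-1,l}\le d_{L+j,j}+\mathtt y_l-\mathtt y_j-1$ with $l<j-1$ has left side $0$, because that entry lies beyond diagonal $L$. After the decrease its right side is still $\ge 0$.
\item Hence the only possible obstruction is $l=j-1$, with $\alpha_{j-1,j}$ integral and equality holding. Then $d_{L+j-1,j-1}=d_{L+j,j}+\mathtt y_{j-1}-\mathtt y_j-1>0$, and you pass to $j-1$ on the same diagonal.
\end{itemize}
If this never stops, after $n$ steps periodicity $d_{L+j-n,j-n}=d_{L+j,j}$ forces $\sum_{s=j-n}^{j-1}(\mathtt y_s-\mathtt y_{s+1}-1)=\kappa-n$ to vanish, contradicting $\kappa\neq n$.

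A smaller point: $D$ acts by $-d_n$, which is unchanged by moves in rows $i\not\equiv 0 \pmod n$. Measure termination of your descent by $\sum_{i=1}^n d_i(\underline d)$ instead.
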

    To prove the theorem we need one more observation. Consider an undirected graph \(\Gamma(\Lambda_{\text{dom}})\) with vertices \(\underline{d}\in\widehat{\mathtt{GT}}_{\text{perm}}(\Lambda_{\text{dom}})\) and edges \((\underline{d},\underline{d}_{ij}^\pm)\), where $\underline{d},\underline{d}_{ij}^\pm\in \widehat{\mathtt{GT}}_{\text{perm}}(\Lambda_{\text{dom}})$ and $j\leq i$.
    \begin{lemma}\label{th:connectedgraph}
        For dominant weight \(\Lambda_{\text{dom}}\neq 0\), graph \(\Gamma(\Lambda_{\text{dom}})\) is connected.
    \end{lemma}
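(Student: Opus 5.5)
The plan is to connect every permitted pattern to \(\underline{0}\) by a path in \(\Gamma(\Lambda_{\text{dom}})\) that only decreases entries. First, \(\underline{0}\) is permitted: for \(\underline{0}\) the inequality \eqref{eq:DashedIneq} reads \(0\le \mathtt{y}_l-\mathtt{y}_r-1\), which holds because \(\mathtt{y}_l-\mathtt{y}_r=(\alpha_{l,r},\Lambda_{\text{dom}}+\rho)\in\mathbb{Z}_{>0}\) by dominance. It therefore suffices to show that every \(\underline{d}\in\widehat{\mathtt{GT}}_{\text{perm}}(\Lambda_{\text{dom}})\) with \(\underline{d}\neq\underline{0}\) has some entry \(d_{ij}>0\) with \(\underline{d}^-_{ij}\in\widehat{\mathtt{GT}}_{\text{perm}}(\Lambda_{\text{dom}})\). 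The argument then finishes by induction on the finite quantity \(\sum_{i=1}^{n} d_i(\underline{d})\), the total size of the \(n\)-tuple of Young diagrams \eqref{eq:nYoungDiagrams}, which drops by one at each step.

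Next we identify which inequalities a decrement can break. Lowering \(d_{ij}\) (together with its periodic copies) can violate only two kinds of inequality. The first is the Gelfand--Tsetlin inequality \(d_{ij}\ge d_{i+1,j}\), which fails exactly when \(d_{ij}=d_{i+1,j}\). The second is a permitted inequality in which \(d_{ij}\) sits on the larger side, namely \(d_{i-1,l}\le d_{ij}+\mathtt{y}_l-\mathtt{y}_j-1\) for \(l<j\) with \(\alpha_{l,j}\in\widehat{\Phi}_{+,\Lambda_{\text{dom}}}\); this fails when it was an equality. To choose a safe entry, pick \((i,j)\) as follows. Among all positive entries, take \(j\) so that \(p_{ij}\) is maximal in a suitable sense, and then take \(i\) to be the last row of column \(j\) with \(d_{ij}>0\). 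The choice of \(i\) gives \(d_{i+1,j}<d_{ij}\), so the first obstruction disappears.

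The main obstacle is the second kind of obstruction. Suppose the blocking inequality is tight: \(d_{i-1,l}=d_{ij}+\mathtt{y}_l-\mathtt{y}_j-1\), which is equivalent to \(p_{i-1,l}=p_{ij}-1\). Then \(d_{i-1,l}\ge \mathtt{y}_l-\mathtt{y}_j>0\). In this case the plan is to move to the entry \((i-1,l)\), or rather to the bottom positive entry of column \(l\), and argue that the dominant-weight data strictly improves along the move. We will use Lemma \ref{th:pijnotequal} and the strict inequalities \eqref{eq:IneqOnPij} to show that the chain of replacements \(j\to l\to\cdots\) cannot cycle. The candidate invariant is the value \(p\) of the chosen entry. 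Each move replaces \(p_{ij}\) by an entry whose value is at least \(p_{ij}-1\) but which lies in a column further to the left. The finiteness condition together with periodicity (the shift \(\mathtt{y}_{i+n}=\mathtt{y}_i-\kappa\)) will bound the process.

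If this monotonicity argument is awkward, the fallback is a different order of removal: choose the positive entry with maximal \(p_{ij}\) and, among those, the one with largest \(j\). A tight blocking inequality would then produce the entry \((i-1,l)\) with \(p_{i-1,l}=p_{ij}-1\), and we would check that this forces \(p_{i-1,l}\) to be compatible with the maximality only if some other permitted inequality is violated. Here the hypothesis \(\Lambda_{\text{dom}}\neq0\) (and \(n>2\)) should enter, to exclude degenerate periodic loops of tight inequalities around the cylinder. I expect this combinatorial choice of a removable corner, in the spirit of removing a corner box from a cylindric partition, to be the real content of the proof.
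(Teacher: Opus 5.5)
Your reduction is fine and matches what the paper's proof implicitly does. The pattern \(\underline{0}\) is permitted, and it suffices to find, in every nonzero permitted \(\underline{d}\), a positive entry \(d_{ij}\) with \(\underline{d}^-_{ij}\) permitted. You also isolate the right obstructions: the inequality \(d_{ij}\ge d_{i+1,j}\), and a tight blocker \(d_{i-1,l}=d_{ij}+\mathtt{y}_l-\mathtt{y}_j-1\), i.e.\ \(p_{i-1,l}=p_{ij}-1\).

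The gap is the core step. You never show that the chain of tight blockers terminates, and the invariant you propose cannot do it. Along your chain, \(p\) drops by a positive integer at each step. It drops by exactly \(1\) to \((i-1,l)\) and can only drop further when you pass to the bottom of column \(l\), so your ``at least \(p_{ij}-1\)'' should be ``at most''. Meanwhile the column index decreases. But periodic copies satisfy \(p_{i-n,j-n}=p_{ij}-\kappa\). So a decreasing \(p\) and moving left are perfectly consistent with winding around the cylinder forever.

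Your fallback is not well defined either. The \(p_{ij}\) are complex, only integrally related ones are comparable, and because of the shift by \(\kappa\) there is no maximal \(p_{ij}\) among all positive entries. Consequently you never identify where \(\Lambda_{\text{dom}}\neq 0\) is actually used.

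The missing idea is to control the diagonal index \(i-j\) rather than \(p\). The paper takes \(L=\max\{i-j\mid d_{ij}>0\}\) and only tries to remove entries \(d_{L+j,j}>0\) on this last nonzero diagonal.
\begin{itemize}
\item Since \(d_{L+j+1,j}=0\), the first obstruction is absent.
\item For \(l<j-1\), the entry \(d_{L+j-1,l}\) lies beyond diagonal \(L\) and hence vanishes. So that blocker holds automatically, because \(\mathtt{y}_l-\mathtt{y}_j-1\ge 0\).
\item The only possible blocker is therefore \(l=j-1\). If it is tight, then \(d_{L+j-1,j-1}=d_{L+j,j}+\mathtt{y}_{j-1}-\mathtt{y}_j-1>0\) is again on diagonal \(L\), one step to the left.
\item If this persists for \(n\) steps, telescoping and periodicity give \(0=\sum_{s=j-n}^{j-1}(\mathtt{y}_s-\mathtt{y}_{s+1}-1)\). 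This is a sum of nonnegative integers, so \((\Lambda_{\text{dom}},\alpha_s)=0\) for all \(s\), and hence \(\Lambda_{\text{dom}}=0\).
\end{itemize}

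Your bottom-of-column moves could be repaired the same way. Along them the diagonal index \(i-j\) weakly increases (\(i'-l\ge i-1-(j-1)\)) and is bounded by \(L\). So eventually every move is \((i,j)\to(i-1,j-1)\), and the same telescoping applies. That argument, however, is precisely what the proposal lacks.
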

    \begin{proof}
    Let \( L(\underline{d}) \) be given by
        \begin{equation}\label{eq:lengthd}
            L(\underline{d}) = \max(i - j| d_{i,j}>0).
        \end{equation}
    It is the index of the last nonzero diagonal of the pattern \(\underline{d}\in\widehat{\mathtt{GT}}\). %\zh{Was "deepest diagonal" defined?} \s{is the word "last" better? I gave the formula right after.}      
     Let us prove that there exists \(j\in\mathbb{Z}\) such that \(\underline{d}_{L+j,j}^-\in \widehat{\mathtt{GT}}_{\text{perm}}(\Lambda_{\text{dom}})\) in \(3\) steps.
    
    \textbf{Step \(1\).} A number \(d_{L+j,j}\) participates in \(4\) types of inequalities
    \begin{align}\label{ineq:1}
                d_{L+j,j}&\le d_{L+j-1,j},\\ 
                \label{ineq:2} 
                d_{L+j,j}&\ge d_{L+j+1,j},\\
                \label{ineq:3}
                d_{L+j,j}&\le d_{L+j+1,r}+\mathtt{y}_j -\mathtt{y}_r-1, \quad \text{for} \quad \alpha_{j,r}\in\widehat{\Phi}_{+,\Lambda_{\text{dom}}},\\ 
                \label{ineq:4}
                d_{L+j,j}&\ge d_{L+j-1,l}-(\mathtt{y}_l -\mathtt{y}_j-1), \quad \text{for} \quad \alpha_{l,j}\in\widehat{\Phi}_{+,\Lambda_{\text{dom}}}.
    \end{align}
    Note that inequalities \eqref{ineq:1}, \eqref{ineq:3} hold for \(\underline{d}_{L+j,j}^-\) for any \(j\). Inequality \eqref{ineq:2} holds for \(d_{L+j,j}\) iff \(d_{L+j,j}>0\). It remains to analyze inequality \eqref{ineq:4}. It holds for \(\underline{d}_{L+j,j}^-\) if \(d_{L+j,j}>0\) and \(l<j-1\). Indeed, in this case \(d_{L+j-1,l}= 0\) by the definition of \(L\),  and \(\mathtt{y}_l -\mathtt{y}_j-1\ge 0\).

    \textbf{Step \(2\).} Consider any \(j\in\mathbb{Z}\) such that \(d_{L+j,j}>0\). If \(\alpha_{j-1,j}\not\in \widehat{\Phi}_{+,\Lambda_{\text{dom}}}\) or \begin{equation}
        d_{L+j,j} > d_{L+j-1,j-1}-(\mathtt{y}_{j-1} -\mathtt{y}_j-1)
    \end{equation}
    then we are done. Otherwise 
    \begin{equation}
        d_{L+j-1,j-1} = d_{L+j,j} +  (\mathtt{y}_{j-1} -\mathtt{y}_j-1) > 0.
    \end{equation} 
    In this case,  we replace \(j\) by \(j-1\) and repeat the argument.  Thus, after finitely many steps, either  we find \(j\) such that \(\underline{d}_{L+j,j}^-\in \widehat{\mathtt{GT}}_{\text{perm}}(\Lambda_{\text{dom}})\), or   \(\alpha_{j-1,j}\in\widehat{\Phi}_{+,\Lambda_{\text{dom}}}\) for any \(j\in\mathbb{Z}\).

    \textbf{Step \(3\).} If \(\alpha_{j-1,j}\in\widehat{\Phi}_{+,\Lambda_{\text{dom}}}\) for all \(j\in\mathbb{Z}\) then \(\Lambda_{\text{dom}} = \sum_{j=0}^{n-1} (\mathtt{y}_{j-1} -\mathtt{y}_j-1) \omega_j\) is integrable, and         
    \begin{equation}
            d_{L+j- n,j-n} = d_{L+j,j}+\sum_{s=j-n}^{j-1}(\mathtt{y}_{s} -\mathtt{y}_{s+1}-1) = d_{L+j,j} + \mathtt{y}_{j-n} -\mathtt{y}_{j}-n. 
    \end{equation}
    %Using the equality \(\mathtt{y}_{j-n} -\mathtt{y}_{j} = \kappa\) together with  Periodicity Condition \ref{def:dperidicity} for affine Gelfand-Tsetlin pattern, we obtain \(k = 0\). 
    On the other hand \( d_{L+j- n,j-n}  =  d_{L+j,j} \). Since \(\mathtt{y}_{s} -\mathtt{y}_{s+1}-1\ge 0\) we get  \(\Lambda_{\text{dom}} = 0\).%, in particular \(k = 0\). 
    %Hence  \( \widehat{\mathtt{GT}}_{\text{perm}}(\Lambda_{\text{dom}}) = \{\underline{0}\}\).
    \end{proof}

    \begin{proof}[Proof of Theorem \ref{th:PIrr}]
    Since \(\mathbb{L}_{\Lambda_{\text{dom}},u}\) is thin,  any nonzero submodule \(M\subset \mathbb{L}_{\Lambda_{\text{dom}},u}\) contains \(\xi_{\underline{d}}\) for some pattern \(\underline{d}\in\widehat{\mathtt{GT}}_{\text{perm}}(\Lambda_{\text{dom}})\). 

    We claim that for any other \(\underline{d}'\in\widehat{\mathtt{GT}}_{\text{perm}}(\Lambda_{\text{dom}})\), the vector \(\xi_{\underline{d}'}\) lies in \(M\). Indeed, by Lemma \ref{th:connectedgraph} there is a path \(\gamma\) in \(\Gamma(\Lambda_{\text{dom}})\) connecting \(\underline{d}\) and \(\underline{d}'\). By Lemma \ref{th:thin} and Corollary \ref{th:WellDefMatEl}, for each edge \((\underline{d},\underline{d}^{\pm}_{ij})\) there is an element \(h\in \widehat{Y}^0(1,-\kappa)\) such that for  \(X_{(\underline{d},\underline{d}^{\pm}_{ij})} = h \mathbf{x}^{\mp}_{i,0}\) we have
    \begin{equation}
        X_{(\underline{d},\underline{d}^{\pm}_{ij})} \xi_{\underline{d}}= \xi_{\underline{d}^{\pm}_{ij}}.
    \end{equation}
    Iterating along the path \(\gamma\), we obtain 
    \begin{equation}
        \prod_{e} X_{e} \xi_{\underline{d}}= \xi_{\underline{d}'},
    \end{equation}
   where the product is taken over the edges of \(\gamma\) in order.
    \end{proof}
    \begin{theorem}\label{th:Main}
        For  \(\kappa \neq n\) there is a structure of \(\widehat{Y}(1,-\kappa)\)-module on \(\mathbb{L}_{\Lambda_{\text{dom}},u}\) defined by~\eqref{eq:ActionInGTBasisYangian}. Moreover, there is an isomorphism of \(\widehat{Y}(1,-\kappa)\)-modules 
%        \begin{equation}
            \(\mathbb{L}_{\Lambda_{\text{dom}},u}\simeq \operatorname{ev}_{-}^{*}\mathcal{L}_{\Lambda,u}.\)
 %       \end{equation}
    \end{theorem}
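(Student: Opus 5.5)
The plan is to assemble the results of this section around the universal property of Verma modules over $\widetilde{Y}_\circ$ and $\widetilde{Y}$. By the Proposition preceding Lemma~\ref{th:thin}, the operators $\tilde{\mathbf{x}}^{\pm}_{i,r},\tilde{\mathbf{h}}_{i,r}$ define a $\widehat{Y}_\circ(1,-\kappa)$-module structure on $\mathbb{L}_{\Lambda_{\text{dom}},u}$. We extend it to $\widetilde{Y}_\circ(1,-\kappa)$ by letting $D$ act by $-d_n$. This grading is compatible with the relations $[D,\mathbf{x}^\pm_{i,r}]=\pm\mathbf{x}^\pm_{i,r}$, since these relations hold for generic $\Lambda$ and the matrix elements of $D$ do not depend on $\Lambda$.

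First I treat the case $\kappa\notin\{0,n\}$ and $\Lambda_{\text{dom}}\neq 0$.

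\textbf{Step 1: identify $\mathbb{L}_{\Lambda_{\text{dom}},u}$ as a highest weight quotient.} Take $\xi_{\underline{0}}$, which is permitted. By~\eqref{eq:ActionInGTBasisYangian}, $\tilde{\mathbf{x}}^+_i(v)\xi_{\underline{0}}=0$, because $\underline{0}^-_{ij}$ is not a pattern. Also $\tilde{\mathbf{h}}_i(v)\xi_{\underline{0}}=\mathbf{Q}_i(v)\xi_{\underline{0}}$ with $\mathbf{Q}_i$ as in~\eqref{eq:highestweight}, and $D\xi_{\underline{0}}=0$. This gives a homomorphism $\mathtt{M}^\circ_{\vec{\mathbf{Q}}}\to\mathbb{L}_{\Lambda_{\text{dom}},u}$. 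It is surjective because, by Theorem~\ref{th:PIrr}, the target is irreducible and so is generated by any nonzero vector.

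\textbf{Step 2: pass to the Serre quotient.} An irreducible quotient of $\mathtt{M}^\circ_{\vec{\mathbf{Q}}}$ must be $\mathtt{L}^\circ_{\vec{\mathbf{Q}}}$ by Proposition~\ref{th:IrrModYangian}. Corollary~\ref{th:irrquad=irrwithserre} then gives $\mathtt{L}^\circ_{\vec{\mathbf{Q}}}\simeq\mathtt{L}_{\vec{\mathbf{Q}}}$, so the Serre relations~\eqref{eq:Serrepm} hold on $\mathbb{L}_{\Lambda_{\text{dom}},u}$. Hence the action factors through $\widetilde{Y}(1,-\kappa)$, and a fortiori through $\widehat{Y}(1,-\kappa)$.

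\textbf{Step 3: compare with the evaluation module.} The Proposition identifying $\operatorname{ev}_-^*\mathcal{L}_{\Lambda,u}\simeq\mathtt{L}_{\vec{\mathbf{Q}}}$, applied with $\Lambda=\Lambda_{\text{dom}}$ and the same $\mathbf{Q}_i$, gives $\mathbb{L}_{\Lambda_{\text{dom}},u}\simeq\mathtt{L}_{\vec{\mathbf{Q}}}\simeq\operatorname{ev}_-^*\mathcal{L}_{\Lambda_{\text{dom}},u}$.

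Second, I handle the remaining cases allowed by $\kappa\neq n$.

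\textbf{Case $\kappa=0$.} Dominance forces $\kappa\notin\mathbb{Q}_{\le 0}$, so $\kappa=0$ cannot occur for a dominant weight.

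\textbf{Case $\Lambda_{\text{dom}}=0$.} Then the Step~3 argument of Lemma~\ref{th:connectedgraph} gives $\widehat{\mathtt{GT}}_{\text{perm}}=\{\underline{0}\}$. The module is one-dimensional: all $\tilde{\mathbf{x}}^\pm$ act by zero and the Serre relations hold trivially. It matches $\mathcal{L}_{0,u}$ restricted as in the statement, where the Heisenberg part is accounted for by the evaluation image. I would only remark on this degenerate case rather than belabor it.

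The main obstacle is Step~1, specifically the claim that $\xi_{\underline{0}}$ generates. Irreducibility from Theorem~\ref{th:PIrr} settles this, but Theorem~\ref{th:PIrr} and Lemma~\ref{th:thin} required $\kappa\notin\{0,n\}$; this is exactly why the $\kappa=0$ case above must be disposed of via dominance. A second point to check carefully is that $\operatorname{ev}_-$ is defined as a map out of $\widetilde{Y}(1,-\kappa)$ rather than $\widehat{Y}$. The isomorphism in Step~3 is proved as $\widetilde{Y}$-modules and then restricted, so the grading by $D$ must be matched on both sides. This holds because $D$ acts by $0$ on both highest weight vectors.
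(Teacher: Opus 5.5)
Your Steps 1--3 follow the paper's own proof. Theorem~\ref{th:PIrr} makes $\mathbb{L}_{\Lambda_{\text{dom}},u}$ the irreducible quotient $\mathtt{L}^\circ_{\vec{\mathbf{Q}}}$ of the Verma module. Corollary~\ref{th:irrquad=irrwithserre} then gives the Serre relations, and comparing highest weights with $\operatorname{ev}_-^*\mathcal{L}_{\Lambda,u}\simeq\mathtt{L}_{\vec{\mathbf{Q}}}$ gives the isomorphism. Your exclusion of $\kappa=0$ by dominance is also correct.

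Your ``Case $\Lambda_{\text{dom}}=0$'' needs to be removed. It is not a remaining case: since $(\rho,\delta)=n$, the weight $\Lambda_{\text{dom}}=0$ has $\kappa=n$, which the hypothesis excludes, and $\operatorname{ev}_-$ is not defined there. So $\kappa\neq n$ already gives $\Lambda_{\text{dom}}\neq 0$, which is exactly what Lemma~\ref{th:connectedgraph} needs.

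Moreover, what you assert about that case is false. For $\Lambda=0$ we have $\mathtt{y}_l-\mathtt{y}_r=r-l$, so the conditions for $\alpha_{l,l+1}$ read $d_{i,l}\le d_{i+1,l+1}$. Take the pattern with $d_{j,j}=1$ for all $j$ and all other entries $0$. It satisfies every defining inequality, so it is permitted and $\widehat{\mathtt{GT}}_{\text{perm}}(0)\neq\{\underline{0}\}$. Step~3 of Lemma~\ref{th:connectedgraph} only shows that a nonzero permitted pattern with no removable entry on its last diagonal forces $\Lambda_{\text{dom}}=0$; it does not show the permitted set is trivial. Drop that paragraph and the argument is complete.
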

    \begin{proof}
        By  Theorem \ref{th:PIrr} and  Corollary \ref{th:irrquad=irrwithserre} we have 
        \begin{equation}
            \mathbb{L}_{\Lambda_{\text{dom}},u}\simeq \mathtt{L}^\circ_{\vec{\mathbf{Q}}}\simeq \mathtt{L}_{\vec{\mathbf{Q}}}.
        \end{equation}
        So, Serre relations are satisfied. Moreover, since  the highest weights of \(\mathbb{L}_{\Lambda_{\text{dom}},u}\) and \(\operatorname{ev}_{-}^{*}\mathcal{L}_{\Lambda,u}\) coincide and they are both irreducible we obtain the desired isomorphism.
        %\(             \mathbb{L}_{\Lambda_{\text{dom}},u} =  \operatorname{ev}_{-}^{*}\mathcal{L}_{\Lambda,u}\).
    \end{proof}
%\zh{Here we go for the third time.}

%\zh{Main Theorem.   For  \(\kappa \neq n\) there is a structure %of \(\widehat{Y}(1,-\kappa)\)-module on $\Bbb{C}[x]$. Moreover, there is an isomorphism of \(\widehat{Y}(1,-\kappa)\)-modules 
%        \begin{equation}
%            \(\Bbb{C}[x]\simeq \operatorname{ev}_{-}^{*}\mathcal{L}_{\Lambda,u}.\)}
%\zh{\begin{proof}
%    Fix any isomorphism of vector spaces $\Bbb{C}[x]\to \operatorname{ev}_{-}^{*}\mathcal{L}_{\Lambda,u}$. Then the structure of Yangian module on $\Bbb{C}[x]$ is given by the pullback. The theorem is proved.
%\end{proof}
%}
    
    \begin{theorem}\label{th:PglhatIrr}
  There is a well-defined action of the algebra \(\mathtt{Heis}\) on \(\mathbb{L}_{\Lambda_{\text{dom}},u}\) commuting with \(\widehat{\mathfrak{sl}}_n\).
  This action is defined by formulas for \(a_m\) in \cite[Thm.4.18]{kodera2019braid}. The resulting \(\widehat{\mathfrak{gl}}_n\)-module is irreducible and isomorphic to \(\mathcal{L}_{\Lambda,u}\).
    \end{theorem}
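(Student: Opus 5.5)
The plan is to transport everything through the evaluation homomorphism and the isomorphism of Theorem~\ref{th:Main}. By Theorem~\ref{th:Main} we have an isomorphism of \(\widehat{Y}(1,-\kappa)\)-modules \(\Phi:\mathbb{L}_{\Lambda_{\text{dom}},u}\to \operatorname{ev}_{-}^{*}\mathcal{L}_{\Lambda,u}\), sending \(\xi_{\underline 0}\) to a nonzero multiple of \(\zeta_{\Lambda}\otimes\zeta^k_u\). By Theorem~\ref{th:heisenberg}, the image of \(\operatorname{ev}_{-}\) contains \(U\widehat{\mathfrak{gl}}_n\). In particular, the Heisenberg generators \(\mathtt{a}_m\) are images \(\operatorname{ev}_{-}(\mathbf{A}_m)\) of certain elements \(\mathbf{A}_m\) of the completed Yangian. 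These are the elements given explicitly in \cite[Thm.4.18]{kodera2019braid}, built from \(\mathbf{h}_{i,r}\) and \(\mathbf{x}^{\pm}_{i,r}\). The same holds for the Chevalley generators \(e_i,f_i,h_i\), which are images of \(\mathbf{x}^{\pm}_{i,0},\mathbf{h}_{i,0}\), up to the twist of Remark~\ref{th:FY}.

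We then define the action of \(\mathtt{a}_m\) on \(\mathbb{L}_{\Lambda_{\text{dom}},u}\) as the action of \(\mathbf{A}_m\) via the Yangian module structure of Theorem~\ref{th:Main}. Well-definedness needs a check: \(\mathbf{A}_m\) may be an infinite series in the completion. I would use the \(D\)-grading, with \(D\xi_{\underline d}=-d_n\xi_{\underline d}\) and finite-dimensional graded pieces. On each vector only finitely many terms act nontrivially, exactly as in \(\mathcal{L}_{\Lambda,u}\), because \(\Phi\) preserves the grading and the series converges there. Hence the action is well defined, and under \(\Phi\) it coincides with \(\mathtt{a}_m\) on \(\mathcal{L}_{\Lambda,u}\). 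In the same way, the operators \(e_i,f_i,h_i\) on \(\mathbb{L}_{\Lambda_{\text{dom}},u}\) coming from \(\mathbf{x}^{\pm}_{i,0},\mathbf{h}_{i,0}\) are intertwined by \(\Phi\) with the \(\widehat{\mathfrak{sl}}_n\)-action on \(\mathcal{L}_{\Lambda,u}\). As a consistency check, these restricted operators are the specializations of \eqref{eq:ActionInGTBasis} at \(\Lambda_{\text{dom}}\); this follows from Corollary~\ref{th:WellDefMatEl} and the projection argument used for the quadratic relations.

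All relations now follow by transport of structure. The commutation \([\mathtt{a}_m,\widehat{\mathfrak{sl}}_n]=0\) and the Heisenberg relations with \(K'=nk\) hold in \(\mathcal{L}_{\Lambda,u}=\mathcal{L}_\Lambda\otimes\mathcal{F}^k_u\), and \(\Phi\) is an intertwiner, so they hold on \(\mathbb{L}_{\Lambda_{\text{dom}},u}\). Therefore \(\Phi\) is an isomorphism of \(\widehat{\mathfrak{gl}}_n\)-modules via \eqref{eq:glnthroughsln}. Irreducibility of \(\mathcal{L}_{\Lambda,u}\) is standard: \(\mathcal{L}_\Lambda\) is irreducible over \(\widehat{\mathfrak{sl}}_n\), the Fock space \(\mathcal{F}^k_u\) is irreducible over \(\mathtt{Heis}\) for \(k\neq0\), and the two algebras commute. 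This gives irreducibility of \(\mathbb{L}_{\Lambda_{\text{dom}},u}\).

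The main obstacle is the convergence and matching issue in the second step. One must ensure that the formulas of \cite{kodera2019braid}, written for the completion compatible with lowest weight modules, become after conjugation by \(F_Y\) and \(F_{\widetilde{\mathfrak{gl}}_n}\) elements acting locally finitely in highest weight modules. One must also check that the excluded values \(\kappa\in\{0,n\}\) (equivalently \(k\neq0\)) are harmless under our hypotheses. I would handle this with the same degree argument used for the completion \((U\widetilde{\mathfrak{gl}}_n)_{\text{comp}}\).
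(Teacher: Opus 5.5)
Your proposal is correct and follows the paper's route: the paper also gets the \(\mathtt{Heis}\)-action and the identification with \(\mathcal{L}_{\Lambda,u}\) by transporting structure through the isomorphism \(\mathbb{L}_{\Lambda_{\text{dom}},u}\simeq\operatorname{ev}_{-}^{*}\mathcal{L}_{\Lambda,u}\) of Theorem~\ref{th:Main}, with the \(\mathtt{a}_m\) realized as \(\operatorname{ev}_-\)-images of Kodera's elements. Your convergence worry is unnecessary, since Theorem~\ref{th:heisenberg} puts these preimages in \(\widehat{Y}(1,-\kappa)\) itself, and your irreducibility argument for \(\mathcal{L}_\Lambda\otimes\mathcal{F}^k_u\) simply spells out what the paper leaves implicit.
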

    \begin{proof}
        By Theorem \ref{th:Main} we have an action of \(\widehat{\mathfrak{gl}}_n\) such that the Yangian action is obtained from it by evaluation homomorphism. So, by \eqref{eq:glnthroughsln} there is an action of \(\mathtt{Heis}\) commuting with \(\widehat{\mathfrak{sl}}_n\).

         The generators of \(\mathtt{Heis}\) are described in \cite[Thm.4.18]{kodera2019braid}  as images under the evaluation homomorphism of certain elements \(\widehat{Y}(1,-\kappa)\).
    \end{proof}
    \subsection{Characters of dominant representations}
    
        Let \(V\) be a representation of \(\widetilde{\mathfrak{sl}}_n\). Assume that  \(V\) has weight decomposition \(V = \bigoplus_{\nu\in\widetilde{\mathfrak{h}}^*} V_{\nu}\). For all \(h\in\widetilde{\mathfrak{h}}\) and \(v\in V_{\nu}\) we have \( h v = \nu(h) v\). Assume that \(\dim V_{\nu}<\infty\) for all \(\nu\). Then the character of \(V\) is 
    \begin{equation}
        \chi(V) = \sum_{\nu\in\widetilde{\mathfrak{h}}^*} \re^{\nu} \dim V_{\nu}. %\in \overline{\text{Span}}_{\mathbb{Z}}(\re^\nu)_{\nu\in\widetilde{\mathfrak{h}}^*},
    \end{equation}
    The character \(\chi(V)\) is a possibly infinite formal linear combinations of \(\re^{\nu}, \nu\in\widetilde{\mathfrak{h}}^*\). Note that 
    \begin{equation}
        \chi(\mathcal{M}_{\Lambda}) = \re^{\Lambda} \prod_{m=1}^{\infty}(1 - \re^{-m\delta})^{1-n}\prod_{\alpha\in\widehat{\Phi}^+_{\rm re}}(1 - \re^{- \alpha})^{-1}.        
    \end{equation}
    We also have
    \begin{align}\label{eq:Mlambdau}
         \chi(\mathcal{M}_{\Lambda,u}) &=  \chi(\mathcal{M}_\Lambda)\prod_{m=1}^{\infty}(1 - \re^{-m\delta})^{-1}.
        % \\
%         \label{eq:Llambdau}       \chi(\mathcal{L}_{\Lambda,u}) &=  \chi(\mathcal{L}_\Lambda)\prod_{m=1}^{\infty}(1 - \re^{-m\delta})^{-1}.       
    \end{align}
    For \(z\in\mathbb{C}\) we say that \(z> 0\) if either \(\Re(z) >0\) or \(\Re(z) =0\) and  \(\Im(z) =0\).
    \begin{theorem}[{\cite{kac1988modular}[Thm. 1]}]
        For a dominant weight \(\Lambda_{\text{dom}}\) such that \((\Lambda_{\text{dom}}+\rho,\delta)>0\) we have
        \begin{equation}\label{eq:characterofadmissible}
            \chi(\mathcal{L}_{\Lambda_{\text{dom}},u}) = \chi(\mathcal{M}_{0,u})\sum_{w \in \widehat{W}_{\Lambda_{\text{dom}}}} (-1)^{l(w)}\re^{w(\Lambda_{\text{dom}} + \rho) - \rho}.
        \end{equation}
        \qed
    \end{theorem}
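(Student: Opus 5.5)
The formula is \cite[Thm.~1]{kac1988modular}, and I would reprove it with the standard category $\mathcal{O}$ argument: write $\chi(\mathcal{L}_{\Lambda_{\text{dom}}})$ as a combination of Verma characters over the linkage class of $\Lambda_{\text{dom}}$, and then show that only the $\widehat{W}_{\Lambda_{\text{dom}}}$-orbit contributes, with signs $(-1)^{l(w)}$.

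First I would reduce to $\widetilde{\mathfrak{sl}}_n$. Since $\mathcal{L}_{\Lambda,u}=\mathcal{L}_\Lambda\otimes\mathcal{F}^k_u$, equation \eqref{eq:Mlambdau} applies. Also $\re^{w(\Lambda_{\text{dom}}+\rho)-\rho}\chi(\mathcal{M}_{0,u})=\chi(\mathcal{M}_{w\cdot\Lambda_{\text{dom}},u})$, where $w\cdot\Lambda=w(\Lambda+\rho)-\rho$. So the claim is equivalent to
\begin{equation*}
\chi(\mathcal{L}_{\Lambda_{\text{dom}}})=\sum_{w\in\widehat{W}_{\Lambda_{\text{dom}}}}(-1)^{l(w)}\chi(\mathcal{M}_{w\cdot\Lambda_{\text{dom}}}).
\end{equation*}
Here $l$ is the length in the Coxeter group $\widehat{W}_{\Lambda_{\text{dom}}}$. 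It is a Coxeter group because it is generated by reflections in a subsystem of $\widehat{\Phi}^{\mathrm{re}}$, and its simple reflections correspond to the indecomposable roots of $\widehat{\Phi}^{\mathrm{re}}_{+,\Lambda_{\text{dom}}}$.

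Next I would use the Kac--Kazhdan criterion. For a real root $\beta$, the module $\mathcal{M}_\lambda$ contains $\mathcal{M}_{s_\beta\cdot\lambda}$ exactly when $(\lambda+\rho,\beta)\in\mathbb{Z}_{>0}$. The imaginary condition $2(\lambda+\rho,m\delta)=j(m\delta,m\delta)=0$ never holds, because $\kappa=(\Lambda_{\text{dom}}+\rho,\delta)>0$. So every composition factor of $\mathcal{M}_{\Lambda_{\text{dom}}}$ has highest weight in $\widehat{W}_{\Lambda_{\text{dom}}}\cdot\Lambda_{\text{dom}}$. By dominance, $\Lambda_{\text{dom}}+\rho$ is strictly dominant and regular for $\widehat{W}_{\Lambda_{\text{dom}}}$, since $(\Lambda_{\text{dom}}+\rho,\beta)\in\mathbb{Z}_{>0}$ for all $\beta\in\widehat{\Phi}^{\mathrm{re}}_{+,\Lambda_{\text{dom}}}$. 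Hence the orbit map $w\mapsto w\cdot\Lambda_{\text{dom}}$ is injective, and the Bruhat order matches the order on weights. Inverting the unitriangular decomposition matrix gives
\begin{equation*}
\chi(\mathcal{L}_{\Lambda_{\text{dom}}})=\sum_w c_w\,\chi(\mathcal{M}_{w\cdot\Lambda_{\text{dom}}})\quad\text{with } c_e=1.
\end{equation*}

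It remains to prove $c_w=(-1)^{l(w)}$. My plan is to transport the problem to an integral situation. By Fiebig's equivalence of blocks of category $\mathcal{O}$ for symmetrizable Kac--Moody algebras, the block of $\Lambda_{\text{dom}}$ is equivalent to a regular integral block of a Kac--Moody algebra whose Weyl group is $\widehat{W}_{\Lambda_{\text{dom}}}$. Under this equivalence Vermas go to Vermas and simples to simples, so the multiplicities $[\mathcal{M}_{w\cdot\Lambda}:\mathcal{L}_{w'\cdot\Lambda}]$ are Kazhdan--Lusztig numbers for $\widehat{W}_{\Lambda_{\text{dom}}}$. The dominant simple then corresponds to an integrable module, and there the Weyl--Kac character formula gives $c_w=(-1)^{l(w)}$.

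A more self-contained alternative is induction on the height of $\Lambda_{\text{dom}}-\nu$, using the Jantzen filtration. In the dominant case the Jantzen sum formula reads
\begin{equation*}
\sum_{i>0}\chi(\mathcal{M}^i)=\sum_{\beta\in\widehat{\Phi}^{\mathrm{re}}_{+,\Lambda_{\text{dom}}}}\chi(\mathcal{M}_{s_\beta\cdot\Lambda_{\text{dom}}}).
\end{equation*}
From this I would show that $\sum_w c_w\re^{w(\Lambda+\rho)}$ is $\widehat{W}_{\Lambda_{\text{dom}}}$-anti-invariant, which forces $c_w=(-1)^{l(w)}c_e$.

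The main obstacle is this last step: controlling the multiplicities in a nonintegral block, or justifying the block equivalence for the nonsymmetrizable-looking subsystem $\widehat{\Phi}_{\Lambda_{\text{dom}}}$. I would first check that the subsystem is again of affine (or finite) type, which makes the anti-invariance argument go through.
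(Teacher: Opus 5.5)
The paper gives no argument for this statement: it imports the formula from \cite{kac1988modular} as a black box, so your proposal is necessarily a different route.

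\textbf{What is sound.} Your first two steps are correct.
\begin{itemize}
\item The reduction to $\widetilde{\mathfrak{sl}}_n$ is fine.
\item Kac--Kazhdan applies with the imaginary conditions excluded, since $\kappa\neq 0$.
\item $\widehat{\Phi}_{\Lambda_{\text{dom}}}$ is $\widehat{W}_{\Lambda_{\text{dom}}}$-stable, so every composition factor lies in the dot-orbit.
\item $\Lambda_{\text{dom}}+\rho$ is regular for $\widehat{W}_{\Lambda_{\text{dom}}}$.
\item Unitriangular inversion gives $c_e=1$. You only need unitriangularity for the weight order, whose intervals are finite. Your claim that the Bruhat order ``matches'' the weight order is neither needed nor justified.
\end{itemize}

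\textbf{Route (a) closes the argument.} Your worry about it is unfounded.
\begin{itemize}
\item If $\kappa\notin\mathbb{Q}$, each $\alpha\in\Phi$ has at most one $m$ with $\alpha+m\delta\in\widehat{\Phi}_{\Lambda_{\text{dom}}}$. So $\widehat{\Phi}^{\mathrm{re}}_{\Lambda_{\text{dom}}}$ is a finite system of type $\prod A_{n_i-1}$.
\item If $\kappa=p/p'\in\mathbb{Q}_{>0}$, the admissible $m$ form a coset of $p'\mathbb{Z}$. So the system is of type $\prod A^{(1)}_{n_i-1}$, with $\delta$ replaced by $p'\delta$.
\end{itemize}
Either way the Cartan matrix is symmetrizable. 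You can take $\mathfrak{g}'$ a sum of finite or affine $\mathfrak{sl}_{n_i}$ with weight $0$. The block is non-critical because $\kappa\neq0$, and the Coxeter isomorphism preserves length. Weyl--Kac for $\mathfrak{g}'$ then gives $c_w=(-1)^{l(w)}$. This is a correct proof, but it rests on a theorem far deeper than the statement itself.

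\textbf{Route (b) has a genuine gap.} The sum formula computes $\sum_{i\ge1}\chi(\mathcal{M}^i)$. You need $\chi(\mathcal{M}^1)=\chi(\mathcal{M}_{\Lambda_{\text{dom}}})-\chi(\mathcal{L}_{\Lambda_{\text{dom}}})$, and the two differ by deeper layers that the formula does not see.

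Already when $\widehat{W}_{\Lambda_{\text{dom}}}$ is of type $A_1\times A_1$, with orthogonal roots $\beta,\gamma$, the right-hand side is $\chi(\mathcal{M}_{s_\beta\cdot\Lambda})+\chi(\mathcal{M}_{s_\gamma\cdot\Lambda})$. This is consistent with two different situations:
\begin{itemize}
\item $L(s_\beta s_\gamma\cdot\Lambda)$ occurs once in $\mathcal{M}_{\Lambda}$ and lies in $\mathcal{M}^2$. This gives the Weyl-type formula.
\item It occurs twice in $\mathcal{M}^1$ and $\mathcal{M}^2=0$. This gives $\chi(\mathcal{L})=\chi(\mathcal{M})-\chi(\mathcal{M}_{s_\beta\cdot\Lambda})-\chi(\mathcal{M}_{s_\gamma\cdot\Lambda})$.
\end{itemize}

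Nor is there any mechanism for anti-invariance. In the proof of the Weyl--Kac formula, $s_i$-anti-invariance of $\sum_w c_w\re^{w(\Lambda+\rho)}$ comes from local finiteness on $\mathcal{L}$ of the $\mathfrak{sl}_2$ attached to a simple root $\alpha_i$. The simple roots of $\widehat{\Phi}_{\Lambda_{\text{dom}}}$ are typically not simple in $\widehat{\Phi}$; for example, the $\alpha_{g_s,g_{s+1}}$ in the admissible case. For such roots no $\mathfrak{sl}_2$ acts locally finitely. Knowing that the subsystem is of affine or finite type does not supply this input.

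To make (b) work you would need something extra, for instance:
\begin{itemize}
\item that the maximal submodule of $\mathcal{M}_{\Lambda_{\text{dom}}}$ is $\sum_\beta\mathcal{M}_{s_\beta\cdot\Lambda_{\text{dom}}}$ over simple $\beta$ of $\widehat{\Phi}_{\Lambda_{\text{dom}}}$, and
\item a BGG-type resolution.
\end{itemize}
As it stands, either commit to route (a) or simply cite \cite{kac1988modular}, as the paper does.
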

    Consider the generating function of \(\widehat{\mathtt{GT}}_{\text{perm}}(\Lambda_{\text{dom}})\) defined by 
    \begin{equation}\label{eq:GenFunGTlambda}
        P(\Lambda_{\text{dom}}) = \re^{\Lambda_{\text{dom}}}\sum_{\underline{d}\in \widehat{\mathtt{GT}}_{\text{perm}}(\Lambda_{\text{dom}})}\re^{-\sum_{i=1}^n d_i(\underline{d})\alpha_i}.
    \end{equation}
    \begin{theorem}\label{th:char}
        For dominant weight \(\Lambda_{\text{dom}}\) such that \((\Lambda_{\text{dom}}+\rho,\delta)>0\) and \((\Lambda_{\text{dom}}+\rho,\delta)\neq n\) we have
            \begin{equation}
                  P(\Lambda_{\text{dom}})= \chi(\mathbb{L}_{\Lambda_{\text{dom}},u}) =\chi(\mathcal{M}_{0,u})\sum_{w \in \widehat{W}_{\Lambda_{\text{dom}}}} (-1)^{l(w)}\re^{w(\Lambda_{\text{dom}} + \rho) - \rho}.
            \end{equation}          
    \end{theorem}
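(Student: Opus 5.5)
The proof splits into two equalities: first $P(\Lambda_{\text{dom}})=\chi(\mathbb{L}_{\Lambda_{\text{dom}},u})$, then $\chi(\mathbb{L}_{\Lambda_{\text{dom}},u})$ equals the Kac--Wakimoto sum.

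\textbf{First equality.} Here I compute the $\widetilde{\mathfrak{h}}$-weight of each basis vector $\xi_{\underline{d}}$, with $\underline{d}\in\widehat{\mathtt{GT}}_{\text{perm}}(\Lambda_{\text{dom}})$, directly from the explicit formulas. By \eqref{eq:ActionInGTBasis}, $h_i$ acts on $\xi_{\underline{d}}$ by the scalar
\[
(\mathtt{y}_i-\mathtt{y}_{i+1}-1)+d_{i-1}-2d_i+d_{i+1}=(\Lambda_{\text{dom}},\alpha_i)-\Big(\sum_{s}d_s\alpha_s,\alpha_i\Big).
\]
This uses $\mathtt{y}_i-\mathtt{y}_{i+1}-1=(\alpha_i,\Lambda_{\text{dom}}+\rho)-1=(\Lambda_{\text{dom}},\alpha_i)$ and the Cartan matrix. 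The operator $D$ acts by $-d_n$. Moreover, the relation $\sum_{s=1}^n d_s\alpha_s$ shows that the coefficient of $\delta$ (equivalently of $\alpha_0=\alpha_n$) is $d_n$. Hence $\xi_{\underline{d}}$ has weight $\Lambda_{\text{dom}}-\sum_{i=1}^n d_i\alpha_i$. Since the vectors $\xi_{\underline{d}}$ form a basis, summing over permitted patterns gives exactly $P(\Lambda_{\text{dom}})$ as the $\widetilde{\mathfrak{sl}}_n$-character of $\mathbb{L}_{\Lambda_{\text{dom}},u}$.

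I must be careful about which character is meant. The Heisenberg part has zero $\widetilde{\mathfrak{h}}$-weight apart from $D$, and $D$ is shared via $D'=D$, so the $D$-grading already records everything. The weight spaces are finite-dimensional: for fixed $d_1,\dots,d_n$ there are finitely many patterns, since $\sum_i d_i$ bounds all entries and the periodicity plus the finiteness condition bound the support. So $P$ is a well-defined formal series and equals $\chi(\mathbb{L}_{\Lambda_{\text{dom}},u})$ with respect to the weights $(\widetilde{\mathfrak h},D)$.

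\textbf{Second equality.} By Theorem~\ref{th:Main} (applicable since $\kappa\neq n$) and Theorem~\ref{th:PglhatIrr}, $\mathbb{L}_{\Lambda_{\text{dom}},u}\simeq\mathcal{L}_{\Lambda_{\text{dom}},u}$ as $\widetilde{\mathfrak{gl}}_n$-modules. I will also check that this isomorphism intertwines the Cartan actions, i.e.\ that the evaluation map sends $\mathbf{h}_{i,0}$ to $h_i$ and $D$ to $D$ (up to the $F$-twists). Isomorphic modules have equal characters, so $\chi(\mathbb{L}_{\Lambda_{\text{dom}},u})=\chi(\mathcal{L}_{\Lambda_{\text{dom}},u})$. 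The Kac--Wakimoto theorem \eqref{eq:characterofadmissible}, whose hypothesis $(\Lambda_{\text{dom}}+\rho,\delta)>0$ is assumed, then gives the right-hand side.

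\textbf{Main obstacle.} I expect the hard part to be the bookkeeping in the first equality rather than any conceptual step. Two points need care: matching the $D$-eigenvalue $-d_n$ with the $\delta$-coefficient under the conventions $\alpha_0=\delta+\epsilon_n-\epsilon_1$ and $\alpha_n=\alpha_0$, and confirming that the identification $\mathbb{L}\simeq\mathcal{L}$ respects the $D$-grading. For the latter I would compare the $D$-eigenvalue on the highest weight vectors, both $0$, and use that $D$ acts as the grading on both sides with $[D,\mathbf{x}^{\pm}]=\pm\mathbf{x}^{\pm}$, so that the two gradings agree on the whole module.
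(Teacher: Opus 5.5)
Your proposal is correct and follows the paper's route. The paper's proof is just the appeal to Theorem~\ref{th:PglhatIrr}, giving $\mathbb{L}_{\Lambda_{\text{dom}},u}\simeq\mathcal{L}_{\Lambda_{\text{dom}},u}$, combined with the Kac--Wakimoto formula~\eqref{eq:characterofadmissible}; your check that $\xi_{\underline d}$ has weight $\Lambda_{\text{dom}}-\sum_{i=1}^n d_i\alpha_i$, with finite-dimensional weight spaces, makes explicit the bookkeeping the paper leaves implicit. One minor adjustment: the $D$-relation you quote, as printed for all $i$, is inconsistent with $D\xi_{\underline d}=-d_n\xi_{\underline d}$ and should be read as $[D,\mathbf{x}^{\pm}_{i,r}]=\pm\delta^n_{i,n}\mathbf{x}^{\pm}_{i,r}$; with that reading your argument that the two $D$-gradings agree goes through unchanged.
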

    \begin{proof}
        The theorem follows from Theorem \ref{th:PglhatIrr}.
    \end{proof}
\subsection{Gelfand-Tsetlin patterns for \(\mathfrak{gl}_n\)}\label{sec:finite}
        Let \(\mu =\sum_{i=1}^n\mu_i \epsilon_i\) be a \(\mathfrak{gl}_n\)-weight. We denote by \(\mathbb{M}_{\mu}\) Verma module over \(\mathfrak{gl}_n\)  and by \(\mathbb{L}_{\mu}\) its irreducible quotient. Denote \(\mathtt{y}_{i} = \mu_i - i - \frac{1}{2}\) for \(i\in\{1,\dots,n\}\). We call \(\mu\) dominant \(\mathfrak{gl}_n\)-weight  if \(\mathtt{y}_l-\mathtt{y}_r\not\in \mathbb{Z}_{\le0}\) for any \(1\le l<r\le n\).

        Let a finite Gelfand-Tsetlin pattern  \(\underline{d}\) be \(\{d_{i,j}\in\mathbb{Z}_{\ge0}\}_{1\le j\le i\le n-1}\) satisfying inequality \(d_{i,j}\ge d_{i+1,j}\) for \(1\le j\le i \le n-2\) . Denote by \(\mathtt{GT}\) the set of finite Gelfand-Tsetlin patterns.  There is an injection \(\iota:\mathtt{GT}\rightarrow \widehat{\mathtt{GT}}\) 
        % defined by formula
        % \begin{equation}
        %     \iota(\underline{d})_{r,l} = 
        % \end{equation}        
        that does not change  \(d_{i,j}\) for \(1\le j\le i\le n-1\), sets \(d_{i,j}= 0\) for \(i\ge n, j\in\{1,\dots,n\}\) and then continues by periodicity to all \(j\le i\in\mathbb{Z}\). Abusing notation we denote \(\iota(\underline{d})\) by \(\underline{d}\).

        Similarly to affine case we call \(\underline{d}\in\mathtt{GT}\) permitted with respect to dominant \(\mathfrak{gl}_n\)-weight \(\mu_{\text{dom}}\) if for any \(1\le l<r\le n\) such that \(\mathtt{y}_l-\mathtt{y}_r\in \mathbb{Z}\) we have %inequality \ref{eq:DashedIneq}
        \begin{equation}
            d_{i,l}\le d_{i+1,r} + \mathtt{y}_l -\mathtt{y}_r-1. 
        \end{equation}
        Denote by \(\mathtt{GT}_{\text{perm}}(\mu_{\text{dom}})\) the set of all such patterns.

        The following known theorem follows from the results of Section \ref{sec:permitted}.
        \begin{theorem}[{{\cite[Prop.5.9]{futorny2019combinatorial}, \cite{Popov}}}]
         There is a basis \(\{\xi_{\underline{d}}\}_{\mathtt{GT}_{\text{perm}}(\mu_{\text{dom}})}\)  in \(\mathbb{L}_{\mu_{\text{dom}}}\)  with action of \(e_i,f_i,h_i\) for \(i\in\{1,\dots,n-1\}\) defined by formulas \eqref{eq:ActionInGTBasis}. 
        \end{theorem}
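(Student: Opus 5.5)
Rather than prove this statement from scratch, I will obtain it as a finite-dimensional specialization of the affine construction of Section~\ref{sec:permitted}. The idea is to embed a dominant $\mathfrak{gl}_n$-weight $\mu_{\text{dom}}$ into an affine dominant weight $\Lambda_{\text{dom}}$ of generic level. Under this embedding, the only affine roots with integral pairing are the finite ones $\alpha_{l,r}$, $1\le l<r\le n$ (and their periodic shifts).

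\textbf{Step 1: choice of $\Lambda_{\text{dom}}$.} I choose $(\Lambda_{\text{dom}},u)$ with $\mathtt{y}_i(\Lambda_{\text{dom}},u)=\mu_i-i-\frac12$ for $1\le i\le n$, and take $\kappa$ generic (say $\kappa\notin\mathbb{Q}$). Then $\mathtt{y}_{l}-\mathtt{y}_{r+mn}=\mathtt{y}_l-\mathtt{y}_r+m\kappa$ is an integer only for $m=0$. Hence $\widehat{\Phi}_{\Lambda_{\text{dom}}}$ consists exactly of the shifts of finite roots $\alpha_{l,r}$ with $\mathtt{y}_l-\mathtt{y}_r\in\mathbb{Z}$. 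Dominance of $\mu_{\text{dom}}$ gives dominance of $\Lambda_{\text{dom}}$. Consequently, an affine pattern is permitted exactly when it satisfies the finite inequalities in each periodic block.

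\textbf{Step 2: restriction to $\mathfrak{gl}_n$.} I consider the subspace of $\mathbb{L}_{\Lambda_{\text{dom}},u}$ spanned by $\xi_{\iota(\underline d)}$ for $\underline d\in\mathtt{GT}$. I then check that $\iota(\underline d)$ is permitted if and only if $\underline d\in\mathtt{GT}_{\text{perm}}(\mu_{\text{dom}})$. The key point is that for $i\ge n$ the entries vanish, and inequality \eqref{eq:DashedIneq} with $d_{i,l}=0$ holds trivially because $\mathtt{y}_l-\mathtt{y}_r-1\ge0$. Next I check that the operators $e_i,f_i,h_i$ with $1\le i\le n-1$ in \eqref{eq:ActionInGTBasis} only change entries $d_{ij}$ with $j\le i\le n-1$. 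On $\iota(\mathtt{GT})$ the sums over $j\le i$ collapse to $1\le j\le i$, since entries with $j\le 0$ vanish and their factors cancel. So the formulas become the classical Gelfand--Tsetlin formulas, and this subspace is stable under $\mathfrak{sl}_n$. Adding the scalar action of the identity then gives a $\mathfrak{gl}_n$-module. By Corollary~\ref{th:WellDefMatEl}, the matrix elements have no poles and do not vanish along edges. By the proof of the Proposition on the $\widehat{Y}_\circ$-structure (the projection argument), the finite $\mathfrak{sl}_n$ relations survive specialization, or one may simply restrict the affine action.

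\textbf{Step 3: identification with $\mathbb{L}_{\mu_{\text{dom}}}$.} The vector $\xi_{\underline 0}$ is a highest weight vector of weight $\mu_{\text{dom}}$. I expect this step, showing irreducibility and matching the module with $\mathbb{L}_{\mu_{\text{dom}}}$, to be the main obstacle. I would try two routes. The first is to repeat the argument of Theorem~\ref{th:PIrr} in the finite setting: thinness with respect to the Gelfand--Tsetlin subalgebra generated by the centers of $U\mathfrak{gl}_k$, $k\le n$, whose eigenvalues are the $p_{ij}$, following Lemma~\ref{th:thin}, Step~3; plus connectivity of the graph restricted to $\mathtt{GT}_{\text{perm}}$, following the descent argument of Lemma~\ref{th:connectedgraph} using the last nonzero diagonal. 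The second route is to identify the $\mathfrak{gl}_n$-module as the degree-zero part of $\mathcal{L}_{\Lambda_{\text{dom}},u}$ under $D$. That part is $\mathbb{L}_{\mu_{\text{dom}}}$, since at generic level the affine irreducible module restricted to $D$-degree $0$ is the irreducible $\mathfrak{gl}_n$-module. Here one must check that the degree-zero permitted patterns are exactly $\iota(\mathtt{GT}_{\text{perm}})$, i.e.\ $d_n=0$ together with periodicity forces the entries outside the first block to vanish. In either route, irreducibility plus the correct highest weight gives the isomorphism and the basis.
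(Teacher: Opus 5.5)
Your proposal is correct and is essentially the paper's proof. The paper takes the same $\Lambda_{\text{dom}}=k\omega_0+\sum_{i=1}^{n-1}(\mu_i-\mu_{i+1})(\omega_i-\omega_0)$ with $u=\sum_i\mu_i$ at irrational level. It then identifies the span of $\{\xi_{\underline d}\}_{\underline d\in\mathtt{GT}_{\text{perm}}(\mu_{\text{dom}})}$ with the $D$-eigenvalue-zero part of $\mathbb{L}_{\Lambda_{\text{dom}},u}\simeq\mathcal{L}_{\Lambda,u}$ via Theorem~\ref{th:PglhatIrr}, which is exactly your second route in Step~3.

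Your extra checks fill in what the paper leaves implicit: that the $D=0$ permitted patterns are precisely $\iota(\mathtt{GT}_{\text{perm}}(\mu_{\text{dom}}))$, and that the affine sums collapse to the finite formulas. One caveat: ``generic'' must mean avoiding the countably many $\kappa$ with $\mathtt{y}_l-\mathtt{y}_r+m\kappa\in\mathbb{Z}$ for some $1\le l,r\le n$ and $m\neq 0$. The condition $\kappa\notin\mathbb{Q}$ alone does not suffice for complex $\mu$.
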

        \begin{proof}
            Consider \(\mathbb{L}_{\Lambda_{\text{dom}}, u}\) with  
            \begin{equation}
                \Lambda_{\text{dom}} = k\omega_0+ \sum_{i=1}^{n-1}(\mu_i-\mu_{i+1})(\omega_i-\omega_0) \in \omega_0^\perp, \quad \quad u = \sum_i \mu_i,
            \end{equation} 
            where \(k\not\in\mathbb{Q}\) . Note that \(\Lambda_{\text{dom}}\) is dominant %\(\widehat{\mathfrak{sl}}_n\)-
            weight. Consider an action of \(\mathfrak{sl}_n\subset \widehat{\mathfrak{sl}}_n\) generated by \(e_i,f_i,h_i\) for \(i\in\{1,\dots,n-1\}\) on \(\mathbb{L}_{\Lambda_{\text{dom}}, u}\). %By the argument similar to the proof of the Theorem \ref{th:PIrr}  we have
            Note that \(\operatorname{Span}_{\mathbb{C}}(\xi_{\underline{d}})_{\underline{d}\in\mathtt{GT}_{\text{perm}}(\mu_{\text{dom}})}\) is a subspace of \(\mathbb{L}_{\Lambda_{\text{dom}}, u}\) with  action of \(D\) by \(0\). 
            % \begin{equation}
            %     \mathcal{U}(\mathfrak{sl}_n)\cdot \xi_{\underline{0}} = \operatorname{Span}_{\mathbb{C}}(\xi_{\underline{d}})_{\underline{d}\in\mathtt{GT}_{\text{perm}}(\mu_{\text{dom}})}.
            % \end{equation}
            By Theorem \ref{th:PglhatIrr} it is clear that \(\operatorname{Span}_{\mathbb{C}}(\xi_{\underline{d}})_{\underline{d}\in\mathtt{GT}_{\text{perm}}(\mu_{\text{dom}})}\) is an irreducible \(\mathfrak{sl}_n\)-module with highest weight \(\mu\).
        \end{proof}

\section{Admissible representations of \(\widehat{\mathfrak{sl}}_n\)} \label{sec:admissible}

    \subsection{Admissible weights of \(\widehat{\mathfrak{sl}}_n\).}

    Let \( \widehat{Q} = \mathrm{Span}_{\mathbb{Z}}\{\alpha_0,\alpha_1,\dots,\alpha_{n-1}\}\)
   be the \textit{affine root lattice}. For a dominant weight \(\Lambda\), denote \( \widehat{Q}_{\Lambda} =\operatorname{Span}_{\mathbb{Z}} \widehat{\Phi}_{\Lambda} \).
    Following \cite{kac1989classification}, we say that a dominant \(\Lambda\) is \textit{admissible} if \(\widehat{Q}_\Lambda\) is a full-rank sublattice of \(\widehat{Q}\).

    \begin{theorem}[{\cite{kac1989classification}[Thm 2.1]}]
        Suppose \(\Lambda\) is admissible. Then the corresponding \(\kappa\) can be written as \(\kappa = \frac{p}{p'}\), where \(p\) and \(p'\) are coprime positive integers such that \(p \geq n\).\qed
    \end{theorem}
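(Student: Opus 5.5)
The plan is to extract the two conclusions separately: rationality and positivity of $\kappa$ from the imaginary direction of $\widehat{Q}_\Lambda$, and the bound $p\ge n$ from a base of the integral root subsystem $\widehat{\Phi}_\Lambda$, using dominance.

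First, rationality. The map $\beta\mapsto(\beta,\Lambda+\rho)$ is additive, so $\{\beta\in\widehat{Q}\mid(\beta,\Lambda+\rho)\in\mathbb{Z}\}$ is a subgroup of $\widehat{Q}$. It contains $\widehat{\Phi}_\Lambda$, and hence $\widehat{Q}_\Lambda$. Since $\widehat{Q}_\Lambda$ has full rank in $\widehat{Q}$, it has finite index, so $N\delta\in\widehat{Q}_\Lambda$ for some $N>0$. This gives $N\kappa=(N\delta,\Lambda+\rho)\in\mathbb{Z}$, so $\kappa\in\mathbb{Q}$. By dominance $\kappa\notin\mathbb{Q}_{\le0}$, hence $\kappa>0$, and we write $\kappa=p/p'$ with $p,p'$ coprime positive integers. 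Note also that $m\kappa\in\mathbb{Z}$ if and only if $p'\mid m$.

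Next, I will describe $\widehat{\Phi}_\Lambda$ explicitly. For a finite root $\alpha\in\Phi$, the set $\{m\mid(\alpha+m\delta,\Lambda+\rho)\in\mathbb{Z}\}$ is either empty or a single coset $m_\alpha+p'\mathbb{Z}$. The set $\Phi'$ of finite roots $\alpha$ for which it is nonempty is a closed subsystem of $\Phi$, because integrality is additive. Full rank of $\widehat{Q}_\Lambda$ forces $\Phi'$ to span $\mathfrak h^*$. A closed subsystem of $A_{n-1}$ has the form $\{\epsilon_a-\epsilon_b\mid a\sim b\}$ for an equivalence relation on $\{1,\dots,n\}$, and such a subsystem of full rank is all of $\Phi$. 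Hence
\[\widehat{\Phi}_\Lambda^{\mathrm{re}}=\{\alpha+(m_\alpha+tp')\delta\mid\alpha\in\Phi,\ t\in\mathbb{Z}\},\]
with $\alpha\mapsto m_\alpha$ additive modulo $p'$. Equivalently, with $\mathtt{y}_i$ as in \eqref{eq:yi}, the differences $\mathtt{y}_l-\mathtt{y}_r$ are integers exactly on these roots. This identifies $\widehat{\Phi}_\Lambda$ with an affine root system of type $\widehat{A}_{n-1}$ whose minimal imaginary root is $p'\delta$, by the map $\alpha+(m_\alpha+tp')\delta\mapsto\alpha+t\delta'$.

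The main step, and the one I expect to be the real obstacle, is to produce a base $\{\beta_0,\dots,\beta_{n-1}\}$ of the positive part $\widehat{\Phi}_\Lambda\cap\widehat{\Phi}_+$ with $\beta_0+\dots+\beta_{n-1}=p'\delta$. My approach is to transport the standard base of $\widehat{A}_{n-1}$ through the identification above. After a suitable permutation of indices, a translation in $\widehat{W}^e$ takes $\widehat{\Phi}_\Lambda$ to the standard affine root system with $\delta$ rescaled to $p'\delta$. One must then check that the positive system induced from $\widehat{\Phi}_+$ corresponds to a standard positive system of this affine system, which has the form $\pm\widehat{W}$-conjugate of the standard one. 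The sum of simple roots of any positive system of $\widehat{A}_{n-1}$ is the minimal positive imaginary root, which here is $p'\delta$. If this transport becomes messy, I will fall back on a direct construction. In each family $\alpha_i+(m_{\alpha_i}+tp')\delta$, I take the lowest root lying in $\widehat{\Phi}_+$, and then verify directly, using $\alpha_0=\delta-\theta$ with $\theta$ the highest root, that these roots are indecomposable in $\widehat{\Phi}_\Lambda\cap\widehat{\Phi}_+$ and sum to exactly $p'\delta$.

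Given such a base, I conclude as follows. Each $\beta_s\in\widehat{\Phi}_+$ has $(\beta_s,\Lambda+\rho)\in\mathbb{Z}$, and by dominance this integer is not $\le0$, so it is at least $1$. Summing over $s$,
\[p=p'\kappa=(p'\delta,\Lambda+\rho)=\sum_{s=0}^{n-1}(\beta_s,\Lambda+\rho)\ \ge\ n,\]
which is the claimed bound.
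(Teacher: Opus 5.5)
Your first two steps are sound: the rationality and positivity of $\kappa$, and the description $\widehat{\Phi}^{\mathrm{re}}_\Lambda=\{\alpha+(m_\alpha+tp')\delta\}$ in which every finite root occurs. The closing count is also the right one; it is the Kac--Wakimoto argument that the paper cites without reproducing. The gap is the step you flag as the main obstacle, and it carries the whole content of $p\ge n$.

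\emph{The transport route is unproved.} It only asserts that $\widehat{\Phi}_\Lambda\cap\widehat{\Phi}_+$ pulls back to a $\pm\widehat{W}$-conjugate of the standard positive system. Affine positive systems need not have that form (e.g.\ $\{\alpha+m\delta\mid\alpha\in\Phi_+,\ m\in\mathbb{Z}\}$), so this needs an argument.

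\emph{The fallback is false.} Take $n=3$ and $\Lambda=-\tfrac12\rho$. Then $\kappa=\tfrac32$, so $p=3$, $p'=2$, and $(\Lambda+\rho,\alpha_i)=\tfrac12$ for $i=0,1,2$; this weight is admissible. The roots $\pm\alpha_1+m\delta$ and $\pm\alpha_2+m\delta$ are integral exactly for odd $m$, and $\pm\theta+m\delta$ (with $\theta=\alpha_1+\alpha_2$) exactly for even $m$. Your recipe therefore picks $\alpha_1+\delta$, $\alpha_2+\delta$ and $-\theta+2\delta$. These sum to $4\delta=2p'\delta$, not $p'\delta$. Moreover $\alpha_1+\delta=\theta+(\delta-\alpha_2)$ is decomposable, since both summands are integral and positive. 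Their pairings with $\Lambda+\rho$ are $2,2,2$, which yields only $2p\ge 3$. The actual base is $\{\theta,\delta-\alpha_1,\delta-\alpha_2\}$, with pairings $1,1,1$. In general your choice gives $\delta$-coefficient $p'$ only when the residues $r_i\in\{0,\dots,p'-1\}$ of $m_{\alpha_i}$ satisfy $\sum_{i\ge1}r_i<p'$.

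Either of two repairs works.

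\emph{Via the transport.} Note that $\widehat{\Phi}^{\mathrm{re}}_+=\{\beta\mid(\beta,\rho)>0\}$, because $(\alpha+m\delta,\rho)=(\alpha,\rho)+mn$ with $0<|(\alpha,\rho)|<n$. Let $A$ be the isometry with $A\widehat{\Phi}=\widehat{\Phi}_\Lambda$ and $A\delta=p'\delta$. The induced positive system is then cut out by $A^{-1}\rho$, which is regular and of level $p'n>0$. Hence $A^{-1}\rho$ is $w$-conjugate to a regular dominant weight for some $w\in\widehat{W}$. The induced system is therefore $Aw\widehat{\Phi}^{\mathrm{re}}_+$, whose base $Aw\widehat{\Delta}$ sums to $Aw\delta=p'\delta$.

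\emph{Directly, in the paper's notation.} By your Step 2, the indices $j$ with $\mathtt{y}_0-\mathtt{y}_j\in\mathbb{Z}$ meet each residue class mod $n$ in a single coset of $p'n\mathbb{Z}$. List those in $[0,p'n]$ as $0=g_0<g_1<\dots<g_n=p'n$. The roots $\alpha_{g_s,g_{s+1}}$ are then positive, real and integral, and they sum to $\alpha_{0,p'n}=p'\delta$. Dominance gives $p=\mathtt{y}_0-\mathtt{y}_{p'n}=\sum_s(\mathtt{y}_{g_s}-\mathtt{y}_{g_{s+1}})\ge n$, with no indecomposability needed. This is exactly the sequence $\vec g$ and the set $\widehat{\Delta}_{p',\gamma}=\{\alpha_{g_i,g_{i+1}}\}$ used in Section 4.
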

    
    Fix \(p, p'\) as above. Define the operator \(A_{p'}\) acting on \(\tilde{\mathfrak{h}}^* = \mathbb{C}\omega_0 \oplus \mathfrak{h}^* \oplus \mathbb{C}\delta\) diagonally by \((\frac{1}{p'}, \mathrm{Id}_{\mathfrak{h}^*}, p')\). Denote by 
    \begin{equation}
        \alpha_i' =  A_{p'}\alpha_i \quad \text{for} \quad i\in\mathbb{Z}
    \end{equation} 
    and
    \begin{equation}\label{eq:alpha'}
        \widehat{\Delta}_{p'} = A_{p'} ~ \widehat{\Delta} = \{\alpha_0', \dots,  \alpha_{n-1}' \}.
    \end{equation}
    \begin{theorem}[{\cite{kac1989classification}[Thm 2.1]}]\label{th:KacAdm}
        For fixed \(p, p'\), all admissible weights of level \(k =  \frac{p}{p'} -n \) are parametrized by pairs \((\eta, \gamma)\), where \(\eta\) is a weight
        \begin{equation}\label{eq:Defai}
                \eta = \sum_{i=0}^{n-1} a_i \omega_i,
        \end{equation}
        with \(a_i \in \mathbb{Z}_{\ge 0}\) such that \(\sum_{i}(a_i+1) = p\), and \(\gamma \in \widehat{W}^e\) satisfying \(\gamma ~\widehat{\Delta}_{p'} \subset \widehat{\Phi}_+\).  
        Then the weight \(\Lambda\) is given by
        \begin{equation}\label{eq:LambdaThroughLambda0}
            \Lambda = \gamma \left(\eta - \omega_0 (p'-1)\kappa + \rho\right) - \rho - \mathtt{const}_1 \cdot \delta,
        \end{equation}
        where \(\mathtt{const}_1\in\mathbb{Q}\) is uniquely determined by the condition \(\Lambda \in \omega_0^{\perp}\).\qed
    \end{theorem}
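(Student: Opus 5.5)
The statement is the classification of Kac and Wakimoto, which the paper cites, so the plan is to reconstruct their argument in the language of Section~\ref{sec:permitted}. The strategy has four stages: describe the integral root system $\widehat{\Phi}_{\Lambda}$ explicitly, identify its base with a translate of $\widehat{\Delta}_{p'}$ by an element $\gamma\in\widehat{W}^e$, then read off the coordinates $a_i$ of $\eta$ from dominance, and finally recover $\sum_i(a_i+1)=p$ from the level.

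\textbf{Step 1: the level is rational.} Suppose $\Lambda$ is admissible. Then $\widehat{Q}_\Lambda$ has full rank, so some nonzero multiple $m\delta$ lies in $\widehat{Q}_\Lambda$. This gives $m\kappa=(\Lambda+\rho,m\delta)\in\mathbb{Z}$. Dominance gives $\kappa\notin\mathbb{Q}_{\le 0}$. Hence $\kappa=p/p'$ with $p,p'>0$ coprime.

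\textbf{Step 2: the shape of $\widehat{\Phi}_\Lambda$.} Write $\alpha_{l,r}$ as in Proposition~\ref{th:RepOfRoot} and use $(\alpha_{l,r},\Lambda+\rho)=\mathtt{y}_l-\mathtt{y}_r$ together with $\mathtt{y}_{i+n}=\mathtt{y}_i-\kappa$. Integrality is then governed by the classes of $\mathtt{y}_1,\dots,\mathtt{y}_n$ in $\mathbb{C}/\tfrac1{p'}\mathbb{Z}$.
\begin{itemize}
\item Full rank of $\widehat{Q}_\Lambda$ forces every difference $\mathtt{y}_i-\mathtt{y}_j$ to lie in $\tfrac1{p'}\mathbb{Z}$. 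Otherwise the roots $\epsilon_i-\epsilon_j+m\delta$ would split into two mutually orthogonal blocks, and the integral lattice would miss part of $\mathfrak h^*$.
\item So for each finite root $\epsilon_i-\epsilon_j$ there is exactly one residue $m_{ij}$ modulo $p'$ with $\epsilon_i-\epsilon_j+(m_{ij}+p'\mathbb{Z})\delta\subset\widehat{\Phi}_\Lambda$.
\item The residues are additive, $m_{ij}+m_{jk}\equiv m_{ik}$, so $m_{ij}=c_i-c_j$ for integers $c_i$ defined modulo $p'$.
\end{itemize}
Consequently $\widehat{\Phi}_\Lambda^{\mathrm{re}}$ is the affine root system of type $\widehat A_{n-1}$ with imaginary root $p'\delta$. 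Concretely, it is the image of $A_{p'}\widehat{\Phi}^{\mathrm{re}}$ under the translation $t_\beta$ with $\beta=\sum_i c_i\epsilon_i$ (projected to $P$), possibly composed with a finite Weyl group element.

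\textbf{Step 3: producing $\gamma$, $\eta$ and the formula.} Let $\widehat{\Pi}_\Lambda$ be the base of $\widehat{\Phi}_\Lambda$ consisting of positive roots that are indecomposable inside $\widehat{\Phi}_{\Lambda}\cap\widehat{\Phi}_+$. By Step 2 this base has the form $\gamma\,\widehat{\Delta}_{p'}$ for some $\gamma\in\widehat{W}^e=W\ltimes P$. The extended group is needed because the $c_i$ need only give a coweight, not a coroot, translation. Positivity of the base is exactly the condition $\gamma\,\widehat{\Delta}_{p'}\subset\widehat{\Phi}_+$.

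Now put $a_i=(\Lambda+\rho,\gamma\alpha_i')-1$. These are integers by construction, and dominance, i.e.\ $(\Lambda+\rho,\alpha)\notin\mathbb{Z}_{\le0}$ for positive $\alpha$, gives $a_i\ge0$. Since $\sum_i\alpha_i'=p'\delta$, we get
\[
\sum_i(a_i+1)=(\Lambda+\rho,p'\delta)=p'\kappa=p .
\]
The vector $\gamma^{-1}(\Lambda+\rho)$ pairs with $\alpha_i'$ to give $a_i+1$. Inverting $A_{p'}$, which scales $\omega_0$ by $1/p'$, yields
\[
\gamma^{-1}(\Lambda+\rho)=\eta+\rho-(p'-1)\kappa\,\omega_0 \pmod{\mathbb{C}\delta}.
\]
Applying $\gamma$ and fixing the $\delta$-component by requiring $\Lambda\in\omega_0^{\perp}$ gives \eqref{eq:LambdaThroughLambda0}. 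The bound $p\ge n$ is immediate from $a_i\ge0$.

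\textbf{Step 4: the converse and uniqueness.}
\begin{itemize}
\item \emph{Converse.} Given a pair $(\eta,\gamma)$, define $\Lambda$ by \eqref{eq:LambdaThroughLambda0}. Then $\widehat{\Phi}_\Lambda\supset\gamma A_{p'}\widehat{\Phi}$, which has full rank. For dominance, every positive root of that subsystem is a nonnegative combination of $\gamma\alpha_i'$, so it pairs positively with $\Lambda+\rho$.
\item \emph{Uniqueness.} The base $\widehat{\Pi}_\Lambda$ is canonical, so $\gamma$ is determined up to the stabiliser of $\widehat{\Delta}_{p'}$. That stabiliser is the cyclic diagram automorphism group contained in $\widehat{W}^e$, and it must be absorbed in the parametrisation, for instance by normalising the choice.
\end{itemize}

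I expect Step 2 to be the main obstacle: showing that a full-rank integral subsystem is exactly a $\widehat{W}^e$-translate of $A_{p'}\widehat{\Phi}$, in particular that the residues $m_{ij}$ are realised by an element of $P$ rather than something coarser. I would first try to handle it through the $\mathtt{y}_i$ directly, ordering the $n$ values $p'\mathtt{y}_i$ modulo $p'$ around a circle. The cyclic ordering of these residues then produces the base $\gamma\widehat{\Delta}_{p'}$ by adjacency, in the same way the blue arrows of permitted patterns connect $d_{i,l}$ to $d_{i+1,r}$.
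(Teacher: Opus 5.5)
The paper gives no proof of this statement; it is quoted from Kac--Wakimoto. Their Prop.~2.1 supplies the non-uniqueness in Remark~\ref{th:GammaEtaUnique}, and their formula (1.7b) is used later in Proposition~\ref{pr:admissible}. Your proposal is therefore a genuinely different, self-contained route. Kac--Wakimoto analyse full-rank coroot subsystems for an arbitrary affine algebra. You instead work in type $\widehat A_{n-1}$ directly with the numbers $\mathtt{y}_i$, which fits well with the paper's later $\vec g$-parametrization. The following parts are fine:
Step~1; the full-rank argument in Step~2; the computation of $a_i\ge 0$ and $\sum_i(a_i+1)=p$; the identity $\gamma^{-1}(\Lambda+\rho)\equiv\eta+\rho-(p'-1)\kappa\,\omega_0 \pmod{\mathbb{C}\delta}$; and your handling of non-uniqueness via the stabiliser of $\widehat\Delta_{p'}$.

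The sentence ``By Step~2 this base has the form $\gamma\widehat\Delta_{p'}$'' does not follow as written. Step~2 only identifies the \emph{set} $\widehat\Phi^{\mathrm{re}}_\Lambda$ with $t_\beta A_{p'}\widehat\Phi^{\mathrm{re}}$, where $\beta$ is the projection of $-\sum_i c_i\epsilon_i$ (note the sign). The base, however, is determined by the induced positive system $\widehat\Phi_\Lambda\cap\widehat\Phi_+$. You must show this system is the image of $A_{p'}\widehat\Phi_+$ under some element of $\widehat W^e$. Your stated worry, that the residues be realised by an element of $P$, is not the difficulty: $c_i\in\mathbb Z$ already puts $\beta$ in $P$. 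Either of the following closes the gap.

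(a) Alcove argument. Pulling back by $t_\beta A_{p'}$, the positive system becomes $\{\epsilon_i-\epsilon_j+m\delta:\ p'm+c_i-c_j>0,\ \text{or}\ p'm+c_i-c_j=0,\ i<j\}$. This is the alcove system attached to a generic point near $\frac1{p'}\sum_i c_i\epsilon_i$, so it equals $w\widehat\Phi_+$ for some $w\in\widehat W$. Since $A_{p'}\widehat W A_{p'}^{-1}=W\ltimes p'Q\subset\widehat W^e$, you can take $\gamma=t_\beta A_{p'}wA_{p'}^{-1}$.

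(b) Your cyclic ordering, which is exactly the paper's $\vec g$. Because $\gcd(p,p')=1$, the set of $l\in\mathbb Z$ with $\mathtt{y}_l-\mathtt{y}_{g_0}\in\mathbb Z$ is an increasing sequence $(g_i)$ with $g_{i+n}=g_i+p'n$, and $g_0,\dots,g_{n-1}$ are pairwise distinct mod $n$. By periodicity, every integral positive real root is some $\alpha_{g_a,g_b}$ with $a<b$. Hence the base is $\{\alpha_{g_i,g_{i+1}}\}_{i=0}^{n-1}$, and $\gamma=\gamma_{\vec g}$ from \eqref{eq:Gammag}.

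In the converse, you need $\widehat\Phi_\Lambda=\gamma A_{p'}\widehat\Phi$, not merely $\supset$, before concluding dominance from positivity on $\gamma\widehat\Delta_{p'}$. This follows from your own coset description. For each pair $i\neq j$ the integral shifts form a single class mod $p'$, and $\gamma A_{p'}\widehat\Phi$ already contains a whole class.
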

    \begin{remark}[{\cite[Prop. 2.1]{kac1989classification}}]\label{th:GammaEtaUnique}
        A pair \((\gamma,\eta)\) is not uniquely determined by the weight \(\Lambda\). In fact, two pairs \((\gamma_1,\eta_1)\) and \((\gamma_2,\eta_2)\) correspond to the same \(\Lambda\) if and only if there exists \(\mathtt{r}\in\widehat{W}^e\) such that \( \mathtt{r}(\widehat{\Delta}_{p'}) = \widehat{\Delta}_{p'}\) and
        \begin{equation}
        \begin{aligned}
            \gamma_2 &= \gamma_1\circ \mathtt{r} ,
            \\
            \eta_2+\rho- \omega_0 (p'-1)\kappa  &= \mathtt{r}^{-1}(\eta_1 +\rho- \omega_0(p'-1)\kappa).
        \end{aligned}
        \end{equation}
        Furthermore, it follows from \( \mathtt{r}(\widehat{\Delta}_{p'}) = \widehat{\Delta}_{p'}\) that \(\mathtt{r}\) should be equal to \(\mathtt{r}_s\) for some  \(s\in\{0,\dots n-1\}\)  where 
        \begin{equation}
            \mathtt{r}_s(\alpha'_i) = \alpha'_{i+s}, \quad \text{for} \quad i\in\{0,\dots\zh{,} n-1\}.
        \end{equation}
    \end{remark}

    Let \(A_{p',\gamma}= \gamma A_{p'}\) and 
    \begin{equation}\label{eq:Deltap'gamma}
        \widehat{\Delta}_{p',\gamma} = \gamma \widehat{\Delta}_{p'}.
    \end{equation}
    %Denote by 
    % \begin{equation}\label{eq:alpha''}
    %     \alpha''_i = \gamma\alpha'_i = A_{p',\gamma}\alpha_i \quad \text{for}\quad i\in\{0,\dots,n-1\}.
    % \end{equation}
    
    \begin{proposition}\label{th:WLambdaThroughW}
    We have
    \begin{equation}
        \widehat{W}_\Lambda = A_{p',\gamma} \widehat{W} A_{p',\gamma}^{-1}.
    \end{equation}
    \end{proposition}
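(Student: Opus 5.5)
The plan is to compare the reflection subgroups through their root subsystems. The central claim is that the set of real roots integral for $\Lambda$ is exactly the image of the affine real roots under $A_{p',\gamma}$:
\begin{equation*}
\widehat{\Phi}^{\mathrm{re}}_{\Lambda}=A_{p',\gamma}\,\widehat{\Phi}^{\mathrm{re}} .
\end{equation*}
Given this, conjugation finishes the argument. Since $A_{p'}$ rescales $\omega_0$ by $1/p'$ and $\delta$ by $p'$, it preserves the pairing $(\ ,\ )$ on $\widetilde{\mathfrak{h}}^*$: the only cross term is $(\omega_0,\delta)$, and it is unchanged. The elements of $\gamma\in\widehat{W}^e$ are isometries as well. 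Hence $A_{p',\gamma}$ is an isometry, and
\begin{equation*}
A_{p',\gamma}\, s_\alpha\, A_{p',\gamma}^{-1}=s_{A_{p',\gamma}\alpha}\quad\text{for all } \alpha\in\widehat{\Phi}^{\mathrm{re}}.
\end{equation*}
The group $\widehat{W}$ is generated by $s_\alpha$ for $\alpha\in\widehat{\Phi}^{\mathrm{re}}_+$, and $s_{-\alpha}=s_\alpha$, so conjugating gives the group generated by reflections in $A_{p',\gamma}\widehat{\Phi}^{\mathrm{re}}=\widehat{\Phi}^{\mathrm{re}}_\Lambda$. This is precisely $\widehat{W}_\Lambda$.

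To prove the claim about root systems, first compute $A_{p'}\widehat{\Phi}^{\mathrm{re}}$ directly. It equals $\{\alpha+mp'\delta\mid \alpha\in\Phi,\ m\in\mathbb{Z}\}$, since $A_{p'}$ fixes $\mathfrak{h}^*$ and sends $\delta$ to $p'\delta$. Applying $\gamma\in\widehat{W}^e$ then permutes the full real root system $\widehat{\Phi}^{\mathrm{re}}$. Next, use \eqref{eq:LambdaThroughLambda0}. Because $\gamma$ is an isometry and $\delta$ is orthogonal to real roots, for every real root $\beta$ we get
\begin{equation*}
(\Lambda+\rho,\beta)=\bigl(\eta+\rho-(p'-1)\kappa\,\omega_0,\ \gamma^{-1}\beta\bigr).
\end{equation*}
Write $\gamma^{-1}\beta=\alpha+m\delta$ with $\alpha\in\Phi$. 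Then $\eta+\rho$ is integral, so the pairing equals $(\eta+\rho,\alpha+m\delta)-(p'-1)\kappa m$. The first term is an integer, while $(p'-1)m\kappa=(p'-1)m\,p/p'$. Since $\gcd(p,p')=1$, this is an integer if and only if $p'\mid (p'-1)m$. As $\gcd(p'-1,p')=1$, that is equivalent to $p'\mid m$. Thus $\beta$ is $\Lambda$-integral exactly when $\gamma^{-1}\beta\in A_{p'}\widehat{\Phi}^{\mathrm{re}}$, which is the claim.

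A point to check is that $\widehat{W}_\Lambda$ in the paper is generated by reflections in the positive integral real roots. The positivity condition $\gamma\widehat{\Delta}_{p'}\subset\widehat{\Phi}_+$ is then not needed for the equality of groups, only for identifying simple roots later. The main obstacle I expect is the bookkeeping of how $\omega_0$ and the $\delta$-shift ($\mathtt{const}_1$) transform under $\gamma$ and the translation part of $\widehat{W}^e$. One must verify that $\gamma^{-1}$ applied to a real root pairs with $\omega_0$ exactly through its $\delta$-coefficient, i.e.\ that the pairing identity above holds with the translation formulas \eqref{eq:Paction}. I would check this first on a translation $t_\Lambda$ and on simple reflections separately, and then use that both sides are compatible with composition.
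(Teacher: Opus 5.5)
Your proposal is correct and follows the paper's proof. Both arguments show that the $\Lambda$-integral real roots are exactly $\gamma\{\alpha+mp'\delta\}=A_{p',\gamma}\widehat{\Phi}^{\mathrm{re}}$, and then conjugate reflections using that $\gamma$ and $A_{p'}$ are isometries. You are in fact more careful than the paper about the integral contribution of $\eta+\rho$, the $\mathtt{const}_1\delta$ shift, and the coprimality of $p'-1$ and $p'$.

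The bookkeeping you flag at the end is not a real obstacle, because your own argument already settles it. Since $\gamma^{-1}\beta$ is a real root $\alpha+m\delta$ and $\gamma$ is an isometry, the pairing identity holds immediately, with no separate check on translations or simple reflections.
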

    
    \begin{proof}
    First, we see that
    \begin{equation}
        \widehat{\Phi}_\Lambda = \gamma \cdot \widehat{\Phi}_{ - \omega_0 (p'-1)\kappa},      
    \end{equation}
    and since \(s_{\gamma(\alpha)} = \gamma s_{\alpha} \gamma^{-1}\), we obtain
    \begin{equation}
        \widehat{W}_\Lambda = \gamma \cdot \widehat{W}_{ - \omega_0 (p'-1)\kappa} \cdot \gamma^{-1}.
    \end{equation}
     For any \(\alpha \in \Phi,\, m \in \mathbb{Z}\), we have the scalar product \(
        (\alpha + m\delta,\,  \omega_0 (p'-1)\kappa ) = (p'-1)m \kappa \). This number is integer if and only if \(p'|m\). Thus, 
    \begin{equation}
        \widehat{\Phi}_{ - \omega_0 (p'-1)\kappa} =
        \bigl\{\alpha + mp'\delta \mid \alpha \in \Phi,\, m \in \mathbb{Z}\bigr\}
        \cup
        \bigl\{mp'\delta \mid m \in \mathbb{Z} \setminus \{0\}\bigr\}.
    \end{equation}

    The operator \(A_{p'}\) maps \(\widehat\Phi\) bijectively to \(\widehat\Phi_{\eta - \omega_0 (p'-1)\kappa}\). Since \(s_{A_{p'}\alpha} = A_{p'} s_{\alpha} (A_{p'})^{-1}\), we conclude that
    \begin{equation}
        \widehat{W}_{ - \omega_0 (p'-1)\kappa} = A_{p'} \widehat{W} (A_{p'})^{-1}.        
    \end{equation}
    \end{proof}
    \begin{corollary}
            Reflections  with respect to \(\widehat{\Delta}_{p',\gamma} = \gamma\widehat{\Delta}_{p'}\) generate \(\widehat{W}_\Lambda\).
    \end{corollary}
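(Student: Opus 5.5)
The corollary follows from Proposition~\ref{th:WLambdaThroughW} by transporting a Coxeter generating set of \(\widehat{W}\) along the conjugation \(A_{p',\gamma}\).

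First, recall that \(\widehat{W}\) is generated by the simple reflections \(s_{\alpha_i}\), \(i\in\{0,\dots,n-1\}\). By Proposition~\ref{th:WLambdaThroughW},
\[
\widehat{W}_\Lambda=A_{p',\gamma}\,\widehat{W}\,A_{p',\gamma}^{-1},
\]
so \(\widehat{W}_\Lambda\) is generated by the conjugates \(A_{p',\gamma}s_{\alpha_i}A_{p',\gamma}^{-1}\).

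The key step is the identity
\[
A_{p',\gamma}\,s_{\alpha}\,A_{p',\gamma}^{-1}=s_{A_{p',\gamma}\alpha}\qquad\text{for }\alpha\in\widehat{\Phi}^{\mathrm{re}}.
\]
This splits into two conjugation identities, since \(A_{p',\gamma}=\gamma A_{p'}\).

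For \(\gamma\in\widehat{W}^e\), the operator \(\gamma\) preserves the form \((\ ,\ )\) on \(\widetilde{\mathfrak{h}}^*\). Hence \(\gamma s_\beta\gamma^{-1}=s_{\gamma\beta}\); this is the identity already used in the proof of Proposition~\ref{th:WLambdaThroughW}.

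For \(A_{p'}\) we need \(A_{p'}s_\alpha A_{p'}^{-1}=s_{A_{p'}\alpha}\), which the proof of Proposition~\ref{th:WLambdaThroughW} also asserts. I would verify it directly, since \(A_{p'}\) is not an isometry. It rescales \(\omega_0\) by \(1/p'\) and \(\delta\) by \(p'\), and fixes \(\mathfrak{h}^*\). The pairing \((\omega_0,\delta)=1\) is therefore preserved, and the form restricted to \(\mathfrak{h}^*\) is unchanged. The only nonzero pairings are those within \(\mathfrak{h}^*\) and between \(\omega_0\) and \(\delta\); on \(\mathfrak{h}^*\) the operator is the identity, and \((A_{p'}\omega_0,A_{p'}\delta)=(\tfrac{1}{p'})(p')\,(\omega_0,\delta)=1\). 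Hence \(A_{p'}\) preserves \((\ ,\ )\) entirely, and the identity holds with no extra check on norms.

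Combining these with \(A_{p',\gamma}\alpha_i=\gamma\alpha_i'\), the conjugated generators are exactly the reflections \(s_{\gamma\alpha'_i}\), \(\gamma\alpha_i'\in\widehat{\Delta}_{p',\gamma}\). This gives the claim.

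The step most likely to hide a subtlety is whether the reflections \(s_\beta\) for \(\beta\in\widehat{\Delta}_{p',\gamma}\) genuinely lie in \(\widehat{W}_\Lambda\) as defined, i.e.\ whether \(\beta\in\widehat{\Phi}^{\mathrm{re}}_{+,\Lambda}\). Positivity holds by the hypothesis \(\gamma\widehat{\Delta}_{p'}\subset\widehat{\Phi}_+\) in Theorem~\ref{th:KacAdm}. Integrality of \((\beta,\Lambda+\rho)\) follows from \(\widehat{\Phi}_\Lambda=\gamma\widehat{\Phi}_{-\omega_0(p'-1)\kappa}\), together with the fact that \(\alpha'_i\), which is \(\alpha_i\) for \(i\neq0\) and \(\alpha_0+(p'-1)\delta\) for \(i=0\), pairs integrally with \(-\omega_0(p'-1)\kappa+\eta+\rho\). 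Since Proposition~\ref{th:WLambdaThroughW} already yields the group equality, this check is only a consistency confirmation.
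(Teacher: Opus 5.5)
Your proof is correct and follows the intended argument: the paper gives no separate proof, treating the corollary as immediate from Proposition~\ref{th:WLambdaThroughW} by conjugating the generators $s_{\alpha_i}$ of $\widehat{W}$ to $A_{p',\gamma}s_{\alpha_i}A_{p',\gamma}^{-1}=s_{\gamma\alpha_i'}$, using the same conjugation identities as in that proposition's proof. One wording fix: ``since $A_{p'}$ is not an isometry'' should read ``not obviously an isometry,'' because your own computation shows $A_{p'}$ preserves $(\ ,\ )$, and the paper later relies on exactly this when it calls $A_{p',\gamma}$ an isometry.
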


    \subsection{Parametrization of \(\gamma\)}
    Consider a strictly increasing sequence \(\vec{g} =(g_0,\dots,g_{n-1},g_{n})\in\mathbb{Z}^{n+1}\) such that \(g_{n} - g_{0} = p' n\) and there exists permutation \(\sigma_{\vec{g}}\in\mathfrak{S}_n\) such that
    \begin{equation}\label{eq:DefOfSigmag}
        g_i \equiv \sigma_{\vec{g}}(i) \quad \operatorname{mod} n, \quad \text{for} \quad i\in\{0,\dots, n-1\}.
    \end{equation}
    Let us continue sequence \(\{g_i\}\) for all \(i\in\mathbb{Z}\) by the rule \(g_{i+n} = g_i+p'n\).       
    \begin{proposition}
        There is a unique  \(\gamma_{\vec{g}}\in\widehat{W}^e\) such that 
        \begin{equation}\label{eq:Gammag}
            \gamma_{\vec{g}}(\alpha_i') = \alpha_{g_i,g_{i+1}}
            , \quad i \in \{0,1,\dots,n-1\},
        \end{equation}
        where \(\alpha_i'\) are defined by \eqref{eq:alpha'}.
    \end{proposition}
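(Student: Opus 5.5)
The plan is to identify $\widehat{W}^e$ with a group of periodic permutations of $\mathbb{Z}$ that acts on positive real roots through their labels $(l,r)$ from Proposition~\ref{th:RepOfRoot}. In that language $\gamma_{\vec g}$ can be written down directly.

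\textbf{Step 1 (model of $\widehat{W}^e$).} Let $\widetilde{\mathfrak S}$ be the group of bijections $w:\mathbb{Z}\to\mathbb{Z}$ with $w(i+n)=w(i)+n$. Let $w$ act on the real roots by
\begin{equation*}
w\bigl(\epsilon_l-\epsilon_r+(\lceil r/n\rceil-\lceil l/n\rceil)\delta\bigr)=\epsilon_{w(l)}-\epsilon_{w(r)}+\bigl(\lceil w(r)/n\rceil-\lceil w(l)/n\rceil\bigr)\delta .
\end{equation*}
By \eqref{eq:gammag-delta} this reads $\alpha_{l,r}\mapsto\alpha_{w(l),w(r)}$, where $\alpha_{l,r}$ for $l>r$ is understood as $-\alpha_{r,l}$. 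Extend it linearly to $\widehat{\mathfrak h}^*$, fix $\delta$, and extend to $\omega_0$ in the unique way that preserves $(\ ,\ )$. I will check three things.
\begin{enumerate}
\item The periodic transposition of $i,i+1$ acts as $s_{\alpha_i}$ for $i\in\{0,\dots,n-1\}$.
\item The element $i\mapsto i+(n\text{-multiple on one residue class})$, i.e.\ $j\mapsto j+n$ for $j\equiv a$ and $j\mapsto j$ otherwise, acts as the translation $t_{\epsilon_a-\frac1n\sum_b\epsilon_b}$ in \eqref{eq:Paction}. This is a direct comparison of $\delta$-coefficients using \eqref{eq:gammag-delta}.
\item The global shift $i\mapsto i+n$ acts trivially, since $\alpha_{l+n,r+n}=\alpha_{l,r}$.
\end{enumerate}
Together these give a surjection $\widetilde{\mathfrak S}\to W\ltimes P=\widehat{W}^e$ whose kernel is generated by the global shift. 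Its compatibility with the root action is the only genuinely computational point. I expect it to be the main obstacle, mostly bookkeeping of the ceilings and of the normalization of $P$ by $\omega_j-\omega_0$.

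\textbf{Step 2 (the simple roots $\alpha'_i$).} For $1\le i\le n-1$ we have $\alpha'_i=\alpha_i=\alpha_{i,i+1}$. For $i=0$, $\alpha_0'=A_{p'}\alpha_0=\epsilon_n-\epsilon_1+p'\delta$. By \eqref{eq:gammag-delta} this equals $\alpha_{0,(p'-1)n+1}$, or, after the allowed shift, $\alpha_{n-p'n,\,1}$.

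\textbf{Step 3 (existence).} Define $w\in\widetilde{\mathfrak S}$ by $w(i)=g_i$ for $i=1,\dots,n$, where $g_n=g_0+p'n$, and extend periodically. It is a bijection because $g_1,\dots,g_n$ are pairwise distinct mod $n$ by \eqref{eq:DefOfSigmag}, using $g_n\equiv g_0$. With the extension rule $g_{i+n}=g_i+p'n$ we get $w(i+tn)=g_i+tn$. So for $1\le i\le n-1$, $w(\alpha_{i,i+1})=\alpha_{g_i,g_{i+1}}$. Also $w(0)=g_n-n=g_0+(p'-1)n$ and $w((p'-1)n+1)=g_1+(p'-1)n$. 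Hence
\begin{equation*}
w(\alpha'_0)=\alpha_{g_0+(p'-1)n,\;g_1+(p'-1)n}=\alpha_{g_0,g_1},
\end{equation*}
by shift-invariance in Proposition~\ref{th:RepOfRoot}. Let $\gamma_{\vec g}$ be the image of $w$ in $\widehat{W}^e$.

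\textbf{Step 4 (uniqueness).} Suppose two candidates agree on all $\alpha'_i$. Then their quotient $x=v\,t_\Lambda\in W\ltimes P$ fixes each $\alpha'_i$, hence fixes $\mathfrak h^*$ and $\delta$, since the $\alpha_i'$ together span $\mathfrak h^*\oplus\mathbb{C}\delta$. The translation acts by $t_\Lambda(h)=h-(h,\Lambda)\delta$ for $h\in\mathfrak h^*$, so the $\mathfrak h^*$-component of $x(h)$ is $v(h)$. This forces $v=1$, and then $(h,\Lambda)=0$ for all $h$ forces $\Lambda=0$. Therefore $\gamma_{\vec g}$ is unique.
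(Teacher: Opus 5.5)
Your proof is correct, but it is organized differently from the paper's.

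\textbf{What the paper does.} It never leaves $W\ltimes P$. It writes $\gamma_{\vec g}=\sigma_{\vec g}\circ t_V$ with the explicit vector $V=\sum_{i=1}^{n-1}\bigl(\lceil g_{i+1}/n\rceil-\lceil g_i/n\rceil\bigr)(\omega_i-\omega_0)$. It verifies $\gamma_{\vec g}(\alpha_i)=\alpha_{g_i,g_{i+1}}$ for $1\le i\le n-1$ from \eqref{eq:Paction} and \eqref{eq:gammag-delta}. The case $i=0$ is obtained only indirectly: $\sum_i\alpha'_i=p'\delta$ is fixed, and $\sum_i\alpha_{g_i,g_{i+1}}=\alpha_{g_0,g_n}=p'\delta$. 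Uniqueness is simply asserted.

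\textbf{What you do instead.} You pass through the model of $\widehat{W}^e$ as periodic permutations of $\mathbb{Z}$ modulo the global shift, where roots transform by $\alpha_{l,r}\mapsto\alpha_{w(l),w(r)}$. Then $\gamma_{\vec g}$ is just the window $[g_1,\dots,g_n]$. All $n$ roots, including $\alpha'_0=\alpha_{0,(p'-1)n+1}$, are handled by the same mechanism. Your Step~4 supplies the uniqueness argument the paper omits.

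\textbf{The cost of the model is small.} Put $\hat\epsilon_l=\epsilon_l-\lceil l/n\rceil\delta$. Then $\alpha_{l,r}=\hat\epsilon_l-\hat\epsilon_r$ for all $l,r$, and $\hat\epsilon_{l+n}=\hat\epsilon_l-\delta$, so $w$ acts by $\hat\epsilon_l\mapsto\hat\epsilon_{w(l)}$. This proves that your linear extension is well defined, which as written you only assert. It also turns items 1--3 of Step~1 into one-line checks. For the proposition itself you only need the image of $w$ to lie in $W\ltimes P$; surjectivity and the description of the kernel are not needed.

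\textbf{What your route buys.}
\begin{enumerate}
\item It computes the translation part for you. Unwinding $w$ gives $\sigma_{\vec g}\circ t_{-V}$. With the convention $t_\Lambda(v)=v-(v,\Lambda)\delta$ of \eqref{eq:Paction}, the paper's displayed identity in fact needs $t_{-V}$ rather than $t_V$. This is a harmless sign slip, since $-V\in P$.
\item It makes evident why shifting all $g_i$ by $n$ gives the same $\gamma_{\vec g}$: that shift is the global shift, which lies in the kernel.
\item It sets up naturally the converse proposition that follows in the paper.
\end{enumerate}
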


    \begin{proof}
    \textbf{Existence.}    
        Let us show that
        \begin{equation}
            \sigma_{\vec{g}} \circ  t_{V}(\alpha_i') = \alpha_{g_i,g_{i+1}}, \quad i \in \{0,1,\dots,n-1\},
        \end{equation}
        where
        \begin{equation}
            V = \sum_{i=1}^{n-1} \left(\Big\lceil \frac{g_{i+1}}{n} \Big\rceil -\Big\lceil \frac{g_{i}}{n} \Big\rceil\right)(\omega_i - \omega_0).
        \end{equation}
        By \eqref{eq:Paction} and \eqref{eq:gammag-delta} we have
        \begin{equation}
            \sigma_{\vec{g}} \circ  t_{V}(\alpha_i) = \epsilon_{\sigma_{\vec{g}}(i)}- \epsilon_{\sigma_{\vec{g}}(i+1)}+ \left(\Big\lceil \frac{g_{i+1}}{n} \Big\rceil -\Big\lceil \frac{g_{i}}{n} \Big\rceil\right)\delta = \alpha_{g_i,g_{i+1}}, \quad \text{for } i\in\{1,\dots, n-1\}.
        \end{equation} 
        Note that
            \begin{equation}
                 \sigma_{\vec{g}} \circ  t_{V}\Big(\sum_{i=0}^{n-1}\alpha'_i\Big) = \sigma_{\vec{g}} \circ  t_{V}(p'\delta) =p'\delta=\sum_{j=g_0}^{g_n}\alpha_j = \sum_{i=0}^{n-1}\alpha_{g_i,g_{i+1}} .
            \end{equation}
        So, we have
        \begin{equation}
            \sigma_{\vec{g}} \circ  t_{V}(\alpha_0') = \alpha_{g_0,g_1}.
        \end{equation}
             \textbf{Uniqueness.} For two such transformations \(\gamma,\gamma'\) it is easy to check that \(\gamma^{-1}\circ \gamma'\) is identity.\!
    \end{proof}

    Denote by \(\tau\in \operatorname{Aut}(\Phi)\)  an involution defined by %formulas
    \begin{equation}\label{eq:tau}
          \tau(\delta) =\delta, \quad \tau(\omega_0) = \omega_0, \quad \tau(\alpha_i) = \alpha_{n-i}.
    \end{equation}
    Note that \(\tau\not \in \widehat{W}^e\) for \(n\ge 3\) because \(\tau\) performs an external automorphism of \(\Phi\).
    \begin{proposition}
        For any element \(\gamma \in \widehat{W}^e\) satisfying \(\gamma \widehat{\Delta}_{p'} \subset \widehat{\Phi}_+\) there exists \(\vec{g}\) such that
        \begin{equation}
            \gamma = \gamma_{\vec{g}}.
        \end{equation}     
    \end{proposition}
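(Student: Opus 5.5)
Given $\gamma\in\widehat{W}^e$ with $\gamma\,\widehat{\Delta}_{p'}\subset\widehat{\Phi}_+$, the plan is to build $\vec g$ directly from the images $\gamma(\alpha_i')$ and then invoke the uniqueness part of the preceding proposition. Every element of $\widehat{W}^e=W\ltimes P$ fixes $\delta$ and maps real roots to real roots. Each $\alpha'_i$ is a real root: for $1\le i\le n-1$ we have $\alpha_i'=\alpha_i$, and $\alpha_0'=p'\delta+\epsilon_n-\epsilon_1$. Hence each $\gamma(\alpha_i')$ is a positive real root. By Proposition~\ref{th:RepOfRoot} we can write
\begin{equation}
\gamma(\alpha_i')=\alpha_{l_i,r_i},\qquad l_i<r_i,\quad r_i-l_i\not\equiv 0 \pmod n,
\end{equation}
with $(l_i,r_i)$ unique up to a simultaneous shift by $n\mathbb{Z}$.

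The key step is to chain these intervals. Write $\gamma=\sigma\circ t_\Lambda$ with $\sigma\in W$. Modulo $\delta$, $t_\Lambda$ acts trivially on $\mathfrak h^*$, so $\gamma(\alpha_i')\equiv \epsilon_{\sigma(i)}-\epsilon_{\sigma(i+1)}$ modulo $\mathbb{C}\delta$ (indices mod $n$). By \eqref{eq:gammag-delta}, $\alpha_{l,r}\equiv\epsilon_l-\epsilon_r$, so $l_i\equiv\sigma(i)$ and $r_i\equiv\sigma(i+1)\equiv l_{i+1}\pmod n$. Using the shift freedom we normalize inductively: fix $g_0=l_0$ with $g_0\equiv\sigma(0)$, and choose representatives so that $l_{i+1}=r_i$ for $i=0,\dots,n-2$. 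Set $g_i=l_i$ for $0\le i\le n-1$ and $g_n=r_{n-1}$. Then $g_0<g_1<\dots<g_n$ is strictly increasing, and $g_i\equiv\sigma(i)$ for $0\le i\le n-1$ with $\sigma$ a permutation, which gives \eqref{eq:DefOfSigmag}. Also $r_{n-1}\equiv\sigma(n)=\sigma(0)\equiv g_0$, so $g_n-g_0\in n\mathbb{Z}_{>0}$.

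It remains to show $g_n-g_0=p'n$. Summing over $i$ and using $\sum_i\alpha_i'=A_{p'}\delta=p'\delta$ and $\gamma(\delta)=\delta$ gives
\begin{equation}
p'\delta=\sum_{i=0}^{n-1}\alpha_{g_i,g_{i+1}}=\alpha_{g_0,g_n}=\tfrac{g_n-g_0}{n}\,\delta,
\end{equation}
since every consecutive block of $n$ simple roots sums to $\delta$. Hence $g_n-g_0=p'n$, so $\vec g$ is admissible. By construction $\gamma(\alpha_i')=\alpha_{g_i,g_{i+1}}$ for all $i$, and the uniqueness part of the previous proposition yields $\gamma=\gamma_{\vec g}$.

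The main obstacle is the chaining step, specifically matching the residues $r_i\equiv l_{i+1}$ and justifying that $\gamma$ acts modulo $\delta$ through its finite part $\sigma$. I would settle this via \eqref{eq:Paction}: $t_\Lambda(v)=v-(v,\Lambda)\delta$ on $\widehat{\mathfrak h}^*$, and $\alpha_0'$ differs from $\alpha_0$ only by a multiple of $\delta$, so its $\mathfrak h^*$-part is still $\epsilon_n-\epsilon_1$. Uniqueness is already part of the previous proposition, since a Weyl group element fixing a basis of $\widehat{\mathfrak h}^*$ together with $\delta$ must act trivially.
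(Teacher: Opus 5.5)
Your proof is correct, but its key step differs from the paper's.

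\textbf{The paper's route.} The paper never uses the decomposition $\gamma=\sigma\circ t_\Lambda$. It only uses that $\gamma$ is an isometry of $\widehat{\Phi}$ fixing $\delta$. From $(\gamma(\alpha_i'),\gamma(\alpha_j'))=C_{ij}$ and the Gram formula for the $\alpha_{l_i,r_i}$, it deduces two things. First, the residues $l_i \bmod n$ form a permutation. Second, either $r_i\equiv l_{i+1}$ for all $i$, or $l_i\equiv r_{i+1}$ for all $i$. The second, reversed orientation is then excluded for $n>2$: it would make $\gamma\circ\tau$ of the first type, hence equal to some $\gamma_{\vec g'}\in\widehat{W}^e$, which would force the outer automorphism $\tau$ into $\widehat{W}^e$.

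\textbf{Your route.} You read the residues directly off the finite part. Since $t_\Lambda$ is trivial modulo $\delta$ and $\sigma\in W\simeq\mathfrak{S}_n$ permutes the $\epsilon_j$, you get $\epsilon_{l_i}-\epsilon_{r_i}=\epsilon_{\sigma(i)}-\epsilon_{\sigma(i+1)}$. Hence $l_i\equiv\sigma(i)$ and $r_i\equiv\sigma(i+1)\equiv l_{i+1}$. The only fact used implicitly is that distinct ordered pairs of distinct residues give distinct roots $\epsilon_a-\epsilon_b$, which is immediate. So the reversed case never arises. You need neither the Gram-matrix computation nor the $\tau$ argument, and your proof works uniformly in $n$, including $n=2$.

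\textbf{Comparison.} The paper's approach buys more generality: in effect it classifies all root-system isometries with $\gamma\widehat{\Delta}_{p'}\subset\widehat{\Phi}_+$ as $\gamma_{\vec g}$ or $\gamma_{\vec g}\circ\tau$, at the price of the extra exclusion step. For the statement as posed, your argument is shorter and cleaner. Your remaining steps match the paper: normalizing representatives so that $l_{i+1}=r_i$, using $\sum_i\alpha'_i=p'\delta$ and $\gamma(\delta)=\delta$ to get $g_n-g_0=p'n$, and invoking uniqueness.
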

    \begin{proof}
         Since  \(\gamma \widehat{\Delta}_{p'} \subset \widehat{\Phi}_+\) by the Proposition \ref{th:RepOfRoot}  we have
         \begin{equation}
            \gamma(\alpha_i') = \alpha_{l_i,r_i} % \sum_{s=l_i}^{r_i-1}\alpha_s,
        \end{equation} 
        for some \(l_i< r_i\in\mathbb{Z}\) for 
        \(i\in\{0,\dots n-1\}\). Extend \(l_i, r_i\)  by periodicity: \(l_i = l_{i+n}, r_i = r_{i+n}\) to all \(i \in \mathbb{Z}\). Since \(\gamma(\delta) = \delta\) we have
        \begin{equation}
            \sum_{i=1}^{n} (r_{i}-l_i) = p'n.
        \end{equation}
        It is clear that 
        \begin{equation}
           (\alpha_{l_i,r_i},\alpha_{l_j,r_j})%\Big(\sum_{s=l_i}^{r_i-1}\alpha_s,\sum_{t=l_j}^{r_j-1}\alpha_t\Big) 
            = \delta_{l_i,l_j}^n +\delta_{r_i,r_j}^n - \delta_{r_i,l_j}^n - \delta_{l_i,r_j}^n.
        \end{equation}
        On the other hand  \((\gamma(\alpha_i'),\gamma(\alpha_j'))=C_{ij}\) since \(\gamma\in \operatorname{Aut}(\widehat{\Phi})\). % As before we can think about \(\mathtt{P}_{\mathfrak{h}^*}\gamma(\alpha_i')\)  as about arrow between vertices from \(l_i\!\mod{n}\) to \(r_i\!\mod{n}\). 
        Then
        \begin{enumerate}
            \item  \(
                \{l_i\operatorname{mod} n \}_{i=0}^{n-1}\equiv\{0,\dots,n-1\} \equiv \{r_i\operatorname{mod} n \}_{i=0}^{n-1}\),
            \item   one of the following conditions holds    
                \begin{enumerate}
                \item\label{case1}
                        \(r_i \equiv l_{i+1} \operatorname{mod} n\) for \(i\in\{0,\dots, n-1\}\) ,
                \item\label{case2}                     
                        \(l_i \equiv_{n} r_{i+1}\operatorname{mod} n\) for \(i\in\{0,\dots, n-1\}\).   
            \end{enumerate}
        \end{enumerate} 
        In the case \ref{case1} we choose \(r_i,l_i\) by such a way that  \(r_i = l_{i+1}\) and we  denote \(g_{i} = l_{i}\) for \(i\in\{0,\dots,n-1\}\) and \(g_{n} = r_n\). So, 
        \begin{equation}
            \gamma = \gamma_{\vec{g}}.
        \end{equation}
        For \(n = 2\) conditions \ref{case1} and \ref{case2} are equivalent. Let us prove, that the case \ref{case2} cannot be realized if \(n>2\). Let us show that \(\gamma\circ\tau\in \widehat{W}^e\). Indeed,
        \begin{equation}
            \gamma\circ\tau(\alpha_i') = \alpha_{l'_i,r'_i}%\sum_{s=l'_{i}}^{r_{i}'-1}\alpha_s 
            , \quad \text{where}  \quad l_{i}' = l_{n-i}\quad \text{   and   } \quad r'_{i} = r_{n-i}.
        \end{equation} 
        So, \(r_i' \equiv_{n} l_{i+1}'\) for \(i\in\{0,\dots, n-1\}\). Define \(g_{i}' = l_{i}'\) for \(i\in\{0,\dots,n-1\}\) and \(g_{n}'= r_n'\). By case \ref{case1} 
        \begin{equation}
            \gamma\circ \tau= \gamma_{\vec{g}'}\in\widehat{W}^e,
        \end{equation}
        Since \(\tau\not\in \widehat{W}^e\) we see that \(\gamma\not\in \widehat{W}^e\).
    \end{proof}
    Introduce notation
    \begin{equation}
        b_i = g_{i+1} - g_i - 1, ~\text{for}~ i\in\{0,\dots n-1\}.
    \end{equation}
    On the other hand for given vector \(\vec{b}=(b_0,\dots,b_{n-1})\) let us denote sequence \(\{g_i(\vec{b})\}_{i\in\mathbb{Z}}\) satisfying the following conditions
    \begin{equation}
        \begin{cases}
            g_{i+1} - g_{i}-1 = b_{i \operatorname{mod} n}, \quad \text{for} \quad i\in\mathbb{Z},\\
            g_0 = 0.
        \end{cases}
    \end{equation}
    Let us reformulate  Theorem  \ref{th:KacAdm} in more explicit terms.
    \begin{theorem}\label{th:KacAdmRefined}
        The set of admissible weights on admissible level \(k =  \frac{p}{p'} -n\) is in a bijection with pairs of sequences \(\{a_i\}_{i=0}^{n-1}, \{b_i\}_{i=0}^{n-1}\)  of nonnegative integers such that 
        \begin{equation}
            \begin{cases}
                \sum_{i=0}^{n-1} a_i = p - n,\\
                \sum_{i=0}^{n-1} b_i = p'n - n,
            \end{cases}
        \end{equation}
        and there exists \(\sigma\in \mathfrak{S}_n\) such that 
        \begin{equation}
            \sigma(i) = g_i(\vec{b}) ~\operatorname{mod} ~ n.
        \end{equation}
        Moreover, any admissible weight has the form
        \begin{equation}
            \Lambda(\vec{a},\vec{b}) = \gamma_{\vec{g}(\vec{b})}\left(\sum_{i=0}^{n-1} a_i\omega_i-\omega_0\kappa(p'-1)+\rho\right)-\rho - \mathtt{const}_1 \cdot \delta 
        \end{equation}
        where \(\mathtt{const}_1\in\mathbb{Q}\) is uniquely determined by the condition \(\Lambda \in \omega_0^{\perp}\). 
    \end{theorem}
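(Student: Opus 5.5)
We obtain Theorem~\ref{th:KacAdmRefined} by combining Theorem~\ref{th:KacAdm} with the two propositions of the previous subsection, which together identify the set of $\gamma\in\widehat{W}^e$ with $\gamma\,\widehat{\Delta}_{p'}\subset\widehat{\Phi}_+$ with the set of admissible sequences $\vec g$. Then we remove the redundancy described in Remark~\ref{th:GammaEtaUnique}.

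\textbf{Step 1: dictionary.} By Theorem~\ref{th:KacAdm}, admissible weights of level $k=\frac{p}{p'}-n$ are the images of pairs $(\eta,\gamma)$ under \eqref{eq:LambdaThroughLambda0}. Writing $\eta=\sum a_i\omega_i$ with $a_i\in\mathbb{Z}_{\ge0}$, the constraint $\sum(a_i+1)=p$ becomes $\sum a_i=p-n$. By the two propositions on $\gamma_{\vec g}$, every admissible $\gamma$ equals $\gamma_{\vec g}$ for a strictly increasing $\vec g$ with $g_n-g_0=p'n$ and residues $g_0,\dots,g_{n-1}$ forming a permutation modulo $n$. Conversely, every such $\vec g$ gives an admissible $\gamma$, since $\alpha_{g_i,g_{i+1}}\in\widehat{\Phi}_+$. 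The map $\vec g\mapsto\vec b$, $b_i=g_{i+1}-g_i-1\ge0$, turns $g_n-g_0=p'n$ into $\sum b_i=p'n-n$. Strict monotonicity is exactly $b_i\ge0$, and the residue condition is the stated condition on $\sigma$.

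\textbf{Step 2: fixing $g_0=0$.} The sequence $\vec g$ is determined by $\vec b$ only up to the shift $g_i\mapsto g_i+c$. We check that $\gamma_{\vec g+c}$ differs from $\gamma_{\vec g}$ by an element fixing the relevant data, or show directly that the resulting $\Lambda$ is unchanged. The concrete route is to note that shifting by $c=n$ leaves every $\alpha_{g_i,g_{i+1}}$ unchanged, by Proposition~\ref{th:RepOfRoot}. A shift by $c\not\equiv0 \pmod n$ corresponds to composing with a diagram rotation. That rotation lies in $\widehat{W}^e$ and permutes $\widehat{\Delta}$. We must see that it is absorbed: it changes the pair $(\gamma,\eta)$ but not $\Lambda$. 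If instead it changes $\Lambda$, then the normalization $g_0=0$ is precisely a choice of representative, and we need to check that it picks exactly one representative.

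\textbf{Step 3: injectivity.} This is the main obstacle. By Remark~\ref{th:GammaEtaUnique}, two pairs give the same $\Lambda$ iff they differ by $\mathtt r_s$, i.e.\ $\gamma_2=\gamma_1\mathtt r_s$ and $\eta_2$ is obtained from $\eta_1$ by rotating the $a_i$. The relation $\gamma_1\mathtt r_s(\alpha'_i)=\alpha_{g_{i+s},g_{i+s+1}}$ means $\vec g_2$ is $\vec g_1$ with its indices cyclically shifted by $s$, so $\vec b$ rotates by $s$ as well.

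To get a bijection with pairs $(\vec a,\vec b)$ with $g_0=0$, we must show two things. First, each $\mathtt r_s$-orbit contains exactly one pair with $g_0(\vec b)=0$. Second, combined with the $n$-periodicity of Step 2, the normalizations $g_0=0$ and ``$\vec b$ up to rotation'' pick out distinct weights.

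I would attempt this by tracking $\Lambda$ explicitly, through the $\mathtt y_i$. The plan is to compute $(\Lambda+\rho,\alpha_{g_i,g_{i+1}})=a_i+1$ and to identify the integral simple roots $\alpha_{g_i,g_{i+1}}$ of $\widehat{\Phi}_\Lambda$ intrinsically as the simple roots of $\widehat{\Phi}_{\Lambda,+}$. The set $\{g_i\}\subset\mathbb{Z}$ is then recovered from $\Lambda$ as a subset of $\mathbb{Z}$ up to translation by $n\mathbb{Z}$. The normalization $g_0=0$ fixes which element is labeled $0$, and from this labeling $\vec a$ and $\vec b$ are read off uniquely. This recovery of $(\vec a,\vec b)$ from $\Lambda$ is where the argument needs care, and it is the step I would verify first.
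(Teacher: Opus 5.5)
Your route is the paper's (its proof is just a citation of Theorem~\ref{th:KacAdm} and Remark~\ref{th:GammaEtaUnique}). Step~1 is correct, and so is your computation that $\mathtt{r}_s$ acts by $(a_i,g_i)\mapsto(a_{i+s},g_{i+s})$. But you never supply the fact that makes the normalization $g_0=0$ give a bijection, and Step~2 looks for it in the wrong place.

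The translation $\vec g\mapsto\vec g+c$ is \emph{left} composition, $\gamma_{\vec g+c}=\pi^c\circ\gamma_{\vec g}$, with the rotation $\pi\colon\alpha_j\mapsto\alpha_{j+1}$. The ambiguity of Remark~\ref{th:GammaEtaUnique} is \emph{right} composition with $\mathtt{r}_s$. So for $c\notin n\mathbb{Z}$ the translation is not absorbed: it replaces $\Lambda$ by $\pi^c(\Lambda+\rho)-\rho$ modulo $\delta$. For instance, for $p'=1$ one has $\vec b=0$, $\gamma_{\vec g(\vec b)}=\mathrm{id}$ and $\Lambda=\eta$. The translate $\vec g+c$ yields $\pi^c\eta$ modulo $\delta$, a different integrable weight unless $\vec a$ is invariant under rotation by $c$. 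Only $c\in n\mathbb{Z}$ is harmless, since $\alpha_{l,r}=\alpha_{l+tn,r+tn}$. Conversely, $\gamma_{\vec g}$ determines $\vec g$ exactly up to such translations. This follows from the uniqueness part of Proposition~\ref{th:RepOfRoot} applied to the real roots $\gamma_{\vec g}(\alpha'_i)$, together with the fact that consecutive pairs $(g_i,g_{i+1})$ share an endpoint.

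Hence admissible weights correspond to pairs $(\vec a,\ \vec g \bmod n\mathbb{Z})$ modulo the $\mathbb{Z}/n$-action $(a_i,g_i)\mapsto(a_{i+s},g_{i+s})$, with $g_{i+n}=g_i+p'n$. The missing observation is that $g_0,\dots,g_{n-1}$ are pairwise incongruent modulo $n$. Therefore each orbit has exactly one $s\in\{0,\dots,n-1\}$ with $g_s\equiv 0\pmod n$. Rotating by this $s$ and then translating by $-g_s\in n\mathbb{Z}$ gives a representative with $g_0=0$. The same residue argument shows that two representatives with $g_0=0$ in one orbit coincide: $g'_i=g_{i+s}+tn$ forces $g_s\equiv g_0$, hence $s=0$ and $t=0$. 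So distinct $(\vec a,\vec b)$ give distinct weights. With $g_0=0$, $\vec b$ determines $\vec g$, and the residue condition is exactly the condition on $\sigma$.

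Your ``first thing to show'' in Step~3 is this statement, read with $g_0\equiv 0$ rather than $g_0=0$, since $g_0(\vec b)=0$ holds by definition. Your intrinsic recovery from the simple roots $\alpha_{g_i,g_{i+1}}$ of $\widehat{\Phi}_{\Lambda,+}$ would also work. But it works only through the same residue fact, which singles out the unique simple root whose left endpoint is divisible by $n$. As written, you leave exactly this step unverified.
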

    \begin{proof}
        The theorem follows from  Theorem  \ref{th:KacAdm} and Remark \ref{th:GammaEtaUnique}. 
    \end{proof}
    For an admissible weight \(\Lambda\),  we encode the sequences \(\vec{a},\vec{b}\) defined in Theorem \ref{th:KacAdmRefined} as weights
    \begin{equation}\label{eq:etaxi}
        \eta = \sum_{i=0}^{n-1} a_{i}\omega_i, \quad \xi = \sum_{i=0}^{n-1} b_{i}\omega_i.       
    \end{equation}

    \subsection{Gelfand-Tsetlin Patterns for  Admissible Modules}

    \begin{proposition}\label{pr:admissible}
    Let \(\Lambda \neq 0\) be an admissible weight. Let \(\vec{a}, \vec{b}\) be as defined in Theorem~\ref{th:KacAdmRefined}, and let \(g_s = g_s(\vec{b})\). Let \(\underline{d} \in \widehat{\mathtt{GT}}\). Then \(\underline{d} \in \widehat{\mathtt{GT}}_{\mathrm{perm}}(\Lambda)\) if and only if
    \begin{equation}\label{eq:DashedIneqForPi}
        d_{i,g_s}\,{\color{blue}\xleftarrow{a_s}}\,d_{i+1,g_{s+1}},
        \quad \text{for } s \in \{0,\dots,n-1\}.
    \end{equation}
\end{proposition}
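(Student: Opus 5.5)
The plan is to show that the infinite system of inequalities \eqref{eq:DashedIneq}, one for each pair $\Lambda,\beta$ with $\beta\in\widehat{\Phi}^{\mathrm{re}}_{+,\Lambda}$, is implied by the $n$ inequalities \eqref{eq:DashedIneqForPi} attached to the simple roots $\gamma\alpha'_s=\alpha_{g_s,g_{s+1}}$ of $\widehat{\Phi}_\Lambda$. The converse direction is immediate once we check that each $\alpha_{g_s,g_{s+1}}$ lies in $\widehat{\Phi}^{\mathrm{re}}_{+,\Lambda}$ and that $\mathtt{y}_{g_s}-\mathtt{y}_{g_{s+1}}-1=a_s$. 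For the first of these we use Proposition~\ref{th:WLambdaThroughW} and the corollary after it. For the second we compute $(\alpha_{g_s,g_{s+1}},\Lambda+\rho)=(\gamma\alpha'_s,\gamma(\eta-\omega_0(p'-1)\kappa+\rho))=(\alpha'_s,\eta+\rho-\omega_0(p'-1)\kappa)$. For $s\neq 0$ this equals $a_s+1$ directly. For $s=0$ the $\delta$-component $p'$ of $\alpha'_0$ pairs with $-\omega_0(p'-1)\kappa$ and with the level of $\eta+\rho$. This gives $a_0+1+p'\,p-p'(p'-1)\cdot\frac{p}{p'}\cdot\frac{1}{?}$, so I will carefully recheck the normalization of $A_{p'}$ on $\omega_0$; the $1/p'$ factor should make it collapse to $a_0+1$. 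Thus \eqref{eq:DashedIneqForPi} is exactly the permissibility condition for the pairs $\Lambda,\alpha_{g_s,g_{s+1}}$ and their periodic shifts.

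For the hard direction, every $\beta\in\widehat{\Phi}^{\mathrm{re}}_{+,\Lambda}$ is a nonnegative integer combination of the simple roots $\alpha_{g_s,g_{s+1}}$ of the subsystem $\widehat{\Phi}_\Lambda\cong A_{p',\gamma}\widehat{\Phi}$. Because $\gamma=\gamma_{\vec g}$ sends $\alpha'_s$ to the consecutive segment $\alpha_{g_s,g_{s+1}}$, the positive roots of the subsystem are exactly the segments $\alpha_{g_s,g_t}$ with $s<t$ and $t-s\not\equiv 0 \bmod n$. By Proposition~\ref{th:RepOfRoot} and uniqueness of the segment representation, the endpoints of $\beta$ are then $l=g_s$ and $r=g_t$. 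Moreover
\[
\mathtt{y}_{g_s}-\mathtt{y}_{g_t}-1=\sum_{u=s}^{t-1}(a_u+1)-1 .
\]
We then prove the inequality for $(g_s,g_t)$ by induction on $t-s$, chaining the simple inequalities. From $d_{i,g_s}\le d_{i+1,g_{s+1}}+a_s$ and $d_{i+1,g_{s+1}}\le d_{i+2,g_{s+2}}+a_{s+1}$, and then descending columns by $d_{i+2,g_{s+2}}\le d_{i+1,g_{s+2}}$ (the rows decrease down each column, by condition~1 of $\widehat{\mathtt{GT}}$), we obtain
\[
d_{i,g_s}\le d_{i+1,g_{s+2}}+a_s+a_{s+1}\le d_{i+1,g_{s+2}}+(a_s+1)+(a_{s+1}+1)-1 .
\]
The slack of $+1$ per extra step is what makes the chaining work. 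The range condition $i\ge r-1$ needs tracking: when the chain uses index $i+1\ge g_{s+1}-1$, this follows from $i\ge g_t-1\ge g_{s+1}-1$.

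The case $t-s\equiv 0 \bmod n$ gives imaginary roots $m\delta$, which are automatic by the remark following \eqref{eq:DashedIneq}. Chains that wrap around periods are handled by the periodicity $d_{i+n,j+n}=d_{ij}$ together with $g_{i+n}=g_i+p'n$. The main obstacle I expect is the bookkeeping: verifying that the positive roots of $\widehat{\Phi}_\Lambda$ are exactly the segments $\alpha_{g_s,g_t}$ (so that no other endpoint pairs $(l,r)$ occur), and checking the admissibility range of $i$ in each chained inequality. If the row range fails at some step, I would instead chain in the other order, first applying the column descent at the top.
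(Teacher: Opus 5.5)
Your proposal is correct and follows the paper's proof. The paper likewise obtains $a_s=\mathtt{y}_{g_s}-\mathtt{y}_{g_{s+1}}-1$ from $(\gamma\alpha'_s,\Lambda+\rho)=(\alpha'_s,\eta-\omega_0(p'-1)\kappa+\rho)$. It then writes every $\beta\in\widehat{\Phi}^+_{\Lambda}$ as $\alpha_{g_l,g_r}$; there it cites Kac--Wakimoto's $\widehat{\Phi}^+_\Lambda=\widehat{\Phi}_+\cap\operatorname{Span}_{\mathbb{Z}}\widehat{\Delta}_{p',\gamma}$, where you instead use $\widehat{\Phi}_\Lambda=A_{p',\gamma}\widehat{\Phi}$. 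Finally it chains $p_{i,g_l}<p_{i+1,g_{l+1}}<\cdots<p_{i+r-l,g_r}$ and descends the column, which is your slack argument written in terms of $p_{ij}$.

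The one step you left open, $s=0$, does close, but not through the action of $A_{p'}$ on $\omega_0$, which never enters since only $\gamma$ appears in \eqref{eq:LambdaThroughLambda0}. Write $\alpha'_0=\alpha_0+(p'-1)\delta$ and use $(\alpha_0,\omega_0)=(\delta,\omega_0)=1$ and $(\delta,\eta+\rho)=p$. This gives $(a_0+1)+(p'-1)p-(p'-1)\kappa-(p'-1)^2\kappa=a_0+1$, because $p'\kappa=p$. Your expression came out off by $p$ because the finite part $\epsilon_n-\epsilon_1$ of $\alpha'_0$ pairs with $\eta+\rho$ to give $a_0+1-p$, not $a_0+1$.
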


\begin{proof}
    First let us show that \eqref{eq:DashedIneqForPi} holds for any \(\underline{d} \in \widehat{\mathtt{GT}}_{\mathrm{perm}}(\Lambda)\). Note that
    \begin{equation}
        a_s = \mathtt{y}_{g_s} - \mathtt{y}_{g_{s+1}} - 1.
    \end{equation}
    Indeed, by  \eqref{eq:Gammag} and \eqref{eq:yi}  we have
    \begin{equation}
        (\gamma_{\vec{g}}(\alpha_s'), \Lambda + \rho) =  (\alpha_{g_s,g_{s+1}}, \Lambda + \rho)=\mathtt{y}_{g_s} - \mathtt{y}_{g_{s+1}}.
    \end{equation}
    On the other hand,
    \begin{equation}
        (\gamma_{\vec{g}}(\alpha_s'), \Lambda + \rho)
        = (\alpha_s', \gamma_{\vec{g}}^{-1}(\Lambda + \rho))
        = (\alpha_s', \eta - \omega_0 (p'-1)\kappa + \rho) =  a_s +1.
    \end{equation}
     In the last equality we use that \(\alpha_s' = \alpha_s\) for \(s \in \{1,\dots,n-1\}\) and \(\alpha_0' = \alpha_0+ (p'-1)\delta\). Applying inequalities \eqref{eq:DashedIneq} to the roots \(\alpha_{g_s,g_{s+1}}\in \widehat{\Phi}_{\Lambda}^+\) we obtain \eqref{eq:DashedIneqForPi}.

    Now assume \eqref{eq:DashedIneqForPi} for \(\underline{d}\), and prove that \(\underline{d}\in\widehat{\mathtt{GT}}_{\mathrm{perm}}(\Lambda)\). By \cite[formula (1.7b)]{kac1989classification} we have
    \begin{equation}
        \widehat{\Phi}_{\Lambda}^+ = \widehat{\Phi}_+ \cap \operatorname{Span}_{\mathbb{Z}}(\widehat{\Delta}_{p',\gamma}).        
    \end{equation}
    By \eqref{eq:Deltap'gamma} and \eqref{eq:Gammag} we  have 
    \begin{equation}\label{eq:Deltap'gammagi}
        \widehat{\Delta}_{p',\gamma} = \{\alpha_{g_i,g_{i+1}}\}_{i=0}^{n-1}.
    \end{equation}
    So, for any \(\beta \in \widehat{\Phi}_{\Lambda}^+\), we can write
    \begin{equation}
        \beta = \sum_{s=l}^{r-1} \alpha_{g_s,g_{s+1}} = \alpha_{g_l,g_r},
    \end{equation}
    where \(l < r\). From \eqref{eq:DashedIneqForPi} we obtain the chain of inequalities
    \begin{equation}
        p_{i,g_l} < p_{i+1,g_{l+1}} < \cdots < p_{i+r-l,g_r}.
    \end{equation}
    Since \(r - l > 0\), it follows that
    \begin{equation}
        p_{i,g_l} < p_{i+1,g_r}.
    \end{equation}
    This is exactly inequality \eqref{eq:DashedIneq} corresponding to the root \(\beta\).
\end{proof}

    Consider the re-indexed $n$-tuple of partitions 
    \((\tilde{\lambda}^{(1)}, \dots, \tilde{\lambda}^{(n)})\) defined by
    \begin{equation}
        \tilde{\lambda}^{(a)}_j 
        = \lambda^{(g_{n-a+1})}_j 
        = d_{g_{n-a+1}+j-1,\, g_{n-a+1}},
    \end{equation}
    where the partitions $\lambda^{(a)}$ are defined by \eqref{eq:nYoungDiagrams}. Note that inequalities \eqref{eq:DashedIneqForPi} take the form
    \begin{equation}
        \tilde{\lambda}^{(i)}_j 
        \ge 
        \tilde{\lambda}^{(i+1)}_{j + b_{n-i}} - a_{n-i}.
    \end{equation}
    Set $\tilde{a}_i = a_{n-i}$ and $\tilde{b}_i = b_{n-i}$. Then
    \begin{equation}\label{eq:tauetaxi}
        \tau \eta = \sum_{i=1}^n \tilde{a}_i \omega_i,
        \qquad
        \tau \xi = \sum_{i=1}^n \tilde{b}_i \omega_i.
    \end{equation}

    We call an \(n\)-tuple of Young diagrams \(\{\tilde{\lambda}^{(i)}\}_{i=1}^{n}\) a \textit{shifted cylindric plane partition} with parameters \(\tilde{a}_i, \tilde{b}_i\), \(i \in \{1,\dots,n\}\), if
    \begin{equation}\label{eq:abconditions}
        \tilde{\lambda}^{(i)}_j 
        \ge 
        \tilde{\lambda}^{(i+1)}_{j+\tilde{b}_i} - \tilde{a}_i,
        \quad \text{for all } i \in \{1,\dots,n\} \text{ and } j \ge 1,
    \end{equation}
    where the index \(i\) is taken modulo \(n\). These conditions appeared in work \cite{feigin2011quantum}. See also \cite{burge1993restricted} for the \(n=2\) case and \cite{bershtein2014agt,alkalaev2014conformal,belavin2015agt} for the appearance of these conditions in the AGT relation.
        
   % We call an $n$-tuple of Young diagrams \(\{\tilde{\lambda}^{(i)}\}_{i=1}^n\)  a \textit{shifted cylindric plane partition} with parameters \(\tilde{a}_i, \tilde{b}_i\), \(i \in \{0,\dots,n-1\}\), if
   %  \begin{equation}\label{eq:abconditions}
   %      \tilde{\lambda}^{(i)}_j 
   %      \ge 
   %      \tilde{\lambda}^{(i+1)}_{j+\tilde{b}_i} - \tilde{a}_i \quad \text{for} \quad i\in \{0,\dots,n-1\},
   %  \end{equation}
   %  where \(i\) is considered modulo \(n\). 

%    See examples of such partitions for \(\underline{0}\in \widehat{\mathtt{GT}}_{\text{perm}}(\Lambda)\) for different admissible weights \(\Lambda\) on the Figures \ref{fig:example}, \ref{fig:example2} and \ref{fig:example3}. The rows of Young diagrams \(\tilde \lambda^{(i)}\) grow up and columns grow right. Action of \(e_i\) (correspondingly \(f_i\)) gives a linear combination of all possible shifted cylindric plane partitions without one of the boxes (correspondingly with one added box)  of \(i\)-th color (together with all periodic to it).

    \begin{remark}
            The case of  \(\tilde{b}_i = 0\) for \(i\in\{0,\dots,n-1\}\) corresponds to integrable representation. In this case the  shifted cylindric %\zh{al}   \s{google thinks that cylindric is correct}
            plane partitions are cylindric 
            %\zh{al}
            plane partitions 
            introduced in \cite{gessel1997cylindric}. %according to definition from  \cite[p.15]{tingley2008three}. {\color{teal} references?} 
            % See the example on \ref{fig:example3}, note that directions of axes are chosen differently from \cite[p.18]{tingley2008three} .
    \end{remark}

    \begin{remark}
       The case $a_i=0$ corresponds to {\rm boundary} admissible modules of Kac-Wakimoto, studied in \cite{KacWakimoto2017Boundary}. Similarly to that of integrable modules, the principal specialization of characters of admissible modules is always expressed in terms of certain theta-functions (see the next section). For boundary admissible modules, this specialization has a factorizable form, as a product of $\vartheta_{11}$. 
    \end{remark}

%\zh{Maybe it is good to say that $a_i=0$ corresponds to boundary admissible modules which among other things have factorizable characters? }
    % \begin{remark}
    %     In \cite[Chapter 5]{feigin2013representations} the case corresponding to limit \(%p,p'
    %     a_0,b_0\rightarrow\infty \) with fixed \(a_i,b_i\) for \(i\in\{1,\dots, n-1\}\) is considered. 
    % \end{remark}

    \subsection{Principal specialization}    
        The main result of this section is Theorem \ref{th:prspec} that gives a formula for principal specialization of character of admissible module. Recall that principal specialization of monomial \(\re^{\nu}\) is \(q^{-(\nu,\rho)}\). We denote it by overline, hence we have 
        \begin{equation}
            \overline{\sum_{\nu} m_{\nu} \re^{\nu}} = \sum_{\nu} m_{\nu}q^{-(\nu,\rho)}.        
        \end{equation}
        \begin{theorem}\label{th:prspec}
            For \(\Lambda\) admissible and \(\eta,\xi\) defined \eqref{eq:etaxi}, we have
        \begin{equation}\label{eq:prspec}
            \overline{\chi(\mathcal{L}_{\Lambda, u})} = \frac{q^{-(\Lambda,\rho)}}{(q)_\infty^n} \sum_{w\in \widehat{W}} (-1)^{l(w)} q^{(\xi + \rho - w(\xi+\rho),\eta + \rho)},
        \end{equation}
        where \((q)_\infty = \prod_{m=1}^\infty (1 - q^m)\).
        \qed
        \end{theorem}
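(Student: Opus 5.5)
We begin from the Kac--Wakimoto character formula \eqref{eq:characterofadmissible}:
\[
\chi(\mathcal{L}_{\Lambda,u})=\chi(\mathcal{M}_{0,u})\sum_{w\in\widehat{W}_{\Lambda}}(-1)^{l(w)}\re^{w(\Lambda+\rho)-\rho},
\]
and apply the principal specialization term by term. The prefactor is easy. By \eqref{eq:Mlambdau}, $\chi(\mathcal{M}_{0,u})$ is $\prod_m(1-\re^{-m\delta})^{-n}\prod_{\alpha\in\widehat{\Phi}^{\rm re}_+}(1-\re^{-\alpha})^{-1}$. Under $\re^{-\alpha}\mapsto q^{(\alpha,\rho)}$, each positive root $\alpha_{l,r}$ specializes to $q^{r-l}$ (its height), and $\delta$ specializes to $q^n$. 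For a given height $h$, the number of positive roots, counted with multiplicity, is $n$ if $n\nmid h$ and $n-1$ if $n\mid h$. Adding the extra Heisenberg factor $(1-q^{nm})^{-1}$ restores multiplicity $n$ in every height. Hence $\overline{\chi(\mathcal{M}_{0,u})}=(q)_\infty^{-n}$.

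The main work is the alternating sum. By Proposition \ref{th:WLambdaThroughW}, every $w\in\widehat{W}_\Lambda$ can be written uniquely as $w=A_{p',\gamma}\,w'\,A_{p',\gamma}^{-1}$ with $w'\in\widehat{W}$, and $l(w)$ has the same parity as $l(w')$ since the sign is multiplicative on reflections. Set $\Lambda+\rho=\gamma(\eta+\rho-\omega_0(p'-1)\kappa)+c\delta$, using \eqref{eq:LambdaThroughLambda0}. Then
\[
w(\Lambda+\rho)-(\Lambda+\rho)=\gamma A_{p'}\bigl(w'(\nu)-\nu\bigr),\qquad \nu=A_{p'}^{-1}(\eta+\rho-\omega_0(p'-1)\kappa).
\]
Here $\nu$ has the form $\tfrac{\kappa}{p'}$-level data, with finite part $\eta+\rho$. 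The $\delta$ terms are fixed by $\widehat{W}$, so they cancel.

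The key step, and the expected obstacle, is computing the pairing
\[
\bigl(\gamma A_{p'}(w'\nu-\nu),\rho\bigr)=\bigl(w'\nu-\nu,\;A_{p'}^{*}\gamma^{-1}\rho\bigr).
\]
Since $\gamma=\gamma_{\vec g}$ sends $\alpha'_i$ to $\alpha_{g_i,g_{i+1}}$, of height $g_{i+1}-g_i=b_i+1$, we get $(\alpha_i',\gamma^{-1}\rho)=b_i+1$. Pulling back through $A_{p'}$ then gives $(\alpha_i,A_{p'}^*\gamma^{-1}\rho)=b_i+1$, so $A_{p'}^*\gamma^{-1}\rho\equiv\xi+\rho$ modulo $\mathbb{C}\delta$. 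The level components must be tracked carefully: the $\omega_0$-coefficient of $\nu$ is $p$ (namely $\sum(a_i+1)$) and the level of $\xi+\rho$ is $p'n$. Because $w'\nu-\nu$ lies in $\widehat{Q}$, any $\delta$-ambiguity pairs to zero. This yields
\[
\bigl(w'\nu-\nu,\;\xi+\rho\bigr)=\bigl(w'(\eta+\rho)-(\eta+\rho),\;\xi+\rho\bigr),
\]
where the $\eta+\rho$ are now at level $p$ and the $\xi+\rho$ at level $p'n$ (which is the content of \eqref{eq:alpha'}).

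Finally, we use invariance of the form, $(w'x,y)=(x,w'^{-1}y)$, and reindex $w'\mapsto w'^{-1}$, which preserves the sign. The principal specialization of $\re^{w(\Lambda+\rho)-\rho}$ becomes $q^{-(\Lambda,\rho)}\,q^{(\xi+\rho-w'(\xi+\rho),\,\eta+\rho)}$, which is exactly \eqref{eq:prspec}.

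I expect the normalization bookkeeping of the last two steps to be the only delicate point: the $\omega_0$ and $\delta$ components under $A_{p'}$ and $\gamma_{\vec g}$, and confirming the sign convention so that the exponent appears as $\xi+\rho-w(\xi+\rho)$ rather than its negative. I would check these using the explicit $V$ and $\sigma_{\vec g}$ from the existence proof of $\gamma_{\vec g}$. As a cross-check, the integrable case $p'=1$, $\xi=0$ should reproduce the Lepowsky principal specialization formula.
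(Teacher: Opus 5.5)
Your proposal is correct and follows the paper's proof essentially step for step. You compute the prefactor $\overline{\chi(\mathcal{M}_{0,u})}=(q)_\infty^{-n}$ by height counting, conjugate $\widehat{W}_\Lambda=A_{p',\gamma}\widehat{W}A_{p',\gamma}^{-1}$ using that $A_{p',\gamma}$ is an isometry, identify $A_{p',\gamma}^{-1}(\Lambda+\rho)$ and $A_{p',\gamma}^{-1}\rho$ with $\eta+\rho$ and $\xi+\rho$ modulo $\mathbb{C}\delta$ (the content of the paper's Lemma~\ref{th:AinvLambda}), and reindex $w\mapsto w^{-1}$. One small slip: $\nu$ is not at level $\kappa/p'$ but equals $\eta+\rho$ exactly, at level $p=p'\kappa$, as you yourself state a line later.
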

         %Recall that \(\mathcal{L}_{\Lambda, u}\) is defined by formula \eqref{eq:Llambdau}.
          %{\color{teal} Remark that if we replace \(\eta,\xi\) by \(\eta' =\tau(\eta)=\eta\) and \(\xi' =\tau(\xi) \) in the theorem the statement still will be true. The advantage of our choice of \(\eta,\xi\) will be clear in comparison with language of plane partitions in next chapter.}

        \begin{remark}
                Note that   if \((n,p) = 1\), then the right-hand side of \eqref{eq:prspec} coincides with the character of \(\mathcal{W}(\widehat{\mathfrak{gl}}_n)\)-module from the minimal \((p'n,p)\) theory (see \cite[Formula (4.7)]{feigin2011quantum},\cite[Theorem 3.2]{frenkel1992characters}, \cite{arakawa2004quantized}). 
        \end{remark}
        To prove the theorem we need a technical lemma.
        \begin{lemma}\label{th:AinvLambda}
            We have
            \begin{align}
                A_{p',\gamma}^{-1}(\Lambda + \rho) &= \eta + \rho-\frac{\mathtt{const}_1}{p'}\delta, \label{eq:AinvLambda1}\\ 
                A_{p',\gamma}^{-1}(\rho) &= \xi + \rho + \frac{\mathtt{const}_2}{p'} \delta, \label{eq:AinvLambda2}
            \end{align}  
            where \(\mathtt{const}_1\) is from \eqref{eq:LambdaThroughLambda0} and  \(\mathtt{const}_2\in\mathbb{Q}\).
        \end{lemma}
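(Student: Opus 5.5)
The plan is to use the fact that \(A_{p'}\) is an isometry of \(\widetilde{\mathfrak h}^*\), and then compare both sides of each identity through their pairings with the simple roots \(\alpha_0,\dots,\alpha_{n-1}\).

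\textbf{Step 1 (isometry).} \(A_{p'}\) acts by \(\tfrac1{p'}\) on \(\omega_0\), by the identity on \(\mathfrak h^*\), and by \(p'\) on \(\delta\). The only nonzero pairings involving \(\omega_0\) and \(\delta\) are \((\omega_0,\delta)=1\), and these are preserved. Hence \(A_{p'}\) preserves \((\ ,\ )\). Since \(\gamma\in\widehat W^e\) is also an isometry, so is \(A_{p',\gamma}=\gamma A_{p'}\). This gives, for any \(x\in\widetilde{\mathfrak h}^*\),
\begin{equation*}
(A_{p',\gamma}^{-1}x,\alpha_i)=(x,\gamma A_{p'}\alpha_i)=(x,\alpha_{g_i,g_{i+1}}),
\end{equation*}
where the last equality uses \eqref{eq:Gammag}.

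\textbf{Step 2 (second identity).} Put \(x=\rho\). Then \((\rho,\alpha_{g_i,g_{i+1}})=g_{i+1}-g_i=b_i+1\), since \(\alpha_{g_i,g_{i+1}}\) is a sum of \(g_{i+1}-g_i\) simple roots. This equals \((\xi+\rho,\alpha_i)\) for all \(i\in\{0,\dots,n-1\}\).
\begin{itemize}
\item The \(\omega_0\)-components agree. Indeed, both vectors lie in \(\omega_0^\perp\): the left side because \(A_{p',\gamma}\) fixes the line \(\mathbb C\delta=(\omega_0^\perp)^\perp\) (it sends \(\delta\) to \(p'\delta\)), so it preserves \(\omega_0^\perp\).
\item The levels agree, and this is a consistency check: \((A^{-1}_{p',\gamma}\rho,\delta)=\tfrac1{p'}(\rho,\delta)\cdot p'\)-rescaled equals \(p'n=\sum(b_i+1)\).
\end{itemize}
A vector in \(\omega_0^\perp\) is determined by its pairings with the \(\alpha_i\) up to a multiple of \(\delta\). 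Therefore \(A^{-1}_{p',\gamma}(\rho)-\xi-\rho\in\mathbb Q\delta\), which is \eqref{eq:AinvLambda2} with some rational \(\mathtt{const}_2\).

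\textbf{Step 3 (first identity, computed directly so the \(\delta\)-coefficient is exact).} Rather than repeat the pairing argument, I would compute directly from \eqref{eq:LambdaThroughLambda0}. Using \(\gamma(\delta)=\delta\),
\begin{equation*}
A^{-1}_{p',\gamma}(\Lambda+\rho)=A_{p'}^{-1}\bigl(\eta+\rho-(p'-1)\kappa\,\omega_0\bigr)-\tfrac{\mathtt{const}_1}{p'}\delta .
\end{equation*}
Write \(\eta+\rho=\sum_i(a_i+1)\omega_i=\sum_{i\ge1}(a_i+1)(\omega_i-\omega_0)+p\,\omega_0\), using \(\sum(a_i+1)=p\) and \(\omega_i-\omega_0\in\mathfrak h^*\). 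The \(\omega_0\)-coefficient of the argument is then \(p-(p'-1)\tfrac{p}{p'}=\tfrac p{p'}\). Applying \(A_{p'}^{-1}\) multiplies this coefficient by \(p'\), giving \(p\), and fixes the \(\mathfrak h^*\)-part. So the argument returns to \(\eta+\rho\), and \eqref{eq:AinvLambda1} follows. As a check, Step 1 gives \((A^{-1}(\Lambda+\rho),\alpha_i)=(\Lambda+\rho,\alpha_{g_i,g_{i+1}})=a_i+1\), matching the computation in Proposition \ref{pr:admissible}.

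\textbf{Expected obstacle.} The only delicate point is bookkeeping of the \(\delta\)- and \(\omega_0\)-components, namely confirming that \(\omega_i-\omega_0\) has no \(\delta\)-part. This holds because \(\omega_i-\omega_0\) is orthogonal to \(\omega_0\) and to \(\delta\), so it lies in \(\mathfrak h^*\). It matters because a stray \(\delta\)-part would shift the constant in \eqref{eq:AinvLambda1} away from \(-\mathtt{const}_1/p'\). For \eqref{eq:AinvLambda2} the constant is left unspecified, so no such care is needed there.
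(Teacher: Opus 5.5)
Your proof is correct in substance and is essentially the paper's. Step 3 is the paper's computation of $A_{p',\gamma}^{-1}(\Lambda+\rho)$. Step 2 is the paper's comparison of pairings with simple roots, organized a bit more efficiently: pairing $A_{p',\gamma}^{-1}\rho$ (rather than $\gamma^{-1}\rho$) with $\alpha_i$ gives $(\rho,\gamma\alpha_i')=b_i+1$ uniformly in $i$. This lets you skip the paper's separate $\alpha_0$ computation (with its $-n(p'-1)$ correction) and the explicit application of $A_{p'}^{-1}$ afterwards.

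One justification in Step 2 is wrong, however, and should be removed.
\begin{itemize}
\item Your first bullet rests on $(\omega_0^\perp)^\perp=\mathbb{C}\delta$. In fact $(\omega_0^\perp)^\perp=\mathbb{C}\omega_0$. An isometry preserving $\mathbb{C}\delta$ preserves the level-zero space $\delta^\perp$, not $\omega_0^\perp$.
\item The conclusion of that bullet is also false in general. Two vectors of $\omega_0^\perp=\operatorname{Span}(\omega_0,\dots,\omega_{n-1})$ with equal $\alpha_i$-pairings are equal, so your bullet would force $\mathtt{const}_2=0$. This fails, e.g., for $n=3$, $p'=2$, $\vec b=(1,1,1)$, where $A_{p',\gamma}^{-1}\rho=2\rho-\tfrac12\delta\notin\omega_0^\perp$.
\end{itemize}

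Nothing of the sort is needed. Every $v\in\widetilde{\mathfrak h}^*$ satisfies $v=\sum_{i=0}^{n-1}(v,\alpha_i)\omega_i+(v,\omega_0)\delta$. So agreement of the pairings with $\alpha_0,\dots,\alpha_{n-1}$ already gives $A_{p',\gamma}^{-1}\rho-\xi-\rho\in\mathbb{C}\delta$ on the whole space.

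The level check is then automatic, since $\delta=\sum_i\alpha_i$. As written it is garbled; the correct line is $(A_{p',\gamma}^{-1}\rho,\delta)=(\rho,A_{p',\gamma}\delta)=p'n$.

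If you want the rationality of $\mathtt{const}_2$ justified rather than asserted, apply the isometry once more. From $(A_{p',\gamma}^{-1}\rho,A_{p',\gamma}^{-1}\rho)=(\rho,\rho)$ you get $\mathtt{const}_2/p'=\bigl((\rho,\rho)-(\xi+\rho,\xi+\rho)\bigr)/(2p'n)\in\mathbb{Q}$.
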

        \begin{proof}[Proof of Lemma \ref{th:AinvLambda}]
            \begin{enumerate}
                \item  By \eqref{eq:LambdaThroughLambda0} we have
                \begin{equation}
                    A_{p',\gamma}^{-1}(\Lambda+\rho) = (A_{p'})^{-1}  \left(\eta -  (p'-1)\kappa \omega_0 + \rho\right)  -\frac{\mathtt{const}_1}{p'} \delta.
                \end{equation}
                Note that 
                \begin{equation}
                    \eta -  (p'-1)\kappa \omega_0 + \rho \in \kappa \omega_0 + \mathfrak{h}^*, 
                \end{equation}
                 then
                 \begin{multline}
                    (A_{p'})^{-1}  \left(\eta - (p'-1)\kappa \omega_0  + \rho\right)
                    = \eta-(p'-1)\kappa \omega_0 + \rho + (p'-1)\kappa \omega_0 = \eta + \rho .                 
                 \end{multline}

                \item First, let us calculate \((\gamma_{\vec{g}})^{-1}\rho\). We have
                \begin{equation}
                    ((\gamma_{\vec{g}})^{-1}\rho,\alpha_i) = (\rho,\gamma_{\vec{g}} \alpha_i)   =   g_{i+1} - g_i = b_i+1,            
                \end{equation}
                for \(i\in\{1,\dots,n-1\}\) and  
                \begin{multline}                ((\gamma_{\vec{g}})^{-1}\rho,\alpha_0 ) = (\rho,\gamma_{\vec{g}} (\alpha'_0-(p'-1)\delta))  
                    \\
                    = 
                    g_{n+1} - g_n - n(p'-1) = b_n+1- n(p'-1).                 
                \end{multline}
                So, 
                \begin{equation}
                    (\gamma_{\vec{g}})^{-1}\rho = \xi + \rho - n(p'-1)\omega_0+\mathtt{const}_2\cdot\delta,
                \end{equation}
                for some \(\mathtt{const}_2\in \mathbb{Q}\). Remark that \(\rho \in n\omega_0 + \mathfrak{h}^*\). Hence
                \begin{equation}
                    (\gamma_{\vec{g}})^{-1}\rho \in n\omega_0  + \mathtt{const}_2\cdot\delta + \mathfrak{h}^*.
                \end{equation} 
                Then we see that
                \begin{multline}
                    (A_{p'})^{-1}(\gamma_{\vec{g}})^{-1}\rho = \xi + \rho -n (p'-1)\omega_0 + \mathtt{const}_2\cdot\delta 
                    \\
                    +n (p'-1)\omega_0 - \frac{p'-1}{p'}\mathtt{const}_2\cdot\delta = \xi + \rho + \frac{\mathtt{const}_2}{p'} \delta.                
                \end{multline}    
                    %(\gamma_{\vec{g}})^{-1}\rho + n(p'-1)\omega_0 - \frac{p'-1}{p'}\mathtt{const}_2\cdot\delta = \tau(\xi + \rho) + \frac{\mathtt{const}_2}{p'} \delta.    
            \end{enumerate}   
        \end{proof}               
        %What happens if \((p',p)=1\) but \((n,p)\neq 1\)? In \cite{frenkel1992characters} they do not cover such representations.     
        \begin{proof}[Proof of Theorem \ref{th:prspec}]
        We will compute principal specialization of \eqref{eq:characterofadmissible} in \(2\) steps.
        
        \textbf{Step 1.}
        Let us see that
        \begin{equation}
            \overline{\chi (\mathcal{M}_{0,u})} = \frac{1}{(q)_{\infty}^n}.
        \end{equation}
        Indeed,
        \begin{equation}
            \chi (\mathcal{M}_{0,u}) = \prod_{m>0}(1 - \re^{-m\delta})^{-n}  \prod_{1\le i<j\le n}\prod_{m>0}(1 - \re^{-m\delta+\epsilon_i-\epsilon_j})^{-1}(1 - \re^{(1-m)\delta-\epsilon_i+\epsilon_j})^{-1}.
        \end{equation}
        
        Since \((\delta,\rho) = n\) and \( (\epsilon_i - \epsilon_j,\rho) = j - i\), we obtain
        \begin{multline}
            \overline{\chi(\mathcal{M}_{0,u})} = \prod_{m>0}(1 - q^{mn})^{-n}  \prod_{1\le i<j\le n}\prod_{m>0}(1 - q^{mn + i - j})^{-1}(1 - q^{(m-1)n - i + j})^{-1} 
            \\
            = \prod_{s=0}^{n-1}\prod_{m>0}(1 - q^{mn+s})^{-n} = \frac{1}{(q)_{\infty}^n}.        
        \end{multline}
    
        \textbf{Step 2.}
        Consider
            \begin{equation}
                S = \overline{\sum_{w \in \widehat{W}_\Lambda} (-1)^{l(w)} \re^{w(\Lambda + \rho) - \rho}}.
            \end{equation}   
        By Proposition \ref{th:WLambdaThroughW} we have
        \begin{equation}
            S = \sum_{w \in \widehat{W}} (-1)^{l(w)} q^{(A_{p',\gamma} w A_{p',\gamma}^{-1}(\Lambda + \rho) - \rho,-\rho)}.
        \end{equation}
        Let us calculate 
        \begin{multline}
            (A_{p',\gamma} w A_{p',\gamma}^{-1}(\Lambda + \rho) - \rho,-\rho) 
            = -(\Lambda,\rho) + (-A_{p',\gamma} w A_{p',\gamma}^{-1}(\Lambda + \rho) + \Lambda+\rho, \rho) 
            \\
            \overset{(1)}{=}-(\Lambda,\rho) + (-w A_{p',\gamma}^{-1}(\Lambda + \rho) + A_{p',\gamma}^{-1}(\Lambda+\rho), A_{p',\gamma}^{-1}\rho) 
            \\
            \overset{(2)}{=}  -(\Lambda,\rho) + (-w (\eta + \rho)+ \eta + \rho,A_{p',\gamma}^{-1}\rho)
            \\
            \overset{(3)}{=} -(\Lambda,\rho) + (-w (\eta + \rho)+ \eta + \rho, \xi + \rho +  \frac{\mathtt{const}_2}{p'}\delta)
            \\
            \overset{(4)}{=} -(\Lambda,\rho) + (-w (\eta + \rho)+ \eta + \rho,\xi + \rho ).
        \end{multline}
        We have equality \((1)\) since \(A_{p',\gamma} \)  is isometry. Equality \((2)\) follows from  \eqref{eq:AinvLambda1} and \(\widehat{W}\)-invariance of \(\delta\).
        By \eqref{eq:AinvLambda2} we obtain equality \((3)\).
        For any \( v\in \widetilde{\mathfrak{h}}^*, w\in\widehat{W}\) we have \((-w v + v,\delta) = 0\), it implies equality \((4)\). 
        
        Let \(u = w^{-1} \),  then we have  
        \begin{equation}
            S = q^{-(\Lambda,\rho)} \sum_{u \in \widehat{W}}(-1)^{l(u)} q^{(-u^{-1}(\eta + \rho)+ \eta + \rho,\xi + \rho )} = q^{-(\Lambda,\rho)} \sum_{u \in \widehat{W}}(-1)^{l(u)} q^{(\eta + \rho,-u(\xi + \rho)+\xi + \rho )}.       
        \end{equation}

        \end{proof}

    In the principal specialization, the generating function of \(\widehat{\mathtt{GT}}_{\text{perm}}(\Lambda)\) has the form
    \begin{equation}\label{eq:GenFunGTlambdaPrSpec}
        P_{\Lambda}(q) = q^{-(\Lambda,\rho)}\sum_{\underline{d}\in \widehat{\mathtt{GT}}_{\text{perm}}(\Lambda)}q^{\sum_{i=1}^n d_{i}(\underline{d})}.
    \end{equation}

    \begin{proposition}
        For admissible weight \(\Lambda\) we have
        \begin{equation}\label{eq:PrSpecVSWalgCharacter}
            P_{\Lambda}(q) = \frac{q^{-(\Lambda,\rho)}}{(q)_\infty^n} \sum_{w\in \widehat{W}} (-1)^{l(w)} q^{(\xi + \rho - w(\xi+\rho),\eta + \rho)}.
        \end{equation}
    \end{proposition}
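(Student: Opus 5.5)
The plan is to combine Theorem~\ref{th:char} with Theorem~\ref{th:prspec}. Both sides of \eqref{eq:PrSpecVSWalgCharacter} are principal specializations of the same formal character. Concretely, $P_\Lambda(q)$ in \eqref{eq:GenFunGTlambdaPrSpec} should be exactly $\overline{P(\Lambda)}$, where $P(\Lambda)$ is the generating function \eqref{eq:GenFunGTlambda}. The right-hand side of \eqref{eq:PrSpecVSWalgCharacter} is, by Theorem~\ref{th:prspec}, $\overline{\chi(\mathcal{L}_{\Lambda,u})}$.

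So the proof reduces to three checks.

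\textbf{First}, the specialization of each monomial. We have $\overline{\re^{\Lambda-\sum_i d_i\alpha_i}} = q^{-(\Lambda,\rho)+\sum_i d_i(\alpha_i,\rho)}$. Since $(\alpha_i,\rho)=1$ for $i=1,\dots,n$ (the index $n$ being read as $0$, with $(\alpha_0,\rho)=1$), this equals $q^{-(\Lambda,\rho)}q^{\sum_{i=1}^n d_i(\underline d)}$. This matches the definition \eqref{eq:GenFunGTlambdaPrSpec} term by term.

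\textbf{Second}, convergence. One must make sure the specialization of the infinite sum is legitimate. For each $N$ only finitely many permitted patterns have $\sum_{i=1}^n d_i = N$: by periodicity and the finiteness condition, $\sum_{i=1}^n d_i(\underline d)$ counts all entries of the $n$-tuple of Young diagrams \eqref{eq:nYoungDiagrams}. Hence $\overline{P(\Lambda)}$ is a well-defined power series, and specialization commutes with the equality of formal characters.

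\textbf{Third}, the hypotheses of Theorem~\ref{th:char}. An admissible weight $\Lambda$ is dominant. Moreover $\kappa=(\Lambda+\rho,\delta)=p/p'>0$, so the positivity assumption holds. The remaining condition is $\kappa\neq n$, i.e.\ $p/p'\neq n$. Given $\gcd(p,p')=1$ and $p\ge n$, this fails only when $p'=1$ and $p=n$. In that case $\sum a_i=0$ and $\sum b_i=0$, so the level is $k=0$ and $\Lambda=0$ up to $\delta$-shift. This is excluded in the spirit of Proposition~\ref{pr:admissible}, or handled directly: there the permitted set reduces to patterns forced to vanish by Lemma~\ref{th:connectedgraph}'s argument, while the right side is the trivial character over $(q)_\infty^n$ up to the Heisenberg factor.

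With these checks, Theorem~\ref{th:char} gives $P(\Lambda)=\chi(\mathcal{L}_{\Lambda,u})$. Specializing both sides, and applying Theorem~\ref{th:prspec} to the right, yields \eqref{eq:PrSpecVSWalgCharacter}.

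The main obstacle I anticipate is the degenerate case $\kappa=n$ and the precise normalization of characters. It must be confirmed that $P(\Lambda)$ and $\chi(\mathcal{L}_{\Lambda,u})$ agree including the Heisenberg factor $\prod(1-\re^{-m\delta})^{-1}$ and the $\re^{\Lambda}$ prefactor. If the $\kappa=n$ case resists, I would state the proposition for $\Lambda\neq0$ (consistent with Proposition~\ref{pr:admissible}) and verify the trivial case by direct computation.
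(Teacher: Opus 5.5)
For $\Lambda\neq 0$ your argument is the paper's. The paper's proof combines Theorem~\ref{th:prspec} with Theorem~\ref{th:PglhatIrr}, which underlies Theorem~\ref{th:char}. Your monomial-by-monomial specialization and the finiteness check are just the implicit details of that step.

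The gap is the degenerate case $\kappa=n$.

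\emph{It cannot be excluded.} As you observed, $\kappa=n$ happens exactly when $p'=1$, $p=n$, $\eta=\xi=0$, i.e.\ $\Lambda=0$. This weight is admissible, so it belongs to the statement; the paper treats it separately as the trivial case.

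\emph{Your direct treatment of it is wrong.} The permitted set for $\Lambda=0$ is not $\{\underline 0\}$. Step~3 of Lemma~\ref{th:connectedgraph} only shows that a nonzero permitted pattern with no removable box on its last diagonal forces $\Lambda=0$. It does not show that $\Lambda=0$ forces $\underline d=\underline 0$. Concretely:
\begin{itemize}
\item For $\Lambda=0$ we have $\mathtt y_l-\mathtt y_r=r-l$, so the condition for $\alpha_{j,j+1}$ reads $d_{i,j}\le d_{i+1,j+1}$.
\item Chaining this $n$ times and using periodicity forces equality, so $d_{i,j}=c_{i-j}$ for a partition $c$.
\item Conversely, every such pattern is permitted: the remaining conditions follow from the monotonicity of $c$.
\end{itemize}
Hence $P_0(q)=\sum_c q^{n|c|}=\prod_{m\ge1}(1-q^{mn})^{-1}$, not $1$. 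For example, $d_{i,i}=1$ with all other entries $0$ is permitted and has no removable box. These patterns account exactly for the Heisenberg Fock factor, and $\Gamma(0)$ is disconnected, which is why Lemma~\ref{th:connectedgraph} excludes $\Lambda=0$. With your description the left side would be $1$, which does not match the right side.

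To close the case, evaluate the right-hand side at $\eta=\xi=0$.
\begin{itemize}
\item Use the denominator identity $\sum_{w\in\widehat W}(-1)^{l(w)}\re^{w\rho-\rho}=\prod_{\alpha\in\widehat\Phi_+}(1-\re^{-\alpha})^{\operatorname{mult}\alpha}$.
\item Combine it with Step~1 of the proof of Theorem~\ref{th:prspec}, i.e.\ $\overline{\chi(\mathcal M_{0,u})}=(q)_\infty^{-n}$. This gives $\sum_{w}(-1)^{l(w)}q^{(\rho-w\rho,\rho)}=(q)_\infty^n\prod_{m\ge1}(1-q^{mn})^{-1}$.
\item The right-hand side is therefore also $\prod_{m\ge1}(1-q^{mn})^{-1}$.
\end{itemize}
With this correction your proof is complete.
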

    \begin{proof}
        The statement is trivial for \(\Lambda=0\). The case \(\Lambda\neq 0\) follows from Theorems~\ref{th:prspec} and~\ref{th:PglhatIrr}.
    \end{proof}
    It is clear that     \begin{equation}\label{eq:PrSpecVSWalgCharacterTilde}
        P_{\Lambda}(q) = \frac{q^{-(\Lambda,\rho)}}{(q)_\infty^n} \sum_{w\in  \widehat{W}} (-1)^{l(w)} q^{(\tau\xi + \rho - w(\tau\xi+\rho),\tau\eta + \rho)}
    \end{equation}
    and that \(P_{\Lambda}(q)\) is a generating function of \(n\)-tuples of Young diagrams \(\tilde\lambda^{(i)}\) with conditions \eqref{eq:abconditions}. Thus, it gives another proof of \eqref{eq:PrSpecVSWalgCharacterTilde} originally proven  in \cite[Section 4]{feigin2011quantum} by recursion.

       %\subsection{Geometry of Laumon spaces? Compact components? Maybe here scalar product}

\section{\(q\)-deformed case}\label{sec:qcase}
In this section, we generalize the main results of the paper to the q-deformed setting. We fix \(q\in\mathbb{C}^{\times}\) to be not a root of unity.

    Let \(U_q\widehat{\mathfrak{sl}}_n\) be an associative algebra generated by \(e_i,f_i,q^{\pm h_i}\) for \(i\in\{0,\dots,n-1\}\) with relations 
    \begin{align}
         q^{h_i}q^{h_j} &=q^{h_j}q^{h_i},\\
        [e_i,f_j] &= \delta_{i,j} \frac{q^{h_i}-q^{-h_i}}{q-q^{-1}},\\
        q^{h_i} e_j q^{-h_i} &= e_j q^{C_{ij}}, \quad q^{h_i} f_j q^{-h_i} = f_j q^{-C_{ij}},\\ 
        [e_i, e_j] &=[f_i, f_j]= 0, \quad \text{if}\quad  C_{ij} = 0,\\
        [e_i,[e_i,e_j]_q]_{q^{-1}} &= [f_i,[f_i,f_j]_q]_{q^{-1}} = 0, \quad \text{if}\quad  C_{ij} = -1,
    \end{align}
    where \([a,b]_q = ab - q ba\). Denote by \(\mathcal{K} = q^{h_0}q^{h_1}\dots q^{h_{n-1}}\). 
    
    Denote by \(\mathtt{Heis}_q\) an algebra generated by \(Z_r\) for \(r\in\mathbb{Z}\setminus\{0\}\), \(\mathcal{K}', q^{\pm Z_0}\) with relations
    \begin{equation}
        [q^{Z_0}, \mathcal{K}'] = [q^{Z_0}, Z_r] = [\mathcal{K}',Z_r] = 0,\quad
        [Z_r,Z_s] =-\delta_{r+s,0}\frac{1 }{r}\frac{q^{nr+1}-q^{-nr-1}}{q-q^{-1}}\frac{\mathcal{K}'-(\mathcal{K}')^{-1}}{q-q^{-1}}.        
    \end{equation}
    Let \(U_q\widehat{\mathfrak{gl}}_n = (U_q\widehat{\mathfrak{sl}}_n\otimes \mathtt{Heis}_q)/(\mathcal{K}-\mathcal{K}')\).
    Similarly to the classical case there are Verma  \( U_{q}\widehat{\mathfrak{gl}}_n\)-modules \(\mathcal{M}_{\Lambda,u}\) and their unique simple quotients \(\mathcal{L}_{\Lambda,u}\) with action on highest weight vector\(~\zeta\)
    \begin{equation}
        q^{Z_0} \zeta = q^{u} \zeta, \quad q^{h_i} \zeta = q^{(\Lambda, h_i)} \zeta, \quad  Z_{r>0}\zeta = 0, \quad e_i \zeta = 0.
    \end{equation}
%    \s{a little problem with notation. \(u\) is overloaded, should I get rid of \(u,v\) in the definition toroidal algebra? it will make it less similar to notations in \cite{tsymbaliuk2010quantum}. Or it is better to use another style, for example         \(\mathfrak{v},\mathfrak{u}\)?}
    \begin{define}
        For \(d\in\mathbb{C}^{\times}\) an associative algebra  \(U_{q,d}(\ddot{\mathfrak{gl}}_n)\) over $\mathbb{C}$ is defined as algebra generated by $x^{\pm}_{i,r}$, $q^{\pm h_i}$, $h_{i,m}$
        $(1\leq i\leq n, r \in \mathbb{Z},  m\in \mathbb{Z}\setminus \{0\})$ with the
        following defining relations: 
        \begin{equation}
        \label{1} \psi_i^{s}(z)\psi_j^{s'}(w)=\psi_j^{s'}(w)\psi_i^{s}(z),
        \end{equation}
        \begin{equation}
        \label{3} [x_i^{+}(z), x_j^{-}(w)]=\frac{\delta_{ij}}{q-q^{-1}}
        \{\delta(w/z)\psi_i^{+}(w)-\delta(z/w)\psi_i^{-}(z) \},
        \end{equation}
        % \begin{equation}
        % \label{4} (z-q^{\pm
        % 2}w)x_k^{\pm}(z)x_k^{\pm}(w)=x_k^{\pm}(w)x_k^{\pm}(z)(q^{\pm 2}z-w)
        % \end{equation}
        \begin{equation}
        \label{5} (z-q^{\pm
         C_{i,j}}w)x_i^{\pm}(z)x_j^{\pm}(w)=x_j^{\pm}(w)x_i^{\pm}(z)(q^{\pm
         C_{i,j}}z-w),\quad %k\neq l,~
         (i,j)\neq (1,n),(n,1),
        \end{equation}
        \begin{equation}
        \label{2} (z-q^{\pm C_{i,j}}w)\psi_j^s(z)x_i^{\pm}(w)=x_i^{\pm}(w)
        \psi_j^s(z)(q^{\pm C_{i,j}}z-w), \quad  (i,j)\neq (1,n),(n,1), 
        \end{equation}
        \begin{equation}
        \label{tor4} ~'\!x_n^{\pm}(z)x_1^{\pm}(w)(z-q^{\mp 1}w)=(q^{\mp
        1}z-w)x_1^{\pm}(w)~'\!x_n^{\pm}(z),
        \end{equation}
        
        \begin{equation}
        \label{tor2.1} '\psi_n^s(z)x_1^{\pm}(w)(z-q^{\mp
        1}w)=x_1^{\pm}(w) '\psi_n^s(z)(q^{\mp 1}z-w),
        \end{equation}
        
        \begin{equation}
        \label{tor2.2} \psi_1^s(z)~'\!x_n^{\pm}(w)(z-q^{\mp
        1}w)=~'\!x_n^{\pm}(w) \psi_1^s(z)(q^{\mp 1}z-w),
        \end{equation}
        \begin{multline}
        \label{6} \{x_i^{s}(z_1)x_i^{s}(z_2)x_{i\pm
        1}^{s}(w)-(q+q^{-1})x_i^{s}(z_1)x_{i\pm
        1}^{s}(w)x_i^{s}(z_2)+
        \\
        x_{i\pm
        1}^{s}(w)x_i^{s}(z_1)x_i^{s}(z_2)\}+\{z_1\leftrightarrow z_2
        \}=0.   
        \end{multline}
        Here $~'\!x_n^{\pm}(z):=x_n^{\pm}(z d^n),\
        '\psi_n^{\pm}(z)=\psi_n^{\pm}(z d^n)$. Here $s,s'\in\{+,-\}$.  Generating functions   $\delta (z), x_k^{\pm}(z), \psi_k^{\pm}(z)$
        are defined as following
        $$\delta(z):=\sum_{r=-\infty}^\infty z^r,\
        x_k^{\pm}(z):=\sum_{r=-\infty}^\infty x^{\pm}_{k,r}z^{-r},$$
        $$\psi_k^{\pm}(z):=q^{\pm h_k} \exp \left(\pm(q-q^{-1})\sum_{m=1}^\infty
        h_{k,\pm m}z^{\mp m}\right).$$
    \end{define}
    %\zh{Is  (5.10) just a special case of (5.12) when $k=l$?}
    \begin{remark}  
        %For $v = q$ and $u=d^{\frac{n}{2}}q^{-\frac{n}{2}}$  d = q^{-1-\frac{2k}{n}}, 
        There is an isomorphism of algebras 
        \begin{equation}
            \Phi:U_{q,d}%_{v,u}
            (\ddot{\mathfrak{gl}}_n)\rightarrow \mathcal{E}_n(q,d),
        \end{equation}
where \(\mathcal{E}_n(q,d)\) is the quantum toroidal algebra in \cite{feigin2013representations} . The map \(\Phi\) is defined by 
        \begin{equation}
            \Phi(x_i^{+}(z))=E_{i}(d^{-i}z),\quad \Phi(x_i^{-}(z))=F_{i}(d^{-i}z),\quad \  \Phi(\psi_i^{\pm}(z))=K_{i}^{\pm}(d^{-i}z).
        \end{equation}    
    \end{remark}
    We denote \(\mathtt{Y}_j = q^{\mathtt{y}_j}\) and \(P_{i,j}:=\mathtt{Y}_{j}^2 q^{-2d_{ij}}= q^{2(\mathtt{y}_j-d_{ij})} = q^{-2p_{ij}}\). Recall that \(\mathbb{M}_{\Lambda,u}, \mathbb{L}_{\Lambda,u}\) are defined by formulas \eqref{eq:GLu},\eqref{eq:PLu}.

%\begin{equation}
%    \mathtt{Y}_{j+n} = \mathtt{Y}_j u
%\end{equation}

%\begin{equation}
%    \kappa_i = \frac{\mathtt{Y}_i}{\mathtt{Y}_{i+1}v},\quad \prod_{i=0}^{n-1}\kappa_i = u^{-1}q^{-n}
%\end{equation}

\begin{theorem}[{\cite[Prop.4.15]{tsymbaliuk2010quantum}}]
For generic \(\Lambda\) there is an action of  \(U_{q,q^{-1-2k/n}}(\ddot{\mathfrak{gl}}_n)\) %\mathcal{U}_{q,q^{-\kappa/n}}(\ddot{\mathfrak{sl}}_n)
on  \(\mathbb{M}_{\Lambda,u}\) defined by %formulas
    \begin{subequations}
        \begin{align}\label{eq:qanalog}
                x^-_{i,r}\xi_{\underline{d}}&=-\sum_{j\le i}\xi_{\underline{d}^{+}_{i,j}}
                \mathtt{Y}_i^{-1}%u^{-\delta_{i,n}}
                q^{d_i-d_{i-1}+i} P_{i,j}(P_{i,j}q^i)^r
                (1-q^2)^{-1}\frac{
                \prod_{s\leq i-1}(1-P_{i,j}P_{i-1,s}^{-1})}{\prod_{j\ne s\leq i}(1-P_{i,j}P_{i,s}^{-1})},\\
                x^+_{i,r}\xi_{\underline{d}}&=\sum_{j\le i}\xi_{\underline{d}^{-}_{i,j}}
                \mathtt{Y}_{i+1}^{-1}q^{d_{i+1}-d_i+1-i}(P_{i,j}q^{i+2})^r(1-q^2)^{-1}\frac{ \prod_{s\leq
                i+1}(1-P_{i+1,s}P_{i,j}^{-1})}{\prod_{j\ne
                s\leq i}(1-P_{i,s}P_{i,j}^{-1})},\\
                \psi_i^{\pm}(z)\xi_{\underline{d}} &= \xi_{\underline{d}} \frac{\mathtt{Y}_i q^{d_{i+1}-2d_{i}+d_{i-1}-1}}{\mathtt{Y}_{i+1}}\frac{\prod_{j\le i+1}(1-z^{-1}q^{i+2}P_{i+1,j})\prod_{j\le i-1}(1-z^{-1}q^iP_{i-1,j})}{\prod_{j\le i}(1-z^{-1}q^{i+2}P_{i,j})(1 -z^{-1}q^iP_{i,j})},
        \end{align}            
    \end{subequations}
    where the last expression is expanded in $z^{\mp1}$ for $ \psi_i^{\pm}$.       \qed
\end{theorem}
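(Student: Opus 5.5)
The statement is quoted from Tsymbaliuk, so the plan is to verify that the displayed formulas satisfy the defining relations of \(U_{q,d}(\ddot{\mathfrak{gl}}_n)\) with \(d=q^{-1-2k/n}\) on \(\mathbb{M}_{\Lambda,u}\) for generic \(\Lambda\). The structure mirrors the Yangian case (Theorem \ref{th:YactsonGTVerma}). First, genericity of \(\Lambda\) guarantees that no denominator \(1-P_{i,j}P_{i,s}^{-1}\) or \(1-z^{-1}q^{i+2}P_{i,j}\) vanishes identically. Indeed \(P_{i,j}/P_{i,s}=q^{2(\mathtt{y}_j-\mathtt{y}_s)-2(d_{ij}-d_{is})}\), and \(\mathtt{y}_j-\mathtt{y}_s\notin\mathbb{Z}\) for \(j\neq s\), while \(q\) is not a root of unity. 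Hence all matrix elements are finite. Periodicity \(\mathtt{Y}_{j+n}=\mathtt{Y}_jq^{-\kappa}\) and \(d_{i+n,j+n}=d_{ij}\) make the operators for \(i\) and \(i+n\) consistent, and this consistency is exactly what produces the twist \(d^n\) in \('x_n^\pm,'\psi_n^\pm\).

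Next I would check the relations in increasing difficulty. The \(\psi\psi\) relation \eqref{1} is immediate, since the \(\psi_i^\pm\) are diagonal. For \eqref{2} and \eqref{tor2.1}, \eqref{tor2.2}, I would compare the eigenvalue of \(\psi_j(z)\) on \(\xi_{\underline d}\) and on \(\xi_{\underline d^{\pm}_{ij'}}\). Changing \(d_{ij'}\) by one shifts \(P_{i,j'}\) by \(q^{\mp2}\), and the ratio of the two rational functions telescopes to \((q^{\pm C_{ij}}z-w)/(z-q^{\pm C_{ij}}w)\) evaluated at the relevant pole \(w=q^{\cdot}P_{i,j'}\). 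The quadratic relations \eqref{5} and \eqref{tor4} reduce to a check on each pair of moves \(\underline d\to\underline d^{\pm\pm}\). For the paths through the two possible intermediate patterns, the product of matrix elements must have ratio \(q^{\pm C_{ij}}\) up to the prescribed factor. This is a ratio of finitely many binomials, because \((P q^{\cdot})^r\) carries the spectral dependence as a delta function in \(w\) located at \(P_{i,j}q^{\cdot}\).

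The main obstacle is the commutator relation \eqref{3}. The diagonal part \([x_i^+(z),x_i^-(w)]\) on \(\xi_{\underline d}\) gives \(\sum_j\) of residue-type terms \(\delta(P_{i,j}q^{\cdot}/z)\cdot c_j\). These must match the expansion of \(\psi_i^+(z)-\psi_i^-(z)\), i.e. the difference of the two expansions of the rational eigenvalue, which equals a sum of delta functions at its poles \(z=q^{i+2}P_{i,j}\) and \(z=q^iP_{i,j}\). I would use partial fractions: each \(c_j\) must equal the residue of the eigenvalue at the corresponding pole. Because \(\underline d^{+}_{ij}\) and \(\underline d^-_{ij}\) shift \(P_{i,j}\) by \(q^{\mp2}\), the two families of poles arise from \(x^+x^-\) and \(x^-x^+\) respectively. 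The off-diagonal terms \(\xi_{\underline d^{+-}_{i,j,j'}}\) with \(j\ne j'\) must cancel, which follows from the two paths giving equal products (same telescoping as above). One must also handle boundary cases where \(\underline d^-_{ij}\notin\widehat{\mathtt{GT}}\). There the matrix element contains the factor \(1-P_{i+1,j}P_{i,j}^{-1}\), which vanishes when \(d_{i,j}=d_{i+1,j}\), so the formulas automatically respect the pattern inequalities.

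Finally, the Serre relations \eqref{6} I would not verify directly. Instead I would argue as in Corollary \ref{th:irrquad=irrwithserre}: the relations other than Serre give a module over the quadratic algebra, and this module is thin and irreducible by the \(q\)-analogue of Lemma \ref{th:thin} and Theorem \ref{th:PIrr}. It is therefore the irreducible quotient of a Verma module, on which Serre relations hold. Alternatively, I would degenerate \(q\to1\) to Theorem \ref{th:YactsonGTVerma} for the identification of parameters \(d=q^{-1-2k/n}\), checking this by matching the highest weight \(\psi_i^\pm\) eigenvalues on \(\xi_{\underline 0}\).
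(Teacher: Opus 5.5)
Your proposal is correct in outline, but it takes a different route from the paper. The paper gives no argument for this statement: it is imported from \cite{tsymbaliuk2010quantum}, where the operators come from correspondences in the equivariant $K$-theory of affine Laumon spaces and the displayed formulas are their matrix coefficients in the fixed-point basis.

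Your route is purely algebraic. You check the non-Serre relations directly on the Gelfand--Tsetlin basis, then obtain \eqref{6} from thinness plus irreducibility. The checks you outline do go through:
\begin{itemize}
\item For \eqref{2} with $\psi_i$, $x_i^+$, the eigenvalue ratio is $q^2\frac{1-q^{-2}w/z}{1-q^{2}w/z}=\frac{q^2z-w}{z-q^2w}$ at $w=q^{i+2}P_{i,j}$.
\item In \eqref{5} with $i=j$ and the same entry moved twice, both sides vanish because of the prefactors.
\item The $\psi_i$-eigenvalue is a ratio of polynomials in $z^{-1}$ of equal degree, so $\psi_i^+-\psi_i^-$ is a sum of $\delta$-functions at its poles. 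On $\xi_{\underline 0}$ the residue of \eqref{eq:DrinfToroidal} matches $x_i^+x_i^-$ exactly.
\item The factors $1-P_{i+1,j}P_{i,j}^{-1}$ and $1-P_{i,j}P_{i-1,j}^{-1}$ keep everything inside $\widehat{\mathtt{GT}}$.
\end{itemize}
Handling Serre through irreducibility is the same mechanism the paper uses later for Theorems \ref{th:Main} and \ref{th:qcase}. Your route buys independence from the geometry and avoids the cubic computation. The citation buys brevity and the geometric meaning of the basis.

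Two points need repair.

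\textbf{Genericity.} Your genericity argument is invalid as written. $\mathtt{y}_j-\mathtt{y}_s\notin\mathbb{Z}$ together with $q$ not a root of unity does not give $q^{2(\mathtt{y}_j-\mathtt{y}_s)}\notin q^{2\mathbb{Z}}$. Indeed $q^{2x}=q^{2m}$ as soon as $x-m\in\frac{\pi i}{\log q}\mathbb{Z}$, and for $|q|=1$ this set contains a dense set of real non-integers. Genericity has to be imposed multiplicatively: $\mathtt{Y}_j^2\mathtt{Y}_s^{-2}\notin q^{2\mathbb{Z}}$ for all $j\neq s$; for $s=j+mn$ this reads $q^{2m\kappa}\notin q^{2\mathbb{Z}}$. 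This is also exactly what you need for simple, distinct poles in the partial-fraction step and for thinness. Under it, thinness can be read off directly from the zeros and poles of the $\psi_i$-eigenvalues, without the functional-equation device of Lemma \ref{th:thin}.

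\textbf{Serre relations.} The claim ``the irreducible quotient of a Verma module satisfies the Serre relations'' is not automatic. In the Yangian case, Corollary \ref{th:irrquad=irrwithserre} rests on the PBW theorem of \cite{yang2020pbw}. That theorem guarantees that the algebra \emph{with} Serre relations has a nonzero highest weight module of the given highest weight. Since graded irreducible highest weight modules over the quadratic algebra are determined by their highest weight, the Serre relations then transfer.

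For $U_{q,d}(\ddot{\mathfrak{gl}}_n)$ you must supply the analogous input. One option is $(\operatorname{ev}^{(1)})^*\mathcal{L}_{\Lambda,u}$ from \cite{Miki1999,feigin2020evaluation}: it has highest weight \eqref{eq:DrinfToroidal} and does not depend on the statement being proved. Another is a triangular decomposition of the quantum toroidal algebra. With these two additions your argument is complete in outline.
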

\begin{remark}
    The module \(\mathbb{M}_{\Lambda,u}\) is a highest weight \(U_{q,q^{-1-2k/n}}(\ddot{\mathfrak{gl}}_n)\)-module with  highest weight vector \(\xi_{\underline{0}}\) and weights
    \begin{equation}\label{eq:DrinfToroidal}
        Q_i(z) = \frac{\mathtt{Y}_i}{\mathtt{Y}_{i+1} q}\frac{1 - z^{-1} q^{i+2}\mathtt{Y}_{i+1}^2}{1 - z^{-1} q^{i}\mathtt{Y}_{i}^2}
    \end{equation}
   % \zh{some confusion in the weight here, maybe $v\to q$ and $Y_{i+1},Y_i\to Y_{i+1}^2,Y_i^2$ in the second factor?}
    i.e.
    \begin{equation}
        \psi_i^{\pm}(z) \xi_{\underline{0}} = Q_i(z) \xi_{\underline{0}}, \quad\quad    x_i^{+}(z) \xi_{\underline{0}} = 0. 
    \end{equation}
\end{remark}
\begin{theorem}
    \begin{enumerate}
        \item({\cite[Thm.3.1]{Miki1999}}) There is a homomorphism 
    \begin{equation}
        \operatorname{ev}^{(1)}:U_{q,q^{-1-2k/n}}(\ddot{\mathfrak{gl}}_n)\rightarrow (U_{q}\widehat{\mathfrak{gl}}_n)_{\text{comp}}/(\mathcal{K} - q^{k}), 
    \end{equation}
    where \( (U_{q}\widehat{\mathfrak{gl}}_n)_{\text{comp}}\) is a properly defined completion of algebra \( U_{q}\widehat{\mathfrak{gl}}_n\). 
    \item({\cite[Thm.5.1]{feigin2020evaluation}}) The module \((\operatorname{ev}^{(1)})^*\mathcal{L}_{\Lambda,u}\) has highest weights \eqref{eq:DrinfToroidal}.        
    \end{enumerate}
        \qed
\end{theorem}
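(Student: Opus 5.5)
This theorem collects two results from the literature, so the plan is to reproduce their structure rather than find a new argument. Part 1 (Miki's evaluation map) and Part 2 (the highest weights of the pullback) are proved separately. For Part 2 we use the explicit form of the map from Part 1.

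\textbf{Part 1.} I would construct $\operatorname{ev}^{(1)}$ in the spirit of Kodera's construction in the Yangian case. First identify a horizontal copy of $U_q\widehat{\mathfrak{sl}}_n$ inside $U_{q,d}(\ddot{\mathfrak{gl}}_n)$, generated by $x^{\pm}_{i,0}$ and $q^{\pm h_i}$, and send it identically to $U_q\widehat{\mathfrak{gl}}_n$. Then send the first Drinfeld modes $x^{\pm}_{i,\pm1}$ to explicit normally ordered infinite sums in the completion. These sums are quadratic expressions in $U_q\widehat{\mathfrak{sl}}_n$ root vectors, corrected by Heisenberg terms $Z_r$. The remaining modes are produced by iterated commutators with $h_{i,\pm1}$.

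Since the target is completed, the relations have to be checked on the generating set. The quadratic relations \eqref{5} and \eqref{2} for $(i,j)\neq(1,n),(n,1)$ follow from the horizontal relations together with the $q$-commutator identities of the root vectors. The twisted relations \eqref{tor4}, \eqref{tor2.1} and \eqref{tor2.2} are where the shift parameter enters. Comparing central terms in the normally ordered products forces $d^n$ to equal a power of $q$ determined by $\mathcal{K}=q^k$, which gives $d=q^{-1-2k/n}$. The Serre relations \eqref{6} can be avoided by the argument used in Corollary~\ref{th:irrquad=irrwithserre}, at least when one only needs the action on irreducible highest weight modules. I expect the twisted relations to be the main obstacle. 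What I would try first is Miki's trick: realize the evaluation map as the composition of the trivial horizontal embedding with Miki's automorphism, which exchanges the horizontal and vertical subalgebras. The twisted relations then become ordinary relations that are checked in the vertical $U_q\widehat{\mathfrak{gl}}_n$.

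\textbf{Part 2.} Let $\zeta$ be the highest weight vector of $\mathcal{L}_{\Lambda,u}$. Each image $\operatorname{ev}^{(1)}(x^+_{i,r})$ is normally ordered and has nonzero weight in the positive direction, so it annihilates $\zeta$. For $\psi_i^{\pm}(z)$ we use relation \eqref{3}: the eigenvalue of $\psi_i^{\pm}(z)$ is determined by the eigenvalues of $[x^+_{i,r},x^-_{i,0}]$ on $\zeta$. These reduce to scalars computed from $q^{h_i}$, $q^{Z_0}$ and the level, because all annihilation parts vanish on $\zeta$. Two steps then determine $Q_i(z)$:
\begin{enumerate}
\item show that the eigenvalue is a degree-one rational function in $z^{-1}$, with one zero and one pole, and with leading term $q^{(\Lambda,h_i)}$;
\item read the zero and the pole from the $r=\pm1$ modes, expressed through $\mathtt{Y}_i$ and $\mathtt{Y}_{i+1}$ via \eqref{eq:lambdai}.
\end{enumerate}
The result should match \eqref{eq:DrinfToroidal}. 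As a consistency check, it agrees with the limit of the action on $\xi_{\underline{0}}$ in Tsymbaliuk's formulas.
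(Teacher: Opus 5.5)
\textbf{Verdict: genuine gap.} The paper gives no argument for this statement. Both parts are imported: Part~1 from Miki and Part~2 from \cite[Thm.~5.1]{feigin2020evaluation}. Read as a reconstruction, your proposal does not establish either part.

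\textbf{Part~1.} You never write down the images of $x^{\pm}_{i,\pm1}$, so none of the relation checks you describe can actually be performed. The ansatz ``quadratic expressions in root vectors corrected by $Z_r$'' has the shape of the Yangian (Guay--Kodera) evaluation map, and you give no reason such expressions exist in the $q$-setting.

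Your fallback does not type-check. Composing the embedding $U_q\widehat{\mathfrak{sl}}_n\hookrightarrow U_{q,d}(\ddot{\mathfrak{gl}}_n)$ with an automorphism of $U_{q,d}(\ddot{\mathfrak{gl}}_n)$ gives a map \emph{into} the toroidal algebra, never one out of it. Miki's automorphism can only transport an already constructed homomorphism into the completion into one that is the identity on the horizontal subalgebra. In Miki's approach that starting homomorphism is, essentially, the identity on the vertical, Drinfeld-type copy of $U_q\widehat{\mathfrak{sl}}_n$ and sends the currents of the remaining node to vertex-operator-type expressions; this is where the completion and the constraint $d^n=q^{-n-2k}$ enter. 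Constructing that homomorphism is the content of Part~1.

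Nor can the Serre relations \eqref{6} be bypassed via Corollary~\ref{th:irrquad=irrwithserre}. The claim is an algebra homomorphism, so the relations must hold in the completion. That corollary only yields them on irreducible quotients of Verma modules over the Serre-less algebra. Even the weaker module-level version would need $(\operatorname{ev}^{(1)})^*\mathcal{L}_{\Lambda,u}$ to be irreducible over the Serre-less algebra. That requires a $q$-analogue of Theorem~\ref{th:heisenberg} putting $\mathtt{Heis}_q$ into the image, since $\mathcal{L}_{\Lambda,u}$ is not irreducible over $U_q\widehat{\mathfrak{sl}}_n$ alone.

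\textbf{Part~2.} Your first step lacks its mechanism, and your second step cannot be executed. The mechanism is as follows.
\begin{enumerate}
\item Since $\alpha_i$ is simple (also for $i=n$, with the Heisenberg modes present), the $(\Lambda-\alpha_i)$-weight space of $\mathcal{L}_{\Lambda,u}$ is spanned by $f_i\zeta$. Hence $x^-_{i,r}\zeta=c_rf_i\zeta$ with $c_0=1$.
\item $[h_{i,\pm1},x^-_{i,r}]$ is a nonzero multiple of $x^-_{i,r\pm1}$, and $h_{i,\pm1}$ acts by scalars on $\zeta$ and on $f_i\zeta$. So $c_r=a^r$ for all $r$; the two directions are tied together by \eqref{3}.
\item Then \eqref{3} gives $[x^+_{i,0},x^-_{i,r}]\zeta=a^r[\lambda]_q\zeta$, where $\lambda=(\Lambda,h_i)$ and $[\lambda]_q=\frac{q^\lambda-q^{-\lambda}}{q-q^{-1}}$. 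So $\psi^{\pm}_i(z)$ acts on $\zeta$ by the expansions of $q^{\lambda}\frac{1-q^{-2\lambda}az^{-1}}{1-az^{-1}}$.
\end{enumerate}
This has the shape of \eqref{eq:DrinfToroidal}. Matching it, however, requires $a=q^{i}\mathtt{Y}_i^2$, which depends on $u$. Computing $a$ needs the eigenvalues of $\operatorname{ev}^{(1)}(h_{i,1})$ on $\zeta$ and $f_i\zeta$, which are exactly the explicit images Part~1 never supplies.

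The comparison with Tsymbaliuk's formulas cannot replace this computation. That module is built independently of $\operatorname{ev}^{(1)}$, and the paper identifies it with $(\operatorname{ev}^{(1)})^*\mathcal{L}_{\Lambda,u}$ in Theorem~\ref{th:qcase} \emph{using} Part~2. Invoking it here would be circular.
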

Note that in \cite{feigin2020evaluation} there is also another homomorphism \(\operatorname{ev}^{(3)}\).
\begin{theorem}\label{th:qcase}
    For a dominant weight \(\Lambda\)  with \((\Lambda,\delta)=k\neq 0\) formulas \eqref{eq:qanalog} define an action \(U_{q, q^{-1-2k/n}}(\ddot{\mathfrak{gl}}_n)\) on \(\mathbb{L}_{\Lambda,u}\) and \(\mathbb{L}_{\Lambda,u}\simeq (\operatorname{ev}^{(1)})^*\mathcal{L}_{\Lambda,u} \) as \(U_{q, q^{-1-2k/n}}(\ddot{\mathfrak{gl}}_n)\)-module.%\zh{Change sl to gl}  
\end{theorem}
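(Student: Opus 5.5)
The plan is to repeat the argument of Section~\ref{sec:permitted} with the Yangian replaced by $U_{q,q^{-1-2k/n}}(\ddot{\mathfrak{gl}}_n)$, keeping the combinatorics of permitted patterns unchanged. The key observation is that in \eqref{eq:qanalog} every denominator has the form $1-P_{i,s}P_{i,j}^{-1}$ or $1-z^{-1}q^{c}P_{i,j}$ with $P_{i,j}=q^{-2p_{ij}}$. Since $q$ is not a root of unity, a factor $1-P_{i,s}P_{i,j}^{-1}$ vanishes exactly when $p_{is}=p_{ij}$. Likewise, a coincidence of poles or zeros of the rational function in $z$ occurs exactly when the corresponding rational Yangian factors $(-v+c-p_{ij})$ coincide. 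Hence Lemma~\ref{th:pijnotequal} applies verbatim, and we obtain the $q$-analog of Corollary~\ref{th:WellDefMatEl}. In more detail:
\begin{enumerate}
\item Matrix elements of $x^\pm_{i,r}$ and the eigenvalues of $\psi^\pm_i(z)$ have no poles at $\Lambda=\Lambda_{\text{dom}}$ for permitted $\underline d$.
\item Matrix elements from a non-permitted $\underline d$ to a permitted $\underline d'$ vanish there. The numerator factors $1-P_{i+1,s}P_{i,j}^{-1}$ and $1-P_{i,j}P_{i-1,s}^{-1}$ are zero exactly when the corresponding Yangian factors $p_{i+1,s}-p_{ij}$ and $p_{i-1,s}-p_{ij}$ are zero.
\item The matrix elements $x^\mp_{i,0}$ between permitted neighbours are nonzero.
\end{enumerate}
Here the factors $\mathtt{Y}$ and powers of $q$ are nonvanishing units. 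The relations of $U_{q,d}(\ddot{\mathfrak{gl}}_n)$ are quadratic (or cubic Serre) relations in generating series. The projection argument therefore goes through unchanged: restrict the generic action to $\mathbb{L}_{\Lambda_{\text{dom}},u}$ via $\mathtt{P}^{\Lambda}_{\Lambda_{\text{dom}}}$ and specialize. Intermediate non-permitted states contribute zero, since their incoming matrix elements are regular and their outgoing ones into permitted states vanish. This already gives all relations, including \eqref{6}, because the argument works for any polynomial identity. Alternatively, one can verify only the non-Serre relations and recover Serre through irreducibility, as in Corollary~\ref{th:irrquad=irrwithserre}.

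Next, thinness. I would define $q$-analogs of the series $\mathbf{a}_i(v)$ through a multiplicative functional equation in the $\psi_i^+$, the analog of \eqref{eq:FunEqOnAi}, with shift $z\mapsto zq^{2(n-\kappa)}$, i.e.\ $d^n$-type shifts. I would solve it recursively using $k\neq0$, which is the reason for that hypothesis; it replaces $\kappa\neq n$. The expected eigenvalues are
\[
a_{i,\underline d}(z)=\frac{1}{1-z^{-1}P_{i+1,i+1}}\prod_{j\le i}\frac{1-z^{-1}P_{i,j}}{1-z^{-1}P_{i+1,j}}.
\]
Steps~2 and~3 of Lemma~\ref{th:thin} then recover the sets $\{P_{ij}: d_{ij}>0\}$, and hence the values $p_{ij}$. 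The recovery of $p_{ij}$ from $P_{ij}$ is fine, since by non-root-of-unity $P$ determines $p$ modulo $\pi i\mathbb{Z}/\log q$. All the $p_{ij}$ in question differ by integers from the fixed $-\mathtt{y}_j$, so the ordering argument of Step~3 works within each congruence class.

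Irreducibility then follows exactly as in Theorem~\ref{th:PIrr}, using Lemma~\ref{th:connectedgraph}, which is purely combinatorial, together with nonvanishing of $x^\mp_{i,0}$ matrix elements and thinness. Finally, $\xi_{\underline0}$ has highest weight \eqref{eq:DrinfToroidal}. Both $\mathbb{L}_{\Lambda,u}$ and $(\operatorname{ev}^{(1)})^*\mathcal{L}_{\Lambda,u}$ are irreducible highest weight modules with the same highest weight, by \cite[Thm.5.1]{feigin2020evaluation}. Uniqueness of the irreducible quotient of the Verma module, argued as in Proposition~\ref{th:IrrModYangian} via the $D$-grading by $-d_n$, gives the isomorphism. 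Irreducibility of the pullback requires that $\operatorname{ev}^{(1)}$ has image generating $U_q\widehat{\mathfrak{gl}}_n$ in the completion, the $q$-analog of Theorem~\ref{th:heisenberg}. I expect this to be the main obstacle: I would cite Miki/Feigin--Jimbo--Mukhin for it, or bypass it by showing directly that the pullback is generated by its highest weight vector using the Chevalley generators in the image. The thinness step, with its $q$-functional equation, is the main computation but should be routine.
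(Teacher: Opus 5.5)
Your outline is the route the paper intends (its proof is the single line \emph{analogous to the proof of Theorem~\ref{th:Main}}), but the transfer step that everything rests on is false as stated. Since $P_{i,s}P_{i,j}^{-1}=q^{-2(p_{is}-p_{ij})}$, the factor $1-P_{i,s}P_{i,j}^{-1}$ vanishes iff $p_{is}-p_{ij}\in\frac{\pi i}{\log q}\mathbb{Z}$. That $q$ is not a root of unity forces $p_{is}=p_{ij}$ only when $p_{is}-p_{ij}\in\mathbb{Z}$, and $p_{is}-p_{ij}=\mathtt{y}_j-\mathtt{y}_s+d_{is}-d_{ij}$ need not be an integer.

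Concretely, take $\mathtt{y}_1-\mathtt{y}_2=1+\frac{\pi i}{\log q}$ and all other parameters generic. Then $\Lambda$ is classically generic, so every pattern is permitted. Yet $P_{2,1}=P_{2,2}$ whenever $d_{21}-d_{22}=1$. Hence the $x^-_{2,r}$-matrix element from the pattern with $d_{11}=2$, $d_{21}=1$ (all other entries of a period zero) to its $\underline{d}^{+}_{21}$ has a pole.

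The same resonances break two further steps. First, your thinness argument: a coincidence $P_{il}(\underline{d}')=P_{ij}(\underline{d})$ with $l\neq j$ no longer gives $\alpha_{l,j}\in\widehat{\Phi}_\Lambda$. Step~3 of Lemma~\ref{th:thin} compares different columns, so working within congruence classes does not help. Second, the recursive solution of your $q$-difference equation needs $q^{2k}$ not to be a root of unity, which does not follow from $k\neq 0$.

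What you need is the non-resonance hypothesis $(\Lambda+\rho,\beta)\notin\mathbb{Z}+\frac{\pi i}{\log q}(\mathbb{Z}\setminus\{0\})$ for all $\beta\in\widehat{\Phi}$. Equivalently, $q^{2(\Lambda+\rho,\beta)}\in q^{2\mathbb{Z}}$ only if $(\Lambda+\rho,\beta)\in\mathbb{Z}$. The statement itself tacitly requires this, since the example above violates it. Under this hypothesis every coincidence of $P$'s, or of $z$-poles and zeros, reduces to a coincidence of $p$'s. Then Lemma~\ref{th:pijnotequal}, Corollary~\ref{th:WellDefMatEl} and Lemma~\ref{th:thin} carry over verbatim, and $k\neq 0$ does suffice for the $q$-difference equation.

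Second, your claim that the projection argument \emph{works for any polynomial identity}, and so yields \eqref{6} directly, is wrong. Apply a cubic word to a permitted $\underline{d}$ and project back. The terms whose two intermediate patterns are both non-permitted contain a matrix element between two non-permitted patterns, and Corollary~\ref{th:WellDefMatEl} says nothing about those. Such elements can have poles even classically: for $\mathtt{y}_1-\mathtt{y}_2=1$, the $x^-_2$-element from the non-permitted pattern above to its $\underline{d}^+_{21}$ contains the factor $(\mathtt{y}_1-\mathtt{y}_2-1)^{-1}$. So a product regular$\,\cdot\,$pole$\,\cdot\,$zero need not vanish.

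The argument only controls words of length two. That is exactly why the paper first obtains a module over the algebra without Serre relations and then recovers Serre from irreducibility via Corollary~\ref{th:irrquad=irrwithserre}. Your \emph{alternatively} is the argument you must use. Modulo these two points, the rest of your outline is what the paper's one-line proof refers to: connectivity of $\Gamma(\Lambda_{\text{dom}})$, nonvanishing of $x^\mp_{i,0}$, comparison of highest weights, and the $q$-analogue of Theorem~\ref{th:heisenberg} for irreducibility of the pullback.
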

\begin{proof}
    The proof is analogous to the proof of Theorem \ref{th:Main}.
\end{proof}

\begin{remark}
    There are isomorphisms \(\iota_{\text{tor}}:\mathcal{E}_n(q,d)\rightarrow \mathcal{E}_n(q,d^{-1})\)  mapping \(E_i(z), F_i(z), K^{\pm}_i(z)\) to \(E_{n-i}(z), F_{n-i}(z), K^{\pm}_{n-i}(z)\) correspondingly and  \(\iota_{\text{aff}}:U_{q}\widehat{\mathfrak{gl}}_n\rightarrow U_{q}\widehat{\mathfrak{gl}}_n\) mapping \(e_i, f_i, q^{\pm h_i}\) to \(e_{n-i}, f_{n-i}, q^{\pm h_{n-i}}\) and preserving  \(\mathtt{Heis}_q\) . Then 
    \begin{equation}
        \operatorname{ev}^{(1)} \circ\iota_{\text{tor}}  = \iota_{\text{aff}}\circ \operatorname{ev}^{(3)}.
    \end{equation}
    Note that on the level of roots \(\iota_{\text{aff}}\) corresponds to  \(\tau\) defined by formula \eqref{eq:tau}. So, renumerating generators in formulas \eqref{eq:qanalog} or \eqref{eq:ActionInGTBasisYangian}, one can get rid of \(\tau\) in formulas \eqref{eq:tauetaxi}  and corresponding tilde over \(a_i,b_i\).
\end{remark}
    % \begin{remark}
    %     In notations of \cite{feigin2013representations}  admissible module over \(\mathcal{U}_{\mathfrak{q}}(\widehat{\mathfrak{gl}}_n)\) could be considered as a subquotient of MacMahon module \( \mathcal{M}^{(n-\nu^t_1)}_{\emptyset,\mu^t,\nu^t}(u,K) \) where \(K^2 =  \mathfrak{q}_3^{n}, ~~ \mathfrak{q}_2^{p}\mathfrak{q}_1^{p'n} = 1\) over toroidal algebra  \(\mathcal{U}_{q,d}(\ddot{\mathfrak{sl}}_n)\). Here we use notation \(\mathfrak{q}_1 = d/q,\mathfrak{q}_2=q^2,\mathfrak{q}_3=1/dq\)
    %     and \(\mu,\nu\) are Young diagrams defined by formulas 
    %     \begin{align}
    %         \mu_i -\mu_{i+1} &= \tilde 
    %         a_i  \quad \text{for}\quad i \in \{1,\dots n-1\},\\
    %         \mu_i&=0,\quad  \text{for}\quad i\ge n-1,\\
    %         \nu_i -\nu_{i+1} &= \tilde 
    %         b_i  \quad \text{for}\quad i \in \{1,\dots n-1\},\\
    %         \nu_i&=0,\quad  \text{for}\quad i\ge n-1.         
    %     \end{align}
    %     Note that existence of \(\sigma\in \mathfrak{S}_n\) such that 
    %     \begin{equation}
    %         \sigma(i) = g_i(\vec{b}) ~\operatorname{mod} ~ n
    %     \end{equation}
    %     is equivalent to \(\nu\) be colorless partition.
    %     Up to  \(\iota_{\text{tor}}\) and limit \(a_0,b_0\rightarrow \infty\) these modules coincide with modules in \cite{feigin2013representations}[Sec. 5]. 
    % \end{remark}

    \begin{remark}
        One of particular cases of Theorem \ref{th:qcase} is considered \cite{feigin2013representations}[Sec. 5]. It is the limiting case of admissible \(U_{\mathfrak{q}}\widehat{\mathfrak{gl}}_n\)-modules with \(a_0,b_0\rightarrow \infty\) with fixed \(a_i,b_i\) for \(i\in\{1,\dots, n-1\}\). In particular, the character formula in \cite{feigin2013representations}[Prop.5.5] is a special case of Theorem \ref{th:char}.
    \end{remark}

%\section{Discussion}\label{sec:Discussion}

% \begin{enumerate}
%     \item Affine Laumon space is a fixed point set of Gieseker with respect to \(\mathbb{Z}/n\mathbb{Z}\) action (see \cite{finkelberg2010quantization}[Sec.2.2],
%     \item Fixed point set with respect to \(\mathbb{T}=(\mathbb{C}^\times)^{n+2}\) on Gieseker  is enumerated by  \(\widehat{\mathtt{GT}}\)
%     \item Character of torus on Gieseker (NY) + Restriction on Laumon.
%     \item Atiyah-Bott, Poincare pairing
%     \item Specialization of \(\Lambda\) correspond to restriction of torus \(\mathbb{T}\) to smaller subtorus \(\mathbb{T}'\)
%     \item Under this restriction Poincare pairing becomes degenerate. We still have Atiyah-Bott theorem, but the components of invariant points becomes more complicated. There are \(3\) types of components:
%     \begin{itemize}
%         \item isolated points
%         \item compact components of dimension \(>0\)
%         \item noncompact components.
%     \end{itemize}
   
%     \end{enumerate}
\section{Geometry}\label{sec:geometry}
In this section we recall the geometric origin of the modules considered above. One has the affine Yangian action on equivariant homology of affine Laumon spaces and the fixed-point basis is labeled by
affine Gelfand--Tsetlin patterns (see \cite{feigin2011yangians}). We use it to give a sketch of an another proof of Theorem \ref{th:Main} and describe a natural inner product on \(\mathbb{L}_{\Lambda_{\text{dom},u}}\).

Let \(\mathcal{P}_{\vec{d}}\) be the affine Laumon space considered in \cite{feigin2011yangians}, along with the action of the torus
\(
\mathbb T=(\mathbb C^\times)^{n+2}
\)
on \(\mathcal{P}_{\vec{d}}\). 
The \(\mathbb T\)-fixed points in \(\mathcal{P}_{\vec{d}}\) are isolated and naturally indexed by patterns
\(
\underline d\in\widehat{\mathtt{GT}}
\)
with \(\vec d=(d_1(\underline d),\dots,d_n(\underline d))\)
\begin{equation}
    \underline d
    \longleftrightarrow
    z_{\underline d}\in \mathcal{P}_{\vec{d}}.
\end{equation}
        
The equivariant homology space
\begin{equation}
H_*^{\mathbb T}\!\left(
\bigsqcup\nolimits_{\vec d\in\mathbb Z_{\ge0}^n}
\mathcal{P}_{\vec{d}}
\right)
\end{equation} is naturally a module over the $\mathbb{T}$-equivariant cohomology ring of a point, that is, the polynomial ring in $n+2$ equivariant parameters \((x_1,\dots,x_n,\hbar,\hbar')\).
The equivariant Poincaré duality gives a $\mathbb C[x_1,\dots,x_n,\hbar,\hbar']$-module morphism
\begin{equation}
H_*^{\mathbb T}
\longrightarrow
H^*_{\mathbb T},
\end{equation}
which together with the natural pairing
\begin{equation}
H^*_{\mathbb T}\!\left(
\bigsqcup\nolimits_{\vec d}\mathcal{P}_{\vec{d}}
\right)
\otimes
H_*^{\mathbb T}\!\left(
\bigsqcup\nolimits_{\vec d}\mathcal{P}_{\vec{d}}
\right)
\longrightarrow
\mathbb C[x_1,\dots,x_n,\hbar,\hbar']
\end{equation}
defines a bilinear form
\(
\langle\cdot,\cdot\rangle
\)
on equivariant homology.

We use the Atiyah--Bott localization theorem \cite{atiyah1984moment} in the following form.

\begin{theorem}
One has
\begin{equation}
H_*^{\mathbb T}\!\left(
\bigsqcup\nolimits_{\vec d}\mathcal{P}_{\vec{d}}
\right)\otimes_{\mathbb C[x_1,\dots,x_n,\hbar,\hbar']}\mathbb C(x_1,\dots,x_n,\hbar,\hbar')
=\bigoplus_{\underline d\in\widehat{\mathtt{GT}}}
\mathbb C(x_1,\dots,x_n,\hbar,\hbar')
\,\xi_{\underline d},
\end{equation}
where \(\xi_{\underline d}\) is the class of the fixed point \(z_{\underline d}\). The basis \(\{\xi_{\underline d}\}\) is orthogonal and
\begin{equation}
\left\langle
\xi_{\underline d},
\xi_{\underline d}
\right\rangle
=
\mathtt{e}_{\mathbb{T}}(T_{z_{\underline d}}\mathcal{P}_{\vec{d}}).
\end{equation}
\end{theorem}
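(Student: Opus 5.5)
This statement is the Atiyah--Bott localization theorem specialized to the affine Laumon spaces. The plan is to reduce it to three standard facts, each checked on every finite-type piece \(\mathcal{P}_{\vec d}\) separately. Since the disjoint union is graded by \(\vec d\in\mathbb Z_{\ge0}^n\) and both homology and the pairing respect this grading, it suffices to work with a fixed \(\vec d\). In what follows, \(\mathbb F=\mathbb C(x_1,\dots,x_n,\hbar,\hbar')\) denotes the fraction field of \(H^*_{\mathbb T}(\mathrm{pt})\).

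\textbf{Step 1 (fixed points).} I would first recall from \cite{feigin2011yangians} that the \(\mathbb T\)-fixed locus \(\mathcal{P}_{\vec d}^{\mathbb T}\) is finite. Its points \(z_{\underline d}\) are in bijection with the patterns \(\underline d\in\widehat{\mathtt{GT}}\) satisfying \(d_i(\underline d)=d_i\), and for each \(\vec d\) there are only finitely many such patterns.

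\textbf{Step 2 (localization isomorphism).} \(\mathcal{P}_{\vec d}\) is a smooth quasi-projective variety, and its torus action contracts it onto a proper subvariety (the fibre over the most degenerate point). In this setting, the localization theorem \cite{atiyah1984moment} gives that pushforward along \(i:\mathcal{P}_{\vec d}^{\mathbb T}\hookrightarrow\mathcal{P}_{\vec d}\) becomes an isomorphism after tensoring with \(\mathbb F\):
\[
\bigoplus_{\underline d}\mathbb F\,[z_{\underline d}]\xrightarrow{\ \sim\ }H_*^{\mathbb T}(\mathcal{P}_{\vec d})\otimes\mathbb F .
\]
The reason is that the kernel and cokernel are supported on the complement of the fixed locus, which has no fixed points, so they are torsion over \(H^*_{\mathbb T}(\mathrm{pt})\). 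This identifies the classes \(\xi_{\underline d}=i_*[z_{\underline d}]\) as an \(\mathbb F\)-basis.

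\textbf{Step 3 (form in this basis).} The pairing \(\langle\xi_{\underline d},\xi_{\underline d'}\rangle\) is computed by sending \(\xi_{\underline d'}\) to its Poincaré dual cohomology class and then evaluating on \(\xi_{\underline d}\), which amounts to restricting that class to the point \(z_{\underline d}\). The Poincaré dual of a fixed-point class is supported at \(z_{\underline d'}\) itself, so its restriction to a different fixed point \(z_{\underline d}\) vanishes. This gives orthogonality for \(\underline d\ne\underline d'\). For \(\underline d=\underline d'\), the self-intersection formula gives
\[
i_{z}^*i_{z*}1=\mathtt e_{\mathbb T}(N_{z})=\mathtt e_{\mathbb T}(T_{z}\mathcal{P}_{\vec d}),
\]
since the normal bundle of an isolated point is the whole tangent space, which yields the stated value.

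\textbf{Main obstacle.} The delicate step is Step 2, because \(\mathcal{P}_{\vec d}\) is not compact and the localization theorem needs some finiteness or properness input. I would first justify it by the attracting-cell argument. A generic one-parameter subgroup of \(\mathbb T\) has limits \(\lim_{t\to0}\) existing on all of \(\mathcal{P}_{\vec d}\), which is inherited from the quasimap/Uhlenbeck-type description of Laumon spaces. This gives a Białynicki-Birula decomposition into affine cells, one for each fixed point. From it, \(H_*^{\mathbb T}\) is a free \(H^*_{\mathbb T}(\mathrm{pt})\)-module of rank \(\#\mathcal{P}_{\vec d}^{\mathbb T}\), so the localization map in Step 2 is injective and, after passing to \(\mathbb F\), an isomorphism. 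Nonvanishing of the Euler classes \(\mathtt e_{\mathbb T}(T_z)\) follows because the fixed points are isolated, so all tangent weights are nonzero.
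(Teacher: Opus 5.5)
Your proposal is correct and follows essentially the paper's route. The paper gives no separate argument: it simply invokes Atiyah--Bott localization for \(\mathcal{P}_{\vec d}\), whose \(\mathbb T\)-fixed points are isolated and indexed by \(\widehat{\mathtt{GT}}\), and your Steps 1--3 spell out that standard argument (the localization isomorphism, vanishing of \(i_{z}^*i_{z'*}1\) for \(z\neq z'\), and \(i_z^*i_{z*}1=\mathtt{e}_{\mathbb T}(T_z\mathcal{P}_{\vec d})\)).

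One correction to your ``main obstacle'' paragraph: it is not true that every generic cocharacter has limits on all of \(\mathcal{P}_{\vec d}\). You need one whose component along the \((\hbar,\hbar')\)-directions dominates its component along the \((x_1,\dots,x_n)\)-directions; otherwise some tangent weights at fixed points are negative and the attracting cells do not cover the space. That said, localization for a finite-type \(\mathbb T\)-variety needs no properness input in the first place.
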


Fix generic \(\Lambda,u\) and set
\begin{equation}
(x_1,\dots,x_n,\hbar,\hbar')
=
(\mathtt y_1\hbar,\dots,\mathtt y_n\hbar,\hbar,-\kappa\hbar).
\end{equation}
Then there exist polynomials
\(
\mathcal N_{\underline d}(\vec{\mathtt y},\kappa)
\)
such that
\begin{equation}
\label{eq:fixedpointpairing}
\mathtt{e}_{\mathbb{T}}(T_{z_{\underline d}}\mathcal{P}_{\vec{d}})
=
\hbar^{2\sum_{i=1}^n d_i}
\mathcal N_{\underline d}(\vec{\mathtt y},\kappa).
\end{equation}
        
Recall that every pattern \(\underline d\in\widehat{\mathtt{GT}}\) corresponds to an \(n\)-tuple of Young diagrams
\(\{\lambda^{(a)}\}_{a=1}^n\) via \eqref{eq:nYoungDiagrams}.
For a box \(s=(i,j)\), denote by \(a_\lambda(s)\) and \(l_\lambda(s)\) its arm and leg lengths in the Young diagram~\(\lambda\).

\begin{theorem}
One has
\begin{equation}\label{eq:NY}
\begin{aligned}
\mathcal N_{\underline d}(\vec{\mathtt y},\kappa)
=
\prod_{i,j=1}^n
&
\prod_{\substack{
s\in\lambda^{(i)}\\
j-i+l_{\lambda^{(j)}}(s)+1\equiv0\!\!\!\pmod n
}}
\left(
\mathtt y_i-\mathtt y_j
-\frac{j-i+l_{\lambda^{(j)}}(s)+1}{n}\kappa
-a_{\lambda^{(i)}}(s)
\right)
\\
\times\,
&
\prod_{\substack{
s\in\lambda^{(j)}\\
j-i-l_{\lambda^{(i)}}(s)\equiv0\!\!\!\pmod n
}}
\left(
\mathtt y_i-\mathtt y_j
-\frac{j-i-l_{\lambda^{(i)}}(s)}{n}\kappa
+a_{\lambda^{(j)}}(s)+1
\right).
\end{aligned}
\end{equation}
\end{theorem}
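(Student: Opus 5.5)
The plan is to start from the tangent space character at a fixed point and take its Euler class. By \cite{feigin2011yangians} (and the original Nekrasov-type computation), the affine Laumon space $\mathcal P_{\vec d}$ is the $\mathbb Z/n\mathbb Z$-fixed locus of the Gieseker moduli space of torsion free sheaves of rank $n$ on $\mathbb P^2$. Here $\mathbb Z/n\mathbb Z$ acts through a subtorus of $\mathbb T$: it rotates one coordinate of $\mathbb C^2$ and acts on the framing with weights $1,\dots,n$.

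First I would write down the $\mathbb T$-character of the tangent space at the fixed point indexed by an $n$-tuple of Young diagrams $\{\lambda^{(a)}\}$. This is the standard Nakajima--Yoshioka formula:
\[
\sum_{i,j}e^{x_i-x_j}\Bigl(\sum_{s\in\lambda^{(i)}}t_1^{-l_{\lambda^{(j)}}(s)}t_2^{a_{\lambda^{(i)}}(s)+1}+\sum_{s\in\lambda^{(j)}}t_1^{l_{\lambda^{(i)}}(s)+1}t_2^{-a_{\lambda^{(j)}}(s)}\Bigr).
\]
Second, the tangent space to $\mathcal P_{\vec d}$ is the $\mathbb Z/n\mathbb Z$-invariant part of this character. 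The Gieseker weights restrict to $\mathbb T$ by a substitution in which $t_1$ becomes an $n$-th root of the loop parameter $\hbar'$ twisted by the generator of $\mathbb Z/n\mathbb Z$, and each $e^{x_i}$ acquires the character $i \bmod n$. Under this substitution a term survives exactly when $j-i+l+1\equiv 0$ (first sum) or $j-i-l\equiv0$ (second sum) modulo $n$. These are precisely the congruence conditions in \eqref{eq:NY}.

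Third, I would take the product of the surviving weights and substitute the specialization $(x_i,\hbar,\hbar')=(\mathtt y_i\hbar,\hbar,-\kappa\hbar)$. Each weight becomes $\hbar$ times the linear form $\mathtt y_i-\mathtt y_j-\tfrac{m}{n}\kappa\mp(\text{arm})$. Counting gives $\dim\mathcal P_{\vec d}=2\sum_i d_i$, which accounts for the factor $\hbar^{2\sum d_i}$ in \eqref{eq:fixedpointpairing}. One also has to check that the identification \eqref{eq:nYoungDiagrams} of $\underline d$ with $\{\lambda^{(a)}\}$ agrees with the fixed-point labelling, so that $\sum_{j\le i}d_{ij}=d_i$ matches the number of boxes of each color.

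The main obstacle is bookkeeping of conventions: which of $t_1,t_2$ carries the orbifold action, the normalization of the $x_i$ relative to $\mathtt y_i$ (the shifts $i+\tfrac12$ and the periodicity $\mathtt y_{i+n}=\mathtt y_i-\kappa$), and the sign convention for $\hbar'$. I would fix these by testing the formula on the single-box patterns $\underline d^{+}_{ii}$ from $\underline 0$. For these, the Euler class must reproduce the norm ratio forced by the Yangian matrix elements of Theorem \ref{th:YactsonGTVerma}, namely $\langle\xi_{\underline d^+},\xi_{\underline d^+}\rangle/\langle\xi_{\underline d},\xi_{\underline d}\rangle$ proportional to the product $e\cdot f$ of matrix coefficients. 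After this normalization the general case follows from the character computation above without further work.
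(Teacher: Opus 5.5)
Your plan is essentially the paper's argument: the paper's proof just cites the Nakajima--Yoshioka tangent character at the fixed points of the Gieseker space and the realization of affine Laumon spaces as $\mathbb Z/n\mathbb Z$-fixed loci, and then takes the Euler class of the invariant part under the specialization $(x_i,\hbar,\hbar')=(\mathtt y_i,1,-\kappa)$. One bookkeeping caution, which your single-box calibration would catch: in Nakajima--Yoshioka's convention the sums as you wrote them carry the prefactor $e^{x_j-x_i}$, and your first sum, which carries $t_1^{-l}$, cannot produce the congruence $j-i+l+1\equiv0$; moreover, the coefficient $\frac{j-i+l+1}{n}\kappa$, rather than $\frac{l+1}{n}\kappa$, appears only after twisting the framing weights to the $n$-periodic parameters $\mathtt y_i+\frac{i}{n}\kappa$.
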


\begin{proof}
This follows from the computation of tangent characters for the Gieseker moduli space \(\mathfrak M_{n,d}\) in \cite{nakajima2005instanton} together with the realization of affine Laumon spaces as \(\mathbb Z/n\mathbb Z\)-fixed loci \cite[Sec.~2.2]{finkelberg2010quantization}.
\end{proof}

Since all expressions are homogeneous in \(\hbar\), we set \(\hbar=1\). Then for generic \(\Lambda\),
\begin{equation}
    \mathbb{M}_{\Lambda,u}
    \simeq
    \Bigg(H_*^{\mathbb T}\big(
    \bigsqcup\nolimits_{\vec d}\mathcal{P}_{\vec{d}}
    \big)\Bigg)_{\hbar = 1}.
\end{equation}
Under this identification, the Yangian generators \(\mathbf x^{\pm}_{i,r}\) from Theorem~\ref{th:YactsonGTVerma}
are realized geometrically by correspondences \cite[Sect.~2.3]{feigin2011yangians} inside
% \begin{equation}
\(
\mathcal{P}_{(d_1,\dots,d_i,\dots,d_n)}
\times
\mathcal{P}_{(d_1,\dots,d_i\mp1,\dots,d_n)}.
\)
% \end{equation}
The transposed correspondences define adjoint operators with respect to \(\langle\cdot,\cdot\rangle\). Hence
\begin{equation}\label{eq:conjugation}
    (\mathbf x^{\pm}_{i,r})^*
    =
    \mathbf x^{\mp}_{i,r}.
\end{equation}

\bigskip

Specializing \(\Lambda\) to \(\Lambda_{\text{dom}}\) corresponds to the choice of one-parameter subgroup in \(\mathbb{T}\). 
By \(\mathbb T_{\text{dom}}\subset \mathbb{T}\) we denote its Zarisky closure. 
The \(\mathbb T_{\text{dom}}\)-fixed locus is no longer necessarily discrete: its irreducible components may be isolated points, compact positive-dimensional varieties, or noncompact varieties. In particular, the Poincaré duality map
\begin{equation}
H_*^{\mathbb T_{\text{dom}}}(\mathcal{P}_{\vec{d}})
\longrightarrow
H^*_{\mathbb T_{\text{dom}}}(\mathcal{P}_{\vec{d}})
\end{equation}
is in general neither injective nor surjective. 
Note that any irreducible component of \(\mathbb T_{\text{dom}}\)-fixed locus has to contain at least one of the points \(z_{\underline{d}}\) for \(\underline{d}\in\widehat{\mathtt{GT}}\).

Both \(\bigoplus\limits_{\vec{d}}H_*^{\mathbb T_{\text{dom}}}(\mathcal{P}_{\vec{d}})\) and \(\bigoplus\limits_{\vec{d}}H^*_{\mathbb T_{\text{dom}}}(\mathcal{P}_{\vec{d}})\) still carry affine Yangian actions.

\begin{claim}\label{conj1}
    One has
    \begin{equation}
    \bigoplus\limits_{\vec{d}}H_*^{\mathbb T_{\text{dom}}}(\mathcal{P}_{\vec{d}})
    \simeq
    \mathrm{ev}_{-}^*\mathcal M_{\Lambda,u},
    \qquad
    \bigoplus\limits_{\vec{d}}H^*_{\mathbb T_{\text{dom}}}(\mathcal{P}_{\vec{d}})
    \simeq
    \mathrm{ev}_{-}^*\mathcal M^\vee_{\Lambda,u},
    \end{equation}
    where \(\mathcal M^\vee_{\Lambda,u}\) is the contragredient dual Verma module.      
\end{claim}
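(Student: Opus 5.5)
The plan is to treat both isomorphisms uniformly by a freeness plus generation argument, working over the localization of the one-parameter equivariant ring of \(\mathbb T_{\text{dom}}\) and then specializing. Since \(\mathcal{P}_{\vec d}\) admits a \(\mathbb T\)-equivariant cell (Białynicki-Birula type) decomposition attracting to the isolated points \(z_{\underline d}\), I would first show that \(H_*^{\mathbb T}\) is a free \(\mathbb C[x_1,\dots,x_n,\hbar,\hbar']\)-module with rank in each degree \(\vec d\) equal to the number of \(\underline d\in\widehat{\mathtt{GT}}\) with \(d_i(\underline d)=d_i\). Restriction to the subtorus is then base change, so \(H_*^{\mathbb T_{\text{dom}}}=H_*^{\mathbb T}\otimes\mathbb C[\mathbb T_{\text{dom}}\text{-parameters}]\). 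After setting \(\hbar=1\), we get a \(D\)-graded \(\widetilde Y(1,-\kappa)\)-module whose graded character equals \(\chi(\mathcal M_{\Lambda,u})\) by \eqref{eq:Mlambdau}. The Yangian correspondences are defined integrally, so they specialize.

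Second, identify the homology with the Verma module. The fundamental class \([\mathcal{P}_{\vec 0}]\) is a highest weight vector: \(\mathbf x^+_{i}(v)\) lowers \(d_i\) and hence kills it, and its \(\mathbf h_i(v)\)-eigenvalue is computed at generic parameters by \eqref{eq:highestweight}, a formula polynomial in the parameters and hence valid after specialization. This gives a map \(\mathtt M_{\vec{\mathbf Q}}\to H_*^{\mathbb T_{\text{dom}}}\). Through \(\operatorname{ev}_-\) and Theorem~\ref{th:heisenberg}, the Yangian Verma quotient relevant here is \(\operatorname{ev}_-^*\mathcal M_{\Lambda,u}\). The task is then to show that the induced map \(\operatorname{ev}_-^*\mathcal M_{\Lambda,u}\to H_*^{\mathbb T_{\text{dom}}}\) is an isomorphism. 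Since the characters agree, it suffices to prove surjectivity, namely that homology is generated by the fundamental class.

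For generation I would use the standard geometric fact that \(H_*\) of Laumon-type spaces is spanned by the classes obtained by applying the lowering correspondences \(\mathbf x^-_{i,r}\) to \([\mathcal{P}_{\vec 0}]\). This holds integrally over the equivariant ring because it holds for the cells, by upper triangularity of the correspondences with respect to the attracting order. The cohomology statement then follows by duality. The pairing identifies \(H^*_{\mathbb T_{\text{dom}}}\) with the graded dual of \(H_*^{\mathbb T_{\text{dom}}}\); the dual is taken degreewise and is of finite rank over the equivariant ring. Equation \eqref{eq:conjugation} says the transposed action is the action twisted by the antiinvolution \(\mathbf x^\pm\mapsto\mathbf x^\mp\), and this is exactly the definition of the contragredient \(\mathcal M^\vee_{\Lambda,u}\).

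The main obstacle is integral generation by the fundamental class after specialization. At generic \(\Lambda\) this is automatic, but at \(\Lambda_{\text{dom}}\) the correspondence matrix coefficients in the fixed-point basis acquire poles and zeros, as in Corollary~\ref{th:WellDefMatEl}. I would therefore avoid the localized basis altogether and argue with the cell filtration, checking that each \(\mathbf x^-_{i,0}\) maps the cell closure for \(\underline d\) to the cell closure for \(\underline d^+_{ij}\) plus lower-order terms with unit leading coefficient in \(H_*^{\mathbb T}\) before localization.
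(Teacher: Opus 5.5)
The paper does not prove Claim~\ref{conj1}. It is used as an input for the geometric sketch of Section~\ref{sec:geometry}, and only a $K$-theoretic analogue of the cohomological half is attributed to the literature. So your outline has to stand on its own, and its central step is not established.

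\textbf{The generation step.} Suppose you have a map $\operatorname{ev}_-^*\mathcal M_{\Lambda,u}\to\bigoplus_{\vec d}H_*^{\mathbb T_{\text{dom}}}(\mathcal P_{\vec d})$ and equality of characters. Then the whole content of the claim is that the specialized homology is generated by $[\mathcal P_{\vec 0}]$. You correctly flag this as the main obstacle, but calling it a ``standard geometric fact'' does not prove it. The mechanism you propose also cannot work as stated: no triangularity statement about the $\mathbf x^-_{i,0}$ alone can give surjectivity. The span of $U(\widehat{\mathfrak n}_-)\cdot[\mathcal P_{\vec 0}]$ has graded rank at most $\chi(\mathcal M_\Lambda)$. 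The homology has graded rank $\chi(\mathcal M_{\Lambda,u})=\chi(\mathcal M_\Lambda)\prod_{m\ge1}(1-\re^{-m\delta})^{-1}$. Hence most cells can never be leading terms of monomials in the $\mathbf x^-_{i,0}$. You would also need the Heisenberg creation operators (equivalently, the higher $\mathbf x^-_{i,r}$), together with a triangularity statement for them. In every case the ``unit leading coefficient'' is exactly what is in question at $\Lambda_{\text{dom}}$, since the fixed-point matrix coefficients degenerate there (Corollary~\ref{th:WellDefMatEl}). So an actual computation of how the Hecke correspondences meet the cell closures is required, and none is given.

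\textbf{Two further steps need arguments.}

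First, the map from $\operatorname{ev}_-^*\mathcal M_{\Lambda,u}$. A highest weight vector only gives a map $\mathtt M_{\vec{\mathbf Q}}\to H_*$. Theorem~\ref{th:heisenberg} shows that $\operatorname{ev}_-^*\mathcal M_{\Lambda,u}$ is a cyclic quotient of $\mathtt M_{\vec{\mathbf Q}}$. It does not show that your map factors through that quotient, i.e.\ that the Yangian action on the \emph{specialized} homology is pulled back along $\operatorname{ev}_-$. Without this, surjectivity plus equality of characters only identifies the homology with some quotient of $\mathtt M_{\vec{\mathbf Q}}$ having that character. The missing step can be supplied by a torsion-freeness argument on the free family over the equivariant ring. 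At the generic point, with $\kappa\notin\{0,n\}$, the module is $\operatorname{ev}_-^*$ of a Verma module, and you would express the $\widehat{\mathfrak{gl}}_n$-generators through Yangian elements with coefficients regular there. But the argument has to be made.

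Second, integrality of the correspondences. ``The correspondences are defined integrally'' is not automatic. On ordinary (compactly supported) equivariant homology, one leg of a Hecke correspondence is not proper, because the modification point moves along a noncompact curve. So the operators are a priori defined only after localization, and you must show that the only denominators that occur become units once $\hbar=1$.

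The cohomological half does follow from the homological one by the duality you describe. To get there, transport \eqref{eq:conjugation} from the form $\langle\cdot,\cdot\rangle$ on $H_*$ to the natural pairing $H^*\otimes H_*\to R$ via the Poincar\'e duality map. That map is an isomorphism only generically, so this step again goes through torsion-freeness.
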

\begin{remark} A
\(K\)-theoretic analogue of the second half of Claim \ref{conj1} is proven in \cite{shen2024affine}.     
\end{remark}

%\s{ Should we add something about sum of units in cohomologies as Whittaker vector?}

The Poincaré duality map is therefore the unique homomorphism
\begin{equation}
\phi:
\mathrm{ev}_{-}^*\mathcal M_{\Lambda,u}
\longrightarrow
\mathrm{ev}_{-}^*\mathcal M^\vee_{\Lambda,u}.
\end{equation}
Hence its image is
\(
\mathrm{ev}_{-}^*\mathcal L_{\Lambda,u}
\).
By Atiyah--Bott localization,
\begin{equation}
\mathrm{im}\,\phi
\simeq
\bigoplus_{Z_{\mathrm c}}
H^*_{\mathbb T_{\text{dom}}}(Z_{\mathrm c}),
\end{equation}
where \(Z_{\mathrm c}\) runs over compact fixed components. Similarly,
\begin{equation}
\ker\phi
=
\bigoplus_{Z_{\mathrm{nc}}}
H_*^{\mathbb T_{\text{dom}}}(Z_{\mathrm{nc}}),
\end{equation}
where \(Z_{\mathrm{nc}}\) runs over noncompact fixed components.
\begin{proposition}\label{prop:isolated points = permited}
Let \(\Lambda_{\mathrm{dom}}\) be dominant with \(k\neq0\). Then
\begin{equation}
\mathcal N_{\underline d}(\vec{\mathtt y},\kappa)\neq0
\quad\Longleftrightarrow\quad
\underline d\in
\widehat{\mathtt{GT}}_{\mathrm{perm}}
(\Lambda_{\mathrm{dom}}).
\end{equation}
\end{proposition}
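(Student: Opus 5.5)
We need to show that $\mathcal N_{\underline d}(\vec{\mathtt y},\kappa)$ vanishes at $\Lambda_{\mathrm{dom}}$ exactly when some inequality \eqref{eq:DashedIneq} fails. The plan is to rewrite each linear factor of \eqref{eq:NY} in terms of the $\mathtt y$'s with shifted indices and the entries $d_{ij}$. Then a factor vanishes exactly when a specific equality $p_{i,l}=p_{i',r}$ (up to a shift) holds, with $\alpha_{l,r}\in\widehat{\Phi}_{+,\Lambda_{\text{dom}}}$.

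\textbf{Step 1: translate the factors.} Using $\mathtt y_{i+n}=\mathtt y_i-\kappa$, a term $\mathtt y_i-\mathtt y_j-\frac{m}{n}\kappa$ with $m\equiv j-i \pmod n$ equals $\mathtt y_{l}-\mathtt y_{r}$ for suitable lifts $l\equiv i$, $r\equiv j$. Here $r-l$ is determined by the leg length. Writing boxes of $\lambda^{(a)}$ through $\lambda^{(a)}_j=d_{a+j-1,a}$, the arm length becomes a difference $d_{\cdot,l}-d_{\cdot,r}$ of pattern entries and the leg length becomes a row index. I expect the first product to turn into factors of the form
\[
\mathtt y_l-\mathtt y_r-1-\bigl(d_{t,l}-d_{t+1,r}\bigr),
\]
and the second product into factors $\mathtt y_l-\mathtt y_r+\bigl(d_{t',r}-d_{t'-1,l}\bigr)$ or similar. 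Each factor is attached to a box, that is, to a pair of diagonal entries in consecutive rows. I will check this bookkeeping against the finite case and the $n=1$ (Nekrasov) case.

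\textbf{Step 2: vanishing only at integral roots.} A factor can vanish only if $\mathtt y_l-\mathtt y_r\in\mathbb Z$, i.e.\ $\alpha_{l,r}$ (or $\alpha_{r,l}$) lies in $\widehat{\Phi}_{\Lambda_{\text{dom}}}$. Dominance gives $\mathtt y_l-\mathtt y_r\in\mathbb Z_{>0}$ when $l<r$. Imaginary roots, where $r-l\in n\mathbb Z$, need separate treatment. There $\mathtt y_l-\mathtt y_r$ is a multiple of $\kappa$. Since $k\neq0$ and $\kappa\notin\mathbb Q_{\le0}$, these factors vanish only in configurations that the arm/leg constraints exclude, and this matches the fact that the $m\delta$ conditions are void.

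\textbf{Step 3, ($\Leftarrow$).} If $\underline d$ is permitted, every relevant factor is nonzero. For $l<r$ the first-type factor is $\ge 1$, or strictly positive, by \eqref{eq:DashedIneq}. A second-type factor is $\mathtt y_l-\mathtt y_r$ plus a nonnegative difference coming from column monotonicity, so it is positive. Equivalently, this is Lemma~\ref{th:pijnotequal}: no coincidences $p_{i,l}=p_{i+1,r}$ with $l\ne r$ occur.

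\textbf{Step 4, ($\Rightarrow$), the main obstacle.} If $\underline d$ is not permitted, pick a violated inequality $d_{i,l}>d_{i+1,r}+\mathtt y_l-\mathtt y_r-1$ and show some factor is exactly zero. The idea is a discrete intermediate value argument. The integer quantity $d_{t,l}-d_{t+1,r}-(\mathtt y_l-\mathtt y_r-1)$, viewed as a function of the row $t\ge r-1$, is positive at $t=i$ and eventually negative, since both $d$'s vanish for large $t$ and $\mathtt y_l-\mathtt y_r\ge1$. Each entry moves by steps compatible with monotonicity, so the quantity should hit $0$ at some row, giving a vanishing factor. The delicate point is that one step can change it by more than one. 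I would handle this by choosing a minimal violated root, minimizing $r-l$ or passing to an extremal box. If $\alpha_{l,r}$ is not simple in $\widehat{\Phi}_{+,\Lambda_{\text{dom}}}$, I would first reduce to simple roots of the integral root subsystem, as in Proposition~\ref{pr:admissible}, using the chaining argument there. Then, among boxes in the relevant pair of diagrams, I would pick the box whose arm-plus-leg hook hits the value $\mathtt y_l-\mathtt y_r$ exactly. This is the standard argument showing that a Nekrasov factor vanishes for non-Burge configurations, and I expect this step to take most of the work.
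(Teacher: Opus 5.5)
Your outline is the right one: rewrite each factor of \eqref{eq:NY} as $\mathtt y_l-\mathtt y_r$ plus an integer using $\mathtt y_{i+n}=\mathtt y_i-\kappa$, then compare with \eqref{eq:DashedIneq}. This is exactly the content the paper compresses into ``direct inspection.'' There is a genuine gap, however: the factor shape you predict in Step~1 is wrong, and Steps~3 and~4 inherit the error.

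\textbf{The actual factors.} Take a box of $\lambda^{(l)}$ lying in pattern row $t$ (so $s_1=t-l+1$) at position $s_2\le d_{t,l}$. Its arm is $d_{t,l}-s_2$, and the leg congruence pairs it with a column $r$, contributing
\[
\mathtt y_l-\mathtt y_r-(d_{t,l}-s_2)\quad\text{if } d_{t,r}<s_2\le d_{t-1,r},\qquad
\mathtt y_r-\mathtt y_l+(d_{t,l}-s_2)+1\quad\text{if } d_{t+1,r}<s_2\le d_{t,r},
\]
to the first and second product respectively, with the conventions $d_{t-1,t}=d_{t,t+1}=+\infty$. So each row and pair of columns contributes a run of consecutive values. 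For $c=\mathtt y_l-\mathtt y_r\in\mathbb Z_{>0}$, a zero occurs exactly when
\[
d_{t,r}+c<d_{t,l}\le d_{t-1,r}+c \quad\text{(first product)},\qquad d_{t+1,r}+c\le d_{t,l}\le d_{t,r}+c-1 \quad\text{(second product)},
\]
equivalently $p_{t,l}\in(p_{t,r},p_{t-1,r}]$ or $p_{t,l}\in[p_{t+1,r},p_{t,r})$.

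\textbf{Consequences for Step~3.} Your claim that second-type factors are positive ``by column monotonicity'' is false, since there is no monotonicity across columns. These factors equal $a+1-c$ and are precisely the ones that detect violations. For example, for the pattern whose only nonzero entries are $d_{1,1}=1$ and its periodic copies, one gets $\mathcal N_{\underline d}=\mathtt y_2-\mathtt y_1+1$. Lemma~\ref{th:pijnotequal} also does not suffice: it only excludes coincidences, not $p_{t,l}$ lying strictly inside these intervals. What ($\Leftarrow$) actually needs is $p_{t,l}<p_{t+1,r}\le p_{t,r}$ and $p_{t,l}\le p_{t-1,l}<p_{t,r}$, which is \eqref{eq:DashedIneq}. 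The remaining cases are handled as follows. Factors with $r\le l$ are nonzero by sign. Imaginary ones ($r=l+mn$, where $m\kappa\in\mathbb Z_{>0}$ can occur at admissible levels) are excluded by $d_{t+1,l+mn}=d_{t+1-mn,l}\ge d_{t,l}$, not by $k\neq0$.

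\textbf{Consequences for Step~4.} Your intermediate-value argument cannot produce a zero. At a row where the slack $d_{t,l}-d_{t+1,r}-(c-1)$ equals $0$, neither interval condition holds, so no factor of the pair $(l,r)$ vanishes there. The reduction to simple roots via Proposition~\ref{pr:admissible} is specific to admissible weights, and it is not needed anyway. The missing idea is to move \emph{up} toward the boundary row $r-1$, where $d_{r-1,r}=+\infty$.

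Let $t^*\ge r-1$ be the smallest row at which \eqref{eq:DashedIneq} fails for $\alpha_{l,r}$.
\begin{itemize}
\item If $t^*=r-1$, the upper bound in the second-product condition is vacuous.
\item If $t^*\ge r$, non-violation at row $t^*-1$ gives $d_{t^*,l}\le d_{t^*-1,l}\le d_{t^*,r}+c-1$.
\end{itemize}
In both cases $d_{t^*+1,r}+c\le d_{t^*,l}\le d_{t^*,r}+c-1$. So the box of $\lambda^{(l)}$ in row $t^*$ at $s_2=d_{t^*,l}+1-c$ gives a zero factor of the second product, which proves ($\Rightarrow$).

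For the converse, the first-product condition at row $t$ forces a violation at row $t-1$, and the second-product condition forces a violation at row $t$. This settles ($\Leftarrow$).
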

\begin{proof}
This follows by direct inspection of formula \eqref{eq:NY}.
\end{proof}
\begin{corollary}
    Any noncompact \(Z_{\text{nc}}\subset \mathcal{P}_{\vec{d}}^{\mathbb T_{\text{dom}}}\) does not contain \(z_{\underline{d}}\) for \(\underline{d}\in\widehat{\mathtt{GT}}_{\text{perm}}(\Lambda_{\text{dom}})\). 
\end{corollary}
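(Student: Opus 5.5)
The corollary should follow from Proposition~\ref{prop:isolated points = permited} by localization. The argument is by contradiction. Suppose a noncompact component $Z_{\mathrm{nc}}$ of $\mathcal{P}_{\vec d}^{\mathbb T_{\mathrm{dom}}}$ contains $z_{\underline d}$ for some permitted $\underline d$. We will show that $z_{\underline d}$ must then be an isolated $\mathbb T_{\mathrm{dom}}$-fixed point. An isolated point is a component by itself and is compact, which contradicts its lying in a noncompact component.

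The key step is to read the nonvanishing $\mathcal N_{\underline d}(\vec{\mathtt y},\kappa)\neq 0$ tangent-space-wise. By \eqref{eq:fixedpointpairing}, $\mathtt e_{\mathbb T}(T_{z_{\underline d}}\mathcal P_{\vec d})$ is the product of the $\mathbb T$-weights of the tangent space at $z_{\underline d}$. Restricting these to $\mathbb T_{\mathrm{dom}}$ amounts to the substitution $(x_1,\dots,x_n,\hbar,\hbar')=(\mathtt y_1,\dots,\mathtt y_n,1,-\kappa)$ with $\Lambda=\Lambda_{\mathrm{dom}}$, so the restricted Euler class is exactly $\mathcal N_{\underline d}(\vec{\mathtt y},\kappa)$. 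By Proposition~\ref{prop:isolated points = permited} this value is nonzero for permitted $\underline d$. Hence no tangent weight at $z_{\underline d}$ restricts to zero on the one-parameter subgroup, and therefore none restricts to zero on its Zariski closure $\mathbb T_{\mathrm{dom}}$.

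Next, the tangent space of the fixed locus at $z_{\underline d}$ is the $\mathbb T_{\mathrm{dom}}$-invariant part $(T_{z_{\underline d}}\mathcal P_{\vec d})^{\mathbb T_{\mathrm{dom}}}$, which is the span of the weight vectors with trivial restricted weight. This uses smoothness of $\mathcal P_{\vec d}$ (affine Laumon spaces are smooth, as in \cite{feigin2011yangians}) and the standard fact that the fixed locus of a torus acting on a smooth variety is smooth with tangent space the invariants. By the previous step this invariant space is zero. So the component of the fixed locus through $z_{\underline d}$ is zero-dimensional, i.e.\ it is the single point $\{z_{\underline d}\}$, which is compact. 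This contradicts $z_{\underline d}\in Z_{\mathrm{nc}}$.

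The main obstacle is the identification in the second paragraph: one must check that the formula \eqref{eq:NY} really lists the tangent weights factor by factor, with no cancellation between numerator and denominator. A cancellation could hide a zero weight while $\mathcal N_{\underline d}$ stays nonzero. Since \eqref{eq:NY} is a pure product of linear forms, each corresponding to one tangent weight from the Nakajima--Yoshioka character restricted to the $\mathbb Z/n\mathbb Z$-invariants, this should be fine. We also need that the normalization $\hbar=1$ does not kill any factor, which holds because each factor is homogeneous of degree one.
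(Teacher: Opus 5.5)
Your proposal is correct and follows essentially the paper's argument. The paper states this corollary without proof, as an immediate consequence of Proposition~\ref{prop:isolated points = permited}. The intended reasoning is yours: \(\mathcal N_{\underline d}(\vec{\mathtt y},\kappa)\neq 0\) means no tangent weight at \(z_{\underline d}\) is trivial on \(\mathbb T_{\text{dom}}\), so \(z_{\underline d}\) is an isolated, hence compact, component of the fixed locus.
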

\begin{claim}\label{conj2}
    For dominant \(\Lambda_{\mathrm{dom}}\) with \(k\neq 0\) any irreducible component \(Z\subset \mathcal{P}_{\vec{d}}^{\mathbb T_{\text{dom}}}\) containing  \(z_{\underline{d}}\) for \(\underline{d}\not\in\widehat{\mathtt{GT}}_{\mathrm{perm}}(\Lambda_{\text{dom}})\) is noncompact.     
\end{claim}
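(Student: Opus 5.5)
The plan is to show that at a non-permitted fixed point $z_{\underline d}$ the $\mathbb T_{\text{dom}}$-fixed component $Z\ni z_{\underline d}$ has a tangent direction along which a $\mathbb T_{\text{dom}}$-invariant curve escapes to infinity. Equivalently, $Z$ contains a non-proper one-parameter family of $\mathbb T$-invariant curves. The argument is local at $z_{\underline d}$ and has three steps.

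\textbf{Step 1: a zero weight at $z_{\underline d}$.} Choose a pair $\Lambda_{\text{dom}},\alpha_{l,r}$ for which \eqref{eq:DashedIneq} fails. Take the violation with the smallest $i\ge r-1$, and among those the smallest $r-l$. By Proposition~\ref{prop:isolated points = permited} we have $\mathcal N_{\underline d}(\vec{\mathtt y},\kappa)=0$. Hence some $\mathbb T$-weight $w$ of $T_{z_{\underline d}}\mathcal P_{\vec d}$ in \eqref{eq:NY} vanishes on $\mathbb T_{\text{dom}}$. Inspecting \eqref{eq:NY}, the vanishing factor is of the form $\mathtt y_l-\mathtt y_r-(\text{integer})$. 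It comes from a box of $\lambda^{(l)}$ or $\lambda^{(r)}$ whose arm and leg lengths are governed by $d_{i,l}$ and $d_{i+1,r}$. So the zero weight corresponds to the "move" that transfers boxes between the rows $d_{\cdot,l}$ and $d_{\cdot,r}$ to restore \eqref{eq:DashedIneq}.

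\textbf{Step 2: a noncompact invariant curve.} Consider the $\mathbb T$-invariant curve $C\subset\mathcal P_{\vec d}$ through $z_{\underline d}$ tangent to the weight line of $w$. Since $w|_{\mathbb T_{\text{dom}}}=0$, $C$ is pointwise $\mathbb T_{\text{dom}}$-fixed, so $C\subset Z$. If the other end of $C$ is a fixed point $z_{\underline d'}$ (a closed $\mathbb P^1$), then $\underline d'$ is the pattern obtained by this transfer move. I would show $\underline d'$ is still non-permitted, with a strictly smaller violation measure (say $d_{i,l}-d_{i+1,r}-(\mathtt y_l-\mathtt y_r-1)$ summed appropriately), or else that the direction is non-proper. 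I would try to realize $C$ explicitly as a rank-one deformation of the flag of sheaves in the quiver/ADHM-type description of affine Laumon spaces from \cite{finkelberg2010quantization}, restricted to the $\mathbb Z/n$-fixed locus of $\mathfrak M_{n,d}$. There one can check directly whether the limit exists inside the quasiflag space.

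\textbf{Step 3: induction.} Iterate Step 2 on the violation measure. Since $\widehat{\mathtt{GT}}$ patterns with fixed $\vec d$ are finite, the descent must terminate in a direction whose limit leaves $\mathcal P_{\vec d}$, i.e.\ $Z$ is noncompact. The alternative, where all such curves close up, would make $Z$ compact and containing only non-permitted points. That would contradict the image computation: $\operatorname{im}\phi\simeq\mathrm{ev}_-^*\mathcal L_{\Lambda,u}$ has character $P(\Lambda_{\text{dom}})$ by Theorem~\ref{th:char}, while each compact component contributes at least its fundamental class to $\operatorname{im}\phi$. Thus any extra compact component (or any positive-dimensional one) would make the character strictly larger. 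This character-counting argument is probably the cleanest route, assuming Claim~\ref{conj1}, and I would use it as the primary proof. The main obstacle is then justifying that every compact component contributes a class with nonzero image under Poincaré duality in the correct weight space, via localization with its nonzero equivariant Euler class on the normal bundle.
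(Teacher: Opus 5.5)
Your primary argument is essentially the paper's proof. Assuming Claim~\ref{conj1}, you compare $\chi(\operatorname{im}\phi)=\chi(\mathcal L_{\Lambda,u})$ with the total contribution of all compact $\mathbb T_{\text{dom}}$-fixed components, use Proposition~\ref{prop:isolated points = permited} to identify the isolated ones with the permitted $z_{\underline d}$, and conclude that a compact positive-dimensional component through a non-permitted $z_{\underline d}$ would make the character too large. The invariant-curve descent of Steps~1--2 is not needed, apart from the observation that a non-permitted $z_{\underline d}$ has a zero weight and so is not isolated. One small correction: take $P(\Lambda_{\text{dom}})=\chi(\mathcal L_{\Lambda,u})$ from Theorem~\ref{th:Main}, as the paper does, rather than Theorem~\ref{th:char}, because the latter assumes $(\Lambda_{\text{dom}}+\rho,\delta)>0$, which is not among the claim's hypotheses.
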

\begin{proof}[Proof modulo Claim \ref{conj1}]
Since \(\operatorname{im}\phi=\mathcal{L}_{\Lambda,u}\), the  character
\(\chi(\operatorname{im}\phi)\) is the generating function of all compact components.
On the other hand, by Theorem~\ref{th:Main} and Proposition~\ref{prop:isolated points = permited}, \(\chi(\mathcal{L}_{\Lambda,u})\) is the
generating function of isolated fixed points. Therefore, there are no compact
components of positive dimension.
\end{proof}
% \begin{proof}[Sketch of the Proof]
%    To prove this claim recall that Affine Laumon Space is eqiuivariant resolution  of singularity \(\pi: L_{\vec{d}}\rightarrow\mathcal{Z}_{\vec{d}} \) where  \(\mathcal{Z}_{\vec{d}}\) is Affine Zastava space \s{(reference?)}. \(\mathcal{Z}_{\vec{d}}\) is quasiaffine. The resolution map is proper. 0 is a unique invariant with repect to \(\mathbb{T}\). So, any irreducible component of fixed point set in \(\mathcal{Z}_{\vec{d}}\) with respect to \(\mathbb{T}_{\text{dom}}\) contains \(0\). Thus, any compact irreducible component of \(L^{\mathbb{T}_{\text{dom}}}\) lies in \(\pi^{-1}(0)\).

%    Moreover, there is a stratification of \(\pi^{-1}(0)\) by affine spaces \(\mathcal{A}_{\underline{d}}\) enumerated by \(\underline{d}\in\widehat{\mathtt{GT}}\). The action of \(\mathbb T\) respects stratification and linear on \(\mathcal{A}_{\underline{d}}\). Note that \(\mathcal{A}_{\underline{d}}^{\mathbb{T}}=\{z_{\underline{d}}\}\). If \(Z\cap\!\mathcal{A}_{\underline{d}}\neq\varnothing\) then \(z_{\underline{d}}\in Z\). So, one has to show that for \(\underline{d}\not\in\widehat{\mathtt{GT}}_{\text{perm}}(\Lambda_{\text{dom}})\) we have \((T_{z_{\underline{d}}}L_{\vec{d}})^{\mathbb{T}_{\text{dom}}}\not\subset~T_{z_{\underline{d}}} \mathcal{A}_{\underline{d}}\). 
% \end{proof}
% Now, it is clear that that \(\mathrm{ev}_{-}^*\mathcal L_{\Lambda,u}\) is spanned by classes of isolated fixed points. These are precisely the points \(z_{\underline d}\) such that \(\mathcal N_{\underline d}(\vec{\mathtt y},\kappa)\neq0\).
\begin{remark}
    Claims~\ref{conj1} and~\ref{conj2}, together with Proposition~\ref{prop:isolated points = permited}, provide an alternative proof of Theorems~\ref{th:Main} and~\ref{th:PglhatIrr} using the geometry of the Laumon space. Moreover, they endow \(\mathbb{L}_{\Lambda_{\mathrm{dom}},u}\) with a nondegenerate bilinear form.
\end{remark}

\begin{theorem}
The pairing \(\langle\cdot,\cdot\rangle\) on
\(
\mathbb{L}_{\Lambda_{\mathrm{dom}},u}
\)
coincides with the Shapovalov form of \(\widehat{\mathfrak{gl}}_n\).
\end{theorem}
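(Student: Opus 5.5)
The plan is to characterize the Shapovalov form by a uniqueness statement and then check that the geometric pairing satisfies its defining properties. Recall from Section~\ref{sec:basic} that the Shapovalov form on \(\mathcal{M}_{\Lambda,u}\) is the unique symmetric bilinear form with \((\zeta,\zeta)=1\) on the highest weight vector and for which \(e_i\) and \(f_i\) are mutually adjoint. It descends to a nondegenerate form on \(\mathcal{L}_{\Lambda,u}\) satisfying the same conditions. Conversely, any form on the irreducible module \(\mathcal{L}_{\Lambda,u}\) with this contravariance is determined by its value on the highest weight vector, because \(\mathcal{L}_{\Lambda,u}\) is cyclic on it. For the \(\widehat{\mathfrak{gl}}_n\)-version one must also include the Heisenberg generators, with \(\mathtt{a}_r^*=\mathtt{a}_{-r}\) up to the normalization fixed by \eqref{eq:glnthroughsln}. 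Under these conventions the Yangian adjointness \(\mathbf x^{\pm}_{i,r}\leftrightarrow \mathbf x^{\mp}_{i,r}\) is the right contravariance. So it suffices to show three things: the pairing \(\langle\cdot,\cdot\rangle\) restricted to \(\mathbb{L}_{\Lambda_{\mathrm{dom}},u}\) is well defined, it is contravariant for the whole Yangian action, and it has the correct normalization on \(\xi_{\underline 0}\).

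First I would make the pairing explicit on the basis. By Atiyah--Bott the fixed-point basis is orthogonal, and after specialization \(\hbar=1\) we have \(\langle\xi_{\underline d},\xi_{\underline d}\rangle=\mathcal N_{\underline d}(\vec{\mathtt y},\kappa)\), up to the inverse-Euler-class convention. I would take this diagonal formula as the definition on \(\mathbb{L}_{\Lambda_{\mathrm{dom}},u}\) for permitted \(\underline d\). By Proposition~\ref{prop:isolated points = permited} the values \(\mathcal N_{\underline d}\) are finite and nonzero at \(\Lambda_{\mathrm{dom}}\), so the form is well defined and nondegenerate. At \(\underline d=\underline 0\) all the products in \eqref{eq:NY} are empty, so \(\langle\xi_{\underline 0},\xi_{\underline 0}\rangle=1\), which matches \((\zeta,\zeta)=1\) once \(\xi_{\underline 0}\) is identified with the highest weight vector.

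Second, I would prove contravariance, which at the level of matrix elements reads
\[
\mathbf x^{+}_{i,r,\underline d,\underline d'}\,\langle\xi_{\underline d'},\xi_{\underline d'}\rangle=\mathbf x^{-}_{i,r,\underline d',\underline d}\,\langle\xi_{\underline d},\xi_{\underline d}\rangle .
\]
For generic \(\Lambda\) this identity of rational functions in \(\Lambda\) follows from \eqref{eq:conjugation}, since the transposed correspondences give adjoint operators. Both sides are rational in \(\Lambda\), and when \(\underline d,\underline d'\) are both permitted, Corollary~\ref{th:WellDefMatEl} shows that neither side has a pole at \(\Lambda_{\mathrm{dom}}\). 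The identity therefore survives the specialization, using the same limiting argument as for the projection \(\mathtt{P}^{\Lambda}_{\Lambda_{\text{dom}}}\). The \(\mathbf h_{i,r}\) are diagonal and hence self-adjoint. The Heisenberg generators and the \(\widehat{\mathfrak{gl}}_n\)-generators are images of Yangian elements under \(\operatorname{ev}_-\), by Theorem~\ref{th:heisenberg} and Theorem~\ref{th:PglhatIrr}. Their adjointness would then follow provided the relevant Yangian elements are mapped to their counterparts under the anti-involution \(\mathbf x^\pm\mapsto\mathbf x^\mp\), \(\mathbf h\mapsto\mathbf h\).

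The main obstacle is this last compatibility. One has to check that the anti-involution of \(\widetilde Y(1,-\kappa)\) fixing \(\mathbf h_i(v)\) and swapping \(\mathbf x^\pm_i(v)\) corresponds under \(\operatorname{ev}_-\), as modified in Remark~\ref{th:FY}, to the Chevalley anti-involution \(X\otimes t^r\mapsto X^t\otimes t^{-r}\) of \(\widehat{\mathfrak{gl}}_n\). This requires Kodera's explicit formulas for \(\operatorname{ev}_-\) on \(\mathbf x^\pm_{i,1}\) and on the Heisenberg images. I would first verify it on the degree-zero generators, where it is immediate, and then argue that \(U\widehat{\mathfrak{gl}}_n\) is generated, inside the image, by \(e_i,f_i\) together with the \(\mathtt a_r\). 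An alternative route avoids Kodera's formulas entirely. Yangian contravariance alone already forces the pairing to be the unique contravariant form on the irreducible Yangian module \(\mathtt L_{\vec{\mathbf Q}}\) normalized at \(\xi_{\underline 0}\). The Shapovalov form, pulled back via \(\operatorname{ev}_-\), would also be such a form once the anti-involutions are shown to match on generators. The two forms are therefore proportional, and the normalization at \(\xi_{\underline 0}\) makes them equal.
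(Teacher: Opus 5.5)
Your plan is essentially the paper's proof. Adjointness of \(\mathbf x^{\pm}_{i,r}\) comes from \eqref{eq:conjugation}, which you carry through the specialization to \(\Lambda_{\mathrm{dom}}\) via Corollary~\ref{th:WellDefMatEl}, and the remaining step is the check \((a_m)^*=a_{-m}\) for Kodera's Heisenberg operators, which the paper also only asserts from their Yangian realization. Your ``alternative route'' does not actually bypass Kodera's formulas, though: matching the two anti-involutions under \(\operatorname{ev}_-\) beyond degree zero (e.g.\ on \(\mathbf h_{i,1}\)) still requires the explicit evaluation formulas.
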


\begin{proof}
By \eqref{eq:conjugation}, the generators \(\mathbf x^+_{i,r}\) and \(\mathbf x^-_{i,r}\) are adjoint with respect to the pairing. It remains to check the Heisenberg generators.

Let \(a_m\) be the Heisenberg operators introduced in \cite[Theorem~4.18]{kodera2019braid}. Using their realization in terms of Yangian generators, one checks that
\begin{equation}
(a_m)^*=a_{-m}.
\end{equation}
Hence the pairing satisfies the defining %adjointness
properties of the Shapovalov form.
\end{proof}

\printbibliography[
heading=bibintoc,
title={References}
]

\noindent \textsc{Scuola Internazionale Superiore di Studi Avanzati (SISSA) Via Bonomea 265, Trieste, Italy,
}

\emph{E-mail}:\,\,\textbf{mbersht@sissa.it}\\

\noindent \textsc{Department of Mathematical Sciences, Indiana University Indianapolis, 402 N. Blackford
St., LD 270, Indianapolis, IN 46202, USA,}

\emph{E-mail}:\,\,\textbf{emukhin@iu.edu}\\

\noindent\textsc{Department of Mathematics and Statistics, University of Montreal, Montreal QC, Canada
	}

\emph{E-mail}:\,\,\textbf{leonid.rybnikov@umontreal.ca}\\

\noindent\textsc{Department of Mathematics and Statistics, University of Montreal, Montreal QC, Canada
	}

\emph{E-mail}:\,\,\textbf{trufaleks2022@gmail.com}\\

\end{document}